\theoremstyle{plain}
\newtheorem{teo}{Theorem}[section]
\newtheorem{prop}[teo]{Proposition}
\newtheorem{lemma}[teo]{Lemma}
\newtheorem{cor}[teo]{Corollary}
\theoremstyle{definition}
\newtheorem{defin}[teo]{Definition}
\newtheorem{condition}[teo]{Condition}
\newtheorem{example}[teo]{Example}
\theoremstyle{remark}
\newtheorem{remark}[teo]{Remark}
\newtheorem{claim}[teo]{Claim}
\numberwithin{equation}{section}
\newcommand{\A}{\mathcal{A}}
\newcommand{\Bcat}{\mathcal{B}}
\newcommand{\C}{\mathbb{C}}
\newcommand{\Cheart}{\mathcal{C}}
\newcommand{\D}{\mathcal{D}}
\newcommand{\DC}{\mathrm{D}}
\newcommand{\E}{\mathcal{E}}
\newcommand{\F}{\mathcal{F}}
\newcommand{\Ftors}{\mathscr{F}}
\newcommand{\G}{\mathcal{G}}
\renewcommand{\H}{\mathcal{H}}
\newcommand{\M}{\mathfrak{M}}
\newcommand{\m}{\mathfrak{m}}
\newcommand{\N}{\mathcal{N}}
\renewcommand{\O}{\mathscr{O}}
\renewcommand{\P}{\mathbb{P}}
\newcommand{\Pfam}{\mathcal{P}}
\newcommand{\Pscr}{\mathscr{P}}
\newcommand{\Qscr}{\mathscr{Q}}
\newcommand{\R}{\mathbb{R}}
\newcommand{\Rcal}{\mathcal{R}}
\newcommand{\Scal}{\mathcal{S}}
\newcommand{\T}{\mathcal{T}}
\newcommand{\Ttors}{\mathscr{T}}
\newcommand{\U}{\mathcal{U}}
\newcommand{\W}{\mathcal{W}}
\newcommand{\Z}{\mathbb{Z}}
\let\Im\relax
\let\Re\relax
\DeclareMathOperator{\Ann}{Ann}
\DeclareMathOperator{\Aut}{Aut}
\DeclareMathOperator{\Coh}{Coh}
\DeclareMathOperator{\coker}{coker}
\DeclareMathOperator{\Def}{Def}
\DeclareMathOperator{\End}{End}
\DeclareMathOperator{\Ext}{Ext}
\DeclareMathOperator{\Gr}{Gr}
\DeclareMathOperator{\Hom}{Hom}
\DeclareMathOperator{\Im}{Im} 
\DeclareMathOperator{\im}{im}
\DeclareMathOperator{\Re}{Re}
\DeclareMathOperator{\rk}{rk}
\DeclareMathOperator{\Sing}{Sing}
\DeclareMathOperator{\Spec}{Spec}
\DeclareMathOperator{\Stab}{Stab}
\newcommand{\dashperf}{\mathrm{-perf}}
\newcommand{\HOM}{\mathcal{H}\kern -.5pt om}
\newcommand{\id}{\mathrm{id}}
\newcommand{\num}{\mathrm{num}}
\newcommand{\pr}{\mathrm{pr}}
\newcommand{\abs}[1]{\left\lvert#1\right\rvert}
\newcommand{\norm}[1]{\left\lVert#1\right\rVert}
\begin{document}

\title{Categorical resolutions of curves and Bridgeland stability}
\author{Nicolás Vilches}
\address{Department of Mathematics, Columbia University, 2990 Broadway, New York, NY 10027, USA}
\email{nivilches@math.columbia.edu}
\begin{abstract}
Categorical resolutions of singularities are a replacement of resolution of singularities within the realm of triangulated categories. They allow the study of the derived category of a singular variety $X$ via a triangulated category that behaves like the derived category of a smooth variety.

We follow these ideas to study the bounded derived category of a singular, reduced curve $C$ (with arbitrary singularities and number of components). We start by describing an explicit categorical resolution of singularities, specializing a general construction of Kuznetsov and Lunts. We prove the existence of Bridgeland stability conditions on these categories. As a consequence, we get the existence of proper, good moduli spaces of semistable objects. If the curve $C$ is irreducible, then we relate these moduli spaces to the moduli of slope-semistable torsion-free sheaves on $C$, and to the moduli of slope-semistable vector bundles on the (geometric) resolution $\tilde{C}$. This extends classical constructions by Oda and Seshadri, Bhosle and many others.

Finally, we use these results to give explicit descriptions of the moduli of torsion-free sheaves on a curve with a single node, cusp, or tacnode.
\end{abstract}
\maketitle

\tableofcontents

\section{Introduction}

Moduli spaces of vector bundles on curves are one of the most studied objects in algebraic geometry. Given a smooth, projective curve $C$, there is a moduli space $M_C(r, d)$ parametrizing (S-equivalence classes of) slope-semistable vector bundles on $C$ with rank $r$ and degree $d$. Their basic structure is well understood: for $(r, d)$ coprime and $g \geq 2$, they are smooth, projective moduli spaces of dimension $r^2(g-1)+1$. We refer to \cite{Ses82} for a discussion on their basic properties. 

The situation varies significantly if $C$ is allowed to have singularities, as the moduli space of semistable vector bundles becomes non-projective. Instead, we consider the moduli space of pure dimension one sheaves. And although the resulting moduli spaces $M_C(r, d)$ are projective, they are no longer smooth (even in the case $\gcd(r, d)=1$), and they can have multiple components (cf. \citelist{\cite{AIK77} \cite{Tei91}}).

Nevertheless, much progress has been made to unveil the structure of the moduli spaces $M_C(r, d)$. The basic strategy is to relate torsion-free sheaves on $C$ with vector bundles on the resolution $s\colon \tilde{C}\to C$, plus some additional gluing data. This has been extensively studied when $C$ has nodes; see for example \citelist{\cite{Ses82} \cite{OS79} \cite{Tei91} \cite{BB14} \cite{FB21}}.

Our goal is to discuss a different approach from the point of view of derived categories. The first technical issue that arises is that $\DC^b(\tilde{C})$ is inadequate to study $\DC^b(C)$, as the pullback $Ls^\ast\colon \DC^b(C) \to \DC^-(\tilde{C})$ is not fully faithful.

One way of circumventing this difficulty is to use a \emph{categorical resolution of singularities}: a smooth dg-category $\D$ endowed with functors $R\pi_\ast\colon \D \to \DC(C)$, $L\pi^\ast\colon \DC(C) \to \D$. satisfying $R\pi_\ast \circ L\pi^\ast = \id$. We refer to \citelist{\cite{Lun01} \cite{Lun10} \cite{KL15} \cite{KK15} \cite{BR11} \cite{BDG01}} for various discussions of this notion, and to Subsection \ref{subsec:catres}.

For singular curves, a small modification of the general procedure of \cite{KL15} gives a categorical resolution of singularities $R\pi_\ast\colon \DC^b(\Rcal) \to \DC^b(C)$, depending on the choice of a \emph{non-rational locus} $T \subset C$. This is a subscheme of $C$, supported at $\Sing(C)$, that measures the difference between $C$ and its resolution $\tilde{C}$. We point out that the scheme-theoretic support of $\rho_\ast \O_{\tilde{C}}/\O_C$ is contained in any non-rational locus. We also fix some integer $e$ such that $I^e=0$, where $I$ is the radical of $T$.

From here, we wish to find an adequate replacement of slope semistability on the categorical resolution. The most natural option is to look for a Bridgeland stability condition on $\DC^b(\Rcal)$. Not only this gives a notion of semistability for objects on $\DC^b(\Rcal)$; it also gives us the existence of moduli spaces. In our case, we get the following result.

\begin{teo} \label{teo:intro_main}
Let $C$ be a reduced, projective curve, and let $T, e$ be as above.
\begin{enumerate}
\item There exists a categorical resolution of singularities $\DC^b(\Rcal) \to \DC^b(C)$, where $\Rcal$ is a sheaf of non-commutative algebras on $C$ depending on $T$ and $e$.
\item We have that $K^{\num}(\Rcal) \cong \Z^{2n+me}$, where $n$ is the number of irreducible components of $C$, and $m$ the number of singular points. There is a distinguished basis of $K^{\num}(\Rcal)^\vee$ denoted by $E\mapsto \deg_k(E), \rk_k(E), \ell_{ij}(E)$ for $1 \leq k \leq n$, $1 \leq i \leq m$, $1 \leq j \leq e$. 
\item There is a heart of a bounded t-structure $\Cheart \subseteq \DC^b(\Rcal)$ and an open, connected subset $\Omega \subset \R^{3n+me}$ satisfying the following. For any $(\alpha_k, \beta_k, \gamma_k, \delta_{ij}) \in \Omega$, the pair $\sigma=(\Cheart, Z)$ defines a Bridgeland stability condition on $\DC^b(\Rcal)$, where $Z$ is given by the formula
\[ Z(E) = \sum_{i=1}^m \sum_{j=1}^e \delta_{ij} \ell_{ij}(E) - \sum_{k=1}^n \gamma_k \deg_k(E) + \sum_{k=1}^n \beta_k \rk_k(E) + i \sum_{k=1}^n \alpha_k \rk_k(E). \]
\item For any $\sigma$ as in (3) and any $v \in K^{\num}(\Rcal)$, there is an algebraic stack $\M_{\sigma}(v)$ parametrizing $\sigma_t$-semistable objects with numerical class $v$, admitting a proper, good moduli space $M_{\sigma}(v)$.
\end{enumerate}
\end{teo}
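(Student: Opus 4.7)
The plan is to treat the four statements in sequence, specializing the Kuznetsov--Lunts construction for (1), computing the K-theory directly for (2), constructing a tilted heart for (3), and invoking the Alper--Halpern-Leistner--Heinloth machinery for (4).

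\textbf{Parts (1) and (2).} I would build $\Rcal$ as a sheaf of orders on $C$, concretely $\Rcal = \HOM_{\O_C}(\mathcal{E}, \mathcal{E})$ for $\mathcal{E} = \O_C \oplus I \oplus \cdots \oplus I^{e-1}$, where $I$ is the radical of $T$. Away from $T$ this is Morita-equivalent to $\O_C$, while at a singular point $\Rcal$ is a finite rank order whose finite global dimension ensures smoothness of $\DC^b(\Rcal)$; the adjunction $L\pi^\ast \dashv R\pi_\ast$ with $R\pi_\ast \circ L\pi^\ast = \id$ is then inherited from the Morita equivalence on $C \setminus T$. For the K-theory, I would decompose objects by support: each irreducible component of $C$ contributes $\Z^2$ (rank and degree read on the smooth locus of the component) while each singular point contributes $\Z^e$ from the $e$ simple $\Rcal$-modules supported there, giving $\Z^{2n+me}$ in total. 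The stated basis $(\deg_k, \rk_k, \ell_{ij})$ is dual to these natural generators.

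\textbf{Part (3).} I would take $\Cheart$ to be a tilt of $\Coh(\Rcal)$ at a torsion pair separating the torsion supported at $T$, in analogy with the smooth-curve construction. The Bridgeland axioms for $(\Cheart, Z)$ then reduce to three checks: positivity $Z(E) \in \H \cup \R_{<0}$ for nonzero $E \in \Cheart$, which carves out $\Omega$ as the open locus in $\R^{3n+me}$ where finitely many explicit linear inequalities hold; existence of Harder--Narasimhan filtrations, which follows from noetherianity of $\Cheart$ together with the discreteness of the imaginary part $\sum_k \alpha_k \rk_k$; and the support property, which I would establish via a Bogomolov-type quadratic form on $K^{\num}(\Rcal)_\R$, negative on $\ker Z$, built by combining the classical discriminant inequality on each component with a local bound at the singular points coming from the order structure of $\Rcal$. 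Non-emptiness and connectedness of $\Omega$ are then read off the explicit linear description.

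\textbf{Part (4).} The stack $\M_\sigma(v)$ is an open substack of the algebraic stack of bounded complexes in $\Cheart$, and the support property in (3) yields boundedness of semistable objects of fixed numerical class $v$. To obtain a proper good moduli space $M_\sigma(v)$ I would verify $\Theta$-reductivity and $S$-completeness in the sense of Alper--Halpern-Leistner--Heinloth; both criteria can be checked after pushing forward via $R\pi_\ast$ and reducing to the classical theory of torsion-free sheaves on singular curves, and properness then follows from the valuative criterion in this framework.

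The principal obstacle is the support property in (3): with numerical lattice of rank $2n+me$, the required quadratic form cannot be assembled purely component-wise, since the singular-point classes $\ell_{ij}$ interact non-trivially with the component ranks and degrees. A careful analysis of how $\Rcal$-modules restrict to each singular stalk, together with the bilinear pairing induced there by the Euler form of the local order, is the technical heart of the argument and dictates the precise shape of the region $\Omega$. A secondary difficulty is the verification of $S$-completeness, where the noncommutative structure of $\Rcal$ must be tracked carefully along the test stack so that the reduction to moduli of torsion-free sheaves on $C$ is rigorously justified.
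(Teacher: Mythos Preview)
Your proposal diverges from the paper's argument in several places, and at least two of these are genuine gaps rather than alternative routes.

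\textbf{Part (1).} Your order $\Rcal=\HOM_{\O_C}(\O_C\oplus I\oplus\cdots\oplus I^{e-1},-)$ is built entirely from ideal sheaves on $C$ and never mentions the normalization $\tilde{C}$. The paper's $\Coh(\Rcal)$ is instead the category of triples $(F,\overline{F};\phi)$ with $F\in\Coh(\tilde{C})$, $\overline{F}$ a module over the Auslander algebra $\A$ of the zero\nobreakdash-dimensional scheme $T$, and $\phi$ a gluing map. This yields the semi\nobreakdash-orthogonal decomposition $\DC^b(\Rcal)=\langle \DC^b(\A),\,\DC^b(\tilde{C})\rangle$, which is the engine behind every later step: the $K$\nobreakdash-group computation in (2), the recollement description of the heart in (3), and the inductive proof of generic flatness in (4). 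For your order, smoothness is not clear (an endomorphism algebra of a chain of ideal sheaves on a singular curve need not have finite global dimension), and no analogue of this decomposition is available, so the downstream arguments do not get off the ground.

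\textbf{Part (3).} Your plan for the support property appeals to a ``Bogomolov\nobreakdash-type quadratic form'' and ``the classical discriminant inequality on each component''. On curves there is no discriminant inequality to invoke; the Bogomolov inequality is a surface phenomenon. The paper proceeds quite differently: it first classifies $\sigma$\nobreakdash-stable objects of phase $1$ (three explicit families) and of phase $<1$ (objects $(F,\overline{F};\phi)$ with $F$ locally free), and then proves elementary norm bounds of the shape $\lVert E\rVert^2\le P\,\lvert\Re Z(E)\rvert^2+Q\,\lvert\Im Z(E)\rvert^2$ with $P,Q$ continuous in the parameters. This is what carves out $\Omega$ and gives the support property; no quadratic form negative on $\ker Z$ is produced.

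\textbf{Part (4).} Reducing $\Theta$\nobreakdash-reductivity and $S$\nobreakdash-completeness to torsion\nobreakdash-free sheaves on $C$ via $R\pi_\ast$ cannot work as stated: $R\pi_\ast$ has a large kernel (the entire $\DC^b(\A)$ component is annihilated), so $\sigma$\nobreakdash-semistability in $\DC^b(\Rcal)$ is strictly finer than any notion visible on $C$. The paper instead verifies the two inputs required by Alper--Halpern-Leistner--Heinloth directly on $\DC^b(\Rcal)$: boundedness of semistable objects (using the explicit classification from (3) together with boundedness of slope\nobreakdash-semistable bundles on each $\tilde{C}_k$) and generic flatness of the heart $\Cheart$ (proved by showing $\Cheart$ is a recollement of $\Coh(\tilde{C})$ and $\Coh(\A)[1]$, and that generic flatness is preserved under recollement).
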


\begin{proof}
Part (1) is Subsection \ref{subsec:catres}; part (2) is Subsection \ref{subsec:Kgrp}; part (3) follows from Corollary \ref{cor:quad_stab}; part (4) is Theorem \ref{teo:moduli_main}. 
\end{proof}

We point out that $C$ is allowed to have arbitrary singularities. In particular, $C$ does not need to have locally planar singularities. 

One should think of the moduli spaces $M_\sigma(v)$ as ``interpolating'' between moduli spaces of torsion-free sheaves on $C$ and moduli spaces of vector bundles on $\tilde{C}$. One justification for this philosophy is given by the following theorem.

\begin{teo}
Let $C$ be a reduced, irreducible, projective curve. Fix $T, e$ as above.
\begin{enumerate}
\item There exists a connected, open subset $\tilde{\Omega} \subset \R^{me}$, and a continuous map $\Sigma\colon \tilde{\Omega} \to \Stab(\Rcal)$, mapping $\Delta=(\delta_{ij}) \in \tilde{\Omega}$ to the stability condition $\sigma=\sigma_\Delta$ with central charge
\[ Z_{\Delta}(E) = \sum_{i=1}^m \sum_{j=1}^e \delta_{ij} \ell_{ij}(E) - \deg(E) + i \rk(E). \]
\end{enumerate}
Now, fix $v$ be a numerical vector with $\rk(v)>0$. Assume that $v$ is primitive. 
\begin{enumerate}[resume]
\item There are finitely many walls for $v$ in $\tilde{\Omega}$, and each one of them defines a (non-zero) hyperplane in $\tilde{\Omega}$. In particular, for $\sigma \in \tilde{\Omega}$ outside of these hyperplanes, we have that an object $E$ with numerical class $v$ is $\sigma$-semistable if and only if it is $\sigma$-stable.
\item If $0 < \delta_{ij} \ll 1$ for all $i, j$, then there is a proper map $M_\sigma(v) \to M_{\tilde{C}}(r, d)$. 
\item If $0 < 1-\delta_{ie} \ll 1$ for all $i$, then there is a proper map $M_\sigma(v) \to M_C(r, \overline{d})$, where $\overline{d}=d+r(p_a(C)-g)-\sum_i \ell_{ie}$. 

\item Assume that $v=v(L\pi^\ast E)$, where $E$ is a vector bundle on $C$. Then, the map $M_\sigma(v) \to M_C(r, \overline{d})$ constructed above is an isomorphism over the locus of $M_C(r, \overline{d})$ parametrizing vector bundles. 
\end{enumerate}
\end{teo}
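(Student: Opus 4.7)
The plan is to construct an explicit section of the proper map
\[
\Phi \colon M_\sigma(v) \to M_C(r, \overline{d})
\]
from part (4) over the vector-bundle locus $U \subseteq M_C(r, \overline{d})$, using the pullback $L\pi^\ast$, and then to verify it is a two-sided inverse by a short stability argument that exploits the primitivity of $v$ and the wall description from part (2).

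\emph{Step 1: Section over the vector-bundle locus.} For $E \in U$, since $E$ is locally free one has $L\pi^\ast E = \Rcal \otimes_{\O_C} E$, a genuine sheaf of $\Rcal$-modules of numerical class $v$ by hypothesis. I would first verify that in the regime $0 < 1 - \delta_{ie} \ll 1$ from part (4), the object $L\pi^\ast E$ lies in the heart $\Cheart$ and is $\sigma$-semistable; this should be essentially built into the construction of $\Phi$, in which $\sigma$-semistability on $\Cheart$ corresponds under $R\pi_\ast$ to slope-semistability on $C$. Combined with $R\pi_\ast L\pi^\ast = \id$, the assignment $E \mapsto L\pi^\ast E$ then defines a morphism $s \colon U \to \Phi^{-1}(U)$ with $\Phi \circ s = \id_U$; as a section of a proper map, $s$ is a closed immersion.

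\emph{Step 2: Surjectivity of the section.} Let $F \in \Phi^{-1}(U)$ be a closed point and set $E := R\pi_\ast F \in U$. The counit of the adjunction $(L\pi^\ast, R\pi_\ast)$ yields a morphism $c \colon L\pi^\ast E \to F$ in $\DC^b(\Rcal)$, and applying $R\pi_\ast$ together with the triangle identity and $R\pi_\ast L\pi^\ast = \id$ recovers $\id_E$; in particular $c \ne 0$. Since $v$ is primitive, by part (2) I may pick $\Delta$ inside the open regime of part (4) and simultaneously outside the finitely many hyperplane walls for $v$. Then both $L\pi^\ast E$ and $F$ are $\sigma$-\emph{stable} objects of $\Cheart$ with the same numerical class $v$, hence of the same phase. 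A non-zero morphism between two $\sigma$-stable objects of equal phase is an isomorphism, so $c$ is an isomorphism and $F \cong L\pi^\ast E = s(E)$.

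\emph{Step 3: Conclusion, and the main obstacle.} Steps 1 and 2 show that $s$ is a closed immersion which is surjective on closed points of $\Phi^{-1}(U)$, so $\Phi$ restricts to an isomorphism over $U$; promoting this to an isomorphism of schemes uses the properness of $\Phi$ together with the reducedness of the moduli near stable points, which follows again from the primitivity of $v$ and the choice of $\Delta$ off the walls. The hard part of this plan will be Step 1: showing that $L\pi^\ast E$ lies in $\Cheart$ and is $\sigma$-semistable for every slope-semistable vector bundle $E$. This requires unpacking the construction of the heart and the central charge of part (3) enough to evaluate them on sheaves pulled back from $C$, and the parameter regime $0 < 1 - \delta_{ie} \ll 1$ is precisely the one in which $L\pi^\ast$ lands in $\Cheart$ and respects semistability. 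Once this compatibility is available, Step 2 is essentially formal.
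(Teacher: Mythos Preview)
Your overall architecture for part (5) matches the paper's exactly: build a section $s\colon U \to M_\sigma(v)$ out of $L\pi^\ast$, then show every $\sigma$-semistable object $\F$ with $R\pi_\ast \F \cong E \in U$ is isomorphic to $\pi^\ast E$. You have also correctly identified the genuine content as the verification that $\pi^\ast E$ is $\sigma$-semistable in the regime $1-\delta_{ie}\ll 1$; in the paper this is isolated as Lemma~\ref{lemma:pfcomparison_pullback}, and it is proved by the kind of slope comparison you anticipate (a would-be destabilizing subobject of $\pi^\ast E$ pushes forward to a destabilizing subsheaf of $E$ once $\delta_{ie}$ is close enough to $1$), not by anything ``built into'' the construction of $\Phi$.

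The one place your argument diverges is Step~2. The paper does not move $\Delta$ off the walls and invoke Schur's lemma for stable objects; instead it shows directly that the counit $f'\colon \pi^\ast E \to \F$ is an isomorphism by comparing numerical invariants. Writing $\pi^\ast E=(G,\overline G;\psi)$ and $\F=(F,\overline F;\phi)$, one first observes that $G\to F$ is generically an isomorphism (since $R\pi_\ast f'=\id_E$), and then uses that $G=\rho^\ast E$ is locally free with $v(G)=v(F)$ to conclude $G\xrightarrow{\sim} F$; surjectivity of $\psi$ plus $v(\overline G)=v(\overline F)$ then forces $\overline G\xrightarrow{\sim}\overline F$. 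This hands-on argument works for \emph{every} $\sigma$ in the regime of part~(4), including those on a wall, whereas your Schur-type argument literally only applies for $\Delta$ outside the finitely many $v$-walls. Your route is cleaner and more conceptual, and for generic $\Delta$ it is a perfectly good substitute; just note that to recover the full statement you would need a one-line remark that the moduli spaces (and the map $R\pi_\ast$) are constant on chambers, so it suffices to check the claim for one $\Delta$ in each chamber meeting the regime.
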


\begin{proof}
Part (1) is a consequence of Theorem \ref{teo:intro_main}; see also the discussion at the beginning of Subsection \ref{sec:comparison}. The rest is Theorem \ref{teo:comparison_main}.
\end{proof}

This result gives us a clear strategy to describe the moduli space $M_C(r, \overline{d})$. First, we set $v=v(L\pi^\ast E)$, where $E \in M_C(r, \overline{d})$. This way, we look at the moduli spaces $M_\sigma(v)$, as we vary $\sigma \in \tilde{\Omega}$. Parts (2) and (4) of Theorem \ref{teo:intro_comparison} relate these moduli spaces with $M_{\tilde{C}}(r, d)$ and $M_C(r, \overline{d})$, respectively. From here, we need to carefully describe the modifications at each wall.

We give two sample applications of these ideas. The first one involves a curve with a single node. This recovers classical descriptions, cf. \citelist{\cite{OS79} \cite{Ses82}*{Chapter 8} \cite{Bho96}}.

\begin{teo} \label{teo:intro_comparison}
Let $C$ be an irreducible curve of genus $g\geq 1$ with a single node or cusp. Let $\DC^b(\Rcal) \to \DC^b(C)$ be the categorical resolution of singularities from Theorem \ref{teo:intro_main}. Then there exists a path $(\sigma_t)_{t \in (0,1)}$ of stability conditions on $\DC^b(\Rcal)$ satisfying the following. 

Fix $v$ a primitive numerical vector with $r=\rk(v)>0$. Denote $d=\deg(v)$, $\ell=\ell(v)$. For each $t$, we let $M_t(v)=M_{\sigma_t}(v)$.
\begin{enumerate}
\item As we vary $t \in (0, 1)$, the moduli spaces $M_t(v) = M_{\sigma_t}(v)$ are birational. Moreover, for all but finitely many values of $t$, we have that $M_t(v)$ is smooth, projective of dimension $r^2( g-1) +1+\ell(2r-\ell)$.
\item For $t$ close to zero, we have a proper map $M_t(v) \to M_{\tilde{C}}(r, d)$. If $\gcd(r, d)=1$, this map is a $\Gr(2r, \ell)$-bundle, locally trivial in the Zariski topology. 
\item For $t$ close to 1, we have a proper map $R\pi_\ast\colon M_t(v) \to M_C(r, d+r-\ell)$. If $\ell=r$, this map is birational.
\item In the special case $r=\ell=1$, the moduli space $M_t(v)$ is independent of $t$; the map $M_t(v) \to M_{\tilde{C}}(1, d)$ is a $\P^1$-bundle; and the map $M_t(v) \to M_C(1, d)$ is birational.
\item In the special case $r=\ell=2$ and $d$ odd, there is a single wall at $t=1/2$. The two moduli spaces $M_{\epsilon}(v)$ and $M_{1-\epsilon}(v)$ are related by replacing a $\P^g$-bundle $P \to M_{1/2}(u) \times M_{1/4}(u)$ with a $\P^{g+1}$-bundle $P' \to M_{1/2}(u) \times M_{1/4}(w)$, where $M_{1/2}(u), M_{1/2}(w)$ are $g$-dimensional tori. The induced map $M_{1-\epsilon}(v) \dashrightarrow M_\epsilon(v)$ is a standard flip. 
\end{enumerate}
\end{teo}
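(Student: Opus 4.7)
The approach is to specialize Theorem~\ref{teo:comparison_main} to $m = 1$, $e = 1$, taking $T$ to be the reduced singular point (for which the nilradical $I$ vanishes, so $I^1 = 0$). Then $\tilde{\Omega}$ is a single open interval, and we set $\sigma_t = \sigma_\Delta$ with $\delta_{11} = t$, varying over a subinterval identified with $(0,1)$. For part (1), Theorem~\ref{teo:comparison_main}(2) supplies finitely many walls in this interval, so between them $\sigma_t$-semistability coincides with $\sigma_t$-stability for primitive $v$; $M_t(v)$ is then smooth by the standard deformation theory of stable objects in a smooth dg-category, and the dimension computation $\dim M_t(v) = 1 - \chi(v,v) = r^2(g-1) + 1 + \ell(2r-\ell)$ follows from Hirzebruch--Riemann--Roch on $\Rcal$ in the basis $(\rk, \deg, \ell)$. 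Birationality across walls is standard: $M_t(v)$ is constant on each chamber and walls carve out loci of positive codimension.

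Parts (2) and (3) are direct specializations of parts (3)--(5) of Theorem~\ref{teo:comparison_main}. For (2), the fiber over a stable $F \in M_{\tilde{C}}(r,d)$ parametrizes the possible $\Rcal$-module extensions of $F$, which unwind to choices of an $\ell$-dimensional subspace of the $2r$-dimensional fiber $F|_{s^{-1}(p)}$ (two separate preimages for a node, a length-two preimage scheme for a cusp), giving $\Gr(2r, \ell)$; in the coprime case the universal bundle on $M_{\tilde{C}}(r,d) \times \tilde{C}$ globalizes this to a Zariski-locally trivial Grassmannian bundle. For (3), with $\ell = r$, a generic rank-$r$ torsion-free sheaf on $C$ is locally free, so the locus in Theorem~\ref{teo:comparison_main}(5) is dense in $M_C(r, d + r - \ell)$, giving birationality.

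Part (4) reduces to a no-walls argument. For $v = (1,d,1)$ and a proper destabilizing subobject $u = (r',d',\ell')$, the phase-parallelism condition reduces to $t(\ell'-r') = d' - r'd$; the constraints $r' \in \{0,1\}$, $\ell' \in \{0,1\}$ (forced by the ambient class) together with $d' \in \Z$ rule out any solution with $t \in (0,1)$ except $u = v$, and a parallel integrality check excludes torsion subobjects. Hence $M_t(v)$ is constant along the path, and parts (2) and (3) specialize to the $\P^1 = \Gr(2,1)$-bundle and the birational pushforward claims.

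The main obstacle is part (5). For $v = (2,d,2)$ with $d$ odd, the same parallelism analysis localizes the unique wall in $(0,1)$ at $t = 1/2$, and the generic Jordan--Hölder classes are $u = (1, (d-1)/2, 0)$ with complement $w = (1, (d+1)/2, 2)$ (plus the swap). The $\sigma_{1/2}$-stable moduli $M_{1/2}(u)$ and $M_{1/2}(w)$ are identified as torsors over shifted Picard varieties of $\tilde{C}$, hence $g$-dimensional tori. Just off the wall, the $\sigma_\pm$-stable objects of class $v$ are non-split extensions of $u$ by $w$ (or of $w$ by $u$), parametrized by projectivizations of $\Ext^1$-bundles over $M_{1/2}(u) \times M_{1/2}(w)$. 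An explicit description of $u, w$ as $\Rcal$-modules built from line bundles on $\tilde{C}$ with prescribed gluing at $p$, combined with a Serre-duality computation on $\Rcal$ (whose Serre functor is shift by one, as for curves), will yield $\Ext^1$-dimensions matching the $\P^g$- and $\P^{g+1}$-bundles. The identification as a standard flip then follows from the general wall-crossing framework for Bridgeland-stable moduli, once the dimension defect between the two projective bundles is pinned down. The precise $\Ext^1$-computation is the main technical challenge, and I would carry it out by a local-to-global spectral sequence applied to the explicit form of $\Rcal$ at the singular point.
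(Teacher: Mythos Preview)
Your outline for parts (1)--(4) is essentially what the paper does in Section~\ref{sec:node}. One caveat: birationality across walls is not automatic --- the paper's Proposition~\ref{prop:nodewalls_bir} actually verifies that the strictly semistable locus has positive codimension by showing $\dim\Ext^1(A,B)>0$ for each wall decomposition, and this requires ruling out degenerate cases such as $\ell'=\ell''=0$ or $\ell'=2r',\ell''=2r''$. The dimension and smoothness statements come from an explicit Euler-form computation (Lemma~\ref{lemma:nodebasic_main}) together with the vanishing $\Hom^i(E,E')=0$ for $i\ne 0,1$, which is read off from the semi-orthogonal decomposition~\eqref{eq:sod_main}; invoking Hirzebruch--Riemann--Roch on $\Rcal$ is unnecessary.

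For part (5) there is a genuine error in your plan: the Serre functor on $\DC^b(\Rcal)$ is \emph{not} shift by one. The category carries a nontrivial semi-orthogonal decomposition $\langle b^\ast\DC^b(\A), La^\ast\DC^b(\tilde C)\rangle$, and the Euler form
\[
\chi_\Rcal(E,E') = d'r - dr' + rr'(1-g) - 2r\ell' + \ell\ell'
\]
from Lemma~\ref{lemma:nodebasic_main}(3) is visibly neither symmetric nor antisymmetric (the term $-2r\ell'$ has no counterpart). So a Serre-duality shortcut is unavailable, and the local-to-global spectral sequence you mention as a fallback would be both awkward and unnecessary. The paper instead reads off $\dim\Ext^1(U,W)$ and $\dim\Ext^1(W,U)$ directly from that Euler form, using that $\Hom^0$ vanishes between distinct stable objects of the same phase and that $\Hom^{\ge 2}=0$ by Lemma~\ref{lemma:nodebasic_main}(2). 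For the standard-flip identification, the paper does not appeal to a general wall-crossing package but computes the \'etale-local ring of $M_{1/2}(v)$ at a polystable point $[U\oplus W]$ as $\C[[\Ext^1(U\oplus W,U\oplus W)]]^{(\C^\ast)^2}$, recognizes it as the affine cone over a Segre variety, and identifies the two small resolutions with the two wall-crossings; see Proposition~\ref{prop:nodehigher_simpleflip}.
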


\begin{proof}
See Theorems \ref{teo:node_main} and \ref{teo:noderktwo_main}.
\end{proof}

Our second application involves a curve with a \emph{tacnode}: a singularity of the form $\C[[x, y]]/(y^2-x^4)$. Contrary to the scenario for nodes, the non-rational locus in this situation is non-reduced, cf. Subsection \ref{subsec:nrlocal}.

\begin{teo}
Let $C$ be an irreducible curve of genus $g\geq 1$ with a single tacnode. Let $\DC^b(\Rcal) \to \DC^b(C)$ be the categorical resolution of singularities from Theorem \ref{teo:intro_main}. We let $\sigma_{\delta_1, \delta_2}$ be the stability conditions from Theorem \ref{teo:intro_comparison}, for $0 < \delta_1, \delta_2$, $\delta_1+\delta_2<1$.

Fix $v$ a primitive numerical vector with $r=\rk(v)>0$. Denote $d=\deg(v)$, $\ell=\ell(v)$. We denote by $M_{\delta_1,\delta_2}(v) = M_{\sigma_{\delta_1\delta_2}}(v)$.
\begin{enumerate}
\item We have $M_{\delta_1,\delta_2}(v) = \varnothing$ unless $0 \leq \ell_1 \leq 2r$, $0 \leq \ell_2 \leq 4r$, $0 \leq \ell_2-\ell_1 \leq 2r$. 
\end{enumerate}
From now on, we assume that $M_{\delta_1,\delta_2}(v) \neq \varnothing$ for some $\sigma$.
\begin{enumerate}[resume]
\item There are finitely many $v$-walls. If $\sigma_{\delta_1, \delta_2}$ and $\sigma_{\delta_1', \delta_2'}$ are outside of the walls, then $M_{\sigma_{\delta_1, \delta_2}}(v)$ and $M_{\sigma_{\delta_1', \delta_2'}}(v)$ are birational. 
\item For $\delta_1, \delta_2 \ll 1$, the map $M_{\delta_1, \delta_2}(v) \to M_{\tilde{C}}(r, d)$ is a bundle, locally trivial in the Zariski topology, with fiber
\[ \{ (V_1, V_2): V_1 \subset \C^{\oplus 2r}, V_2 \subset \C^{\oplus 4r}, V_1 \times \{0\} \subset V_2 \subset \C^{2r} \times V_1, \dim(V_i) = \ell_i \}. \]

\item Assume that $v=(1, d, 1, 2)$. Then, the moduli space $M_{\delta_1, \delta_2}(v)$ is independent of $(\delta_1, \delta_2)$. We have a diagram
\[ M_{\tilde{C}}(1, d) \xleftarrow{a_\ast} M_{\delta_1, \delta_2}(v) \xrightarrow{R\pi_\ast} M_C(1, d), \]
where the map $a_\ast$ is an $\mathbb{F}_2$-bundle, and the map $R\pi_\ast$ is birational. 

\item Assume that $v=(2, d, 2, 4)$ with $d$ odd. Then, there is a diagram
\[ M_{\tilde{C}}(2, d) \leftarrow M_1 \dasharrow M_2 \dasharrow M_3 \dasharrow M_4 \to M_C(2, d), \]
where $M_1 \to M_{\tilde{C}}(2, d)$ is a bundle, locally trivial in the Zariski topology; $M_i \dashrightarrow M_{i-1}$ is a standard flip over a $2g$-dimensional torus; and $M_4 \to M_C(2, d)$ is birational. 
\end{enumerate}
\end{teo}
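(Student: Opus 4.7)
The strategy is to follow the template of the node/cusp case (Theorem~\ref{teo:intro_comparison}) while handling the two features new to the tacnode: the non-rational locus has length two rather than one, producing the two-dimensional parameter space $(\delta_1,\delta_2)\in\tilde\Omega$, and the local non-commutative algebra $\Rcal$ at the singularity has a nontrivial two-step filtration $I\supset I^2\supset 0$. Throughout, I would work in the heart $\Cheart\subseteq \DC^b(\Rcal)$ provided by Theorem~\ref{teo:intro_main} and apply the general wall-crossing and moduli machinery from part~(4) of that theorem.

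For part~(1), the bounds on $\ell_1,\ell_2$ should come directly from the module structure at the tacnode: the two graded pieces of the $I$-adic filtration on an object $E\in\Cheart$ at the singular point are modules over $I^{j}/I^{j+1}$, each of generic rank $2$ (since the tacnode has two analytic branches), so their lengths $\ell_1$ and $\ell_2-\ell_1$ are each bounded between $0$ and $2r$, which yields $\ell_2\leq 4r$. For part~(2), finiteness of walls for primitive $v$ is the general statement in Theorem~\ref{teo:intro_main}(4) applied to the connected family $\Sigma(\tilde\Omega)$, and birationality across walls follows from the standard overlap of stable loci in codimension one. For part~(3), as $\delta_1,\delta_2\to 0^+$ the central charge degenerates to $-\deg(E)+i\rk(E)$, matching classical slope stability on $\tilde C$ via $L\pi^\ast$; a semistable $E$ in this limit is therefore determined by a slope-semistable torsion-free sheaf $F$ on $\tilde C$ together with a pair of subspaces in the fibers of $L\pi^\ast F$, and the compatibility $V_1\times\{0\}\subset V_2\subset \C^{2r}\times V_1$ comes from the action of $I/I^2$ linking the two filtration steps.

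For part~(4), the numerical type $v=(1,d,1,2)$ is primitive with minimal rank, so no numerical wall in $\tilde\Omega$ can arise and $M_{\delta_1,\delta_2}(v)$ is independent of $(\delta_1,\delta_2)$. The fiber of $a_\ast$ from part~(3) is then the set of pairs $V_1\subset\C^2$, $V_2\subset\C^4$ with $\dim V_1=1$, $\dim V_2=2$, and $V_1\times\{0\}\subset V_2\subset\C^2\times V_1$; a short computation (e.g.~realizing this as the projectivization of $\O(1)\oplus\O(-1)$ over $\P(\C^2)$) identifies it with the Hirzebruch surface $\mathbb{F}_2$. The map $R\pi_\ast$ is birational since it restricts to an isomorphism on the locus parametrizing line bundles on $C$. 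For part~(5), I would enumerate all decompositions $v=v'+v''$ with $v'$ of rank one that give rise to a wall inside $\tilde\Omega$; combined with the non-emptiness ranges from~(1), this should yield exactly three walls, arranged in a linear order, separating the four moduli $M_1,\ldots,M_4$. Each wall-crossing is governed by extensions between rank-one classes pulled back from $\tilde C$ and $C$, producing a standard flip over the product of two rank-one moduli, i.e.~a $2g$-dimensional torus.

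The main obstacle will be the wall analysis in part~(5): one must verify that there are exactly three walls, rule out coincident or degenerate configurations, identify the destabilizing classes on each side to produce the $\P^a$- and $\P^{a+1}$-bundles claimed, and check that the codimensions match the shape of a standard flip. This will require an $\Ext^1$-computation on each wall together with the local structure of Bridgeland wall-crossings between primitive classes.
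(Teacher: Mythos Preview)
Your overall strategy matches the paper's, and parts (1)--(4) are essentially right in outline. The substantive gap is in part (5).

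You assert that the wall enumeration ``should yield exactly three walls, arranged in a linear order.'' This is where the tacnode genuinely differs from the node/cusp case: the parameter space $\tilde\Omega=\{(\delta_1,\delta_2):\delta_1,\delta_2>0,\ \delta_1+\delta_2<1\}$ is two-dimensional, and the walls are lines in a plane, not points on an interval. The paper's computation (via the formula $\delta_1(\ell_1/r-\ell_1'/r')+\delta_2(\ell_2/r-\ell_2'/r')=d/r-d'/r'$) produces \emph{four} walls for $v=(2,d,2,4)$, corresponding to the decompositions
\[
(1,k-1,0,0)+(1,k+2,2,4),\quad (1,k,0,0)+(1,k+1,2,4),\quad (1,k,0,1)+(1,k+1,2,3),\quad (1,k,0,2)+(1,k+1,2,2),
\]
and these cut $\tilde\Omega$ into \emph{five} chambers. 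The diagram $M_1\dashrightarrow M_2\dashrightarrow M_3\dashrightarrow M_4$ arises not because there are only three walls, but because one chooses a specific path from the region $\delta_1,\delta_2\ll 1$ to the region $\delta_2$ near $1$, and that path crosses three of the four walls (the fourth separates off a chamber not on the route to $M_C(2,d)$). You will not discover this by expecting a linear wall arrangement; you must draw the picture in the triangle.

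Two smaller points. First, your appeal to Theorem~\ref{teo:intro_main}(4) for finiteness of walls is a misattribution; that part concerns existence of good moduli spaces, while finiteness of walls for primitive $v$ is the content of Theorem~\ref{teo:comparison_main}(2). Second, ``birationality follows from the standard overlap of stable loci in codimension one'' is not automatic: the paper proves it by an explicit $\chi$-computation showing $\dim\Ext^1(u,w)>0$ for every actual wall decomposition $v=u+w$, which forces the destabilized locus to have positive codimension. You will need that computation (or an equivalent one) rather than an appeal to general principles.
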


\begin{proof}
See Lemma \ref{lemma:tacbasic_main} and Theorem \ref{teo:tac_main}.
\end{proof}

\subsection{Structure of the paper}

We will devote the first five sections to the construction of the stability conditions of Theorem \ref{teo:intro_main}. We start by reviewing the notion of a non-rational locus in Section \ref{sec:nr}, and relating it to the \emph{conductor} of a singular curve. 

The next two sections will construct a categorical resolution of $\DC^b(C)$, adapting the argument of \cite{KL15}. We first recall the construction of a generalized Auslander algebra in Section \ref{sec:aus}, which provides a categorical resolution of a zero-dimensional scheme. We use this to construct the categorical resolution of $\DC^b(C)$ in Section \ref{sec:catres}. 

We finish the first part by constructing the stability conditions in Section \ref{sec:stab}. The existence of moduli spaces is proven in Section \ref{sec:moduli}.

The last three sections are focused on applications. We prove general comparison results in Section \ref{sec:comparison}, including the proof of Theorem \ref{teo:intro_comparison}. We specialize to the case of a curve with a node or cusp in Section \ref{sec:node}, and of a curve with a tacnode in Section \ref{sec:tac}.

\subsection{Conventions}

We work over the complex numbers. If $V$ is a vector space of dimension $b$, and $1 \leq a \leq b$, we denote by $\Gr(V, a)$ the Grassmannian of $a$-dimensional \emph{quotients} of $V$. 

If $C$ is an irreducible projective curve, we write $M_C(r, d)$ for the moduli space of pure dimension one sheaves on $C$ with rank $r$ and degree $d$.

\subsection{Acknowledgements}

I am deeply thankful to my PhD advisor, Giulia Saccà, for her guidance and constant support during all these years. I am grateful of discussions with Arend Bayer, Amal Mattoo, Saket Shah, Sofia Wood, and Fan Zhou about various pieces of this project. I am especially thankful to Emanuele Macrì for sparkling my interest in categorical resolutions, and to Angela Ortega for pointing out the reference \cite{BRH21}.

This work was partially supported by the National Science Foundation (grant number DMS-2052934) and by the Simons Foundation (grant number SFI-MPS-MOV-00006719-09).


\section{Non-rational loci of curves and conductors} \label{sec:nr}

In this section we discuss the notion of a \emph{non-rational locus}, following \cite{KL15}*{\textsection 6.1}. We first recall its definition in Subsection \ref{subsec:nrgen}. In the case of curves, we relate this notion to the \emph{conductor} of a curve. After that, we compute explicit examples of non-rational loci for curves in Subsection \ref{subsec:nrlocal}.

\subsection{Generalities} \label{subsec:nrgen}

\begin{defin}[\cite{KL15}*{Definition 6.1}] \label{defin:nrgen_main}
Let $f\colon X \to Y$ be a proper, birational morphism. A subscheme $T \subseteq Y$ is a \emph{non-rational locus} of $Y$ with respect to $f$ if the canonical morphism $I_T \to Rf_\ast(I_{f^{-1}(T)})$ is an isomorphism, where $f^{-1}(T)$ is the scheme-theoretic preimage of $T$.
\end{defin}

Note that non-rational loci are not unique. For example, if $f$ is an isomorphism, then any subscheme is a non-rational locus. We also point out that being a non-rational locus is Zariski-local on $Y$.

\begin{remark}[\cite{KL15}*{Remark 6.2}]
If $f\colon X \to Y$ is a resolution of singularities, then $Y$ has rational singularities if and only if $T=\varnothing$ is a non-rational locus for $f$. In particular, if $C$ is a singular curve and $\rho\colon \tilde{C} \to C$ its resolution, then $T=\varnothing$ is \emph{never} a non-rational locus of singularities.
\end{remark}

\begin{defin}
Let $C$ be a reduced curve. A \emph{non-rational locus} of $C$ is a non-rational locus for the normalization map $\rho\colon \tilde{C} \to C$.
\end{defin}

\begin{lemma} \label{lemma:nrgen_existence}
Let $C$ be a reduced curve, and let $\rho\colon \tilde{C} \to C$ be its normalization. Then there is a non-rational locus $T \subset C$ whose set-theoretic support equals $\Sing(C)$. 
\end{lemma}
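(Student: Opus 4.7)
The plan is to construct $T$ using the conductor ideal. Let $\mathfrak{c} = \Ann_{\O_C}(\rho_\ast \O_{\tilde{C}}/\O_C) \subseteq \O_C$ be the conductor. This is the largest ideal of $\O_C$ that remains an ideal of $\rho_\ast \O_{\tilde{C}}$, so in particular $\mathfrak{c} \cdot \rho_\ast \O_{\tilde{C}} = \mathfrak{c}$. I define $T \subset C$ to be the closed subscheme with $I_T = \mathfrak{c}$.

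First I would verify that $T$ has the correct set-theoretic support. Since $\rho$ is an isomorphism outside $\Sing(C)$, the sheaf $\rho_\ast \O_{\tilde{C}}/\O_C$ vanishes off $\Sing(C)$, hence $\mathfrak{c}$ equals $\O_C$ there, so $T$ is supported inside $\Sing(C)$. Conversely, at a singular point $p$, we have $\rho_\ast \O_{\tilde{C},p} \supsetneq \O_{C,p}$; if $\mathfrak{c}_p$ were the unit ideal, then $\O_{C,p}$ would itself be an ideal of $\rho_\ast \O_{\tilde{C},p}$ containing $1$, forcing the two rings to coincide, a contradiction. Thus the support of $T$ equals $\Sing(C)$.

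Next I would check the key isomorphism $I_T \xrightarrow{\sim} R\rho_\ast I_{\rho^{-1}(T)}$. Because $\rho$ is a finite morphism (reduced curves have finite normalization), it is affine and $R^i\rho_\ast$ vanishes on coherent sheaves for $i \geq 1$; so it suffices to identify $\rho_\ast I_{\rho^{-1}(T)}$ with $I_T$. The scheme-theoretic preimage is defined by the inverse image ideal sheaf $I_{\rho^{-1}(T)} = I_T \cdot \O_{\tilde{C}} = \mathfrak{c} \cdot \O_{\tilde{C}}$. Working locally on stalks, with $A = \O_{C,p}$ and $\tilde{A} = \rho_\ast\O_{\tilde{C},p}$, the self-absorbing property of the conductor yields $\mathfrak{c}\tilde{A} = \mathfrak{c}$ as subsets of $\tilde{A}$. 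Pushing forward, $\rho_\ast I_{\rho^{-1}(T)} = \mathfrak{c} \cdot \rho_\ast\O_{\tilde{C}} = \mathfrak{c} = I_T$, and the canonical map coming from the commutative diagram
\[
\begin{tikzcd}
0 \ar[r] & I_T \ar[r] \ar[d] & \O_C \ar[r] \ar[d] & \O_T \ar[r] \ar[d] & 0 \\
0 \ar[r] & \rho_\ast I_{\rho^{-1}(T)} \ar[r] & \rho_\ast \O_{\tilde{C}} \ar[r] & \rho_\ast \O_{\rho^{-1}(T)} \ar[r] & 0
\end{tikzcd}
\]
is exactly this identification.

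The step requiring the most care is the equality $\mathfrak{c} \cdot \rho_\ast \O_{\tilde{C}} = \mathfrak{c}$, but this is a standard property of conductor ideals in commutative algebra — it is precisely the characterization of $\mathfrak{c}$ as the largest $\O_C$-ideal which is simultaneously an ideal of $\rho_\ast \O_{\tilde{C}}$. Everything else is formal, using only finiteness of $\rho$ to kill higher direct images.
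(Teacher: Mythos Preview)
Your argument is correct. You construct $T$ as the subscheme cut out by the conductor $\mathfrak{c}$, verify its support is exactly $\Sing(C)$, and then use finiteness of $\rho$ together with the defining property $\mathfrak{c}\cdot\rho_\ast\O_{\tilde{C}}=\mathfrak{c}$ to identify $I_T$ with $\rho_\ast I_{\rho^{-1}(T)}$ (and hence with $R\rho_\ast I_{\rho^{-1}(T)}$, since higher direct images vanish). Each step is sound.

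This is, however, not the route the paper takes at this point. The paper argues that $\rho$ is realized as a blow-up of $C$ at some subscheme $T_0$ supported on $\Sing(C)$ (Hartshorne, Exercise~II.7.11(c)), and then invokes \cite{KL15}*{Lemma~6.3} to conclude that a suitable infinitesimal thickening of $T_0$ is a non-rational locus. Your approach is more elementary and self-contained: it avoids the external reference to \cite{KL15} and produces directly the \emph{smallest} non-rational locus. In fact, what you have written is essentially the content of the paper's later Lemma~\ref{lemma:nrgen_conductorisnr}, which establishes that the conductor defines a non-rational locus and that it is minimal among all such. The paper's approach, by contrast, is an instance of a general mechanism that applies to arbitrary proper birational morphisms, not just normalizations of curves; the trade-off is that it does not immediately identify the optimal $T$.
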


\begin{proof}
Note that $\rho$ is a blow-up of $C$ at a subscheme $T_0 \subset C$ set-theoretically supported at $\Sing(C)$, e.g. by \cite{Har77}*{Exercise II.7.11(c)}. The argument of \cite{KL15}*{Lemma 6.3} ensures that an infinitesimal thickening of $T_0$ is a non-rational locus for $\rho$. 
\end{proof}

\begin{remark}
Let $C$ be a reduced curve, $\rho\colon \tilde{C} \to C$ be its resolution of singularities, and let $T$ be a non-rational locus for $C$. If $S=\rho^{-1}(T)$ is its scheme-theoretic preimage, then we have a short exact sequence $0 \to \rho_\ast I_S \to \rho_\ast \O_{\tilde{C}} \to \rho_\ast \O_S \to 0$. Using that $T$ is a non-rational locus gives us the diagram
\begin{equation} \label{eq:nrgen_keydiagram}
\begin{tikzcd}
& 0 \arrow[d] & 0 \arrow[d] \\
& I_T \arrow[r, "\cong"] \arrow[d] & \rho_\ast I_S \arrow[d] \\
0 \arrow[r] & \O_C \arrow[r] \arrow[d] & \rho_\ast \O_{\tilde{C}} \arrow[r] \arrow[d] & Q \arrow[r] \arrow[d, equal] & 0 \\
0 \arrow[r] & \O_T \arrow[r] \arrow[d] & \rho_\ast \O_S \arrow[r] \arrow[d] & Q \arrow[r] & 0 \\
& 0 & 0
\end{tikzcd}
\end{equation}
In particular, we point out that $\O_C$ is the kernel of a map $\rho_\ast \O_{\tilde{C}} \to Q$; the isomorphism $Q \cong \rho_\ast \O_S/\O_T$ ensures that the scheme-theoretic support of $Q$ is contained in $T$. 
\end{remark}

Let us finish up this section by relating this notion with the conductor of a curve. Recall that \emph{conductor} of a reduced curve $C$ is the ideal $J=\Ann_C(\rho_\ast \O_{\tilde{C}}/\O_C)$.

\begin{lemma} \label{lemma:nrgen_conductorisnr}
Let $C$ be a reduced curve. If $T$ is a non-rational locus for $\rho\colon \tilde{C} \to C$, and $I=I(T)$ is the ideal corresponding to $T$, then $I \subseteq J$. Moreover, we have that $J$ is a non-rational locus.
\end{lemma}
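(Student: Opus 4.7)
The plan is to combine two ingredients: diagram (\ref{eq:nrgen_keydiagram}) from the preceding remark, and the classical description of the conductor as the largest ideal of $\O_C$ that is simultaneously an ideal of $\rho_\ast \O_{\tilde{C}}$.

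For the containment $I \subseteq J$, the preceding remark has essentially already done the work. Diagram (\ref{eq:nrgen_keydiagram}) identifies $Q = \rho_\ast \O_{\tilde{C}}/\O_C$ with $\rho_\ast \O_S/\O_T$, which is manifestly a module over $\O_T = \O_C/I$. Hence $I$ annihilates $Q$, so $I \subseteq \Ann_C(Q) = J$.

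To show that $J$ itself is a non-rational locus, I would argue affine-locally. Write $\Spec A \subseteq C$ with preimage $\Spec B \subseteq \tilde{C}$, so $A \subseteq B$ is a finite extension. By definition $J = \{a \in A : aB \subseteq A\}$, and a direct check shows that $J$ is the largest ideal of $A$ that is also an ideal of $B$. In particular, viewing $J$ as an ideal of $B$, the extended ideal $J \cdot B$ agrees with $J$. Setting $T' = V(J) \subseteq C$ and $S' = \rho^{-1}(T') = V(J \cdot B) \subseteq \tilde{C}$, pushing forward yields $\rho_\ast I_{S'} = J = I_{T'}$ as $A$-submodules of $B$. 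Since $\rho$ is finite, $R^i \rho_\ast$ vanishes for $i \geq 1$, so the canonical morphism $I_{T'} \to R\rho_\ast I_{S'}$ is an isomorphism, confirming that $J$ defines a non-rational locus.

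There is no real obstacle: the only substantive point is the equality $J \cdot B = J$, which is a tautology given that the conductor is, by construction, an ideal of both $A$ and $B$. The rest is bookkeeping of the identification of $J$ with an ideal under the two different ring structures.
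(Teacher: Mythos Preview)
Your proof is correct and follows the same approach as the paper: the containment $I \subseteq J$ comes from diagram \eqref{eq:nrgen_keydiagram} exactly as you wrote, and the paper reduces the second part to the well-known fact that $J$ is also an ideal of $\O_{\tilde{C}}$, which is precisely the equality $J \cdot B = J$ you spell out. The paper leaves the passage from that fact to the isomorphism $I_{T'} \xrightarrow{\sim} R\rho_\ast I_{S'}$ implicit, whereas you have written it out in full; there is no substantive difference.
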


In other words, this lemma ensures that the closed subscheme determined by $J$ is the smallest non-rational locus for $C$.

\begin{proof}
For the first part, note from \eqref{eq:nrgen_keydiagram} that we have the isomorphism $\rho_\ast \O_{\tilde{C}}/\O_C = \rho_\ast \O_S/\O_T$. The latter is scheme-theoretically supported at $T$, and so $I(T) \subseteq \Ann_C(\rho_\ast \O_{\tilde{C}}/\O_C)$. The second part follows from the well-known fact that $J$ is also an ideal in $\O_{\tilde{C}}$.
\end{proof}

\subsection{Local description} \label{subsec:nrlocal}

Let $C$ be a reduced curve, and let $\rho\colon \tilde{C} \to C$ be its resolution. We have shown that there exists a non-rational locus of $C$ with respect to $\rho$, supported set-theoretically on $\Sing(C)$. It is convenient to have an explicit description of a non-rational locus in terms of the singularities of $C$.

Up to shrinking $C$, we may assume that $C=\Spec A$ is affine, and that $p$ is the only singular point of $C$. We use the following key observation.

\begin{lemma}
Let $C$ be an affine curve with $q \in C$ its only singular point. Denote by $\rho\colon \tilde{C} \to C$ the resolution of $C$, $C_q$ the completion of $C$ at $q$, and $\rho_p\colon \tilde{C}_q \to C_q$ the base change.

Given a closed subscheme $T \subseteq C$ supported set-theoretically at $p$, we have that $T$ is a non-rational locus for $C$ if and only if the corresponding subscheme $T_q \subseteq C_q$ is a non-rational locus for $\rho_q$.
\end{lemma}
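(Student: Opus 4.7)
The plan is to reduce both directions to a faithful flatness statement for coherent sheaves supported at $q$. Let $\iota\colon C_q \to C$ denote the completion morphism, which is flat, and let $\iota_{\tilde C}\colon \tilde C_q \to \tilde C$ be its base change along $\rho$. Write $\varphi\colon I_T \to R\rho_\ast I_{\rho^{-1}(T)}$ for the canonical map defining the non-rational locus condition, and $\varphi_q$ for the analogous map on $C_q$; the content of the lemma is that $\mathrm{Cone}(\varphi) = 0$ if and only if $\mathrm{Cone}(\varphi_q) = 0$. The first observation is that $\mathrm{Cone}(\varphi)$ has cohomology supported set-theoretically at $q$: since $q$ is the only singular point of $C$, $\rho$ is an isomorphism over $U = C \setminus \{q\}$, and $T$ is disjoint from $U$, so both sides of $\varphi$ restrict to $\O_U$ and $\varphi|_U$ is the identity.

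Next I would apply flat base change along $\iota$. Because $\iota_{\tilde C}$ is flat, pulling back the short exact sequence $0 \to I_{\rho^{-1}(T)} \to \O_{\tilde C} \to \O_{\rho^{-1}(T)} \to 0$ identifies $\iota_{\tilde C}^\ast I_{\rho^{-1}(T)}$ with $I_{\rho_q^{-1}(T_q)}$, and similarly $\iota^\ast I_T \cong I_{T_q}$. Flat base change for the proper map $\rho$ then yields a canonical isomorphism $\iota^\ast R\rho_\ast I_{\rho^{-1}(T)} \cong R\rho_{q,\ast} I_{\rho_q^{-1}(T_q)}$, compatible with the two structure maps, so that $\iota^\ast \varphi$ is identified with $\varphi_q$ and hence $\iota^\ast \mathrm{Cone}(\varphi) \cong \mathrm{Cone}(\varphi_q)$.

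To finish, I would use that each cohomology sheaf $\mathcal{H}^i(\mathrm{Cone}(\varphi))$ is a finitely generated module over the local ring $\O_{C,q}$ (since it is coherent and supported at $q$), and $\widehat{\O}_{C,q}$ is faithfully flat over $\O_{C,q}$. Combined with flatness of $\iota$, which commutes with the formation of cohomology sheaves, this shows $\mathrm{Cone}(\varphi) = 0$ iff $\iota^\ast \mathrm{Cone}(\varphi) = \mathrm{Cone}(\varphi_q) = 0$, giving the equivalence. The only mildly delicate point is verifying the ideal-sheaf identifications on the flat pullback, but this is formal from the fact that flat pullback preserves short exact sequences of coherent sheaves; there is no substantive difficulty beyond standard cohomology and base change.
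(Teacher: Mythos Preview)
Your proof is correct and follows essentially the same approach as the paper's: both arguments identify $\iota^\ast I_T \cong I_{T_q}$ (and similarly for the preimages), invoke flat base change, and conclude via faithful flatness of the completion. You have simply spelled out in more detail the intermediate steps the paper leaves implicit, in particular the observation that $\mathrm{Cone}(\varphi)$ is supported at $q$, which is what makes faithful flatness of $\hat{\O}_{C,q}$ over $\O_{C,q}$ applicable.
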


\begin{proof}
Denote by $\phi\colon C_q \to C$ the completion. We have that $\phi^\ast I_T = I_{T_q}$, and similarly for the preimages. The result follows immediately from the fact that the completion functor is faithfully flat. 
\end{proof}

As a direct corollary, non-rational loci depend only on the germ of the singularity. This helps greatly in calculations.

\begin{example}[Ordinary $n$-uple points]
Recall that an \emph{ordinary $n$-uple point} is the germ of the curve $C=\Spec \C[x, y]/((y-\lambda_1 x)\dots (y-\lambda_n x))$ at the origin, where $\lambda_1, \dots, \lambda_n$ are pairwise distinct. 

Note that $\tilde{C} = \Spec \C[x, t]/(t-\lambda_1)\dots (t-\lambda_n)$. This way, we quickly check that the conductor of $C$ is $(x, y)^{n-1}$.
\end{example}

\begin{example}[$A_k$ singularities]
Recall that an $A_k$ singularity is the germ of $C=\Spec \C[x, y]/(y^2-x^{k+1})$ at the origin. To compute its conductor, we separate in cases:
\begin{itemize}
\item If $k$ is even, then the curve has a single branch at the origin. The normalization is given by $\mathbb{A}^1 \to C$, $t \mapsto (t^2, t^{k+1})$. From here, one quickly checks that $(x^{k/2}, y)$ is the conductor. 
\item If $k$ is odd, then the curve has two branches at the origin. The normalization is $\mathbb{A}^1_s \sqcup \mathbb{A}^1_t \to C$, $s \mapsto (s, s^{(k+1)/2})$ and $t \mapsto (t, -t^{(k+1)/2})$. We directly verify that $(x^{(k+1)/2}, y)$ is the conductor.
\end{itemize}
Putting everything together, we get that $I=(x^{\lfloor (k+1)/2 \rfloor}, y)$ is the conductor.
\end{example}

We have summarized our computations in Table \ref{table:nrlocal_planar}.

\begin{table}[htbp]
\centering
\caption{Conductors of some planar singularities.}
\label{table:nrlocal_planar}
\begin{tabular}{|c|c|c|} \hline
Singularity & Local equation & Conductor \\ \hline \hline
Node & $xy$ & $(x, y)$ \\
Cusp & $y^2-x^3$ & $(x, y)$ \\
Ordinary $n$-uple point & $\prod_{i=1}^n (y-\lambda_i x)$ & $(x, y)^n$ \\
$A_k$ & $x^{k+1} + y^2$ & $(x^{\lfloor (k+1)/2\rfloor}, y)$ \\ \hline
\end{tabular}
\end{table}

\begin{example} (Union of coordinate axes)
Let $C$ be the curve obtained as the union of the $n$ coordinate axis of $\C^n$. We will check in Proposition \ref{prop:nrsmall_classification} that $T=\{(0,\dots,0)\}$ is a non-rational locus for $C$. 
\end{example}

\subsection{Small non-rational locus} \label{subsec:nrsmall}

Let us finish by classifying which singular curves $C$ admit $\Sing(C)$ with its reduced structure as a non-rational locus. Note by Lemma \ref{lemma:nrgen_conductorisnr}, this is the same as asking that $\rho_\ast \O_{\tilde{C}}/\O_C$ is scheme-theoretically supported at $\Sing(C)$.

\begin{prop} \label{prop:nrsmall_classification}
Let $C$ be a reduced curve with a single singular point $q \in C$, and let $\rho\colon \tilde{C} \to C$ be its resolution of singularities. Assume that $\{q\}$ (with its reduced structure) is a non-rational locus of $C$. If $S=\rho^{-1}(q)$ is a subscheme supported at $s$ points with lengths $e_1, \dots, e_s$, then
\begin{equation} \label{eq:nrsmall_ring}
\hat{\O}_{C, p} \cong \frac{\C[[x_{i, j}: 1 \leq i \leq s, e_i \leq j \leq 2e_i-1]]}{(\{x_{i,j}^{j'} - x_{i, j'}^j\}_{e_i \leq j, j' \leq 2e_i-1}, \{x_{i,j}x_{i', j'}\}_{i \neq i'})}.
\end{equation}
\end{prop}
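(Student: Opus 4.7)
The plan is to reduce to a calculation with complete local rings, extract an explicit description of $A := \hat{\O}_{C,q}$ from the non-rational locus hypothesis, and then exhibit the presentation.

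First I would complete at $q$. Write $A = \hat{\O}_{C,q}$, and let $\tilde{A}$ denote the completion of $\rho_\ast\O_{\tilde{C}}$ at $q$; since $\rho$ is finite and $\tilde{C}$ is smooth of dimension one, $\tilde{A} \cong \prod_{i=1}^s \C[[t_i]]$, one factor per point of $S$ above $q$. By hypothesis the $i$-th factor of $S$ has length $e_i$, so the scheme-theoretic preimage satisfies $I_S\cdot \C[[t_i]] = (t_i^{e_i})$. Since being a non-rational locus is local and passes to completions (by faithful flatness, as in the earlier lemma of this subsection), I may work entirely with $A \hookrightarrow \tilde{A}$.

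Next I would use the non-rational locus assumption to extract $A$ explicitly. By Definition~\ref{defin:nrgen_main} applied to $T = \{q\}_{\text{red}}$, the canonical inclusion $\mathfrak{m}_A \hookrightarrow \rho_\ast I_S$ is an isomorphism. Interpreting this inside $\tilde{A}$, we get $\mathfrak{m}_A = \mathfrak{m}_A\tilde{A} = \prod_i (t_i^{e_i})\C[[t_i]]$ (the first equality is the non-rational condition; the second uses that $I_S = \rho^\ast\mathfrak{m}_A\cdot \O_{\tilde C}$ together with the length data). Since $A = \C \oplus \mathfrak{m}_A$ as $\C$-vector spaces, this identifies $A$ as the $\C$-subalgebra of $\tilde A$ consisting of tuples $(f_i(t_i))$ with a common constant term $c\in \C$ and with $f_i - c \in (t_i^{e_i})$ for every $i$.

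Then I would produce the presentation. Define a $\C$-algebra map $\phi$ from the complete ring on the right-hand side of \eqref{eq:nrsmall_ring} to $A$ by $x_{ij}\mapsto t_i^j$. The stated relations hold: on one branch, $\phi(x_{ij}^{j'}) = t_i^{jj'} = \phi(x_{ij'}^j)$; across branches, elements supported on different factors of $\tilde A$ annihilate one another, so $\phi(x_{ij}x_{i'j'}) = 0$ for $i\neq i'$. Surjectivity follows from the structural description above, since every $t_i^n$ with $n\geq e_i$ is a product of elements of $\{t_i^{e_i},\ldots,t_i^{2e_i-1}\}$ (any $n\geq e_i$ can be written as a sum of integers in that range by an immediate induction), and these exhaust topological generators of $\mathfrak{m}_A$.

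The hard step is injectivity, i.e., showing that the listed relations generate \emph{all} syzygies among the $x_{ij}$. The cross-branch binomials $x_{ij}x_{i'j'}$ reduce the problem, modulo the kernel of $\phi$, to monomials supported on a single branch, so the problem becomes the following intrinsic one: show that the semigroup ring $\C[t^{e_i},t^{e_i+1},\ldots,t^{2e_i-1}]$ is cut out inside $\C[x_{i,e_i},\ldots,x_{i,2e_i-1}]$ by the binomials $x_{ij}^{j'} - x_{ij'}^j$. I would do this by grading $x_{ij}$ with weight $j$ and comparing Hilbert series of the two sides branch by branch, proving that any equality of monomials $\prod_j x_{ij}^{a_j}$ and $\prod_j x_{ij}^{b_j}$ in the target (equivalently, $\sum_j j\,a_j = \sum_j j\,b_j$) can be transformed into one another using the binomial relations. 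Finally, one passes from the polynomial presentation to the completed one by noting that the ideal of relations is $\mathfrak{m}$-primary, so that $\mathfrak{m}$-adic completion commutes with taking the quotient, yielding the claimed isomorphism of complete local rings.
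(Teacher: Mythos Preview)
Your structural approach matches the paper's: pass to completions, identify $\tilde A \cong \prod_i \C[[t_i]]$, use the non-rational hypothesis to pin down $\mathfrak m_A = \prod_i t_i^{e_i}\C[[t_i]]$, and hence recognise $A$ as the subring of tuples with common constant term and vanishing low-order coefficients. The paper stops exactly there, asserting that the choice $x_{ij}=t_i^j$ yields the presentation; you go further and attempt to verify injectivity of the induced map from the displayed quotient.

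That injectivity step is where your argument breaks. The binomials $x_{ij}^{\,j'} - x_{ij'}^{\,j}$ do \emph{not} generate the toric ideal of the numerical semigroup $\langle e_i, e_i+1,\ldots,2e_i-1\rangle$ once $e_i\geq 3$. Concretely, take a single branch with $e_i=3$: the kernel of $\C[[x_3,x_4,x_5]]\to \C[[t]]$, $x_j\mapsto t^j$, contains $x_3x_5 - x_4^2$, which is weighted-homogeneous of weight $8$, whereas the listed binomials $x_3^4-x_4^3$, $x_3^5-x_5^3$, $x_4^5-x_5^4$ have weights $12$, $15$, $20$. Since these generators are themselves weighted-homogeneous, the ideal they generate (in the polynomial ring or in the completion) contains nothing of weight less than $12$. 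Hence your Hilbert-series comparison would fail, and the claim that any two monomials of equal weight are connected by the listed moves is false. (The side remark that the relation ideal is $\mathfrak m$-primary is also off: the quotient is one-dimensional.) This is not a defect of your strategy relative to the paper's---the paper's proof simply does not address this step---but it shows that the presentation \eqref{eq:nrsmall_ring} as stated is missing relations for $e_i\geq 3$; one needs the full toric ideal, e.g.\ $x_3x_5-x_4^2$, $x_3^3-x_4x_5$, $x_3^2x_4-x_5^2$ in the $e_i=3$ case.
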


\begin{proof}
Denote by $p_1, \dots, p_s$ the preimages of $q \in C$. This way, we can write the scheme-theoretic preimage $\rho^{-1}(q)$ as $\bigcup_{i=1}^s \{e_i p_i\}$ for some $e_i \geq 1$. 

Let us write $A=\hat{\O}_{C, q}$, and let $\tilde{A} = \hat{\O}_{\tilde{C}, \rho^{-1}(q)}$. Here $\tilde{A} = \prod_{i=1}^s \C[[t_i]]$, as $\tilde{C}$ is smooth and $\rho^{-1}(q)$ consists of $s$ points. This way, the diagram \eqref{eq:nrgen_keydiagram} gives us the Cartesian square:
\[ \begin{tikzcd} A \arrow[r] \arrow[d] & A/I_q \cong \C \arrow[d, hook] \\ \prod_{i=1}^k \C[[t_i]] \arrow[r] & \prod_{i=1}^s \C[t_i]/(t_i^{e_i}) 
\end{tikzcd} \]
We get that $A \to \prod_{i=1}^k \C[[t_i]]$ embeds $A$ into the subring 
\[ \left\{ \sum_{i=0}^n \sum_{j=0}^\infty a_{ij} t_i^j : a_{0, 1}=\dots=a_{0, n}, a_{i,j}=0 \text{ if } 0<j<e_j\right\}.\]
As the map $A \to \prod_{i=1}^k \C[[t_i]]$ is a ring homomorphism, this also determines the ring structure on $A$. The claimed result follows immediately by picking generators $x_{i,j} = t_i^j$.
\end{proof}

Note that the isomorphism class of a singularity $q \in C$ as in Proposition \ref{prop:nrsmall_classification} depends only on the tuple $(e_1, \dots, e_s)$ of multiplicities, up to rearrangement. For instance, the tuples $(1, 1)$ and $(2)$ correspond to a node and a cusp, respectively. Similarly, the tuple $(1, 1, \dots, 1)$ corresponds to the singularity formed by taking the coordinate axes in $\C^s$. 

If $q \in C$ is a singularity with multiplicity tuple $(e_1, \dots, e_s)$, the description \eqref{eq:nrsmall_ring} ensures that the embedding dimension of $C$ at $q$ is $\sum_{i=1}^s e_i$. In particular, the only planar singularities that arise in this fashion are the node and the cusp.

\section{Generalized Auslander algebras for zero-dimensional schemes} \label{sec:aus}

Following \cite{KL15}*{\textsection 5}, we recall the construction of a (generalized) Auslander algebra in Subsection \ref{subsec:ausalg}. We later describe various functors in Subsection \ref{subsec:ausfunctors}.

\subsection{Auslander algebras and their modules} \label{subsec:ausalg}

Our setup is the following. We let $Z$ be a zero-dimensional affine scheme. We fix an ideal $I$ on $Z$, and an integer $n \geq 1$ such that $I^n=0$. (Note that we are \emph{not} requiring that $I^{n-1}\neq 0$.)

\begin{defin}[\cite{KL15}*{pp. 4579--80}]
Set $\A=\A_{S, I, n}$ to be the sheaf of algebras over $Z$ described as
\[ \A = \begin{pmatrix} \O_Z & I & I^2 & \cdots & I^{n-1} \\ \O_Z/I^{n-1} & \O_Z/I^{n-1} & I/I^{n-1} & \cdots & I^{n-2}/I^{n-1} \\ \O_Z/I^{n-2} & \O_Z/I^{n-2} & \O_Z/I^{n-2} & \cdots & I^{n-3}/I^{n-2} \\ \vdots & \vdots & \vdots & \ddots & \vdots \\ \O_Z/I & \O_Z/I & \O_Z/I & \cdots & \O_Z/I \end{pmatrix}, \]
where the algebra structure is given as a subset of $\End_Z(\O_Z \oplus \O_Z/I^{n-1} \oplus \dots \oplus \O_Z/I)$.

We let $\Coh(\A)$ be the category of (right) $\A$-modules that are coherent over $Z$. 
\end{defin}

\begin{lemma}[\cite{KL15}*{Lemma A.9}] \label{lemma:ausalg_quiver}
The category of coherent $\A$-modules is equivalent to the following category. 

Objects are tuples $(M_1, \dots, M_n, \alpha, \beta_1, \dots, \beta_{n-1})$, where the $M_i$ are sheaves on $Z$, $\alpha\colon M_i \to M_{i+1}$, and $\beta_k\colon M_i \otimes I^k \to M_{i-k}$. We impose the following compatibility conditions.
\begin{enumerate}
\item The diagram 
\[ \begin{tikzcd} & M_1 \otimes I \arrow[r, "\alpha \otimes \id"] \arrow[d, "\times"'] \arrow[ld] & M_2 \otimes I \arrow[r, "\alpha \otimes \id"] \arrow[d, "\times"'] \arrow[ld, "\beta_1"'] & \cdots \arrow[r, "\alpha \otimes \id"] \arrow[ld, "\beta_1"'] & M_n \otimes I \arrow[d, "\times"'] \arrow[ld, "\beta_1"'] \\ 0 \arrow[r] & M_1 \arrow[r, "\alpha"] & M_2 \arrow[r, "\alpha"] & \cdots \arrow[r, "\alpha"] & M_n \end{tikzcd} \]
commutes, where $\times$ is the multiplication map.
\item We have that $\beta_k \beta_\ell = \beta_{k+\ell}$. More precisely, we have that the diagram 
\[ \begin{tikzcd} M_i \otimes I^k \otimes I^\ell \arrow[r, "\id \otimes \times"] \arrow[d, "\beta_k\otimes \id"'] & M_i \otimes I^{k +\ell} \arrow[d, "\beta_{k+\ell}"] \\ M_{i-k} \otimes I^\ell \arrow[r, "\beta_\ell"'] & M_{i-k-\ell} \end{tikzcd} \]
commutes.
\end{enumerate}
Morphisms are given by maps $\phi_i\colon M_i \to M_i'$ compatible with the $\alpha$ and $\beta_k$.
\end{lemma}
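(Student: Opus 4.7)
The plan is to build the equivalence by hand, exploiting the block structure of $\A$ as a formal matrix algebra with orthogonal idempotents on the diagonal. First, I would extract the idempotent decomposition: the diagonal matrix units $e_1, \ldots, e_n \in \A$ are pairwise orthogonal idempotents summing to $1$, and for a right $\A$-module $M$ the decomposition $M = \bigoplus_{i=1}^n M e_i$ gives summands that are modules over $e_i \A e_i = \O_Z/I^{n-i+1}$, hence coherent sheaves on $Z$. Setting $M_i := M e_{n+1-i}$ matches the indexing of the quiver, so that $M_i$ is annihilated by $I^i$.

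Second, I would read off the arrows. Direct inspection of the matrix description gives $e_{n+1-i} \A e_{n-i} \cong \O_Z/I^i$ (a lower triangular block), so right multiplication by the canonical generator $1$ yields an $\O_Z$-linear map $\alpha_i\colon M_i \to M_{i+1}$; similarly $e_{n+1-i} \A e_{n+1-i+k} \cong I^k/I^i$ (an upper triangular block), yielding $\beta_k\colon M_i \otimes I^k \to M_{i-k}$. Conversely, given quiver data, I would set $M := \bigoplus M_i$ and define the right $\A$-action block by block using $\alpha$ and $\beta_k$; the annihilation on each $M_i$ lets these extend to actions of the relevant quotient algebras.

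Third, I would verify that the matrix-multiplication relations in $\A$ correspond exactly to the commutativity conditions. The product $e_{n+1-i} \A e_{n-i} \cdot e_{n-i} \A e_{n+1-i} \to e_{n+1-i} \A e_{n+1-i}$ encodes both the $\O_Z$-linearity of $\alpha$ and the triangle identity $\alpha \circ \beta_1 = \times$; the product $e_{n+1-i} \A e_{n+1-i+k} \cdot e_{n+1-i+k} \A e_{n+1-i+k+\ell} \to e_{n+1-i} \A e_{n+1-i+k+\ell}$ encodes $\beta_k \beta_\ell = \beta_{k+\ell}$ after passing through the multiplication $I^k \otimes I^\ell \to I^{k+\ell}$. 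All other nontrivial block products reduce to one of these after iteration, and functoriality together with the fact that the two constructions are mutually inverse are then automatic from the canonicity of the decomposition $M = \bigoplus M e_i$.

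The main obstacle I anticipate is pure bookkeeping: pairing the reindexing $M_i = M e_{n+1-i}$ with the quiver convention, choosing the correct matrix units to realize the canonical $\alpha$- and $\beta$-generators, and checking that every nontrivial product of blocks in $\A$ is captured by one of the two stated commutativity conditions (and that none are missed).
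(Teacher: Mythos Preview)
The paper does not supply its own proof of this lemma; it is quoted directly from \cite{KL15}*{Lemma A.9} and used as a black box. Your idempotent-decomposition approach is the standard and correct way to establish such an equivalence for a formal matrix algebra, and it matches the argument in the cited reference, so there is nothing further to compare.
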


We point out that condition (2) ensures that the maps $\beta_k, k\geq 2$ are determined by $\beta_1$, as $I^k \otimes I^\ell \to I^{k+\ell}$ is surjective. Compare to \cite{KL15}*{Definition A.8(2)}.

We will drop the maps $\alpha$ and $\beta$, and simply write $M=(M_1, \dots, M_n)$ for an object of $\Coh(\A)$.

\subsection{Functors} \label{subsec:ausfunctors}

In this subsection we will construct various functors on the category of coherent $\A$-modules. To start, we define the \emph{resolution} functors as follows.

\begin{defin}[\cite{KL15}*{p. 4615}] \label{defin:ausfunctors_res}
We define functors $\mu_Z^\ast\colon \Coh(Z) \to \Coh(\A)$ and $\mu_{Z\ast}\colon \Coh(\A) \to \Coh(Z)$ via the formulas
\[ \mu_{Z\ast}(M_1, \dots, M_n) = M_n, \quad \mu_Z^\ast(N) = (N \otimes_Z I^{n-1}, N \otimes_Z I^{n-2}, \dots, N), \]
where the maps $\alpha$ are induced by inclusions, and the maps $\beta$ by the multiplication of elements of $I$. 
\end{defin}

Note from the definition that $\mu_{Z\ast}$ is exact, while $\mu_Z^\ast$ is right exact. Moreover, the adjunction $\mu_Z^\ast \dashv \mu_{Z\ast}$ holds.

\begin{defin}[\cite{KL15}*{p. 4614}] \label{defin:ausfunctors_sodtruncation}
Assume that $n \geq 2$, and denote by $Z'$ the closed subscheme corresponding to $I^{n-1}$. Set $e_\ast\colon \Coh(\A_{Z', I, n-1}) \to \Coh(\A_{Z, I, n})$ and $e^!\colon \Coh(\A_{Z, I, n}) \to \Coh(\A_{Z', I, n-1})$ via the formulas
\[ e_\ast(M_1, \dots, M_{n-1}) = (M_1, \dots, M_{n-1}, M_{n-1}), \]
where $\alpha\colon M_{n-1} \to M_{n-1}$ is the identity and $\beta_1\colon M_{n-1} \otimes_Z I \to M_{n-1}$ is the multiplication; and 
\[ e^!(M_1, \dots, M_n) = (M_1, \dots, M_{n-1}). \]
\end{defin}

Here both $e^!$ and $e_\ast$ are exact. Moreover, we have the adjunction $e_\ast \dashv e^!$.

\begin{defin}[\cite{KL15}*{p. 4614}] \label{defin:ausfunctors_sodpt}
Let $Z_0 \subseteq Z$ be the closed subscheme corresponding to $I$. Define $i^\ast\colon \Coh(\A_{Z, I, n}) \to \Coh(Z_0)$ and $i_\ast\colon \Coh(Z_0) \to \Coh(\A_{Z, I, n})$ via the formulas
\[ i^\ast(M_1, \dots, M_n) = \coker(\alpha\colon M_{n-1} \to M_n), \qquad i_\ast(N) = (0, \dots, 0, N). \]
\end{defin}

Note that $\coker(\alpha\colon M_{n-1} \to M_n)$ is in fact a module over $Z_0$ (instead of merely $Z$), as the multiplication map $M_n \otimes I \to M_n$ factors through $\alpha$. We point out that we have the adjunction $i^\ast \dashv i_\ast$, that $i_\ast$ is exact, and that $i^\ast$ is right exact.

\begin{defin}[\cite{KL15}*{\textsection 5.2}] \label{defin:ausfunctors_sch}
Let $f\colon Z \to W$ be a morphism of zero-dimensional schemes. Let $I \subset \O_Z$, $J \subset \O_W$ be nilpotent ideals such that $f^{-1}(J) \subseteq I$. Pick $k$ such that $I^k = J^k =0$. 

We define functors $f_\ast\colon \Coh(\A_{Z, I, k}) \to \Coh(\A_{W, J, k})$ and $f^\ast\colon \Coh(\A_{W, J, k}) \to \Coh(\A_{Z, I, k})$ via the formulas
\[ f_\ast(M_1, \dots, M_n) = (f_\ast M_1, \dots, f_\ast M_k), \quad f^\ast (N_1, \dots, N_k) = (f^\ast N_1, \dots, f^\ast N_k). \]
\end{defin}

\subsection{Derived category}

Given $S, I, n$ as before, we consider the bounded derived category $\DC^b(\A)$ of $\Coh \A$. In this subsection we will review some of its properties, following \cite{KL15}*{\textsection 5.3--4}.

Let us start by discussing how to extend the functors of Subsection \ref{subsec:ausfunctors} to the derived category. In Definition \ref{defin:ausfunctors_res}, the functor $\mu_Z^\ast$ gives a left derived functor $L\mu_S^\ast\colon \DC^-(\A) \to \DC^-(Z)$, while $\mu_{Z\ast}$ descends directly to the (bounded or unbounded) derived category. In Definition \ref{defin:ausfunctors_sodpt}, the functor $i^\ast$ can be left derived at the level of \emph{bounded} derived categories, cf. \cite{KL15}*{pp. 4588--9}. In Definition \ref{defin:ausfunctors_sch}, the functor $f_\ast$ is exact (as $f$ is affine), while $f^\ast$ requires to be left derived. 

These functors can be used to effectively describe the derived category $\DC^b(\A)$, as the following result shows.

\begin{prop}[\cite{KL15}*{Proposition 5.14}] \label{prop:auscd_presod}
Let $n \geq 2$. There is a semi-orthogonal decomposition $\DC^b(\A_{Z, I, n}) = \langle i_\ast(\DC^b(Z_0)), e_\ast(\DC^b(\A_{Z', I, n-1})) \rangle$.
\end{prop}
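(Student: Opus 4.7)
The plan is to verify the three standard conditions defining a semi-orthogonal decomposition: the functors $i_\ast$ and $e_\ast$ both induce fully faithful embeddings on bounded derived categories, the semi-orthogonality
\[ \Hom_{\DC^b(\A_{Z, I, n})}(e_\ast X, i_\ast Y[k]) = 0 \]
holds for every $X$, $Y$, and $k$, and the union of the two essential images generates $\DC^b(\A_{Z, I, n})$ as a triangulated category.

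For the full faithfulness, both $i_\ast$ and $e_\ast$ are exact (Definitions~\ref{defin:ausfunctors_sodpt} and~\ref{defin:ausfunctors_sodtruncation}), so they extend directly to bounded derived categories. Using the quiver description from Lemma~\ref{lemma:ausalg_quiver}, I would check that both essential images are abelian subcategories of $\Coh(\A_{Z, I, n})$ closed under extensions: an extension $0 \to i_\ast M \to E \to i_\ast N \to 0$ forces $E_j = 0$ for $j < n$ and $E_n$ to be $I$-annihilated, so $E = i_\ast E_n$; and an extension $0 \to e_\ast A \to E \to e_\ast B \to 0$ forces $\alpha^E_{n-1}\colon E_{n-1} \to E_n$ to be an isomorphism by the five-lemma, hence the counit $e_\ast e^! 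E \to E$ is an isomorphism. A standard Yoneda-Ext computation then upgrades closure under extensions to full faithfulness at the level of bounded derived categories.

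For the semi-orthogonality, I would apply the adjunction $e_\ast \dashv e^!$ together with the exactness of $e^!$ to get
\[ \Hom_{\DC^b(\A_{Z, I, n})}(e_\ast X, i_\ast Y[k]) \cong \Hom_{\DC^b(\A_{Z', I, n-1})}(X, e^! i_\ast Y[k]). \]
Inspection of the defining formulas gives $e^! i_\ast Y = 0$, since $i_\ast Y = (0, \dots, 0, Y)$ is concentrated in the last slot while $e^!$ retains only the first $n-1$ slots.

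For generation, the key construction is the triangle obtained from the counit $\varepsilon\colon e_\ast e^! M \to M$, which is the identity on slots $1, \dots, n-1$ and $\alpha_{n-1}\colon M_{n-1} \to M_n$ on the last slot. Completing to a triangle $e_\ast e^! M \to M \to C \to e_\ast e^! M[1]$ and applying the exact functor $e^!$ — together with $e^! e_\ast = \id$ — shows $e^! C = 0$. Hence each cohomology sheaf $H^k(C)$ has vanishing first $n-1$ slots; for any such module $N$, the map $\beta_1\colon N_n \otimes I \to N_{n-1} = 0$ is forced to vanish, so $N_n$ is naturally a module over $\O_Z/I = \O_{Z_0}$ and $N = i_\ast N_n$. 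Thus $C$ has cohomology entirely in $i_\ast(\Coh(Z_0))$, and the full faithfulness from the previous step places $C$ in $i_\ast(\DC^b(Z_0))$. The triangle then realizes $M$ as an extension of an object of $e_\ast(\DC^b(\A_{Z', I, n-1}))$ by one of $i_\ast(\DC^b(Z_0))$, proving generation. I expect the main delicacy to be the derived-level bookkeeping: cleanly upgrading the Serre-subcategory identifications to the bounded derived categories and verifying that the underived kernel of $\varepsilon$ — which need not vanish when $\alpha_{n-1}$ fails to be injective — still lies in $i_\ast(\Coh(Z_0))$, so that $H^{-1}(C)$ has the required structure.
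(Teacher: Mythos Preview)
The paper does not give its own proof of this proposition; it is quoted directly from \cite{KL15}*{Proposition 5.14}. Your outline is the standard one and is essentially correct: the semi-orthogonality via $e^! i_\ast = 0$ and the generation via the cone of the counit $e_\ast e^! M \to M$ are both fine as written.

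The one place where your argument is genuinely incomplete is the full-faithfulness step. Closure under extensions only yields agreement of $\Ext^1$ via Yoneda; it does \emph{not} in general force agreement of higher $\Ext$, so the phrase ``a standard Yoneda--Ext computation upgrades closure under extensions to full faithfulness'' hides exactly the difficulty you flag at the end. The clean fix is to bypass this entirely using the adjunctions. For $e_\ast$ you have an exact right adjoint $e^!$ with $e^! e_\ast = \id$ on the nose; since both functors are exact, the derived unit is an isomorphism and $e_\ast$ is fully faithful on $\DC^b$ immediately. This already gives a semi-orthogonal decomposition $\DC^b(\A_{Z,I,n}) = \langle \ker e^!, \, e_\ast \DC^b(\A_{Z',I,n-1}) \rangle$, so what remains is only to identify $\ker e^!$ with $\DC^b(Z_0)$ via $i_\ast$. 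Your cohomology argument shows that the heart of $\ker e^!$ is exactly $i_\ast \Coh(Z_0)$; since $Z_0$ is a finite set of reduced points this heart is semisimple, and your extension-closure observation gives $\Ext^1_{\A}(i_\ast N, i_\ast N') = 0$. One then checks (or cites from \cite{KL15}) that $Li^\ast i_\ast \cong \id$, which is the derived statement actually needed; alternatively one verifies directly that objects of $\ker e^!$ split as sums of shifted hearts. Either way, this is the step where the real work beyond ``closure under extensions'' lives, and it should be made explicit rather than absorbed into the phrase ``standard computation''.
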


\begin{cor}[\cite{KL15}*{Corollary 5.15}] \label{cor:ausdc_sod}
The derived category $\DC^b(\A_{Z, I, n})$ admits a semi-orthogonal decomposition with $n$ components equivalent to $\DC^b(Z_0)$.
\end{cor}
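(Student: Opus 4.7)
The plan is to proceed by induction on $n$, using Proposition~\ref{prop:auscd_presod} to peel off one factor at each stage.

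For the base case $n=1$, the hypothesis $I^n=0$ forces $I=0$, so $Z_0=Z$ and the matrix algebra $\A_{Z,I,1}$ collapses to $\O_Z$. Hence $\DC^b(\A_{Z,I,1})=\DC^b(Z)=\DC^b(Z_0)$ gives the tautological $1$-component decomposition.

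For the inductive step, assume $n\geq 2$ and that the result is known with $n-1$ in place of $n$. Proposition~\ref{prop:auscd_presod} supplies the $2$-piece semi-orthogonal decomposition
\[ \DC^b(\A_{Z,I,n})=\langle i_\ast\DC^b(Z_0),\;e_\ast\DC^b(\A_{Z',I,n-1})\rangle. \]
I would then feed in the inductive hypothesis applied to the triple $(Z',\bar I,n-1)$, where $\bar I$ denotes the image of $I$ in $\O_{Z'}=\O_Z/I^{n-1}$ (which still satisfies $\bar I^{\,n-1}=0$). This yields an $(n-1)$-term semi-orthogonal decomposition
\[ \DC^b(\A_{Z',I,n-1})=\langle \D_1,\dots,\D_{n-1}\rangle, \qquad \D_j\simeq \DC^b(Z'_0). \]

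Two small reconciliations finish the argument. First, $Z_0\cong Z'_0$: both are cut out by $I$, and a direct quotient computation gives $\O_{Z'_0}=(\O_Z/I^{n-1})/(I/I^{n-1})\cong \O_Z/I=\O_{Z_0}$. Second, $e_\ast$ is fully faithful (built into Proposition~\ref{prop:auscd_presod}), so one may concatenate the two decompositions by the standard refinement principle for semi-orthogonal decompositions, obtaining
\[ \DC^b(\A_{Z,I,n})=\langle i_\ast\DC^b(Z_0),\;e_\ast\D_1,\dots,e_\ast\D_{n-1}\rangle, \]
with every component equivalent to $\DC^b(Z_0)$.

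I do not anticipate any real obstacle: Proposition~\ref{prop:auscd_presod} carries the full categorical weight, and the induction is purely formal. The only item requiring a moment of care is the bookkeeping identification $Z_0\cong Z'_0$, which ensures the individual pieces across the recursion all match the original $Z_0$ rather than drifting to some quotient scheme along the way.
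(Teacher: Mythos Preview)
Your proof is correct and follows exactly the approach implicit in the paper: the corollary is stated immediately after Proposition~\ref{prop:auscd_presod} with no separate proof, precisely because the induction you spell out is the intended (and only natural) argument. Your handling of the base case and the identification $Z_0\cong Z'_0$ is accurate.
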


\section{Categorical resolution} \label{sec:catres}

In this section we discuss categorical resolutions of singularities. We refer to \citelist{\cite{Lun01} \cite{KL15} \cite{KK15}} for constructions of categorical resolutions of singularities in greater level of generality. Our goal is to describe a categorical resolution of singularities for a reduced curve $C$, following the approach of \cite{KL15}.

\subsection{Setup} \label{subsec:setup}

Let us fix some notation for the remainder of the section. We let $C$ be a reduced curve, with irreducible components $C_1, \dots, C_n$, and let $\Sing(C) = \{q_1, \dots, q_m\}$ be the set of singular points. We denote by $\rho\colon \tilde{C} \to C$ the normalization of $C$. The corresponding irreducible components are $\tilde{C}_1, \dots, \tilde{C}_n$. 

Fix a non-rational locus $T \subset C$ for the morphism $\rho$, set-theoretically supported at $\Sing(C)$. (Its existence is guaranteed by Lemma \ref{lemma:nrgen_existence}.) Denote by $S = \rho^{-1}(T)$ its scheme-theoretic preimage. Let $I$ be the ideal of $\{q_1, \dots, q_m\} \subset T$, and $J$ the ideal of $\rho^{-1}(\{q_1, \dots, q_m\}) \subset S$. Also, fix some exponent $e$ such that $I^e=0$.

\begin{remark}
Note from Lemma \ref{lemma:nrgen_conductorisnr} that we can take $T$ to be subscheme associated to the conductor of $C$. 
\end{remark}

For each $1\leq i \leq m$, we let $T_i$ be the irreducible component of $T$ supported at $q_i$, and $S_i = \rho^{-1}(T_i)$ its scheme-theoretic preimage. Denote by $J_i$ the ideal of $q_i \in T_i$, and $I_i$ the ideal of $S_i^\circ:=\rho^{-1}(q_i) \subset S_i$. 

We let $j\colon T \to C$ and $i\colon S \hookrightarrow \tilde{C}$ be the inclusions. We abuse notation by using $\rho\colon S \to T$ the induced morphism. These maps fit in the following diagram of Cartesian squares:
\begin{equation} \label{eq:setup_arrows}
\begin{tikzcd}
S_i^\circ \arrow[r, hook] \arrow[d] & S_i \arrow[r, hook] \arrow[d] & S \arrow[r, hook, "i"] \arrow[d, "\rho"] & \tilde{C} \arrow[d, "\rho"] \\ \{q_i\} \arrow[r, hook] & T_i \arrow[r, hook] & T \arrow[r, hook, "j"] & C.
\end{tikzcd}
\end{equation}

Finally, we write $\A=\A_{T, I, e}$ for the sheaf of algebras from Subsection \ref{subsec:ausalg}.

\subsection{An abelian category} \label{subsec:abcat}

\begin{defin}
We let $\Coh(\Rcal)$ be the category whose objects are triples $\F=(F, \overline{F}; \phi)$, where $F \in \Coh(\tilde{C})$, $\overline{F} \in \Coh(\A)$, and $\phi\colon \rho_\ast(\mu_S^\ast(F|_S)) \to \overline{F}$. 
\end{defin}

The notation here is chosen as one can see this category as the category of coherent modules over a sheaf of non-commutative algebras on $C$, cf. \cite{KL15}*{\textsection 5.1}. In particular, we point out that $\Coh(\Rcal)$ is an abelian category. 

Note that the morphism $\phi\colon \rho_\ast(\mu_S^\ast(F|_S)) \to \overline{F}$ can be equivalently described as morphisms $\phi_i\colon \rho_\ast(F|_S \otimes_S I^{e-i}) \to M_i$, such that the diagrams
\[ \begin{tikzcd} \rho_\ast(F|_S \otimes_S I^{e-i}) \arrow[r] \arrow[d, "\phi_i"'] & \rho_\ast(F|_S \otimes_S I^{e-(i+1)}) \arrow[d, "\phi_{i+1}"] \\ M_i \arrow[r, "\alpha"'] & M_{i+1} \end{tikzcd} \]
and
\[ \begin{tikzcd}
\rho_\ast(F|_S \otimes_S I^{e-(i+1)} \otimes_S I) \arrow[r, "\id\otimes \times"] \arrow[d, "\phi_{i+1} \otimes \id"'] & \rho_\ast(F|_S \otimes_S I^{e-i}) \arrow[d, "\phi_i"] \\ M_{i+1} \otimes_S I \arrow[r, "\beta_1"'] & M_i 
\end{tikzcd} \]
commute.

\begin{remark}
Suppose that $T=\{q_1, \dots, q_m\}$ with its reduced structure is a non-rational locus for $C$. (These cases were classified in Subsection \ref{subsec:nrsmall}.) We have that $I=0$, and so we can take $e=1$. Under this assumption, the category $\Coh(\Rcal)$ reduces to $\{(F, \overline{F}; \phi): F \in \Coh(C), \overline{F} \in \Coh(T), \phi\colon \rho_\ast(F|_S) \to \overline{F}\}$.
\end{remark}

\subsection{Functors} \label{subsec:functors}

In this subsection we will construct some functors relating $\Coh(\Rcal)$ to $\Coh(\tilde{C})$ and $\Coh(\A)$, and their extensions to the respective derived categories.

We start by constructing some functors between $\Coh(\Rcal)$ and $\Coh(\tilde{C})$. To start, set $a^\ast\colon \Coh(\tilde{C}) \to \Coh(\Rcal)$ and $a_\ast\colon \Coh(\Rcal) \to \Coh(\tilde{C})$ given by the formulas $a^\ast(M) = (M, \rho_\ast(\mu_S^\ast)(M|_S); \id)$ and $a_\ast(F, \overline{F}; \phi)=F$.

\begin{lemma}
The functor $a_\ast$ is exact, and the functor $a^\ast$ is right exact. Additionally, we have the adjunction $a^\ast \dashv a_\ast$.
\end{lemma}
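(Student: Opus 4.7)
The plan is to verify the three claims by unpacking the definition of $\Coh(\Rcal)$ as a glued abelian category. First, I would observe that the kernel and cokernel of a morphism $(f, \overline{f})\colon (F, \overline{F}; \phi) \to (F', \overline{F'}; \phi')$ in $\Coh(\Rcal)$ can be computed componentwise: the kernel is $(\ker f, \ker \overline{f}; \phi_{\ker})$, where $\phi_{\ker}$ is induced from $\phi$ using that $\rho_\ast \mu_S^\ast$ applied to the inclusion $\ker f \hookrightarrow F$ factors through $\ker \overline{f}$ by the commuting square defining morphisms, and analogously for the cokernel (where we use right exactness of $\rho_\ast \mu_S^\ast$ on the cokernel side). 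It follows that a sequence in $\Coh(\Rcal)$ is exact if and only if the induced sequences on $F$-components and $\overline{F}$-components are exact; hence the projection $a_\ast(F, \overline{F}; \phi) = F$ is exact.

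Next, right exactness of $a^\ast$ reduces to the right exactness of each ingredient in its construction. The restriction functor $(-)|_S = i^\ast$ is right exact; the functor $\mu_S^\ast$ is right exact by the discussion after Definition \ref{defin:ausfunctors_res}; and $\rho_\ast$ is exact since $\rho$ is a finite morphism. The $F$-component is the identity and hence exact. Combining these, given a right exact sequence $M' \to M \to M'' \to 0$ in $\Coh(\tilde{C})$, applying $a^\ast$ produces sequences that are right exact in both components, and the structure maps $\phi$ are all identities and hence compatible, so $a^\ast(M') \to a^\ast(M) \to a^\ast(M'') \to 0$ is right exact in $\Coh(\Rcal)$.

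For the adjunction $a^\ast \dashv a_\ast$, I would unpack $\Hom_{\Rcal}(a^\ast M, (F, \overline{F}; \phi))$: such a morphism is a pair $(f\colon M \to F, \overline{f}\colon \rho_\ast \mu_S^\ast(M|_S) \to \overline{F})$ making the compatibility square with the structure maps $\id$ (on the source side) and $\phi$ (on the target side) commute. The upper horizontal map being the identity forces $\overline{f} = \phi \circ \rho_\ast \mu_S^\ast(f|_S)$, so $\overline{f}$ is determined by $f$, with no further obstruction. This yields a natural bijection $\Hom_{\Rcal}(a^\ast M, (F, \overline{F}; \phi)) \xrightarrow{\cong} \Hom_{\tilde{C}}(M, F) = \Hom_{\tilde{C}}(M, a_\ast(F, \overline{F}; \phi))$ given by $(f, \overline{f}) \mapsto f$, establishing the adjunction.

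None of the steps presents a real obstacle; the only subtlety is checking that the induced $\phi_{\ker}$ and $\phi_{\coker}$ are well defined in the componentwise construction of kernels and cokernels, which is where the right exactness (rather than exactness) of $\rho_\ast \mu_S^\ast$ enters and explains why $a^\ast$ is only right exact while $a_\ast$ is exact.
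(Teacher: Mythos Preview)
Your proposal is correct and follows essentially the same approach as the paper's proof: both dispatch exactness of $a_\ast$ by the componentwise description of kernels and cokernels, both obtain right exactness of $a^\ast$ as a composition of right-exact functors, and both prove the adjunction by unpacking $\Hom_{\Rcal}(a^\ast M, (F, \overline{F}; \phi))$ and observing that the identity structure map on $a^\ast M$ forces $\overline{f}$ to be determined by $f$. Your version simply supplies more detail on the first point, which the paper leaves as ``clear.''
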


\begin{proof}
The exactness of $a_\ast$ is clear. For $a^\ast$, it follows directly from composition of right-exact functors.

Now, consider $M \in \Coh(\tilde{C})$, $(F, \overline{F}; \phi) \in \Coh(\Rcal)$. Here $\Hom(a^\ast(M), (F, \overline{F}; \phi))$ consists of morphisms $M \to F$ and $\rho_\ast(\mu_S^\ast)(M|_S) \to \overline{F}$ making the diagram
\[ \begin{tikzcd} \rho_\ast(\mu_S^\ast)(M|_S) \arrow[r] \arrow[d, equal] & \rho_\ast(\mu_S^\ast)(F|_S) \arrow[d, "\phi"] \\ \rho_\ast(\mu_S^\ast)(M|_S) \arrow[r] & \overline{F} \end{tikzcd} \]
commute. This way,we have that
\[ \Hom(a^\ast(M), (F, \overline{F}; \phi)) \cong \Hom_{\tilde{C}}(M, F) = \Hom(M, a_\ast(F, \overline{F}; \phi)). \]
This shows the required adjunction.
\end{proof}

As a direct consequence, the functor $a_\ast$ descends directly to the (bounded) derived categories. However, to construct $La^\ast$ some extra care must be taken. Let us recall the following well-known result.

\begin{prop}[\cite{Har66}*{Corollary 5.3.$\beta$--$\gamma$}] \label{prop:functors_existence}
Let $F\colon \Bcat \to \mathcal{C}$ be a right exact additive functor between abelian categories. Assume that there is a class of objects $\overline{\Bcat}$ of $\Bcat$ satisfying the following properties:
\begin{enumerate}
\item Given any $B \in \Bcat$, there is a surjection $\overline{B} \to B \to 0$ with $\overline{B} \in \overline{\Bcat}$.
\item Given any short exact sequence $0 \to B \to B' \to B''$ in $\Bcat$ with $B'' \in \overline{\Bcat}$, we have that $B \in \overline{\Bcat}$ if and only if $B' \in \overline{\Bcat}$.
\item The functor $F$ preserves short exact sequences in $\overline{\Bcat}$.
\end{enumerate}
Then the left derived functor $LF\colon \DC^-(\Bcat) \to \DC^-(\mathcal{C})$ exists, and $LF(E)=F(E)$ for any bounded above complex with elements in $\overline{\Bcat}$.

Moreover, assume that condition (1) is replaced by
\begin{enumerate}
\item[(1')] There is a positive integer $r$ such that any $B \in \Bcat$ admits a resolution $0 \to B^{-r} \to B^{-r+1} \to \dots \to B^0 \to B \to 0$, with $B^i \in \Bcat$.  
\end{enumerate}
Then the functor $LF$ is defined at the level of unbounded categories, and it satisfies $LF(\DC(\Bcat)^{[i,j]}) \subseteq \DC(\mathcal{C})^{[i-r, j]}$ for all $i \leq j$. In particular, $LF$ maps $\DC^b(\Bcat)$ to $\DC^b(\mathcal{C})$.
\end{prop}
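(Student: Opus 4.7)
The plan is to follow the classical Deligne--Hartshorne construction of left derived functors via an adapted (or $F$-acyclic) class of objects. The whole argument reduces to two key lemmas: existence of resolutions by complexes in $\overline{\Bcat}$, and the fact that $F$ sends acyclic bounded above complexes in $\overline{\Bcat}$ to acyclic complexes.

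First I would establish the \emph{resolution lemma}: every bounded above $B^\bullet \in \DC^-(\Bcat)$ admits a quasi-isomorphism $P^\bullet \to B^\bullet$ with $P^\bullet$ bounded above and with all terms in $\overline{\Bcat}$. This is a Cartan--Eilenberg-style descending induction on degree: at each step use condition (1) to pick a surjection from an object of $\overline{\Bcat}$ onto the relevant cocycle/kernel module, and assemble the pieces into a complex.

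Next I would prove the \emph{acyclicity preservation lemma}: if $P^\bullet$ is bounded above, acyclic, with terms in $\overline{\Bcat}$, then $F(P^\bullet)$ is acyclic. Setting $Z^n = \ker(d^n)$, one has $Z^n = 0$ for $n \gg 0$, and descending induction using the short exact sequences $0 \to Z^n \to P^n \to Z^{n+1} \to 0$ together with condition (2) shows $Z^n \in \overline{\Bcat}$ for all $n$. Condition (3) then makes $F$ preserve these sequences, and splicing them yields acyclicity of $F(P^\bullet)$. With both lemmas in place, define $LF(B^\bullet) := F(P^\bullet)$; well-definedness, functoriality, and the derived-category structure then follow from a standard mapping cone argument, since the cone of a morphism between two such resolutions is acyclic and in $\overline{\Bcat}$, hence killed by $F$.

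For the unbounded extension under hypothesis (1'), the uniform length bound $r$ allows me to resolve an arbitrary (not merely bounded above) complex while controlling its amplitude: a complex in $\DC(\Bcat)^{[i,j]}$ admits a resolution in $\overline{\Bcat}$ concentrated in degrees $[i-r, j]$, which yields the claimed cohomological bound for $LF$ and hence preservation of $\DC^b$. The main obstacle I anticipate is precisely this unbounded case --- a naive inductive construction would let the resolution run off to $-\infty$, and avoiding this requires either a Cartan--Eilenberg double complex construction or a Spaltenstein-style truncation-and-limit argument that genuinely exploits the uniform bound $r$. Since the statement is quoted verbatim from Hartshorne's \emph{Residues and Duality}, in practice I would simply verify that the setup we apply it to (in Section \ref{sec:catres}) satisfies (1)--(3) and (1'), and appeal to the cited corollary rather than reproduce the full proof.
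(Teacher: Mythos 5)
Your proposal is correct and matches the paper's treatment: the paper does not prove this proposition but quotes it directly from Hartshorne's \emph{Residues and Duality} (Corollary 5.3.$\beta$--$\gamma$), which is exactly what you suggest doing in your final paragraph. Your sketch of the underlying adapted-class argument --- the resolution lemma from condition (1), acyclicity preservation via descending induction on the kernels $Z^n$ using condition (2) (which also gives closure of $\overline{\Bcat}$ under finite direct sums, needed for the cone argument), and the amplitude bound coming from the uniform length $r$ in the unbounded case --- is the standard and correct proof.
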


Of course, a dual version of Proposition \ref{prop:functors_existence} also holds, guaranteeing the existence of right derived functors.

\begin{lemma} \label{lemma:functors_Labounded}
The functor $a^\ast$ preserves short exact sequences of locally free sheaves in $\tilde{C}$. As a consequence, the left derived functor $La^\ast\colon \DC^b(\tilde{C}) \to \DC^b(\A)$ exists. Finally, we have the adjunction $La^\ast \dashv a_\ast$.
\end{lemma}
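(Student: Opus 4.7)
The plan is to verify the three hypotheses of Proposition \ref{prop:functors_existence} (in the stronger form with $(1')$) for the functor $a^\ast$, taking $\overline{\mathcal{B}}$ to be the class of locally free sheaves on $\tilde{C}$. The main content is to check hypothesis (3), which is precisely the first sentence of the lemma.

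\emph{Exactness on locally free sheaves.} Let $0 \to M' \to M \to M'' \to 0$ be a short exact sequence of locally free sheaves on $\tilde{C}$. The component $F$ of $a^\ast(-)$ is the identity, so that part of the sequence stays exact. For the component $\overline{F} = \rho_\ast \mu_S^\ast((-)|_S)$, I would argue in three steps. First, restriction to $S = \rho^{-1}(T)$ is exact on this sequence because the $M$'s are locally free, so $0 \to M'|_S \to M|_S \to M''|_S \to 0$ is exact with $M|_S, M'|_S, M''|_S$ locally free over $S$. Second, $\mu_S^\ast$ is defined by $N \mapsto (N \otimes_S I^{e-1}, \dots, N \otimes_S I, N)$; since each $M^{(\cdot)}|_S$ is locally free, tensoring is exact in each component, so the sequence $0 \to \mu_S^\ast(M'|_S) \to \mu_S^\ast(M|_S) \to \mu_S^\ast(M''|_S) \to 0$ is exact in $\Coh(\A)$. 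Third, $\rho\colon S \to T$ is a finite morphism between zero-dimensional affine schemes, so $\rho_\ast$ is exact. Combining these gives exactness of the $\overline{F}$-component, and hence of $a^\ast$ applied to the original sequence in $\Coh(\Rcal)$.

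\emph{Applying Proposition \ref{prop:functors_existence}.} With $\overline{\mathcal{B}}$ the class of locally free sheaves on $\tilde{C}$, hypothesis $(1')$ holds with $r=1$: since $\tilde{C}$ is a smooth (hence regular, one-dimensional) scheme, every coherent sheaf admits a length-one locally free resolution. Hypothesis (2) holds because whenever $M''$ is locally free the sequence $0 \to M' \to M \to M'' \to 0$ splits Zariski-locally, so $M$ is locally free at a point if and only if $M'$ is. Hypothesis (3) is exactly what I showed above. Hence $La^\ast$ is defined on $\DC^-(\tilde{C})$, computed by bounded above complexes of locally free sheaves, and the length bound in the proposition implies $La^\ast(\DC^b(\tilde{C})) \subseteq \DC^b(\Rcal)$.

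\emph{Derived adjunction.} For $M \in \DC^b(\tilde{C})$ pick a bounded locally free resolution $P^\bullet \to M$, so $La^\ast M \cong a^\ast P^\bullet$; note that $a_\ast$, being exact, descends directly to bounded derived categories and preserves quasi-isomorphisms. Using the underived adjunction $a^\ast \dashv a_\ast$ at the level of complexes, together with the formula $\Hom_{\DC^b}(X, Y) = \mathrm{colim}_{Y \to Y'}\, \Hom_{K^b}(X, Y')$ where the colimit is over quasi-isomorphisms, one obtains
\[ \Hom_{\DC^b(\Rcal)}(La^\ast M, F) \cong \Hom_{\DC^b(\tilde{C})}(M, a_\ast F) \]
for any $F \in \DC^b(\Rcal)$, which is the required adjunction.

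\emph{Main obstacle.} The only nontrivial point is the first step: verifying that $\mu_S^\ast$ behaves well on the restriction of a short exact sequence of locally free sheaves. The key input is that restriction of a SES of locally free sheaves to the (non-reduced) subscheme $S$ remains short exact, so that the resulting $S$-modules are still locally free and tensoring with the filtration $\O_S \supset I \supset \cdots \supset I^{e-1}$ stays exact. The rest is formal.
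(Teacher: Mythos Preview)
Your proof is correct and follows essentially the same approach as the paper: both verify that $a^\ast$ preserves short exact sequences of locally free sheaves by observing that restriction to $S$, tensoring with $I^k$ (using local freeness of the restrictions), and $\rho_\ast$ are each exact, and then invoke Proposition~\ref{prop:functors_existence} with the class of locally free sheaves on the smooth curve $\tilde{C}$ (satisfying $(1')$ with $r=1$ and $(2)$). Your treatment of the derived adjunction is slightly more detailed than the paper's, which simply says it follows from the underived adjunction $a^\ast \dashv a_\ast$, but the content is the same.
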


\begin{proof}
If $0 \to E \to E' \to E'' \to 0$ is a short exact sequence of locally free sheaves in $\tilde{C}$, then the restriction $0 \to E|_S \to E'|_S \to E''|_S \to 0$ is exact. Exactness is preserved after tensoring with $I^k$ (as the $E|_S$ are locally free on $S$), and so after applying $\mu_S^\ast(-)$. As $\rho_\ast$ is exact, this immediately gives us the first part. 

Now, note that the class of locally free sheaves in $\tilde{C}$ satisfy conditions (1') (with $r=1$) and (2) in Proposition \ref{prop:functors_existence}. This immediately gives us the second part. The third part follows directly from the adjunction $a^\ast \dashv a_\ast$.
\end{proof}

Let us now construct functors relating $\Coh(\Rcal)$ with $\Coh(\A)$. We start by considering the functors $b^\ast\colon \Coh(\A) \to \Coh(\Rcal)$ and $b_\ast \colon \Coh(\A) \to \Coh(Y)$ given by the formulas $b^\ast(N) = (0, N; 0)$ and $b_\ast(F, \overline{F}; \phi) = \overline{F}$. It is easy to see that $b^\ast$, $b_\ast$ are exact and $b^\ast \dashv b_\ast$. This way, they immediately induce exact functors at the level of derived categories, while also preserving the adjunction.

\begin{lemma}
The functor $b^\ast\colon \Coh(\A) \to \Coh(\Rcal)$ admits a left adjoint $b_!$, given by $b_!(F, \overline{F}; \phi) = \coker \phi$.
\end{lemma}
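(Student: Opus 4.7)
The plan is to verify the adjunction directly by unwinding the definition of morphisms in $\Coh(\Rcal)$. First, I would observe that a morphism $(F, \overline{F}; \phi) \to b^\ast(N) = (0, N; 0)$ consists of a pair of maps $F \to 0$ in $\Coh(\tilde{C})$ and $\psi\colon \overline{F} \to N$ in $\Coh(\A)$, subject to the compatibility square
\[ \begin{tikzcd} \rho_\ast \mu_S^\ast(F|_S) \arrow[r] \arrow[d, "\phi"'] & \rho_\ast \mu_S^\ast(0|_S) = 0 \arrow[d, "0"] \\ \overline{F} \arrow[r, "\psi"'] & N. \end{tikzcd} \]
The morphism $F \to 0$ is necessarily zero, so the data reduces entirely to a morphism $\psi \in \Hom_{\A}(\overline{F}, N)$ satisfying $\psi \circ \phi = 0$.

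Next, I would apply the universal property of the cokernel in the abelian category $\Coh(\A)$ (which makes sense because $\Coh(\A)$ is abelian by Lemma \ref{lemma:ausalg_quiver}): morphisms $\overline{F} \to N$ killing $\phi$ are in natural bijection with morphisms $\coker \phi \to N$. Putting these identifications together gives the natural isomorphism
\[ \Hom_{\Rcal}((F, \overline{F}; \phi), b^\ast N) \cong \{\psi\colon \overline{F} \to N : \psi \circ \phi = 0\} \cong \Hom_\A(\coker \phi, N), \]
which is exactly the adjunction $b_! \dashv b^\ast$. I should also briefly check functoriality of $b_!$: a morphism of triples $(F, \overline{F}; \phi) \to (F', \overline{F}'; \phi')$ restricts to a compatible square between $\phi$ and $\phi'$, which induces the required map $\coker \phi \to \coker \phi'$.

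There is essentially no obstacle here — the statement is a formal unwinding of definitions, and the only mildly subtle point is checking that the compatibility diagram for morphisms in $\Coh(\Rcal)$ really does degenerate to the single condition $\psi \circ \phi = 0$ once one component is forced to vanish. The naturality of the bijection in both variables is automatic from the naturality of the cokernel construction.
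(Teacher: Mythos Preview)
Your proposal is correct and follows essentially the same approach as the paper: both unwind the definition of $\Hom_{\Rcal}((F,\overline{F};\phi), b^\ast N)$ to the single condition $\psi\circ\phi=0$ and then invoke the universal property of the cokernel. Your added remarks on functoriality and naturality are not in the paper's proof but are harmless elaborations.
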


\begin{proof}
Let $\F=(F, \overline{F}; \phi) \in \Coh(\Rcal)$ and $N \in \Coh(\A)$ be given, and let us compute $\Hom(\F, b^\ast N)$. By definition, these are given by maps $f\colon F \to 0$ and $g\colon \overline{F} \to N$ making the diagram
\[ \begin{tikzcd} r_\ast F|_Z \arrow[r, "r_\ast f|_Z"] \arrow[d, "\phi"] & 0 \arrow[d] \\ \overline{F} \arrow[r, "g"] & N \end{tikzcd} \]
commute. In other words, these corresponds to maps $g\colon \overline{F} \to N$ such that $g \circ \phi=0$. These are uniquely determined by a morphism $\coker \phi \to N$, proving the required statement.
\end{proof}

\begin{lemma} \label{lemma:functors_Lbbounded}
The left derived functor $Lb_!\colon \DC^b(\A) \to \DC^b(Y)$ exists, and we have the adjunction $Lb_! \dashv b^\ast$.
\end{lemma}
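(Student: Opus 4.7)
The plan is to apply Proposition~\ref{prop:functors_existence} to the right-exact functor $b_!$. The adapted class $\overline{\Rcal} \subseteq \Coh(\Rcal)$ I would try consists of triples $(F, \overline{F}; \phi)$ in which $F$ is locally free on $\tilde{C}$ and $\phi$ is injective; the prototypical examples are objects of the form $a^\ast(E) \oplus b^\ast(N)$ with $E$ locally free on $\tilde{C}$ and $N \in \Coh(\A)$, for which $\phi = (\id, 0)$ is a split monomorphism.

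The core of the argument is condition~(3), the exactness of $b_!$ on $\overline{\Rcal}$. Given a short exact sequence $0 \to \F' \to \F \to \F'' \to 0$ in $\overline{\Rcal}$, local freeness of all three first-coordinate sheaves makes the top row of the natural diagram
\[
\begin{tikzcd}
0 \arrow[r] & \rho_\ast \mu_S^\ast(F'|_S) \arrow[r] \arrow[d, "\phi'"'] & \rho_\ast \mu_S^\ast(F|_S) \arrow[r] \arrow[d, "\phi"'] & \rho_\ast \mu_S^\ast(F''|_S) \arrow[r] \arrow[d, "\phi''"'] & 0 \\
0 \arrow[r] & \overline{F}' \arrow[r] & \overline{F} \arrow[r] & \overline{F}'' \arrow[r] & 0
\end{tikzcd}
\]
short exact: $(-)|_S$ preserves exactness on locally-free sequences (Tor vanishing), and both $\mu_S^\ast$ and the finite $\rho_\ast$ then preserve exactness. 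Combined with the vanishing of $\ker\phi'$, $\ker\phi$, $\ker\phi''$, the snake lemma yields the required short exact sequence $0 \to \coker\phi' \to \coker\phi \to \coker\phi'' \to 0$, which is $b_!$ applied to the original sequence.

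Next I would check the remaining hypotheses of Proposition~\ref{prop:functors_existence}. For condition~(2), on the smooth curve $\tilde{C}$ the class of locally free sheaves is closed under sub-objects, quotients by locally free subsheaves, and extensions by locally free sheaves; moreover, the same snake-lemma diagram (assuming only $\phi''$ injective and $F''$ locally free) shows that in such a short exact sequence $\phi$ is injective if and only if $\phi'$ is. For condition~(1'), given $\F = (F, \overline{F}; \phi)$ I would pick a length-one locally-free resolution $0 \to E_1 \to E_0 \to F \to 0$ on $\tilde{C}$ and form the natural surjection $a^\ast(E_0) \oplus b^\ast(\overline{F}) \twoheadrightarrow \F$; a direct calculation identifies the kernel with an object whose first coordinate is $E_1$, and iterating fits $\F$ into a finite-length resolution by $\overline{\Rcal}$. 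Proposition~\ref{prop:functors_existence} then produces $Lb_!\colon \DC^b(\Rcal) \to \DC^b(\A)$; the derived adjunction $Lb_! \dashv b^\ast$ follows from the abelian adjunction $b_! \dashv b^\ast$, since $b^\ast$ is exact (so equals its own derived functor) and the $\Hom$-isomorphism extends to $\DC^b$ by resolving the source through the adapted class.

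The main obstacle I anticipate is condition~(1'): controlling how the injectivity of $\phi$ propagates through the iterated resolution. Ensuring that the next $\phi$-map on the kernel remains injective amounts to the Tor-vanishing $\operatorname{Tor}_1^{\O_{\tilde{C}}}(\O_S, -) = 0$ for the intermediate objects that appear, which is precisely where the smoothness of $\tilde{C}$ (so that $S \subset \tilde{C}$ has finite Tor-dimension) has to be invoked carefully in order to guarantee the resolution terminates in bounded length.
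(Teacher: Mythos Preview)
Your approach is essentially identical to the paper's: the same adapted class $\Pscr = \{(F, \overline{F}; \phi): F \text{ locally free},\ \phi \text{ injective}\}$, the same snake-lemma verification of (2) and (3) via the diagram you wrote, and the same first step $a^\ast(E_0) \oplus b^\ast(\overline{F}) \twoheadrightarrow \F$ for (1'). Your anticipated obstacle is not one: iterating once on the kernel $(E_1, K; \eta)$ (with $E_1$ already locally free) gives a surjection from $a^\ast(E_1) \oplus b^\ast(K)$ whose kernel is $(0,\, \rho_\ast\mu_S^\ast(E_1|_S);\, 0) \in \Pscr$ trivially, so the resolution terminates at length $r = 2$ with no need to propagate injectivity of $\phi$ through intermediate syzygies.
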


\begin{proof}
Consider the class 
\begin{equation} \label{eq:functors_classP}
\Pscr = \{ (F, \overline{F}; \phi)\colon F \text{ locally free, } \ker \phi=0 \}.
\end{equation}
(If $\tilde{C}$ has multiple components, then $F$ is allowed to have different rank on each component; even $F=0$ is allowed.) We will prove that $\Pscr$ satisfies properties (1') (with $r=2$), (2) and (3) of Proposition \ref{prop:functors_existence}. From here, the existence of the left derived functor is immediate. Finally, $Lb_!\dashv b^\ast$ follows from $b_! \dashv b^\ast$. 
\end{proof}

\begin{proof}[Proof of Lemma \ref{lemma:functors_Lbbounded}, (1')]
Let $\F=(F, \overline{F}; \phi)$ be an element in $\Coh(\Rcal)$. Pick a resolution $0 \to E^{-1} \to E^0 \to F \to 0$, with $E^{-1}, E^0$ locally free. This way, if $\E^0=(E_0, \rho_\ast(\mu_S^\ast({E_0}|_S)) \oplus \overline{F}; \id \oplus \psi)$, we get a surjection $\E^0 \twoheadrightarrow \F$, whose kernel is of the form $(E^{-1}, K; \eta)$. Here $\E^0$ lies on $\Pscr$. 

Now, note that we have a surjection
\[ \E^{-1} = (E^{-1}, \rho_\ast(\mu_S^\ast(E^{-1}|_S)) \oplus K; \id \oplus \eta) \to (E^{-1}, K; \eta) \to 0, \]
where the map $\rho_\ast(\mu_S^\ast(E^{-1}|_S)) \oplus K \to K$ is $(0, \id)$. The kernel of this surjection is $\E^{-2} = (0, \rho_\ast(\mu_S^\ast(E^{-1}|_S)); 0)$, which clearly lies in $\Pscr$. This gives us the claimed resolution.
\end{proof}

\begin{proof}[Proof of Lemma \ref{lemma:functors_Lbbounded}, (2)] Consider a short exact sequence $0 \to \F \to \F' \to \F'' \to 0$. Write $\F=(F, \overline{F}; \phi)$ and similar for $\F', \F''$. We get a short exact sequence $0 \to F \to F' \to F'' \to 0$ in $\tilde{C}$, with $F''$ locally free. Here $F$ is locally free if and only if $F'$ is. Under this assumption, the restriction to $S$ and tensoring with $I^k$ is exact, and we get the commutative diagram
\begin{equation} \label{eq:functors_sesP}
\begin{tikzcd} 0 \arrow[r] & \rho_\ast(\mu_S^\ast(F|_S)) \arrow[r] \arrow[d, "\phi"] & \rho_\ast(\mu_S^\ast(F'|_S)) \arrow[r] \arrow[d, "\phi'"] & \rho_\ast(\mu_S^\ast(F''|_S)) \arrow[r] \arrow[d, "\phi''"] & 0 \\ 0 \arrow[r] & \overline{F} \arrow[r] & \overline{F}' \arrow[r] & \overline{F}'' \arrow[r] & 0. \end{tikzcd}
\end{equation}
By assumption the kernel of $\phi''$ is trivial. Thus, we get that $\ker \phi \cong \ker \phi'$ by the snake lemma, which allows us to conclude immediately. 
\end{proof}

\begin{proof}[Proof of Lemma \ref{lemma:functors_Lbbounded}, (3)]
We keep the notation of the previous proof. By assumption we have the diagram \eqref{eq:functors_sesP}, with $\ker \phi''=0$. The snake lemma gives us the short exact sequence $0 \to b_! \F \to b_! \F' \to b_! \F'' \to 0$. 
\end{proof}

\subsection{Semi-orthogonal decomposition} \label{subsec:sod}

\begin{prop} \label{prop:sod_main}
There are semi-orthogonal decompositions
\begin{equation} \label{eq:sod_main}
\DC^b(\Rcal) = \langle b^\ast \DC^b(\A), La^\ast \DC^b(\tilde{C}) \rangle, \quad \DC^-(\Rcal) = \langle b^\ast \DC^-(\A), La^\ast \DC^-(\tilde{C}) \rangle.
\end{equation}
\end{prop}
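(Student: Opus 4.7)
The plan is to verify the two characterizing properties of a semi-orthogonal decomposition: orthogonality and existence of functorial decomposition triangles. Full faithfulness of the embeddings is essentially automatic: $b_\ast$ is exact with $b_\ast b^\ast = \id_{\Coh(\A)}$ directly from the definitions, so $b^\ast$ is fully faithful on derived categories; and $a_\ast$ is exact with $a_\ast a^\ast = \id$ on locally free sheaves, giving $a_\ast La^\ast = \id_{\DC^b(\tilde C)}$ and hence full faithfulness of $La^\ast$. For orthogonality, the adjunction $La^\ast \dashv a_\ast$ from Lemma \ref{lemma:functors_Labounded} yields
$$\Hom_{\DC^b(\Rcal)}(La^\ast E, b^\ast N) \cong \Hom_{\DC^b(\tilde C)}(E, a_\ast b^\ast N) = 0,$$
since $a_\ast b^\ast N = a_\ast(0, N; 0) = 0$ by direct inspection.

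The key step is to produce, for any $\F \in \DC^b(\Rcal)$, a decomposition triangle
$$La^\ast a_\ast \F \xrightarrow{\epsilon} \F \xrightarrow{\eta} b^\ast Lb_! \F \to La^\ast a_\ast \F [1],$$
with $\epsilon$ the counit of $La^\ast \dashv a_\ast$ and $\eta$ the unit of $Lb_! \dashv b^\ast$. I will first establish this for objects $\F = (F, \overline F; \phi)$ in the class $\Pscr$ of \eqref{eq:functors_classP}, where $F$ is locally free and $\phi$ is injective. For such $\F$, direct computation shows that the counit is the morphism $(\id_F, \phi) \colon (F, \rho_\ast \mu_S^\ast F|_S; \id) \to (F, \overline F; \phi)$, with zero kernel and cokernel $(0, \coker \phi; 0) = b^\ast b_! \F$. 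This yields a short exact sequence
$$0 \to a^\ast a_\ast \F \to \F \to b^\ast b_! \F \to 0$$
in $\Coh(\Rcal)$. Since $F$ is locally free and $\F \in \Pscr$, the underived functors here already coincide with their derived versions, producing the desired triangle on $\Pscr$.

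For arbitrary $\F \in \Coh(\Rcal)$, I will resolve $\F$ by a length-$2$ complex $\F^\bullet \to \F$ with terms in $\Pscr$, as constructed in the proof of Lemma \ref{lemma:functors_Lbbounded}. Applying the above SES termwise gives a short exact sequence of complexes, hence a triangle in $\DC^b(\Rcal)$. Exactness of $a_\ast$ and local-freeness of each $a_\ast \F^i$ ensure the left complex computes $La^\ast a_\ast \F$; Lemma \ref{lemma:functors_Lbbounded} ensures the right complex computes $Lb_! \F$. Since the subcategory of objects admitting such a triangle is triangulated, this extends to all of $\DC^b(\Rcal)$; boundedness is preserved because $\tilde C$ is smooth (bounding the amplitude of $La^\ast$) and Lemma \ref{lemma:functors_Lbbounded} bounds the amplitude of $Lb_!$. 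The $\DC^-$ case follows analogously with bounded-above resolutions. The main subtlety is arranging a single resolving class $\Pscr$ to compute both $La^\ast \circ a_\ast$ and $Lb_!$ simultaneously; this is possible precisely because $\Pscr$ imposes local-freeness of the first component, which is preserved by the exact functor $a_\ast$.
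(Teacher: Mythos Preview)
Your proof is correct and follows essentially the same approach as the paper: both verify orthogonality via the adjunction $La^\ast \dashv a_\ast$ and $a_\ast b^\ast = 0$, and both produce the decomposition triangle by resolving with the class $\Pscr$ and using the termwise short exact sequence $0 \to a^\ast a_\ast \E^i \to \E^i \to b^\ast b_! \E^i \to 0$ valid for $\E^i \in \Pscr$. The only organizational difference is that the paper passes directly from $\F \in \DC^b(\Rcal)$ to a bounded $\Pscr$-resolution, whereas you first treat $\F \in \Coh(\Rcal)$ and then invoke a triangulated-subcategory argument; your added remarks on full faithfulness and on identifying the maps with the counit/unit are correct but not explicitly spelled out in the paper.
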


\begin{proof}
We will prove the result for $\DC^b(\Rcal)$; the other one is analogous. Given $M \in \DC^b(Y)$ and $N \in \DC^b(\tilde{C})$, we have $\Hom_\A(La^\ast M, b^\ast N) = \Hom_{\tilde{C}}(M, a_\ast b^\ast N) = \Hom_{\tilde{C}}(M, 0) = 0$ by direct computation. This shows the vanishing of Homs.

Now, let $\F \in \DC^b(\A)$ be given. Up to a quasi-isomorphism, we may replace $\F$ with a bounded complex $\E^\bullet$ with $\E^i = (E^i, \overline{E}^i; \phi^i) \in \Pscr$, thanks to property (2) in the proof of Lemma \ref{lemma:functors_Lbbounded}. Set $\G^i = (E^i, \rho_\ast(\mu_S^\ast(F|_S)); \id)$ and $\H^i = (0, \coker \phi^i; 0)$. We get short exact sequences $0 \to \G^i \to \E^i \to \H^i \to 0$. The differential of $\E^\bullet$ gives differentials on the $\G^i$ and $\H^i$, and so we get a short exact sequence of complexes:
\begin{equation} \label{eq:sod_ses}
0 \to \G^\bullet \to \E^\bullet \to \H^\bullet \to 0.
\end{equation}

To conclude, note that $\G^\bullet = La^\ast(E^\bullet)$ and $\H^\bullet = b^\ast (\coker \phi^\bullet)$. This way, the short exact sequence \eqref{eq:sod_ses} gives us a triangle $A \to \E^\bullet \to B \to A[1]$ with $B \in b^\ast \DC^b(\A)$ and $A \in La^\ast \DC^b(\tilde{C})$, as required.
\end{proof}

Various consequences can be obtained directly from the semi-orthogonal decomposition \eqref{eq:sod_main}. For instance, the unit $\id \to b^\ast Lb_!$ and the counit $La^\ast a_\ast \to \id$ give the triangles $La^\ast a_\ast(E) \to E \to b^\ast Lb_!(E) \to La^\ast a_\ast(E)[1]$ of the semi-orthogonal decomposition. Another important consequence is the fact that $\DC^b(\Rcal)$ is a \emph{geometric non-commutative scheme} in the sense of \cite{Orl16}*{Definition 4.3}.

\begin{cor} \label{cor:sod_ncsch}
The category $\DC^b(\Rcal)$ is equivalent to an admissible subcategory of $\DC^b(X)$, for some smooth, projective scheme $X$. 
\end{cor}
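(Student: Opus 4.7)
The plan is to realize $\DC^b(\Rcal)$ as an iterated gluing of known geometric non-commutative schemes and invoke the main closure result of \cite{Orl16}*{Theorem 4.15}: if $\T_1, \T_2$ are admissible subcategories of $\DC^b(X_1), \DC^b(X_2)$ with $X_i$ smooth and projective, then any semi-orthogonal gluing $\T = \langle \T_1, \T_2 \rangle$ is admissible in $\DC^b(X)$ for some smooth projective $X$. Starting from Proposition \ref{prop:sod_main}, which gives $\DC^b(\Rcal) = \langle b^\ast \DC^b(\A), La^\ast \DC^b(\tilde{C}) \rangle$ with both embeddings admissible (they are fully faithful and admit the left/right adjoints constructed in Subsection \ref{subsec:functors}), it suffices to check that each factor is a geometric non-commutative scheme.

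The factor $\DC^b(\tilde{C})$ is tautologically geometric, as $\tilde{C}$ is a smooth projective curve. For the factor $\DC^b(\A)$, I would iterate the semi-orthogonal decomposition of Proposition \ref{prop:auscd_presod}, or equivalently apply Corollary \ref{cor:ausdc_sod}, to obtain a semi-orthogonal decomposition into $e$ components, each equivalent to $\DC^b(Z_0)$, where $Z_0 = \{q_1, \ldots, q_m\}$ is the reduced zero-dimensional subscheme cut out by $I$. Since $Z_0$ is a disjoint union of $m$ reduced points, $\DC^b(Z_0) \simeq \bigoplus_{i=1}^m \DC^b(\Spec \C)$ is generated by $me$ exceptional objects in total. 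Any such collection is admissible inside, for instance, $\DC^b(\P^{me-1})$ via part of a Beilinson exceptional collection; equivalently, one applies the gluing theorem inductively to the trivial building block $\DC^b(\Spec \C)$.

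A final application of Orlov's gluing to the decomposition of Proposition \ref{prop:sod_main} then produces the required smooth projective $X$ with $\DC^b(\Rcal)$ admissible in $\DC^b(X)$. The only non-trivial ingredient is Orlov's theorem itself; everything else is bookkeeping of the SODs already established. The main point to verify carefully is that the hypotheses of the gluing theorem are met, namely that the bimodule encoding the gluing of $\DC^b(\A)$ and $\DC^b(\tilde{C})$ inside $\DC^b(\Rcal)$ is of the admissible/perfect type required by \cite{Orl16}; this is guaranteed because the functor $a_\ast \circ b^\ast$ and its adjoints are already defined on bounded derived categories and preserve perfectness thanks to the structure of $\A$ and the smoothness of $\tilde{C}$.
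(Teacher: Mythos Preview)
Your proposal is correct and follows essentially the same approach as the paper's proof, which reads in its entirety: ``Follows directly from the semi-orthogonal decomposition of Proposition \ref{prop:sod_main} and by \cite{Orl16}*{Theorem 4.15}.'' You have simply unpacked the step that the paper leaves implicit, namely that $\DC^b(\A)$ is itself geometric because of its full exceptional collection coming from Corollary \ref{cor:ausdc_sod}; your discussion of the bimodule hypothesis is a reasonable sanity check but is not elaborated in the paper either.
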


\begin{proof}
Follows directly from the semi-orthogonal decomposition of Proposition \ref{prop:sod_main} and by \cite{Orl16}*{Theorem 4.15}.
\end{proof}

\subsection{Categorical resolution} \label{subsec:catres}

In this subsection we will show that $\DC(\Rcal)$ is a categorical resolution of $\DC(C)$, in the sense of \cite{KL15}. To do so, we start by describing functors relating $\Coh(\Rcal)$ with $\Coh(C)$. We will use these to produce a semi-orthogonal decomposition at the unbounded level, which will be restricted to a Verdier quotient at the bounded level.

\begin{teo}
The (unbounded) derived category $\DC(\Rcal)$ is a categorical resolution of singularities of $\DC(C)$. This is, there exists functors $R\pi_\ast\colon \DC(\Rcal) \to \DC(C)$, $L\pi^\ast\colon \DC(C) \to \DC(\Rcal)$, satisfying the following.
\begin{enumerate}
\item $\DC(\Rcal)$ is a smooth, cocomplete, compactly generated triangulated category.
\item $R\pi_\ast \circ L\pi^\ast = \id$.
\item $L\pi^\ast$ and $R\pi_\ast$ commute with arbitrary direct sums.
\item $R\pi_\ast$ maps compact objects to $\DC^b(C)$.
\end{enumerate}
\end{teo}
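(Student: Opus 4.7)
The plan is to construct the pair $(L\pi^\ast, R\pi_\ast)$ and then verify the four axioms. I set $L\pi^\ast := La^\ast \circ L\rho^\ast$, with $\rho\colon \tilde{C} \to C$ the normalization and $La^\ast$ the functor of Lemma \ref{lemma:functors_Labounded}; its extension to unbounded derived categories is handled by the dual of Proposition \ref{prop:functors_existence} applied to bounded-above complexes of vector bundles on $\tilde{C}$. For $R\pi_\ast$, I would use the description of $\Coh(\Rcal)$ as modules over a sheaf of noncommutative $\O_C$-algebras $\Rcal$ on $C$ (Subsection \ref{subsec:abcat}): then $\pi_\ast$ is the forgetful functor to $\O_C$-modules, and $R\pi_\ast$ its right derived functor via injective resolutions.

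Axioms (1), (3), and (4) are then largely formal. For (1), smoothness of $\DC^b(\Rcal)$ is Corollary \ref{cor:sod_ncsch}; cocompleteness and compact generation follow from passing to quasi-coherent modules and extending the semi-orthogonal decomposition of Proposition \ref{prop:sod_main} to the unbounded level, producing compact generators from those of $\DC(\tilde{C})$ and $\DC(\A)$. For (3), $L\pi^\ast$ commutes with colimits as a left adjoint, while $R\pi_\ast$ commutes with arbitrary direct sums because $\rho$ and $j$ are proper maps between quasi-compact, separated schemes. Axiom (4) reduces to observing that compact objects of $\DC(\Rcal)$ are bounded complexes of coherent modules, and the forgetful functor preserves coherence and boundedness in our setup.

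The substantive step is axiom (2): $R\pi_\ast L\pi^\ast = \id$. Both functors are $\DC(C)$-linear, and $L\pi^\ast E$ can be identified with $E \otimes^L_{\O_C} \Rcal$; the unit $E \to R\pi_\ast L\pi^\ast E$ is then tensor-linear, so it suffices to verify the case $E = \O_C$, i.e.\ $R\pi_\ast \Rcal = \O_C$. The key input is the non-rational locus hypothesis, which by Definition \ref{defin:nrgen_main} is equivalent to the derived pullback square extracted from diagram \eqref{eq:nrgen_keydiagram},
\[ \O_C \cong R\rho_\ast \O_{\tilde{C}} \times^R_{Rj_\ast R\rho_\ast \O_S} Rj_\ast \O_T. \]
Unwinding $L\pi^\ast \O_C = a^\ast \O_{\tilde{C}}$ through the Auslander layers identifies $R\pi_\ast L\pi^\ast \O_C$ with precisely this pullback. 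The main obstacle will be the bookkeeping: tracking the Auslander data $(M_1, \dots, M_e)$ with its intermediate quotients $I^k/I^{k+1}$ through the derived pushforward, and verifying that these layers telescope so that only the non-rational locus identity remains at the end.
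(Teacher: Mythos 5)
Your overall architecture (construct the two functors, reduce axiom (2) to the structure sheaf, and feed in the non-rational locus condition via diagram \eqref{eq:nrgen_keydiagram}) matches the intended strategy, but both functors are misidentified, and with your definitions axiom (2) actually fails. First, $L\pi^\ast$ is \emph{not} $La^\ast\circ L\rho^\ast$: the correct underived functor is $\pi^\ast(M)=(\rho^\ast M,\ \coker(\mu_T^\ast(M|_T)\to\rho_\ast\mu_S^\ast(\rho^\ast M|_S));\ \mathrm{res})$, i.e.\ the Auslander component must be cut down by the image of $\mu_T^\ast(M|_T)$. Omitting this quotient changes the answer: with the correct $\pi_\ast(F,\overline F;\phi)=\ker(\rho_\ast F\to\overline F_e)$ one computes $\pi_\ast a^\ast\rho^\ast\O_C=\ker(\rho_\ast\O_{\tilde C}\to\rho_\ast\O_S)=\rho_\ast I_S\cong I_T$, not $\O_C$. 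Second, $R\pi_\ast$ is not the (derived) restriction-of-scalars/forgetful functor along $\O_C\to\Rcal$; it is $R\HOM_\Rcal(\pi^\ast\O_C,-)$, computed underived as the kernel above. With your pair (extension and restriction of scalars) the unit computes $R\pi_\ast L\pi^\ast\O_C=\Rcal$ as an $\O_C$-module, which is a direct sum of the components rather than the fiber product $\rho_\ast\O_{\tilde C}\times_{\rho_\ast\O_S}\O_T$; so the claimed "unwinding through the Auslander layers" cannot terminate in the non-rational-locus identity. The cokernel in $\pi^\ast$ and the kernel in $\pi_\ast$ are precisely what implement that fiber product, and the paper's proof of $R\pi_\ast L\pi^\ast=\id$ is exactly: resolve $E$ by locally frees, observe $R\pi_\ast L\pi^\ast E=\pi_\ast\pi^\ast E$ on such complexes, and tensor \eqref{eq:nrgen_keydiagram} by $E$.

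Two smaller points. Deriving $\pi_\ast$ by injective resolutions requires first passing to quasi-coherent $\Rcal$-modules ($\Coh(\Rcal)$ has no injectives); the paper instead right-derives $\pi_\ast$ using the adapted class $\Qscr$ of triples with $\rho_\ast F\to\overline F_e$ surjective, which is also what yields the finite cohomological amplitude needed for axiom (4). Your treatment of axioms (1) and (3) is fine in outline and is the part the paper itself delegates to \cite{KL15}.
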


We will omit the technical details of the proof, as they follow from the general construction of \cite{KL15}. Instead, we will focus on the construction of the functors $R\pi_\ast$ and $L\pi^\ast$.

First, set $\pi_\ast\colon \Coh(\Rcal) \to \Coh(C)$ as $\pi_\ast(F, \overline{F}; \phi) = \ker(\rho_\ast F \to \overline{F}_e)$, where the map is induced by $\phi$. Second, we define $\pi^\ast\colon \Coh(C) \to \Coh(\Rcal)$ via $\pi^\ast(M) = (\rho^\ast M, \coker(\mu_T^\ast(M|_T) \to\rho_\ast \mu_S^\ast(\rho^\ast M|_S)); \text{res})$, where the restriction map is induced by the unit $\id \to \rho_\ast \rho^\ast$.

\begin{lemma}
We have that $\pi^\ast \dashv \pi_\ast$. 
\end{lemma}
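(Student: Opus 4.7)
The plan is to establish the adjunction directly by chaining two simpler adjunctions, $\rho^\ast \dashv \rho_\ast$ and $\mu_T^\ast \dashv \mu_{T\ast}$, together with the universal properties of kernels and cokernels.

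First I would unpack morphisms into $\F = (F, \overline{F}; \phi)$. Write $\eta_M\colon \mu_T^\ast(M|_T) \to \rho_\ast \mu_S^\ast(\rho^\ast M|_S)$ for the map whose cokernel appears in the definition of $\pi^\ast$, and $q$ for the quotient onto that cokernel $K$. By definition, a morphism $\pi^\ast M \to \F$ is a pair $(f, g)$ with $f\colon \rho^\ast M \to F$ in $\Coh(\tilde{C})$ and $g\colon K \to \overline{F}$ in $\Coh(\A)$ satisfying $\phi \circ \rho_\ast \mu_S^\ast(f|_S) = g \circ q$. The universal property of the cokernel lets me eliminate $g$: given $f$, such a $g$ exists (uniquely) if and only if $\phi \circ \rho_\ast \mu_S^\ast(f|_S) \circ \eta_M$ vanishes as a morphism $\mu_T^\ast(M|_T) \to \overline{F}$. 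Hence $\Hom_{\Coh(\Rcal)}(\pi^\ast M, \F)$ is identified with the set of $f\colon \rho^\ast M \to F$ satisfying this vanishing condition.

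Next I would peel off the $\A$-module structure using $\mu_T^\ast \dashv \mu_{T\ast}$. Since $\mu_{T\ast}$ is projection onto the $e$-th component, a morphism out of $\mu_T^\ast(M|_T)$ is determined by, and vanishes if and only if, its level-$e$ component does. At level $e$ the ideal factors $I^{e-i}$ and $J^{e-i}$ trivialize, and $\eta_M$ reduces to the ordinary unit $M|_T \to \rho_\ast(\rho^\ast M|_S)$; the level-$e$ component of our composition is thus the single arrow
\[ M|_T \xrightarrow{\text{unit}} \rho_\ast(\rho^\ast M|_S) \xrightarrow{\rho_\ast(f|_S)} \rho_\ast(F|_S) \xrightarrow{\phi_e} \overline{F}_e. \]
Finally I would apply $\rho^\ast \dashv \rho_\ast$ to convert $f$ into $\tilde{f}\colon M \to \rho_\ast F$. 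Naturality of this adjunction, together with affine base change along the Cartesian square \eqref{eq:setup_arrows} (which gives $(\rho_\ast F)|_T = \rho_\ast(F|_S)$), shows that the first two arrows of the display above compose to the restriction $\tilde{f}|_T$. Because $\overline{F}_e$ is scheme-theoretically supported on $T$, vanishing of the composite $M \to \rho_\ast F \to \overline{F}_e$ on $C$ is equivalent to vanishing after restricting to $T$, and the universal property of the kernel then translates this condition into the statement that $\tilde{f}$ factors through $\pi_\ast \F = \ker(\rho_\ast F \to \overline{F}_e)$. Every identification above is natural in both variables, giving the desired adjunction.

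The main obstacle I expect is the bookkeeping for $\eta_M$: it is assembled from the unit of $\rho^\ast \dashv \rho_\ast$ twisted by the canonical maps $\rho^\ast I^k \to J^k$, so one must confirm that at level $e$ these twists collapse and that the adjunction isomorphism $\Hom_\A(\mu_T^\ast(-), -) \cong \Hom_T(-, (-)_e)$ sends our composition to the single arrow displayed above. Once this identification is in place, the remaining steps reduce to routine applications of universal properties and naturality.
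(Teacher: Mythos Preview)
Your proposal is correct and follows essentially the same approach as the paper: both arguments chain the universal properties of kernel/cokernel with the adjunctions $\rho^\ast \dashv \rho_\ast$ and $\mu_T^\ast \dashv \mu_{T\ast}$, and both hinge on the observation that at level $e$ the map $\eta_M$ collapses to the unit $M|_T \to \rho_\ast(\rho^\ast M|_S)$. The only cosmetic difference is direction---the paper starts from $\Hom_C(M, \pi_\ast\F)$ and works toward $\Hom_{\Coh(\Rcal)}(\pi^\ast M, \F)$, whereas you run the same chain in reverse---and your treatment of the bookkeeping for $\eta_M$ is somewhat more explicit than the paper's.
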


\begin{proof}
Let $M \in \Coh(C)$ and $(F, \overline{F}; \phi) \in \Coh(\A)$ be given. By definition, the set $\Hom_C(M, \pi^\ast(F, \overline{F}; \phi))$ is the same as morphisms $f\colon M \to \rho_\ast F$ such that the composition
\[ M|_T \xrightarrow{f|_T} (\rho_\ast F)|_T \to \overline{F}_e \]
is zero. By adjunction, this is the same as maps $g\colon \rho^\ast M \to F$ such that 
\[ M|_T \to \rho_\ast((\rho^\ast M)|_S) \to \rho_\ast (F|_S) \to \mu_{Z\ast}(\overline{F}) \]
is zero. Once again by adjunction, this is the same as a pair of morphisms $g\colon \rho^\ast M \to F$ and $h\colon \rho_\ast \mu_S^\ast((\rho^\ast M)|_S) \to \overline{F}$ such that the composition
\[ \mu_T^\ast (M|_T) \to \rho_\ast \mu_S^\ast((\rho^\ast M)|_S) \to \overline{F} \]
vanishes. This gives us the claimed description.
\end{proof}

\begin{lemma} \label{lemma:catres_ff}
The functors $\pi^\ast$ and $\pi_\ast$ induce derived functors $L\pi^\ast\colon \DC^-(C) \to \DC^-(\Rcal)$ and $R\pi_\ast\colon \DC^-(\Rcal) \to \DC^-(C)$. The functors $L\pi^\ast$ and $R\pi_\ast$ are adjoint, and the composition $R\pi_\ast \circ L\pi^\ast$ is isomorphic to the identity. 
\end{lemma}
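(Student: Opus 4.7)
The plan is to construct each derived functor separately, verify the adjunction via a standard argument, and then establish $R\pi_\ast L\pi^\ast \cong \id$ by reducing to a computation on locally free sheaves that exploits the non-rational locus diagram \eqref{eq:nrgen_keydiagram}.

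To construct $L\pi^\ast\colon \DC^-(C) \to \DC^-(\Rcal)$ I would apply Proposition \ref{prop:functors_existence} to the class $\Qscr \subset \Coh(C)$ of locally free coherent sheaves. Every coherent sheaf on $C$ admits a surjection from a locally free one, so condition (1) holds; since any short exact sequence ending in a locally free sheaf splits locally, condition (2) reduces to the observation that a local direct summand of a locally free module is locally free. For condition (3), the local splitting implies that $\rho^\ast$, restrictions to $T$ and $S$, tensor products with the powers of $I$ and $J$, and $\rho_\ast$ (exact since $\rho$ is finite) all preserve exactness; applying the snake lemma to the defining cokernel of $\pi^\ast$ then shows that $\pi^\ast$ sends short exact sequences of locally free sheaves to short exact sequences in $\Coh(\Rcal)$.

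For $R\pi_\ast$, I observe that $\pi_\ast$ is the kernel of the natural transformation $\rho_\ast F \xrightarrow{\phi_e} \overline{F}_e$ between two exact functors out of $\Coh(\Rcal)$. This lets me define $R\pi_\ast \F = [\rho_\ast F \to \overline{F}_e]$ as a two-term complex in degrees $[0,1]$; the construction descends to the derived category because the two component functors are exact, and in fact has cohomological amplitude at most one, so extends to a functor $R\pi_\ast\colon \DC(\Rcal) \to \DC(C)$ that restricts to $\DC^- \to \DC^-$. The adjunction $L\pi^\ast \dashv R\pi_\ast$ follows from the underived adjunction $\pi^\ast \dashv \pi_\ast$ by the standard resolution argument (bounded above locally free resolutions on the source together with the explicit cone description on the target), as in \cite{KL15}.

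The substantial step is $R\pi_\ast L\pi^\ast \cong \id$, which it suffices to verify on a single locally free sheaf $M \in \Coh(C)$ and then extend via locally free resolutions. For such $M$ we have $L\pi^\ast M = \pi^\ast M$, and the last component of its second entry is $\overline{F}_e = \coker(M|_T \to (\rho_\ast \rho^\ast M)|_T)$, where we use the Cartesian base change identification $(\rho_\ast \rho^\ast M)|_T = \rho_\ast(\rho^\ast M|_S)$. Tensoring the short exact sequence $0 \to \O_T \to \rho_\ast \O_S \to Q \to 0$ from \eqref{eq:nrgen_keydiagram} with the locally free $\O_T$-module $M|_T$ (together with the projection formula) identifies $\overline{F}_e$ with $Q \otimes_{\O_C} M$. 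Tensoring the other non-rational locus exact sequence $0 \to \O_C \to \rho_\ast \O_{\tilde{C}} \to Q \to 0$ with the flat sheaf $M$ then shows that the two-term complex $[\rho_\ast \rho^\ast M \to Q \otimes_{\O_C} M]$ computing $R\pi_\ast \pi^\ast M$ has $H^0 = M$ and $H^1 = 0$. The main obstacle is precisely this final identification: it is the defining property of the non-rational locus $T$ that makes the two-term complex collapse to $M$; all other verifications are formal consequences of Proposition \ref{prop:functors_existence} and the cone description of $R\pi_\ast$.
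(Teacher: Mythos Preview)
Your construction of $L\pi^\ast$ via locally free resolutions and your final computation of $R\pi_\ast L\pi^\ast$ on a locally free sheaf (using the two rows of diagram \eqref{eq:nrgen_keydiagram} tensored with $M$) match the paper's argument essentially verbatim. The one substantive difference is how you build $R\pi_\ast$: the paper produces it from the dual of Proposition \ref{prop:functors_existence} applied to the adapted class $\Qscr=\{(F,\overline F;\phi):\rho_\ast F\to\overline F_e\text{ surjective}\}$, verifying conditions (1$'$), (2), (3) by hand, whereas you write $R\pi_\ast\F$ directly as the two-term complex $[\rho_\ast F\to\overline F_e]$ coming from a natural transformation between exact functors. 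Your route is slicker for defining a triangulated functor, and it makes the amplitude bound and the final computation transparent; the paper's route gives the universal property of the right derived functor for free.

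That universal property is exactly what you need, though, and you gloss over it. Your appeal to ``the standard resolution argument'' for the adjunction $L\pi^\ast\dashv R\pi_\ast$ presumes that your two-term complex functor \emph{is} the right derived functor of $\pi_\ast$; the general theorem that derived functors of adjoints are adjoint does not apply to an arbitrary triangulated extension of $\pi_\ast$. To close this you must show effaceability of the $\delta$-functor $\{H^iT\}$, i.e.\ that every $(F,\overline F;\phi)$ embeds into an object with $\rho_\ast F\to\overline F_e$ surjective. That is precisely the paper's verification of (1$'$) for $\Qscr$ (embed into $(F\oplus E^0,\overline F;\psi)$ with $E^0$ chosen so its restriction surjects onto $\overline F_e$). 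Once you add this one line, your cone functor is identified with $R\pi_\ast$ and the adjunction follows; without it, the adjunction step is not justified.
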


\begin{proof}
First, note that $\pi^\ast$ preserves short exact sequences of locally free sheaves on $C$. In fact, note that $\rho^\ast$ is exact in short exact sequences of locally free sheaves. The exactness of the cokernels is a consequence of the snake lemma. This immediately gives us the existence of $L\pi^\ast$, thanks to Proposition \ref{prop:functors_existence}.

For the second part, consider the class $\Qscr \subseteq \Coh(\Rcal)$ consisting of objects $(F, \overline{F}; \phi)$ with $\rho_\ast F \to \overline{F}_e$ surjective. Let us show that $\Qscr$ satisfy the (dual of the) properties (1'), (2), (3) from Proposition \ref{prop:functors_existence}.
\begin{enumerate}
\item[(1')] Given $(F, \overline{F}; \phi) \in \Coh(\Rcal)$, we take some sheaf $E^0 \in \Coh(\tilde{C})$ with a surjection $E^0|_T \to \overline{F}_e$. This gives us a short exact sequence 
\[ 0 \to (F, \overline{F}; \phi) \to (F \oplus E^0, \overline{F}; \psi) \to (E^0, 0; 0) \to 0, \]
and so a resolution $0 \to \F \to \E^0 \to \E^1 \to 0$ with $\E^0, \E^1$ in $\Qscr$.
\item[(2)] This is a direct application of the snake lemma: take the diagram
\[ \begin{tikzcd}
0 \arrow[r] & \rho_\ast F \arrow[r] \arrow[d] & \rho_\ast F' \arrow[r] \arrow[d] & \rho_\ast F'' \arrow[r] \arrow[d] & 0 \\
0 \arrow[r] & \overline{F}_e \arrow[r] & \overline{F}'_e \arrow[r] & \overline{F}''_e \arrow[r] & 0,
\end{tikzcd} \]
and look at the induced sequence at the level of cokernels.

\item[(3)] Given an element $\F \in \Coh(\Rcal)$, consider the map $\psi\colon \rho_\ast F \to \overline{F}_e$ whose kernel corresponds to $\pi_\ast(F, \overline{F}; \phi)$. Our assumption ensures that the cokernel of this map is trivial.

This way, if $0 \to \F \to \F' \to \F'' \to 0$ is a short exact sequence with elements in $\Qscr$, the snake lemma applied to the maps $\psi, \psi', \psi''$ defined as above gives us the short exact sequence on the kernels. This is, a short exact sequence $0 \to \pi_\ast \F \to \pi_\ast \F' \to \pi_\ast \F'' \to 0$.
\end{enumerate}
This shows that $R\pi_\ast\colon \DC^b(\Rcal) \to \DC^b(C)$ is defined and preserves boundedness. Moreover, the functor $R\pi_\ast$ also exists as a functor from $\DC^-(\Rcal)$ to $\DC^-(C)$, cf. \cite{Stacks}*{Tag 07K7}. Note that the adjointness $L\pi^\ast \dashv R\pi_\ast$ follows immediately.

It remains to prove that $R\pi_\ast \circ L\pi^\ast(E) \cong E$, for which we may assume that $E$ is a bounded above complex of locally free sheaves. But in this case $R\pi_\ast \circ L\pi^\ast(E) = \pi_\ast \pi^\ast(E)$. The result follows from \eqref{eq:nrgen_keydiagram} after tensoring by $E$.
\end{proof}

From Lemma \ref{lemma:catres_ff} we have $\DC^-(\Rcal) = \langle \ker R\pi_\ast, L\pi^\ast \DC^-(C)\rangle$. In particular, the functor $R\pi_\ast\colon \DC^-(\Rcal) \to \DC^-(C)$ identifies $\DC^-(C)$ with the Verdier quotient $\DC^-(\Rcal)/\ker R\pi_\ast$. We will now adapt this result to the bounded derived categories.

\begin{lemma} \label{lemma:catres_esssurj}
The functor $R\pi_\ast$ preserves boundedness, and the restricted functor $R\pi_\ast\colon \DC^b(\Rcal) \to \DC^b(C)$ is essentially surjective.  
\end{lemma}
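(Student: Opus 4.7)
The plan is to dispatch Part (1) via a cohomological amplitude bound, and Part (2) via a truncation of $L\pi^\ast M$ followed by extracting a direct summand using Karoubi completeness.

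For Part (1), the class $\Qscr \subset \Coh(\Rcal)$ from the proof of Lemma \ref{lemma:catres_ff} provides, for every $\F \in \Coh(\Rcal)$, a two-term coresolution $0 \to \F \to \E^0 \to \E^1 \to 0$ with $\E^i \in \Qscr$, and $\pi_\ast$ is exact on short exact sequences of $\Qscr$. This forces $R^i\pi_\ast \equiv 0$ for $i \geq 2$, so $R\pi_\ast$ has cohomological amplitude contained in $[0,1]$ and hence restricts to a functor $\DC^b(\Rcal) \to \DC^b(C)$.

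For Part (2), I would fix $M \in \DC^b(C)$ with $H^i(M) = 0$ for $i < a$ and form $L\pi^\ast M \in \DC^-(\Rcal)$; in general this is unbounded below, since at a singular point $q_i$ the residue field $\kappa(q_i)$ has infinite projective dimension over $\O_C$. For $N \geq 1-a$, set $\F := \tau^{\geq -N}(L\pi^\ast M)$, which lies in $\DC^b(\Rcal)$. Applying $R\pi_\ast$ to the truncation triangle and using $R\pi_\ast L\pi^\ast \cong \id$, I obtain a distinguished triangle $Y \to M \to R\pi_\ast \F$ with $Y := R\pi_\ast(\tau^{<-N}L\pi^\ast M) \in \DC^{\leq -N}(C) \subseteq \DC^{<a}(C)$ by Part (1). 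Since $\Hom_{\DC^b(C)}(Y, M) = 0$, the triangle splits:
\[ R\pi_\ast \F \cong M \oplus Y[1], \]
exhibiting $M$ as a direct summand of an object in the essential image of $R\pi_\ast$.

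To extract $M$ itself, the plan is to invoke Karoubi completeness of $\DC^b(\Rcal)$ (which holds as the bounded derived category of the Noetherian abelian category $\Coh(\Rcal)$): the canonical truncation map $L\pi^\ast M \to \F$, whose image under $R\pi_\ast$ is the inclusion $M \hookrightarrow R\pi_\ast \F$ of the first summand, should produce an idempotent on $\F$ whose splitting yields the desired $\G$ with $R\pi_\ast \G \cong M$. The main obstacle is promoting the idempotent on $R\pi_\ast \F$ projecting to $M$ to an honest idempotent on $\F$ in $\DC^b(\Rcal)$, since $R\pi_\ast\colon \End(\F) \to \End(R\pi_\ast \F)$ is not automatically surjective; a workable approach is to exploit the semi-orthogonal decomposition $\DC^-(\Rcal) = \langle \ker R\pi_\ast, L\pi^\ast \DC^-(C) \rangle$ implicit in Lemma \ref{lemma:catres_ff} to identify the two summands concretely inside $\F$, after verifying that the $Y[1]$-piece admits a bounded lift.
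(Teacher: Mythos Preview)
Your Part (1) is correct and matches the paper exactly.

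For Part (2), you have the right opening move---truncate $L\pi^\ast M$---but you then take an unnecessary detour, and the gap you yourself flag (lifting the idempotent through $R\pi_\ast$) is real and not easily closed along the lines you suggest. The point you are missing is that your object $Y$ is not merely concentrated in low degrees: it is \emph{zero}. Since $R^p\pi_\ast$ vanishes for $p\geq 2$, the hypercohomology spectral sequence
\[
E_2^{p,q}=R^p\pi_\ast\bigl(H^q(L\pi^\ast M)\bigr)\ \Longrightarrow\ H^{p+q}(M)
\]
has only the two columns $p=0,1$ and therefore degenerates at $E_2$, yielding short exact sequences
\[
0\longrightarrow R^1\pi_\ast H^{n-1}(L\pi^\ast M)\longrightarrow H^n(M)\longrightarrow \pi_\ast H^n(L\pi^\ast M)\longrightarrow 0.
\]
For $n<a$ the middle term vanishes, so both outer terms vanish; hence $R^p\pi_\ast H^q(L\pi^\ast M)=0$ for every $p$ once $q\leq a-2$. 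Running the same spectral sequence for $\tau^{\leq a-2}L\pi^\ast M$ then gives $R\pi_\ast(\tau^{\leq a-2}L\pi^\ast M)=0$. Consequently $R\pi_\ast\bigl(\tau^{\geq a-1}L\pi^\ast M\bigr)\cong M$ on the nose, with no splitting or idempotent-lifting required. This is precisely the paper's argument.

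Your proposed workaround via the semi-orthogonal decomposition of $\DC^-(\Rcal)$ does not help: the $L\pi^\ast\DC^-(C)$-component of $\F$ is $L\pi^\ast(R\pi_\ast\F)=L\pi^\ast M\oplus L\pi^\ast Y[1]$, which is again unbounded, so you are back where you started. The clean fix is the vanishing above.
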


\begin{proof}
The first part follows by Proposition \ref{prop:functors_existence} together with the proof of Lemma \ref{lemma:catres_ff}. These two guarantee that $R^i\pi_\ast(E)=0$ for $i>2$ if $E \in \Coh(\Rcal)$. This way, if $F \in \DC^b(\Rcal)$ is given, the spectral sequence 
\begin{equation} \label{eq:catres_ss}
E_2 = R^p\pi_\ast(H^q(F)) \Rightarrow R^{p+q}\pi_\ast F
\end{equation}
converges at the $E_2$ page. 

Now, let $E \in \DC^b(C)$ be given, say $E \in \DC(C)^{[k, \ell]}$. By Lemma \ref{lemma:catres_ff}, we have that $E\cong R\pi_\ast F$ with $F = L\pi^\ast E \in \DC(\Rcal)^{(-\infty, \ell]}$. By the degeneration of the spectral sequence above we get that $E \cong R\pi_\ast \tau^{[k-1, \ell]}F$. 
\end{proof}

To further analyze the functor $R\pi_\ast\colon \DC^b(\Rcal) \to \DC^b(C)$ we follow the strategy in \cite{KKS22}*{p. 479}. Let us recall the key technical lemma.

\begin{lemma}[Verdier, \cite{Ver77}*{Théoremè 4--2, p. 20}] \label{lemma:catres_verdier}
Let $\D$ be a triangulated category, $\D'$ a full subcategory, and $\N$ a thick triangulated category. Assume that any of the following conditions hold:
\begin{enumerate}
\item[(b.ii)] For each $f\colon X \to Y$ with $X \in \D'$ and $Y \in \N$, we have that $f$ factors through an object of $\N \cap \D'$.
\item[(b.ii')] For each $f\colon Y \to X$ with $X \in \D'$ and $Y \in \N$, we have that $f$ factors through an object of $\N \cap \D'$.
\end{enumerate}
Then, the canonical functor $\D'/(\D' \cap \N) \to \D/\N$ is fully faithful. 
\end{lemma}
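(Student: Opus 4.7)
The plan is to work with the calculus of left fractions in the Verdier quotient. Morphisms $X \to Y$ in $\D/\N$ are equivalence classes of roofs $X \xleftarrow{s} Z \xrightarrow{g} Y$ with $\mathrm{cone}(s) \in \N$, and a morphism $f$ in $\D$ vanishes in $\D/\N$ if and only if $f$ factors through some object of $\N$. I will prove faithfulness and fullness of the canonical functor $\Phi\colon \D'/(\D' \cap \N) \to \D/\N$ separately; both arguments reduce to the factorization hypothesis (b.ii) (or its dual (b.ii')) applied to an appropriate morphism, followed by completion of a triangle inside $\D'$.

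For faithfulness, take $f\colon X \to Y$ in $\D'$ that vanishes in $\D/\N$, and write $f = h \circ g$ with $g\colon X \to N$, $h\colon N \to Y$, $N \in \N$. Under hypothesis (b.ii), factor $g = g_2 \circ g_1$ with $g_1\colon X \to N'$ and $N' \in \D' \cap \N$; then $f = (h \circ g_2) \circ g_1$ factors through $N'$ inside the full subcategory $\D'$, so $f$ already vanishes in $\D'/(\D' \cap \N)$. Under hypothesis (b.ii'), apply the factorization to $h$ instead and argue symmetrically.

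For fullness, represent a morphism $\alpha \in \Hom_{\D/\N}(X, Y)$, with $X, Y \in \D'$, by a roof $X \xleftarrow{s} Z \xrightarrow{g} Y$, and complete $s$ to a distinguished triangle $Z \xrightarrow{s} X \xrightarrow{p} N \to Z[1]$ with $N \in \N$. Under (b.ii), factor $p = v \circ u$ with $u\colon X \to N'$, $v\colon N' \to N$, and $N' \in \D' \cap \N$. Completing $u$ to a triangle $Z' \xrightarrow{s'} X \xrightarrow{u} N' \to Z'[1]$ yields $Z' \in \D'$ (since $\D'$ is triangulated), with $\mathrm{cone}(s') \cong N' \in \N$. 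The octahedral axiom applied to the composition $p = v \circ u$ then produces a morphism $\phi\colon Z' \to Z$ satisfying $s \circ \phi = s'$ and with $\mathrm{cone}(\phi) \cong \mathrm{cone}(v)[-1]$; the latter lies in $\N$ because $N, N' \in \N$ and $\N$ is triangulated. Hence $X \xleftarrow{s'} Z' \xrightarrow{g \circ \phi} Y$ is a roof entirely in $\D'$, equivalent via $\phi$ to the original, exhibiting $\alpha$ as the image of a morphism in $\D'/(\D' \cap \N)$. Under (b.ii'), one argues dually using right fractions $X \xrightarrow{g} W \xleftarrow{t} Y$: complete $t$ to a triangle $Y \xrightarrow{t} W \to N \to Y[1]$, factor the connecting map $N \to Y[1]$ through some $N' \in \D' \cap \N$ by (b.ii'), and use the octahedron to produce the required new roof within $\D'$.

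The main delicate point is the octahedral step, where the morphism of triangles must be identified so as to guarantee that the new and old roofs are genuinely equivalent in $\D/\N$. Once the diagram is set up, this reduces to bookkeeping. Alternatively, the (b.ii') case can be deduced from the (b.ii) case by passage to opposite categories, since Verdier localization commutes with $(-)^{\mathrm{op}}$.
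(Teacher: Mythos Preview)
The paper does not supply its own proof of this lemma; it is quoted verbatim as a classical result of Verdier and used as a black box in the proof of Proposition~\ref{prop:catres_main}. So there is nothing in the paper to compare your argument against.

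That said, your argument is essentially the standard one and is correct in outline. Two small points are worth tightening. First, in the faithfulness step you silently reduce to morphisms $f\colon X\to Y$ coming from $\D'$ rather than general roofs; this reduction is valid (since any class in $\D'/(\D'\cap\N)$ is $Q'(g)\circ Q'(s)^{-1}$ and vanishes iff $Q'(g)$ does), but it deserves a sentence. Second, the fullness step uses that $\D'$ is a \emph{triangulated} subcategory (to conclude $Z'\in\D'$ from the triangle on $u$); the paper's statement omits this hypothesis, but it is certainly present in Verdier's theorem and is satisfied in the application to $\DC^b(\Rcal)\subset\DC^-(\Rcal)$. Your use of the octahedral axiom to produce $\phi\colon Z'\to Z$ with $s\circ\phi=s'$ and $\mathrm{cone}(\phi)\in\N$ is the right mechanism, and the remark that (b.ii') follows from (b.ii) by passing to opposite categories is a clean way to avoid repeating the argument.
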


\begin{prop} \label{prop:catres_main}
The functor $R\pi_\ast\colon \DC^b(\Rcal) \to \DC^b(C)$ realizes $\DC^b(C)$ as a Verdier quotient of $\DC^b(\Rcal)$ by $\ker R\pi_\ast$.
\end{prop}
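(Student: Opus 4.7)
The plan is to apply Verdier's Lemma \ref{lemma:catres_verdier} to the inclusion $\DC^b(\Rcal) \hookrightarrow \DC^-(\Rcal)$ with $\N := \ker R\pi_\ast$, combining with Lemma \ref{lemma:catres_ff}, which identifies $\DC^-(\Rcal)/\N$ with $\DC^-(C)$. Writing $\N^b := \N \cap \DC^b(\Rcal)$, the induced functor $\bar\pi\colon \DC^b(\Rcal)/\N^b \to \DC^b(C)$ is essentially surjective by Lemma \ref{lemma:catres_esssurj}. Fully faithfulness then reduces to showing that the natural comparison functor $\DC^b(\Rcal)/\N^b \to \DC^-(\Rcal)/\N$ is fully faithful, which is exactly Verdier's lemma once condition (b.ii$'$) is verified.

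To verify the factorization condition, it is convenient to work directly with roofs. A morphism in $\DC^-(\Rcal)/\N$ between two bounded objects $X, Y$ is represented by a roof $X \xleftarrow{s} W \xrightarrow{t} Y$ in which the cone $K$ of $s$ belongs to $\N$. My plan is to replace such a roof with an equivalent one $X \xleftarrow{s''} W'' \xrightarrow{t''} Y$ in which $W'' \in \DC^b(\Rcal)$ and the cone of $s''$ lies in $\N^b$; this shows fullness, and a similar argument handles faithfulness. I proceed by two truncation steps. First, for $c$ smaller than the lower cohomological bound of $Y$, I form $K' := \tau^{\geq c} K$ and, noting that the composite $X \to K \to K'$ fits into a new triangle $W' \to X \to K'$ with $W' := \mathrm{fib}(X \to K')$, produce a roof $X \xleftarrow{s'} W' \xrightarrow{t'} Y$ whose cone is the bounded object $K'$. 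The map $t'$ extends $t$ along the natural map $W \to W'$, whose cone is identified with $\tau^{\leq c-1}K$ by a morphism-of-triangles argument; existence and uniqueness of this extension follow from the $t$-structure vanishings $\Hom(\tau^{\leq c-1}K, Y[i]) = 0$ for $i \in \{0,1\}$, which hold once $c$ is chosen small enough relative to the cohomological amplitudes of $Y$ and $K$.

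The main obstacle is the second step: replacing $K'$ by an object of $\N^b$. Applying $R\pi_\ast$ to the triangle $\tau^{\leq c-1}K \to K \to K'$ and using $R\pi_\ast K = 0$ together with the amplitude bound from Lemma \ref{lemma:catres_esssurj} shows that $R\pi_\ast K'$ is concentrated in a single cohomological degree; say $R\pi_\ast K' \cong M[-c]$ for a coherent sheaf $M$ on $C$. Following the strategy of Lemma \ref{lemma:catres_esssurj}, I form the bounded approximation $\tilde B := \tau^{\geq c-1} L\pi^\ast M[-c]$, which satisfies $R\pi_\ast \tilde B \cong M[-c]$ and admits a natural map $\tilde B \to K'$ induced by the counit $L\pi^\ast R\pi_\ast \to \id$. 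Setting $K''$ to be the cone of $\tilde B \to K'$ gives $K'' \in \N^b$, and repeating the extension procedure with $K''$ in place of $K'$ yields the desired roof $X \xleftarrow{s''} W'' \xrightarrow{t''} Y$ with $W'' \in \DC^b(\Rcal)$ and cone in $\N^b$. The delicate point throughout is the careful choice of cohomological bounds and truncation levels to guarantee all the relevant $\Hom$-vanishings, which in turn exploits both the $t$-structure on $\DC^-(\Rcal)$ and the finite cohomological amplitude of $R\pi_\ast$.
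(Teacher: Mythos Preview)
Your overall strategy---apply Verdier's Lemma \ref{lemma:catres_verdier} to the inclusion $\DC^b(\Rcal)\hookrightarrow\DC^-(\Rcal)$---is exactly the paper's. But you overcomplicate the verification, because you miss the one fact that makes the paper's argument a single line: \emph{truncations of objects in $\ker R\pi_\ast$ remain in $\ker R\pi_\ast$}. Indeed, since $R^p\pi_\ast$ vanishes on $\Coh(\Rcal)$ for $p\geq 2$ (proof of Lemma \ref{lemma:catres_ff}), the spectral sequence \eqref{eq:catres_ss} degenerates at $E_2$; hence if $R\pi_\ast F=0$ then $R\pi_\ast H^q(F)=0$ for every $q$, and therefore $R\pi_\ast(\tau^{\geq c}F)=R\pi_\ast(\tau^{<c}F)=0$ for every $c$. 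In your notation this says $K'=\tau^{\geq c}K$ already lies in $\N^b$: your own amplitude computation in fact gives $R\pi_\ast K'\in\DC^{\geq c}\cap\DC^{\leq c-1}=0$, so the sheaf $M$ you produce is zero and your entire Step~2 is vacuous.

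With this observation in hand, the paper simply verifies condition (b.ii$'$) directly rather than working with roofs: given $f\colon F\to G$ with $F\in\N$ and $G\in\DC^b(\Rcal)$, choose $N\gg 0$ so that $\Hom(\tau^{\leq -N}F,G)=0$; then $f$ factors through $\tau^{>-N}F\in\N\cap\DC^b(\Rcal)$, and essential surjectivity is Lemma \ref{lemma:catres_esssurj}. Your roof-based argument is not incorrect, but it effectively re-derives the conclusion of Verdier's lemma instead of checking its hypothesis, and the detour through $\tilde B$ and $K''$ disappears once you notice $K'\in\N$ already.
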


\begin{proof}
Let us verify Verdier's criterion to $\D'=\DC^b(\Rcal)$, $\D=\DC^-(\Rcal)$ and $\N=\ker R\pi_\ast$. To do so, consider a morphism $f\colon F \to G$ with $F \in \ker R\pi_\ast$ and $G \in \DC^b(\A)$. Given $N$, note that both $\tau^{\leq -N} F$ and $\tau^{>-N}F$ lie in $\ker R\pi_\ast$, by the degeneration of the spectral sequence \eqref{eq:catres_ss}. For $N \gg 0$, we have that $\Hom(\tau^{\leq -N}F, G)=0$, and so $f$ factors through $\tau^{>N}F \in \ker R\pi_\ast \cap \DC^b(\Rcal)$. This shows that $R\pi_\ast\colon \DC^b(\Rcal) \to \DC^b(C)$ is fully faithful; essential surjectivity follows from Lemma \ref{lemma:catres_esssurj}.
\end{proof}

\section{Stability conditions} \label{sec:stab}

In this section we will produce stability conditions on the category $\DC^b(\Rcal)$. To do so, we will start by describing the numerical Grothendieck group of $\DC^b(\Rcal)$ in Subsection \ref{subsec:Kgrp}. We will then produce a heart of a bounded t-structure on $\DC^b(\Rcal)$ in Subsection \ref{subsec:heart}, and a family of central charges on it in Subsection \ref{subsec:ccharge}.

The remaining work is to prove the support property. To do so, we will first give some constraints on the $\sigma_{\alpha, \beta, \gamma, \delta}$-semistable objects in Subsection \ref{subsec:sstable}. We will use this to prove various inequalities towards the support property in Subsection \ref{subsec:inequalities}. Finally, we will show the support property in Subsection \ref{subsec:quad}.

We work under the setup of Subsection \ref{subsec:setup}, but now assuming that $C$ is projective. This also guarantees that $\tilde{C}$ is projective.

\subsection{Grothendieck groups} \label{subsec:Kgrp}

Recall that for a triangulated category $\D$, we have the following two constructions. 
\begin{enumerate}
\item The \emph{Grothendieck group} $K(\D)$ is the abelian group generated by the objects of $\D$, subject to the relation $[B]=[A]+[C]$ for each triangle $A \to B \to C \to A[1]$.
\item Assume that $\D$ is $\C$-linear, with a Serre functor,  and that $\bigoplus_i \Hom(A, B[i])$ is a finite dimensional $\C$-vector space for any $A, B \in \D$. In this case we have the \emph{Euler pairing} $\chi(A, B) = \sum_i (-1)^i \dim \Hom(A, B[i])$. We set the \emph{numerical Grothendieck group} $K^{\num}(\D)$ to be quotient of $K(\D)$ by the kernel of the Euler pairing.
\end{enumerate}

\begin{prop}
The numerical Grothendieck group of $\DC^b(\Rcal)$ is free of rank $2n+me$. A basis of its dual is given by the maps that assign to $(F, \overline{F}; \phi)$ the following:
\begin{enumerate}
\item for each $1 \leq k \leq n$, the map $\deg_k(F, \overline{F}; \phi) = \deg(F|_{\tilde{C}_k})$;
\item for each $1 \leq k \leq n$, the map $\rk_k(F, \overline{F}; \phi) = \rk(F|_{\tilde{C}_k})$;
\item for each $1 \leq i \leq m$ and each $1 \leq j \leq e$, the map $\ell_{ij}(F, \overline{F}; \phi) = \ell((\overline{F}_j)|_{q_i})$.
\end{enumerate} 
\end{prop}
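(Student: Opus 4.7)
The plan is to leverage the semi-orthogonal decomposition $\DC^b(\Rcal) = \langle b^\ast \DC^b(\A), La^\ast \DC^b(\tilde{C}) \rangle$ from Proposition~\ref{prop:sod_main}, which induces a direct sum decomposition $K^{\num}(\Rcal) \cong K^{\num}(\A) \oplus K^{\num}(\tilde{C})$. For the smooth projective curve $\tilde{C}$ with $n$ irreducible components, $K^{\num}(\tilde{C}) \cong \Z^{2n}$ with $(\rk_k, \deg_k)_{k=1}^n$ the standard dual basis. For $\A$, I would iterate Corollary~\ref{cor:ausdc_sod}: after $e$ applications one obtains a semi-orthogonal decomposition of $\DC^b(\A)$ with $e$ pieces, each equivalent to $\DC^b(Z_0)$, where $Z_0 = \{q_1, \dots, q_m\}$ is the reduced subscheme of $T$. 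Hence $K^{\num}(\A) \cong \Z^{me}$, with basis given by the simple $\A$-modules $S_{ij}$ indexed by $1 \leq i \leq m$, $1 \leq j \leq e$. Altogether $K^{\num}(\Rcal) \cong \Z^{2n+me}$.

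Next, I would check that each claimed functional descends to $K^{\num}(\Rcal)$. Additivity on short exact sequences in $\Coh(\Rcal)$ is immediate, so each is well-defined on $K(\DC^b(\Rcal))$. The functionals $\rk_k, \deg_k$ depend only on $a_\ast(F, \overline{F}; \phi) = F$, and $a_\ast$ is exact; so they factor through $K(\tilde{C}) \to K^{\num}(\tilde{C})$. Similarly, $\ell_{ij}$ depends only on $b_\ast(F, \overline{F}; \phi) = \overline{F}$, and $b_\ast$ is exact. Since $\A$ is a generalized Auslander algebra — a finite-dimensional algebra of finite global dimension — the Cartan matrix is invertible, $K(\A) \cong K^{\num}(\A)$, and $\ell_{ij}$ is precisely the composition multiplicity dual to $[S_{ij}]$.

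Finally, I would confirm that $(\deg_k, \rk_k, \ell_{ij})$ is a $\Z$-basis of $K^{\num}(\Rcal)^\vee$ by pairing it against the explicit basis consisting of $[La^\ast \O_{x_k}]$ for a closed point $x_k \in \tilde{C}_k \setminus S$, $[La^\ast \O_{\tilde{C}_k}]$ for $k = 1, \dots, n$, and $[b^\ast S_{ij}]$. Direct evaluation gives a change-of-basis matrix of shape
\[ \begin{pmatrix} I_n & 0 & 0 \\ 0 & I_n & 0 \\ \ast & \ast & I_{me} \end{pmatrix}, \]
which is invertible over $\Z$. The first row reflects that $\deg_k(La^\ast \O_{x_{k'}}) = \delta_{k,k'}$ and vanishes on the other basis vectors (since $x_k \notin S$ forces the $\overline F$-component to be zero); the third row reflects $\ell_{ij}(b^\ast S_{i'j'}) = \delta_{(i,j),(i',j')}$; and the middle row uses that $\deg(\O_{\tilde{C}_k}|_{\tilde C_k}) = 0$.

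The main obstacle is that $\ell_{ij}$ is generally \emph{nonzero} on $La^\ast \O_{\tilde{C}_k}$, so the given functionals are not literally the ``canonical'' SOD functionals but differ from them by the lower-triangular corrections encoded in the $\ast$-blocks above; the content of the proof is precisely that these corrections do not spoil invertibility. One technical point to verify along the way is that iterating Corollary~\ref{cor:ausdc_sod} genuinely produces $e$ copies of $\DC^b(Z_0)$ for the same reduced $Z_0$, which follows because at each stage the ideal $I$ restricts to an ideal of $Z'$ with the same radical.
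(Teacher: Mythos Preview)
Your proof is correct and follows essentially the same approach as the paper: both use the semi-orthogonal decomposition of Proposition~\ref{prop:sod_main} together with Corollary~\ref{cor:ausdc_sod} to compute the rank, and then verify that the given functionals form a dual basis by evaluating on explicit objects. The paper is slightly terser, deferring the fact that SODs induce direct sums on numerical $K$-groups to a reference (\cite{AK25}*{Proposition 2.8(ii)}) and checking surjectivity of $K^{\num}(\Rcal)\to\Z^{2n+me}$ rather than writing out the change-of-basis matrix, but the content is the same.
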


\begin{proof}
Recall from Proposition \ref{prop:sod_main} and Corollary \ref{cor:ausdc_sod} that we have semi-orthogonal decompositions
\[ \DC^b(\Rcal) = \langle b^\ast \DC^b(\A), La^\ast \DC^b(\tilde{C}) \rangle, \quad \DC^b(\A) = \langle \DC^b(T_0), \DC^b(T_0), \dots, \DC^b(T_0) \rangle. \]
Here $T_0=\Sing(C)$ is the set-theoretic support of $C$, and so $\DC^b(\A)$ admits a semi-orthogonal decomposition with $me$ factors isomorphic to $\DC^b(pt)$. Similarly, we have that $\DC^b(\tilde{C}) = \bigoplus_k \DC^b(\tilde{C}_k)$. This shows that
\[ K^{\num}(\Rcal) \cong \bigoplus_k K^{\num}(\tilde{C}_i) \oplus \bigoplus_{m,e} K^{\num}(\DC^b(pt)), \]
cf. \cite{AK25}*{Proposition 2.8(ii)}. This shows that $K^{\num}(\Rcal)$ is free of rank $2n+me$. 

For the second part, note that the maps $\deg_k, \rk_k, \ell_{ij}$ induce a linear map $K^{\num}(\Rcal) \to \Z^{2n+me}$. (The fact that these maps descend to $K^{\num}(\Rcal)$ follows immediately from the computations in \cite{AK25}*{Proposition 2.8(ii)}.) This way, it suffices to show that this map is surjective, which can be checked in appropriately chosen objects (such as $\O_{\tilde{C}_k}$, $\O_p$, and so on). 
\end{proof}

\subsection{Heart} \label{subsec:heart}


The next step towards constructing a pre-stability condition is to provide a heart on $\DC^b(\Rcal)$. To find it we will tilt the standard heart $\Coh(\Rcal)$. Set
\begin{align*}
\Ttors &= \{ (T, \overline{T}; \phi) : T \in \Coh(\tilde{C}), \overline{T} \in \Coh(\A), \phi \text{ surjective} \}, \\
\Ftors &= \{ (0, \overline{F}; 0) : \overline{F} \in \Coh(\A)\}.
\end{align*}

\begin{lemma}
The pair $(\Ttors, \Ftors)$ is a torsion pair on $\Coh(\A)$.
\end{lemma}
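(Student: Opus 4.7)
The plan is to verify the two defining properties of a torsion pair: the Hom-vanishing $\Hom(\Ttors,\Ftors)=0$ and the existence of a functorial short exact sequence decomposing every object. I should note that the target category in the statement, $\Coh(\A)$, appears to be a typographical slip for $\Coh(\Rcal)$: the classes $\Ttors$ and $\Ftors$ are defined in terms of triples $(F,\overline{F};\phi)$, which are objects of $\Coh(\Rcal)$ rather than of $\Coh(\A)$. I will treat the intended target as $\Coh(\Rcal)$.

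For the Hom-vanishing, I take an arbitrary morphism from $(T,\overline{T};\phi_T)\in \Ttors$ (with $\phi_T$ surjective by assumption) to $(0,\overline{F};0)\in \Ftors$. By the definition of morphisms in $\Coh(\Rcal)$, such a morphism is a pair $(f,g)$ with $f\colon T\to 0$ and $g\colon \overline{T}\to \overline{F}$ subject to the compatibility $g\circ \phi_T=0\circ\rho_\ast\mu_S^\ast(f|_S)$. The first forces $f=0$, whence the compatibility reads $g\circ\phi_T=0$; surjectivity of $\phi_T$ then forces $g=0$.

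For the filtration, given $\F=(F,\overline{F};\phi)\in \Coh(\Rcal)$ I set $\overline{F}':=\Im(\phi)\subseteq \overline{F}$, computed in the abelian category $\Coh(\A)$. Then $\phi$ corestricts to a surjection $\phi'\colon \rho_\ast\mu_S^\ast(F|_S)\twoheadrightarrow \overline{F}'$, so $(F,\overline{F}';\phi')\in \Ttors$, and the componentwise quotient in $\Coh(\Rcal)$ is $(0,\overline{F}/\overline{F}';0)\in \Ftors$. Assembling, I obtain the short exact sequence
\[ 0 \to (F,\overline{F}';\phi') \to (F,\overline{F};\phi) \to (0,\overline{F}/\overline{F}';0) \to 0, \]
which is the desired torsion/torsion-free decomposition.

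I do not expect a substantive obstacle: the argument is purely diagrammatic, and the only point to verify carefully is that kernels, images, and cokernels in $\Coh(\Rcal)$ can be formed componentwise (which follows from the explicit description of $\Coh(\Rcal)$ as a category of triples). The torsion pair is, in fact, canonically attached to the structure map $\phi$, making both the vanishing and the filtration formal.
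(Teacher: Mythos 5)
Your proof is correct and matches the paper's argument exactly: the same Hom-vanishing computation using surjectivity of $\phi$, and the same decomposition $0 \to (F, \im\phi; \phi) \to \F \to (0, \coker\phi; 0) \to 0$. You are also right that $\Coh(\A)$ in the statement is a slip for $\Coh(\Rcal)$.
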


\begin{proof}
Given $\T = (T, \overline{T}; \phi) \in \Ttors$ and $\F=(0, \overline{F}; 0) \in \Ftors$, we have that $\Hom(\T, \F)$ consists on pairs $f\colon T \to 0$ and $g \colon \overline{T} \to \overline{F}$ making the diagram
\[ \begin{tikzcd}[column sep=huge] \rho_\ast \mu_S^\ast (T|_S) \arrow[r, "\rho_\ast\mu_S^\ast(f|_S)"] \arrow[d, "\phi"'] & 0 \arrow[d] \\ \overline{T} \arrow[r, "g"'] & \overline{F} \end{tikzcd} \]
commute. Here $f$ is the zero map, and the surjectivity of $\phi$ ensures that $g$ is the zero map as well. This proves that $\Hom(\Ttors, \Ftors)=0$.

Now, given $\E=(E, \overline{E}; \phi)$ an element of $\Coh(\A)$, it fits into the short exact sequence $0 \to (E, \im \phi; \phi) \to \E \to (0, \coker \phi; 0) \to 0$. The first term lies in $\Ttors$, while the second one is in $\Ftors$. This proves the claimed statement. 
\end{proof}

This way, we perform the tilt of the torsion pair to produce a heart
\[ \Cheart =\{ E \in \DC^b(\A) : H^{-1}(E) \in \Ftors, \ H^0(E) \in \Ttors, \ H^i(E)=0 \text{ otherwise} \}. \]

\begin{lemma} \label{lemma:heart_exactness}
The functor $R\pi_\ast\colon \DC^b(\Rcal) \to \DC^b(C)$ restricts to an exact functor $R\pi_\ast\colon \Cheart \to \Coh(C)$. Similarly, $a_\ast\colon \DC^b(\Rcal) \to \DC^b(\tilde{C})$ restricts to an exact functor $a_\ast\colon \Cheart \to \Coh(\tilde{C})$.
\end{lemma}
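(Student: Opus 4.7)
My strategy has two layers. First, both $R\pi_\ast$ and $a_\ast$ are triangulated, so to establish exactness between the abelian hearts it suffices to verify that they send $\Cheart$ into $\Coh(C)$ and $\Coh(\tilde{C})$ respectively: a triangulated functor mapping hearts into hearts is automatically exact, since short exact sequences in a heart are exactly the distinguished triangles having all three terms in the heart. Second, because $\Cheart$ is the tilt of $\Coh(\Rcal)$ by $(\Ttors, \Ftors)$, each $E \in \Cheart$ fits in the distinguished triangle
\[ H^{-1}(E)[1] \to E \to H^0(E) \to H^{-1}(E)[2] \]
with $H^{-1}(E) \in \Ftors$ and $H^0(E) \in \Ttors$, so it is enough to analyze each functor on the torsion pair.

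For $a_\ast$ the analysis is immediate: $a_\ast$ is exact on $\Coh(\Rcal)$ (Subsection \ref{subsec:functors}), so it descends as a triangulated functor agreeing with the underived sheaf-level formula. By definition $a_\ast(0, \overline{F}; 0) = 0$ for any object of $\Ftors$, whence $a_\ast H^{-1}(E)[1] = 0$ and the triangle yields $a_\ast E \cong a_\ast H^0(E)$, a coherent sheaf on $\tilde{C}$.

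For $R\pi_\ast$ I would establish two claims. Claim (A): if $\T = (T, \overline{T}; \phi) \in \Ttors$, then $R\pi_\ast \T = \pi_\ast \T$ is a sheaf. Indeed, surjectivity of $\phi$ in $\Coh(\A)$ is checked componentwise by Lemma \ref{lemma:ausalg_quiver}, so its top component $\phi_e \colon \rho_\ast(T|_S) \twoheadrightarrow \overline{T}_e$ is surjective; composing with the surjection $\rho_\ast T \twoheadrightarrow \rho_\ast(T|_S)$ (which uses that $\rho$ is finite) puts $\T$ inside the $\pi_\ast$-acyclic class $\Qscr$ from the proof of Lemma \ref{lemma:catres_ff}. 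Claim (B): if $\F = (0, \overline{F}; 0) \in \Ftors$, then $R\pi_\ast \F$ is a sheaf placed in cohomological degree $1$, so that $R\pi_\ast H^{-1}(E)[1] \in \Coh(C)$. For this, I would use the length-one resolution from property (1') in Lemma \ref{lemma:catres_ff}: pick $E^0 \in \Coh(\tilde{C})$ with a surjection $\rho_\ast E^0 \twoheadrightarrow \overline{F}_e$ (which exists because $\overline{F}_e$ has zero-dimensional support and $\tilde{C}$ is projective) and form $0 \to \F \to (E^0, \overline{F}; \psi) \to (E^0, 0; 0) \to 0$ with both right-hand terms in $\Qscr$. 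Computing $R\pi_\ast \F$ via this $\Qscr$-resolution yields the two-term complex
\[ \ker(\rho_\ast E^0 \to \overline{F}_e) \hookrightarrow \rho_\ast E^0 \]
in degrees $[0, 1]$, whose only cohomology is $\overline{F}_e$ placed in degree $1$. Combining (A) and (B), the triangle for $R\pi_\ast E$ has both flanks in $\Coh(C)$, so $R\pi_\ast E$ is an extension of coherent sheaves, hence a sheaf.

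The main obstacle is Claim (B): one must track carefully that the derived pushforward of a purely $\Ftors$ object produces exactly a sheaf shifted by one, rather than any higher-cohomology contribution. This is exactly the reason for tilting $\Coh(\Rcal)$ into $\Cheart$ using the specific pair $(\Ttors, \Ftors)$: the shift $[1]$ introduced by the tilt cancels the degree-one concentration of $R\pi_\ast$ on $\Ftors$, so that the resulting composed functor lands in $\Coh(C)$. The bound $R^i\pi_\ast \F = 0$ for $i \geq 2$ relies crucially on the length-one resolution by $\Qscr$-objects provided by Lemma \ref{lemma:catres_ff}, and this compatibility of the tilt with the chosen $\pi_\ast$-acyclic class is the heart of the proof.
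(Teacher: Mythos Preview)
Your proof is correct and follows essentially the same approach as the paper: reduce to the tilt triangle $H^{-1}(E)[1] \to E \to H^0(E)$, observe that $\Ttors \subset \Qscr$ so $R\pi_\ast = \pi_\ast$ on $H^0(E)$, and that on $\Ftors$ one has $\pi_\ast = 0$ together with $R^{\geq 2}\pi_\ast = 0$ (from the length-one $\Qscr$-resolution), forcing $R\pi_\ast(H^{-1}(E)[1]) \in \Coh(C)$. Your Claim (B) is slightly more explicit than the paper's one-line version (which simply notes $\pi_\ast H^{-1}(E) = 0$ and invokes the vanishing of higher derived functors), but the content is identical.
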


\begin{proof}
Given $E \in \Cheart$, we consider the exact triangle $H^{-1}(E)[1] \to E \to H^0(E)$. Here $R\pi_\ast(H^0(E)) = \pi_\ast(H^0(E))$ by the proof of Lemma \ref{lemma:catres_ff}. Also, note that $\pi_\ast(H^{-1}(E))=0$, and so $R\pi_\ast(H^{-1}(E)) = R^1\pi_\ast(H^{-1}(E))[-1]$. The first part follows immediately. A similar argument applies for the second part. 
\end{proof}

For later reference, we will need the following fact.

\begin{lemma} \label{lemma:heart_recollement}
Consider the exact triple $\DC^b(\A) \to \DC^b(\Rcal) \to \DC^b(\tilde{C})$ induced by the semi-orthogonal decomposition \eqref{eq:sod_main}. We have that the heart $\Cheart \subseteq \DC^b(\Rcal)$ is obtained by \emph{recollement} (cf. \cite{BBD82}*{Théoremè 1.4.10}) of the standard hearts $\Coh(\A)[1] \subseteq \DC^b(\A)$ and $\Coh(\tilde{C}) \subseteq \DC^b(\tilde{C})$.
\end{lemma}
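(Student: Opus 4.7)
The plan is to realize $\Cheart$ as the heart of a t-structure produced by BBD gluing applied to a recollement refining the semi-orthogonal decomposition \eqref{eq:sod_main}.

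First, I would extend the semi-orthogonal decomposition to a genuine recollement. The adjunctions $La^\ast \dashv a_\ast$ from Lemma \ref{lemma:functors_Labounded} and $Lb_! \dashv b^\ast \dashv b_\ast$ from Subsection \ref{subsec:functors} supply five of the six required functors; the missing right adjoint of $a_\ast$ is furnished by the exact, fully faithful functor $a^!\colon \DC^b(\tilde{C}) \to \DC^b(\Rcal)$, $a^!(G) = (G, 0; 0)$. The adjunction $a_\ast \dashv a^!$ is immediate: any morphism $(F, \bar{F}; \phi) \to (G, 0; 0)$ in $\Coh(\Rcal)$ is forced to be zero on the second coordinate, giving $\Hom_{\Rcal}((F, \bar{F}; \phi), (G, 0; 0)) = \Hom_{\tilde{C}}(F, G)$. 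The remaining recollement identities ($a_\ast b^\ast = 0$, $b_\ast a^! = 0$, $Lb_! La^\ast = 0$, and the four fully-faithful compatibilities) can be checked directly from the formulas.

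Next, I would apply \cite{BBD82}*{Théorème 1.4.10} with the standard t-structure on $\DC^b(\tilde{C})$ (heart $\Coh(\tilde{C})$) and the shifted t-structure on $\DC^b(\A)$ (heart $\Coh(\A)[1]$). This yields a glued t-structure on $\DC^b(\Rcal)$ given by
\begin{align*}
\DC^{\leq 0}_{\mathrm{glu}} &= \{E : a_\ast E \in \DC^{\leq 0}(\tilde{C}), \; Lb_!(E) \in \DC^{\leq -1}(\A)\}, \\
\DC^{\geq 0}_{\mathrm{glu}} &= \{E : a_\ast E \in \DC^{\geq 0}(\tilde{C}), \; b_\ast(E) \in \DC^{\geq -1}(\A)\},
\end{align*}
with heart $\H^{\mathrm{glu}}$.

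Finally, I would show the tilted t-structure with heart $\Cheart$ agrees with this glued one. It suffices to verify the inclusions $\DC^{\leq 0}_{\Cheart} \subseteq \DC^{\leq 0}_{\mathrm{glu}}$ and $\DC^{\geq 0}_{\Cheart} \subseteq \DC^{\geq 0}_{\mathrm{glu}}$: by the orthogonality $\DC^{\leq 0} = {}^\perp \DC^{\geq 1}$, these force the reverse inclusions and hence coincidence of t-structures and their hearts. For the first, take $E \in \DC^{\leq 0}_{\Cheart}$, so $H^i(E) = 0$ for $i > 0$ and $H^0(E) \in \Ttors$. Exactness of $a_\ast$ and $a_\ast|_{\Ftors} = 0$ give $a_\ast E \in \DC^{\leq 0}(\tilde{C})$. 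The Grothendieck spectral sequence $L^p b_!(H^q(E)) \Rightarrow H^{p+q}(Lb_!(E))$ degenerates at total degree zero to yield $H^0(Lb_!(E)) = b_!(H^0(E)) = \coker(\phi_{H^0(E)}) = 0$, since $\phi_{H^0(E)}$ is surjective by $H^0(E) \in \Ttors$; hence $Lb_!(E) \in \DC^{\leq -1}(\A)$. For the second inclusion, take $E \in \DC^{\geq 0}_{\Cheart}$: exactness of $a_\ast$ with $a_\ast|_{\Ftors} = 0$ gives $a_\ast E \in \DC^{\geq 0}(\tilde{C})$, while exactness of $b_\ast$ applied level-by-level gives $b_\ast E \in \DC^{\geq -1}(\A)$.

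The main obstacle is the $Lb_!$ computation; alternatively, one may sidestep the Grothendieck spectral sequence by using the three-term $\Pscr$-resolution from the proof of Lemma \ref{lemma:functors_Lbbounded} to compute $Lb_!((T, \bar{T}; \phi))$ explicitly for $(T, \bar{T}; \phi) \in \Ttors$ and observe that surjectivity of $\phi$ annihilates the degree-zero term.
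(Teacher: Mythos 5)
Your proof is correct, but it takes a genuinely different route from the paper's. The paper's proof is a three-line appeal to the recognition criterion of \cite{BBD82}*{Proposition 1.4.12}: it checks that $b^\ast$ carries $\Coh(\A)[1]$ into $\Cheart$, that $a_\ast$ carries $\Cheart$ onto $\Coh(\tilde{C})$, and that $La^\ast\circ a_\ast$ is left t-exact (the last point via locally free resolutions), and stops there. You instead complete the semi-orthogonal decomposition to an explicit recollement by producing the missing right adjoint $a^!(G)=(G,0;0)$ — a correct and useful formula that the paper never writes down — then unwind Théorème 1.4.10 to get the glued aisle and co-aisle and match them against the tilted t-structure by a double inclusion plus orthogonality. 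All the substantive verifications are right: the adjunction $a_\ast\dashv a^!$, the identification of the glued aisles with the $[1]$-shift on the $\A$-side, exactness of $a_\ast$ and $b_\ast$, the vanishing $a_\ast|_{\Ftors}=0$, and the computation $H^0(Lb_!E)=b_!(H^0(E))=\coker\phi=0$ for $H^0(E)\in\Ttors$ (right-exactness of $b_!$ makes the spectral-sequence detour optional, as you note). What your approach buys is a self-contained argument at the level of aisles, independent of the recognition lemma; what it costs is that the "remaining recollement identities" you defer include the second semi-orthogonal decomposition $\langle a^!\DC^b(\tilde{C}), b^\ast\DC^b(\A)\rangle$, i.e.\ right admissibility of $b^\ast\DC^b(\A)$, which is the one piece not already supplied by Proposition \ref{prop:sod_main}; it does follow immediately from the adjunctions you list (semiorthogonality is $\Hom(b^\ast N, a^!G)=\Hom(a_\ast b^\ast N, G)=0$, and the decomposition triangles come from the unit/counit of $Lb_!\dashv b^\ast$ and $a_\ast\dashv a^!$), but it deserves to be stated rather than folded into "checked directly from the formulas".
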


\begin{proof}
It is clear that the functor $b^\ast\colon \DC^b(\A) \to \DC^b(\Rcal)$ maps $\Coh(\A)[1]$ into $\Cheart$, and that $a_\ast\colon \DC^b(\Rcal) \to \DC^b(\tilde{C})$ maps $\Cheart$ onto $\Coh(\tilde{C})$. This way, it suffices to show that $La^\ast \circ a_\ast$ is left t-exact, cf. \cite{BBD82}*{Proposition 1.4.12}. This follows directly by the fact that $a_\ast$ is t-exact, and that $La^\ast$ can be computed by resolving using locally free sheaves.
\end{proof}

\subsection{Central charge} \label{subsec:ccharge}

The next ingredient we need is a central charge. Pick $\alpha = (\alpha_k)_{1 \leq k \leq n}$, $\beta = (\beta_k)_{1 \leq k \leq n}$, $\gamma = (\gamma_k)_{1 \leq k \leq n}$ and $\delta = (\delta_{ij})_{1 \leq i \leq m, 1 \leq j \leq e}$ real numbers.

\begin{condition} \label{cond:ccharge_main}
For all $1 \leq k \leq n$, $1 \leq i \leq m$, $1 \leq j \leq e$, we impose the following:
\begin{enumerate}
\item $\alpha_k >0, \gamma_k >0$;
\item $\delta_{ij} >0$;
\item $\sum_j \ell(I^{e-j}|_{p}) \delta_{ij}<\gamma_k$ for all $p \in \tilde{C}_k \cap \rho^{-1}(q_i)$.
\end{enumerate}
\end{condition}

We point out that the condition is non-vacuous: take $\alpha_k=1$, $\beta_k=0$, $\gamma_k=1$, and $\delta_{ij}=\epsilon$ for some $0<\epsilon \ll 1$.

Under these assumptions, set $Z=Z_{\alpha, \beta, \gamma, \delta}\colon K^\num(\Rcal) \to \C$ via the formula
\[ Z(E) = \sum_{i=1}^m\sum_{j=1}^e \delta_{ij} \ell_{ij}(E) - \sum_{k=1}^n \gamma_k \deg_k(E) + \sum_{k=1}^n \beta_k \rk_k(E) + i \sum_{k=1}^n \alpha_k \rk_k(E). \]

\begin{lemma} \label{lemma:ccharge_upperhalf}
Let $0 \neq E \in \Cheart$ be given. Then $Z(E)$ lies in the upper half-space. 
\end{lemma}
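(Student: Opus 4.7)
The plan is to exploit the description of $\Cheart$ as the tilt of $\Coh(\Rcal)$ at the torsion pair $(\Ttors, \Ftors)$. Any $E \in \Cheart$ sits in a triangle $H^{-1}(E)[1] \to E \to H^0(E) \to$ with $H^{-1}(E) = (0, \overline{F}; 0) \in \Ftors$ and $H^0(E) = (T, \overline{T}; \phi) \in \Ttors$, and $\phi\colon \rho_\ast \mu_S^\ast(T|_S) \twoheadrightarrow \overline{T}$ is surjective. Additivity of $Z$ on $K^{\num}(\Rcal)$ together with the fact that $\deg_k$ and $\rk_k$ vanish on objects of $\Ftors$ gives
\[ \Im Z(E) = \sum_k \alpha_k \rk_k(T), \]
\[ \Re Z(E) = \sum_{i,j} \delta_{ij}\bigl(\ell((\overline{T}_j)|_{q_i}) - \ell((\overline{F}_j)|_{q_i})\bigr) - \sum_k \gamma_k \deg_k(T) + \sum_k \beta_k \rk_k(T). \]
Since $\alpha_k > 0$ and $\rk_k(T) \geq 0$, the imaginary part is nonnegative, and the case $\Im Z(E) > 0$ is immediate from Condition \ref{cond:ccharge_main}(1).

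The remaining case is $\Im Z(E) = 0$, i.e.\ $T$ is torsion on $\tilde{C}$, and the $\beta_k$-term in $\Re Z(E)$ drops out. If $T = 0$, surjectivity of $\phi$ forces $\overline{T} = 0$, so $E \neq 0$ requires $\overline{F} \neq 0$, and then $\Re Z(E) = -\sum_{i,j}\delta_{ij}\ell((\overline{F}_j)|_{q_i}) < 0$. Assume $T \neq 0$, and decompose $T = T_S \oplus T'$, with $T_S$ the summand supported at the closed points of $S$ and $T'$ supported away from $S$. Since $T'|_S = 0$, $\phi$ factors through $\rho_\ast\mu_S^\ast(T_S|_S) \twoheadrightarrow \overline{T}$, and reading off the $j$-th component yields
\[ \ell((\overline{T}_j)|_{q_i}) \leq \sum_{p \in \rho^{-1}(q_i)} \ell\bigl((T_S|_S \otimes_S I^{e-j})_p\bigr). \]

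To estimate the right-hand side, I would use that $\tilde{C}$ is smooth, so each stalk $(T_S)_p$ is a finite length module over a DVR; a surjection $\O_{\tilde{C},p}^{s} \twoheadrightarrow (I^{e-j})_p$ with $s = \ell(I^{e-j}|_p)$ provided by Nakayama gives the bound
\[ \ell\bigl((T_S|_S \otimes_S I^{e-j})_p\bigr) \leq r_p\cdot \ell(I^{e-j}|_p), \]
where $r_p = \ell((T_S)_p)$. Multiplying by $\delta_{ij}$, summing over $j$, and invoking Condition \ref{cond:ccharge_main}(3) at each $p \in \rho^{-1}(q_i) \cap \tilde{C}_{k(p)}$ gives
\[ \sum_j \delta_{ij}\, \ell\bigl((T_S|_S \otimes_S I^{e-j})_p\bigr) \leq r_p \sum_j \delta_{ij}\ell(I^{e-j}|_p) < r_p \gamma_{k(p)}, \]
with strict inequality as soon as $r_p > 0$. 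Aggregating over $i$ and $p$ produces
\[ \sum_{i,j} \delta_{ij} \ell((\overline{T}_j)|_{q_i}) < \sum_k \gamma_k \deg_k(T_S) \leq \sum_k \gamma_k \deg_k(T), \]
strictly whenever $T_S \neq 0$. If instead $T_S = 0$ but $T' \neq 0$, then $\overline{T} = 0$ and $\deg_k(T') > 0$ for some $k$, so the strict inequality still holds. Combining with $-\sum_{i,j}\delta_{ij}\ell((\overline{F}_j)|_{q_i}) \leq 0$ one concludes $\Re Z(E) < 0$.

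The main obstacle I expect is making the local length bound precise and propagating the strict inequality of Condition \ref{cond:ccharge_main}(3) cleanly through the surjection $\phi$ and the aggregations over $i$, $j$, and $p$; once those are in place the argument becomes additive bookkeeping over the components of $\tilde{C}$ and the fibers of $\rho$.
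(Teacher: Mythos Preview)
Your proof is correct and follows essentially the same approach as the paper's: split via the triangle $H^{-1}(E)[1] \to E \to H^0(E)$, dispose of the $\Ftors$-part trivially, and for the torsion part of $H^0(E)$ bound the $\delta_{ij}$-terms against the degree terms using surjectivity of $\phi$ and Condition~\ref{cond:ccharge_main}(3). The only difference is organizational: the paper handles the torsion case by induction on the length of $T$ with base case $T=\O_x$, whereas you unroll this into a single direct estimate $\ell\bigl((T_S|_S\otimes_S I^{e-j})_p\bigr)\leq r_p\cdot\ell(I^{e-j}|_p)$ via Nakayama; the content is the same.
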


\begin{proof}
Note that for any $E \in \Cheart$, we have that
\[ \rk_k(E) = \rk_k(H^0(E)) \geq 0, \qquad 1 \leq k \leq n. \]
Equality holds only if $a_\ast H^0(E)|_{\tilde{C}_k}$ is torsion. This immediately gives us that $\Im Z_{\alpha, \beta, \gamma, \delta}(E) \geq 0$, and equality holds when $a_\ast H^0(E)$ is torsion.

This way, assume now that $\Im Z_{\alpha, \beta, \gamma, \delta}(E) = 0$, so that $a_\ast H^0(E)$ is torsion. We will divide this case into four small steps. 
\begin{enumerate}
\item Assume that $E=H^{-1}(E)[1]$. In this case, we have that $\ell_{ij}(E)\leq 0$ for all $i,j$ and that the other terms are zero. This way, $\sum_{i,j} \delta_{ij} \ell_{ij}(E) \leq 0$. Equality holds only for $\ell_{ij}(E)=0$ for all $i,j$, i.e. for $E=0$.

\item Assume that $E=(\O_x, \overline{F}; \phi)$ for some $x \in \tilde{C}_k$. Our assumption ensures that $\phi$ is surjective. So, if $\rho(x) \notin \Sing(C)$, then $\overline{F}=0$, and $Z(E) = -\gamma_k<0$. Otherwise, we have that $\rho(x)=q_i$ for some $i$, and so 
\[ Z(E) = -\gamma_k + \sum_j \delta_{ij} \ell_{ij}(E) \leq -\gamma_k + \sum_j \delta_{ij} \ell(I^{e-j}|_{x}) <0, \]
thanks to the surjectivity. 

\item Assume that $E = H^0(E)$. Here $E=(F, \overline{F}; \phi)$ with $F$ torsion, so we induct in the length of $F$. The case $\ell(F)=1$ was done in the previous point. For the induction step, we pick a subsheaf $F' \subset F$ of length $1$, and use the subobject $(F', \phi(\mu_S^\ast(F'|_S)); \id)$ of $E$, which is a subobject both in $\Coh(\Rcal)$ and in $\Cheart$. The induction step allows us to conclude.

\item For the general case, note that $E$ fits in a short exact sequence $0 \to H^{-1}(E)[1] \to E \to H^0(E) \to 0$. We conclude directly by the previous points.
\end{enumerate}
This proves that $Z(E)$ lies in the upper half-space, as claimed.
\end{proof}

\begin{cor} \label{cor:ccharge_prestab}
Assume that $\alpha_k, \beta_k, \gamma_k, \delta_{ij}$ are all rational numbers satisfying Condition \ref{cond:ccharge_main}. Then, the pair $\sigma_{\alpha, \beta, \gamma, \delta}=(Z_{\alpha, \beta, \gamma, \delta}, \Cheart)$ is a pre-stability condition.
\end{cor}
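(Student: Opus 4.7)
A pre-stability condition consists of the heart together with a central charge that defines a stability function on it and admits Harder--Narasimhan filtrations. The stability-function axiom, namely $Z_{\alpha,\beta,\gamma,\delta}(E) \in \mathbb{H} \cup \R_{<0}$ for every $0 \neq E \in \Cheart$, is essentially contained in Lemma \ref{lemma:ccharge_upperhalf}: the four cases of its proof show not only that $\Im Z \geq 0$, but also that $\Re Z < 0$ whenever $\Im Z = 0$. So only the existence of HN filtrations remains to be established.

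The plan is to invoke the standard criterion that HN filtrations exist whenever the heart is Noetherian and the image of $Z$ lies in a discrete subgroup of $\C$. Discreteness is immediate from the rationality hypothesis: clearing denominators, $Z$ factors through $\tfrac{1}{N}(\Z + i\Z)$ for a common denominator $N$ of the tuple $(\alpha_k, \beta_k, \gamma_k, \delta_{ij})$.

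The real content is Noetherianness of $\Cheart$. By Lemma \ref{lemma:heart_recollement}, $\Cheart$ is obtained by recollement from $\Coh(\A)[1]$ and $\Coh(\tilde{C})$, both of which are Noetherian: $\Coh(\tilde{C})$ since $\tilde{C}$ is a Noetherian scheme, and $\Coh(\A)$ even of finite length since $\A$ is a finite-dimensional $\C$-algebra (as $T$ is zero-dimensional and $I^e = 0$). Given an ascending chain $E_1 \subseteq E_2 \subseteq \cdots$ in $\Cheart$, applying the exact functor $a_\ast \colon \Cheart \to \Coh(\tilde{C})$ of Lemma \ref{lemma:heart_exactness} produces an ascending chain that stabilizes, so after passing to a tail we may assume $a_\ast(E_n) = a_\ast(E_N)$ for all $n \geq N$. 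Each quotient $E_n/E_N$ is then killed by $a_\ast$, and one verifies that $\ker(a_\ast|_\Cheart) = \Coh(\A)[1]$ using the observation that any $(T, \overline{T}; \phi) \in \Ttors$ with $T = 0$ has $\overline{T} = 0$ by surjectivity of $\phi$. Hence the induced chain $(E_n/E_N)_{n \geq N}$ stabilizes in $\Coh(\A)[1]$, forcing the original chain to stabilize. The main delicate point in this strategy is precisely this last identification of the kernel of $a_\ast$ restricted to the tilted heart.
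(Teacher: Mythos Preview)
Your approach is correct and proves a stronger fact than the paper does. The paper does not verify that $\Cheart$ is Noetherian; instead it observes that $\{E \in \Cheart : \Im Z(E) = 0\}$ consists of objects with $H^0(E) = (F, \overline{F}; \phi)$ having $F$ torsion, checks directly that this subcategory is of finite length, and invokes \cite{MS17}*{Lemma 4.10}. Your route, via the exact functor $a_\ast$ and the recollement structure, establishes Noetherianness of the whole heart, which is a pleasant structural statement at the cost of a slightly longer argument. One point to tighten: Noetherianness concerns chains of subobjects of a \emph{fixed} object $E$, and you should carry such an ambient $E$ throughout. In particular, for the last step you need a common bound for $(E_n/E_N)$ inside $\Coh(\A)[1]$: since each $E_n/E_N$ lies in $\Ftors[1]$ and is a subobject of $E/E_N$ in $\Cheart$, it injects into $H^{-1}(E/E_N)[1]$, which supplies the required ambient object in $\Coh(\A)$. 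The identification $\ker(a_\ast|_{\Cheart}) = b^\ast\Coh(\A)[1]$ that you flagged as delicate is in fact the easy part.
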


\begin{proof}
Our assumption guarantees that $Z_{\alpha, \beta, \gamma, \delta}$ has discrete image. Moreover, the set $\{ E \in \Cheart: \Im Z(E)=0\}$ consists of objects $E$ with $H^0(E)=(F, \overline{F}; \phi)$ having $F$ torsion. This way, we immediately verify that the set is both Noetherian and Artinian. The existence of Harder--Narasimhan filtrations is now standard, cf. \cite{MS17}*{Lemma 4.10}.
\end{proof}

\subsection{Semistable objects} \label{subsec:sstable}

In this subsection we fix $\alpha_k, \beta_k, \gamma_k, \delta_{ij}$ rational numbers satisfying Condition \ref{cond:ccharge_main}. This way, we have a pre-stability condition $\sigma=\sigma_{\alpha, \beta, \gamma, \delta}$ as proven in Corollary \ref{cor:ccharge_prestab}. We start by describing the $\sigma$-stable objects of phase 1.

\begin{lemma} \label{lemma:sstable_phase1}
Let $E \in \Cheart$ be a $\sigma$-stable object of phase 1. Then $E$ satisfies one of the following three options.
\begin{enumerate}
\item We have $E=(\O_x, 0; 0)$ for some $x \in \tilde{C} \setminus S$. 
\item We have $E=(\O_p, \overline{F}; \phi)$, for some $p \in \rho^{-1}(q_i)$, and $\overline{F}$ is supported at $q_i$ with $\ell(\overline{F}_j) \leq \ell(I^{e-j}|_{q_i})$.
\item We have $E=(0, \overline{F}; 0)[1]$ for some $\overline{F}$.
\end{enumerate}
\end{lemma}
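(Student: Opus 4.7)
The plan is to combine the phase-1 condition with the canonical triangle of the tilt, using stability to rule out all but the three listed shapes. First I would extract what phase~$1$ buys us: $Z(E)$ is a negative real number, so $\Im Z(E) = \sum_k \alpha_k \rk_k(E) = 0$. Since $\alpha_k > 0$ and every object of $\Cheart$ has $\rk_k \geq 0$ (Lemma~\ref{lemma:ccharge_upperhalf}), this forces $\rk_k(E) = 0$ for every $k$.

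Next I would invoke the canonical short exact sequence $0 \to H^{-1}(E)[1] \to E \to H^0(E) \to 0$ in $\Cheart$. Because $a_\ast$ is exact on $\Cheart$ (Lemma~\ref{lemma:heart_exactness}), both $H^{-1}(E)[1]$ and $H^0(E)$ again have all $\rk_k = 0$. By the step-by-step analysis in the proof of Lemma~\ref{lemma:ccharge_upperhalf}, each of them is either zero or has $Z$-value on the strictly negative real axis, so if both were nonzero then $H^{-1}(E)[1]$ would be a proper subobject of $E$ of the same phase, contradicting stability. Thus one of the two equals $E$, and the case $E = H^{-1}(E)[1]$ with $H^{-1}(E) \in \Ftors$ is option~(3).

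The main work is the case $E = H^0(E) \in \Ttors$, writing $E = (F, \overline{F}; \phi)$ with $F$ torsion and $\phi$ surjective. Suppose for contradiction that $\ell(F) \geq 2$, pick a length-one subsheaf $F' \subsetneq F$, and form
\[ A = \bigl( F', \, \phi\bigl(\rho_\ast \mu_S^\ast(F'|_S)\bigr); \, \psi \bigr), \]
where $\psi$ is the induced surjection. The key check is that the cokernel $E/A$ computed in $\Coh(\Rcal)$ still lies in $\Ttors$: its structural map $\rho_\ast \mu_S^\ast((F/F')|_S) \to \overline{F}/\overline{A}$ arises as the induced map of cokernels of surjections, hence is itself surjective. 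Therefore $0 \to A \to E \to E/A \to 0$ is short exact in $\Cheart$. Applying step~(2) of the proof of Lemma~\ref{lemma:ccharge_upperhalf} to $A$, together with Condition~\ref{cond:ccharge_main}(3), yields $Z(A) < 0$, so $A$ is a proper phase-one subobject of $E$ and stability is violated.

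Hence $\ell(F) \leq 1$; and $F = 0$ would force $\overline{F} = 0$ by surjectivity, giving $E = 0$, a contradiction. Thus $F = \O_x$ for a unique $x \in \tilde{C}$. If $x \in \tilde{C}\setminus S$, then $\mu_S^\ast(\O_x|_S) = 0$ and surjectivity of $\phi$ forces $\overline{F} = 0$, i.e.\ option~(1). If $x = p \in \rho^{-1}(q_i)$ for some $i$, then $\overline{F}$ is a quotient of $\rho_\ast \mu_S^\ast(\O_p|_S)$, supported at $q_i$, with the asserted length bound $\ell(\overline{F}_j) \leq \ell(I^{e-j}|_{q_i})$; this is option~(2). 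The main obstacle I expect is the check that $E/A$ genuinely lies in $\Ttors$ rather than merely in $\Coh(\Rcal)$, so that the destabilizing subobject really lives in the tilted heart; the rest reduces to bookkeeping against the positivity constraints in Condition~\ref{cond:ccharge_main}.
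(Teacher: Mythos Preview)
Your proof is correct and follows essentially the same approach as the paper: the paper's proof is the one-line ``Follows from the proof of Lemma~\ref{lemma:ccharge_upperhalf}'', and you have simply unwound that reference, combining the case analysis (1)--(4) there with stability to eliminate the composite cases. Your extra care in verifying that $E/A$ lies in $\Ttors$ (so that the destabilizing sequence is genuinely short exact in $\Cheart$) is the right place to pay attention, and your argument via right-exactness of $\rho_\ast\mu_S^\ast((-)|_S)$ handles it correctly.
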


\begin{proof}
Follows from the proof of Lemma \ref{lemma:ccharge_upperhalf}.
\end{proof}

\begin{lemma} \label{lemma:sstable_phasenot1}
Let $E \in \Cheart$ be a $\sigma$-stable object of phase smaller than 1. Then $E=(F, \overline{F}; \phi)$ is an element of $\Coh(\Rcal)$, with $F$ a locally free sheaf.
\end{lemma}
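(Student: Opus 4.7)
My plan has two main steps: first, rule out the $H^{-1}(E)$-part of $E$ in the tilt; second, rule out torsion in the sheaf part $F$. Both reductions use the same mechanism, namely that the forbidden object produces a subobject in $\Cheart$ with $\Im Z = 0$, which by Lemma~\ref{lemma:ccharge_upperhalf} must have phase exactly $1$, contradicting $\phi(E)<1$.

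For the first step, I would use the triangle $H^{-1}(E)[1] \to E \to H^0(E)$ associated to $E \in \Cheart$ arising from the tilted t-structure. Since $H^{-1}(E)\in\Ftors$, it is of the form $(0,\overline{G};0)$, and so $H^{-1}(E)[1]$ is a subobject of $E$ in $\Cheart$. If $H^{-1}(E)\neq 0$, direct computation (or an application of Lemma~\ref{lemma:ccharge_upperhalf}) shows $Z(H^{-1}(E)[1])\in\mathbb{R}_{<0}$, so this subobject has phase $1$. This contradicts $E$ being $\sigma$-stable of phase strictly less than $1$. Hence $H^{-1}(E)=0$ and $E=H^0(E)\in\Ttors\subseteq\Coh(\Rcal)$.

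For the second step, write $E=(F,\overline{F};\phi)$ with $\phi$ surjective, let $F_t\subseteq F$ be the torsion subsheaf, and consider
\[ A = \bigl(F_t,\ \phi(\rho_\ast\mu_S^\ast(F_t|_S));\ \phi|_{F_t}\bigr), \]
viewed as a subobject of $E$ in $\Coh(\Rcal)$. The key verification is that the short exact sequence $0\to A\to E\to E/A\to 0$ in $\Coh(\Rcal)$ actually lies in $\Ttors$, hence in $\Cheart$: $A$ is in $\Ttors$ by construction, and for $E/A$ the induced map is obtained as the cokernel of a right-exact column in a diagram whose bottom row is the surjection defined by $\phi$, so the surjectivity is preserved under the quotient. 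This is a routine diagram chase, but it is the only bookkeeping step where one really uses the definition of $\Ttors$.

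Once the sequence is established in $\Cheart$, the argument concludes as in step one: $A$ has $\rk_k(A)=0$ for all $k$, so $\Im Z(A)=0$, and if $A\neq 0$ then Lemma~\ref{lemma:ccharge_upperhalf} forces $Z(A)\in\mathbb{R}_{<0}$, giving $\phi(A)=1>\phi(E)$ and contradicting $\sigma$-stability of $E$. Therefore $F_t=0$, and since $\tilde{C}$ is a smooth curve, $F$ is locally free. The main obstacle is the diagram chase showing $E/A\in\Ttors$; the rest is a clean application of Lemma~\ref{lemma:ccharge_upperhalf}.
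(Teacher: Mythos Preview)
Your proposal is correct and follows essentially the same approach as the paper's proof: first eliminate $H^{-1}(E)$ via the tilt triangle, then eliminate torsion in $F$ by exhibiting the torsion part as a phase-$1$ subobject in $\Cheart$. The paper's argument is terser and does not spell out the verification that $E/A\in\Ttors$, so your added diagram chase is a welcome clarification rather than a departure.
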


\begin{proof}
For the first part, note that we have the short exact sequence
\[ 0 \to H^{-1}(E)[1] \to E \to H^0(E) \to 0 \]
in $\Cheart$. Here $H^{-1}(E)[1]$ must be zero, as otherwise it will be a subobject with phase 1. Let us write $E=(F, \overline{F}; \phi)$. 

We claim that $F$ must be locally free. Otherwise, the torsion subobject $F_{\mathrm{tors}}$ will induce a destabilizing subobject $(F_{\mathrm{tors}}, \phi(\mu_S^\ast(F_{\mathrm{tors}}|_S)); \phi)$. It follows that $F_{\mathrm{tors}}=0$, and so $F$ is locally free.
\end{proof}

\subsection{Inequalities} \label{subsec:inequalities}

The goal of this subsection is to produce various inequalities that will help us proving the support property. To do so, we endow $K^\num(\Rcal)$ with the norm $\norm{v}^2 = \sum_k \abs{\deg_k(v)}^2 + \sum_k \abs{\rk_k(v)}^2 + \sum_{i,j} \abs{\ell_{ij}(v)}^2$.

Let us start by bounding $\sigma$-stable objects $E \in \Cheart$ with phase 1. We omit the proof, as it follows by a direct computation together with Lemma \ref{lemma:sstable_phase1}.

\begin{lemma} \label{lemma:ineq_phase1}
Let $E \in \Cheart$ be a $\sigma$-stable object of phase 1.
\begin{enumerate}
\item If $E=(\O_x, 0; 0)$, for some $x \in \tilde{C}_k \setminus S$, then $\norm{E} = 1$ and $\abs{\Re Z(E)} = \abs{\gamma_k}$. 
\item If $E=(\O_{p}, \overline{F}; \phi)$ with $\rho(p) = q_i$ and $p \in \tilde{C}_k$, then
\[ \norm{E}^2\leq 1+\sum_j \ell(I^{e-j}|_{q_i})^2, \quad \abs{\Re Z(E)} \geq \gamma_k-\sum_j \ell(I^{e-j}|_{q_i})\delta_{ij}. \]

\item If $E=(0, \overline{F}; 0)[1]$, then
\[ \norm{E}^2=\sum_{ij} \ell(\overline{F}^j|_{p_i})^2, \quad \abs{\Re Z(E)} = \sum_{i,j}\delta_{ij} \ell(\overline{F}^j|_{p_i}). \]
\end{enumerate}
\end{lemma}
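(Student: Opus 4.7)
The plan is a straightforward case-by-case computation, treating the three possibilities for $\sigma$-stable objects of phase 1 identified in Lemma \ref{lemma:sstable_phase1}. In each case, I will compute the numerical invariants $\deg_k(E)$, $\rk_k(E)$, and $\ell_{ij}(E)$ directly, then plug them into the definitions of $\norm{E}$ and $\Re Z(E)$.

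In case (1), where $E=(\O_x, 0; 0)$ for $x \in \tilde{C}_k \setminus S$, the skyscraper sheaf $\O_x$ satisfies $\deg_k(E)=1$, $\rk_{k'}(E)=0$ for all $k'$, and $\ell_{ij}(E)=0$ (since $\overline{F}=0$ and $x \notin S$). This gives $\norm{E}^2 = 1$ immediately, while $\Re Z(E) = -\gamma_k$; using $\gamma_k>0$ from Condition \ref{cond:ccharge_main}(1), we get $\abs{\Re Z(E)} = \gamma_k$.

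In case (2), where $E=(\O_p, \overline{F}; \phi)$ with $\rho(p)=q_i$ and $p \in \tilde{C}_k$, we similarly have $\deg_k(E)=1$, all ranks vanish, and $\ell_{ij}(E) = \ell(\overline{F}_j|_{q_i})$ only for the distinguished index $i$. The norm bound follows by substituting $\ell(\overline{F}_j|_{q_i}) \leq \ell(I^{e-j}|_{q_i})$ from Lemma \ref{lemma:sstable_phase1}(2). For the real part, $\Re Z(E) = \sum_j \delta_{ij} \ell(\overline{F}_j|_{q_i}) - \gamma_k$, and Condition \ref{cond:ccharge_main}(3) guarantees this is strictly negative, so applying $\delta_{ij}>0$ together with the length bound yields the stated lower bound on $\abs{\Re Z(E)}$.

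In case (3), where $E=(0, \overline{F}; 0)[1]$, the shift introduces a sign, so $\ell_{ij}(E) = -\ell(\overline{F}^j|_{p_i})$ while degrees and ranks vanish. Squaring absorbs the sign for $\norm{E}^2$, and the sign, together with $\delta_{ij}>0$, makes $\Re Z(E)$ negative with absolute value $\sum_{i,j}\delta_{ij}\ell(\overline{F}^j|_{p_i})$. There is no real obstacle in any of these computations: the lemma is a bookkeeping statement that tabulates the values of $\norm{\cdot}$ and $\Re Z$ on the finite list of candidates produced by Lemma \ref{lemma:sstable_phase1}, and it exists only so that the support-property argument in Subsection \ref{subsec:quad} can compare stable classes of phase 1 against the quadratic form on $K^{\num}(\Rcal)$.
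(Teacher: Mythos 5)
Your proposal is correct and is exactly the direct computation that the paper omits (the paper states the lemma "follows by a direct computation together with Lemma \ref{lemma:sstable_phase1}"). The case-by-case evaluation of $\deg_k$, $\rk_k$, $\ell_{ij}$, the use of Condition \ref{cond:ccharge_main}(3) to fix the sign of $\Re Z$ in case (2), and the sign flip from the shift in case (3) are all handled as intended.
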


We will now focus on the case when $E \in \Cheart$ is a $\sigma$-stable object with phase smaller than 1. From Lemma \ref{lemma:sstable_phasenot1}, we have that $E=(F, \overline{F}; \phi)$ for some $F \in \Coh(\tilde{C})$ locally free. We write $F=\bigoplus_k F_k$, where each $F_k$ is a locally free sheaf on $\tilde{C}_k$.

\begin{lemma} \label{lemma:ineq_deg}
Let $E=(F, \overline{F}; \phi)$ be a $\sigma$-stable object of phase smaller than 1, and let $1 \leq k \leq n$. Then, the inequality
\[ \gamma_k \abs{\deg_k(E)} \leq \frac{\Im Z(E)}{\alpha_k} \left(\sum_{i,j} \delta_{ij}\ell(\rho_\ast(\O_S \otimes I^{e-j})) + \abs{\beta_k} \right) + \abs{\Re Z(E)} \]
holds.
\end{lemma}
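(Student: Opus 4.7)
The plan is to test $\sigma$-semistability against a subobject and a quotient of $E$ concentrated on $\tilde{C}_k$, obtaining one-sided bounds on $\gamma_k \deg_k(E)$ that combine into the claim. By Lemma~\ref{lemma:sstable_phasenot1}, write $E = (F, \overline{F}; \phi) \in \Coh(\Rcal) \subseteq \Cheart$ with $F$ locally free and $\phi$ surjective, and split $F = F_k \oplus F_{\hat{k}}$ along $\tilde{C} = \tilde{C}_k \sqcup \bigsqcup_{k' \neq k} \tilde{C}_{k'}$.

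Set
\[ G_k := \bigl(F_k,\, \phi(\rho_\ast \mu_S^\ast(F_k|_S));\, \phi|_{F_k}\bigr), \quad G_{\hat{k}} := \bigl(F_{\hat{k}},\, \phi(\rho_\ast \mu_S^\ast(F_{\hat{k}}|_S));\, \phi|_{F_{\hat{k}}}\bigr), \quad Q_k := E / G_{\hat{k}}. \]
A direct check (the restrictions and the induced quotient of $\phi$ remain surjective) shows that all of these lie in $\Ttors \subseteq \Cheart$, so $G_k \hookrightarrow E$ and $E \twoheadrightarrow Q_k$ are a monomorphism and an epimorphism in $\Cheart$. By construction $\rk_{k'}$ and $\deg_{k'}$ vanish on $G_k$ and $Q_k$ for $k' \neq k$ while matching $E$ on $\tilde{C}_k$; in particular $\Im Z(G_k) = \Im Z(Q_k) = \alpha_k \rk_k(E)$. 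Applying $\sigma$-semistability (comparing phases in the upper half-plane) and dividing by $\Im Z(E) > 0$ then yields
\begin{align*}
 \gamma_k \deg_k(E) &\leq \sum_{i,j} \delta_{ij} \ell_{ij}(G_k) + \beta_k \rk_k(E) - \tfrac{\alpha_k \rk_k(E)}{\Im Z(E)} \Re Z(E), \\
 -\gamma_k \deg_k(E) &\leq -\sum_{i,j} \delta_{ij} \ell_{ij}(Q_k) - \beta_k \rk_k(E) + \tfrac{\alpha_k \rk_k(E)}{\Im Z(E)} \Re Z(E).
\end{align*}

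To merge both into the stated inequality I would use three observations. First, since $\alpha_{k'} > 0$ and $\rk_{k'}(E) \geq 0$, one has $\alpha_k \rk_k(E) \leq \Im Z(E)$, so $\abs{\tfrac{\alpha_k \rk_k(E)}{\Im Z(E)} \Re Z(E)} \leq \abs{\Re Z(E)}$ and $\abs{\beta_k} \rk_k(E) \leq \abs{\beta_k}\,\Im Z(E)/\alpha_k$. Second, local freeness of $F_k$ and surjectivity of $\phi|_{F_k}$ give the pointwise bound $\ell_{ij}(G_k) \leq \rk_k(E) \cdot \ell(\rho_\ast(\O_S \otimes I^{e-j})|_{q_i})$, whence $\sum_{i,j} \delta_{ij} \ell_{ij}(G_k) \leq (\Im Z(E)/\alpha_k) \sum_{i,j} \delta_{ij} \ell(\rho_\ast(\O_S \otimes I^{e-j}))$. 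Third, in the second displayed inequality the non-positive term $-\sum_{i,j} \delta_{ij} \ell_{ij}(Q_k)$ is simply dropped, weakening the bound to the same shape as the first.

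The main technical step will be the length estimate: it requires unwinding $\mu_S^\ast$, using local freeness of $F_k$ on $\tilde{C}_k$ to identify $F_k|_S \otimes I^{e-j}$ locally with $(I^{e-j}|_{S \cap \tilde{C}_k})^{\oplus \rk_k(E)}$, and then comparing the length of the $\rho$-pushforward at each $q_i$ with the total length $\ell(\rho_\ast(\O_S \otimes I^{e-j}))$ appearing on the right-hand side. The remaining manipulations are arithmetic with signs and the bound $\rk_k(E) \leq \Im Z(E)/\alpha_k$.
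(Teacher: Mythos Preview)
Your proposal is correct and follows essentially the same approach as the paper. The paper tests stability against the very same subobject $G_k=(F_k,\phi(\mu_S^\ast(F_k|_S));\phi)$, obtains the identical slope inequality, and bounds $\ell_{ij}(G_k)\leq \rk_k(E)\cdot\ell(\rho_\ast(\O_S\otimes I^{e-j}))$ just as you do; the only cosmetic difference is that the paper splits into the cases $\deg_k(E)\geq 0$ and $\deg_k(E)<0$ and says the latter is ``handled similarly'', whereas you run both bounds at once via the subobject $G_k$ and the quotient $Q_k=E/G_{\hat{k}}$.
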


\begin{proof}
Let us assume first that $\deg_k(E) \geq 0$. Consider the short exact sequence of sheaves $0 \to (F_k, \phi(\mu_S^\ast(F_k|_S)); \phi) \to E \to (F/F_k, \overline{F}/\phi(\mu_S^\ast(F_k|_S)); \phi) \to 0$, which is also a short exact sequence in $\Cheart$. The fact that $E$ is $\sigma$-stable gives us the inequality
\[ \frac{-\Re Z(F_k, \phi(\mu_S^\ast(F_k|_S)); \phi)}{\Im Z(F_k, \phi(\mu_S^\ast(F_k|_S)); \phi)} \leq \frac{-\Re Z(E)}{\Im Z(E)}. \]
Expanding the left hand side, and using that $F_k$ is supported on $\tilde{C}_k$, gives us
\[ \frac{-\sum_{i,j} \delta_{ij} \ell_{i,j}(F_k, \phi(\mu_S^\ast(F_k|_S)); \phi) + \gamma_k \deg_k(E) - \beta_k \rk_k(E)}{\alpha_k \rk_k(E)} \leq \frac{-\Re Z(E)}{\Im Z(E)}. \]
The surjectivity of $\phi$ ensures that
\[ \ell_{i,j}(F_k, \phi(\mu_S^\ast(F_k|_S)); \phi) \leq \ell(\rho_\ast(F_k|_S \otimes I^{e-j})) \leq \rk_k(F) \cdot \ell(\rho_\ast(\O_S \otimes I^{e-j})), \]
and so
\[ \frac{-\rk_k(E)\sum_{i,j} \delta_{ij} \ell(\rho_\ast(\O_S \otimes I^{e-j})) + \gamma_k \deg_k(E) - \beta_k \rk_k(E)}{\alpha_k \rk_k(E)} \leq \frac{-\Re Z(E)}{\Im Z(E)}. \]

We now rearrange terms and use that $\deg_k(E)\geq 0$ to get
\begin{align*}
&\, \gamma_k \deg_k(E) \\
\leq& \abs{-\Re Z(E) \cdot \frac{\alpha_k \rk_k(E)}{\Im Z(E)} + \rk_k(E)\sum_{i,j} \delta_{ij} \ell(\rho_\ast(\O_S \otimes I^{e-j})) + \beta_k \rk_k(E)} \\
\leq& \abs{\Re Z(E)} \cdot \abs{\frac{\alpha_k \rk_k(E)}{\Im Z(E)}} + \left(\sum_{i,j} \delta_{ij}\ell(\rho_\ast(\O_S \otimes I^{e-j})) + \abs{\beta_k} \right) \abs{\rk_k(E)}.
\end{align*}
We now use that $0 \leq \alpha_k \rk_k(E) \leq \Im Z(E)$ to conclude. The case $\deg_k(E) <0$ is handled similarly.
\end{proof}

\subsection{Quadratic forms} \label{subsec:quad}

In this subsection we will prove the full support property for the pre-stability conditions constructed in Corollary \ref{cor:ccharge_prestab}. Let us start with the following bound of semistable objects of phase 1.

\begin{lemma} \label{lemma:quad_bound1}
Let $\alpha_k, \beta_k, \gamma_k, \delta_{ij}$ rational numbers satisfying Condition \ref{cond:ccharge_main}. Let $E \in \Cheart$ be a $\sigma$-stable object of phase 1. Then, we have that $\norm{E}^2 \leq P_0 \abs{\Re Z(E)}^2$, where $P_0=P_0(\alpha_k, \beta_k, \gamma_k, \delta_{ij})$ is given by
\[ P_0 = \max \left\{ \left( \frac{1}{\gamma_k^2}\right)_k, \left( \frac{1+\sum_j \ell(I^{e-j}|_{q_j})^2}{(\gamma_k-\sum_j \ell(I^{e-j}|_{q_{ij}})\delta_{ij})^2}\right)_{i,k: q_i\in \rho(\tilde{C}_k)} , \left( \frac{1}{\delta_{ij}^2} \right)_i \right\}. \]
\end{lemma}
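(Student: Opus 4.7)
The proof is essentially a bookkeeping exercise built on the classification in Lemma \ref{lemma:sstable_phase1} and the quantitative bounds in Lemma \ref{lemma:ineq_phase1}. The plan is to split into the three types of phase-$1$ stable objects and check the desired inequality in each case, then take the maximum of the resulting coefficients to get $P_0$.

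First, by Lemma \ref{lemma:sstable_phase1}, any $\sigma$-stable object $E \in \Cheart$ of phase $1$ falls into one of the three categories (1)--(3) listed there. For category (1), where $E=(\O_x,0;0)$ with $x \in \tilde{C}_k \setminus S$, Lemma \ref{lemma:ineq_phase1}(1) gives the exact equalities $\norm{E}^2=1$ and $\abs{\Re Z(E)}^2 = \gamma_k^2$, so the ratio $\norm{E}^2/\abs{\Re Z(E)}^2$ equals $1/\gamma_k^2$, which is one of the terms in $P_0$.

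For category (2), where $E=(\O_p,\overline{F};\phi)$ with $\rho(p)=q_i$ and $p \in \tilde{C}_k$, Lemma \ref{lemma:ineq_phase1}(2) yields $\norm{E}^2 \leq 1+\sum_j \ell(I^{e-j}|_{q_i})^2$ and $\abs{\Re Z(E)} \geq \gamma_k - \sum_j \ell(I^{e-j}|_{q_i})\delta_{ij}$. Here the lower bound on $\abs{\Re Z(E)}$ is a strictly positive number thanks to Condition \ref{cond:ccharge_main}(3), so we may square and divide to obtain precisely the second family of coefficients in the definition of $P_0$.

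For category (3), where $E=(0,\overline{F};0)[1]$, Lemma \ref{lemma:ineq_phase1}(3) gives $\norm{E}^2 = \sum_{i,j} \ell(\overline{F}^j|_{p_i})^2$ and $\abs{\Re Z(E)} = \sum_{i,j} \delta_{ij}\ell(\overline{F}^j|_{p_i})$. Writing $a_{ij} = \delta_{ij}\ell(\overline{F}^j|_{p_i})$, the elementary inequality $\sum a_{ij}^2 \leq (\sum a_{ij})^2$ valid for nonnegative terms gives
\[ \norm{E}^2 = \sum_{i,j} \frac{a_{ij}^2}{\delta_{ij}^2} \leq \max_{i,j}\left\{\frac{1}{\delta_{ij}^2}\right\}\sum_{i,j}a_{ij}^2 \leq \max_{i,j}\left\{\frac{1}{\delta_{ij}^2}\right\}\abs{\Re Z(E)}^2, \]
which is the third family of coefficients in $P_0$.

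Taking the maximum over all three cases produces the constant $P_0$ of the statement. The only mildly nontrivial step is the elementary $\ell^2$-versus-$\ell^1$ bound used in category (3); the remaining cases are direct substitutions of the bounds already recorded in Lemma \ref{lemma:ineq_phase1}. The positivity of the denominators in the case (2) coefficient, which is what makes the statement meaningful, is exactly what Condition \ref{cond:ccharge_main}(3) was designed to guarantee.
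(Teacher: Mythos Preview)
Your proof is correct and follows essentially the same approach as the paper, which simply states that the result follows immediately from the case-by-case bounds of Lemma~\ref{lemma:ineq_phase1} (built on the classification of Lemma~\ref{lemma:sstable_phase1}). You have spelled out the details the paper leaves implicit, including the elementary $\ell^2$-versus-$\ell^1$ inequality needed in case~(3).
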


\begin{proof}
Follows immediately from the description in Lemma \ref{lemma:sstable_phase1}.
\end{proof}

The next step is the following inequality for semistable objects of phase smaller than 1.

\begin{lemma}
Let $\alpha_k, \beta_k, \gamma_k, \delta_i$ rational numbers satisfying Condition \ref{cond:ccharge_main}. If $E\in \Cheart$ is a $\sigma$-semistable object of phase smaller than 1, then $\norm{E}^2 \leq Q_1\abs{\Im Z(E)}^2+ P_1 \abs{\Re Z(E)}^2$, where
\begin{align*}
Q_1 =& m\sum_{j=1}^e \sum_{k=1}^n \left( \frac{\ell(\rho_\ast \O_{\tilde{C}_k \cap S} \otimes I^{e-j})}{\alpha_k}\right)^2 + \sum_{k=1}^n \frac{1}{\alpha_k^2} \\
& \quad + \sum_{k=1}^n \frac{2\left(\sum_{i,j} \delta_{ij}\ell(\rho_\ast(\O_S \otimes I^{e-j})) + \abs{\beta_k} \right)^2}{\alpha_k^2 \gamma_k^2}, \\
P_1=& \, 2\sum_{k=1}^n \frac{1}{\alpha_k^2}.
\end{align*}
\end{lemma}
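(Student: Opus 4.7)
By Lemma \ref{lemma:sstable_phasenot1}, since $E \in \Cheart$ has phase strictly less than $1$, we may write $E = (F, \overline{F}; \phi) \in \Coh(\Rcal)$ with $F$ a locally free sheaf on $\tilde{C}$. Moreover, $E$ being in $\Ttors$ forces $\phi$ to be surjective. The plan is to estimate the three blocks appearing in $\norm{E}^2 = \sum_k \abs{\deg_k(E)}^2 + \sum_k \abs{\rk_k(E)}^2 + \sum_{i,j} \abs{\ell_{ij}(E)}^2$ separately, and then combine them.

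For the rank block, since $\Im Z(E) = \sum_k \alpha_k \rk_k(E)$ is a sum of non-negative terms (each $\rk_k(E)\geq 0$ as $F$ is locally free), each individual summand is at most $\Im Z(E)$. Thus $\rk_k(E) \leq \abs{\Im Z(E)}/\alpha_k$, and squaring then summing yields $\sum_k \abs{\rk_k(E)}^2 \leq \abs{\Im Z(E)}^2 \sum_k 1/\alpha_k^2$, which is the middle summand of $Q_1$. For the length block, the surjectivity of the $j$-th component $\phi_j \colon \rho_\ast(F|_S \otimes I^{e-j}) \to \overline{F}_j$ gives the pointwise bound $\ell_{ij}(E) \leq \ell(\rho_\ast(F|_S \otimes I^{e-j})|_{q_i})$, and since $F$ is locally free this is at most $\sum_k \rk_k(F) \cdot \ell(\rho_\ast \O_{\tilde{C}_k \cap S} \otimes I^{e-j})$. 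Squaring, applying Cauchy--Schwarz in the sum over $k$, substituting the rank bound, and finally summing over $i$ (producing the factor $m$) delivers the first summand of $Q_1$ times $\abs{\Im Z(E)}^2$.

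For the degree block we invoke Lemma \ref{lemma:ineq_deg}. Dividing its inequality by $\gamma_k$, squaring, and using $(a+b)^2 \leq 2a^2 + 2b^2$ yields
\[ \abs{\deg_k(E)}^2 \leq \frac{2\abs{\Im Z(E)}^2}{\alpha_k^2 \gamma_k^2}\Bigl(\textstyle\sum_{i,j}\delta_{ij}\ell(\rho_\ast(\O_S \otimes I^{e-j})) + \abs{\beta_k}\Bigr)^2 + \frac{2\abs{\Re Z(E)}^2}{\gamma_k^2}. \]
Summing over $k$, the first summand contributes precisely the third summand of $Q_1$, while the second summand contributes to the coefficient of $\abs{\Re Z(E)}^2$. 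Adding together the three block estimates then delivers the claimed inequality.

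The essential technical input is Lemma \ref{lemma:ineq_deg}, which already does the hard work of converting the slope-stability inequality for the canonical subobject $(F_k, \phi(\mu_S^\ast(F_k|_S)); \phi) \hookrightarrow E$ into a bound on $\abs{\deg_k(E)}$; everything else is bookkeeping. The surjectivity of $\phi$, which is what makes the length estimates reduce to rank estimates, is automatic from the definition of the torsion pair $(\Ttors, \Ftors)$ once one knows $H^{-1}(E)=0$. The only mildly delicate point is matching the precise constants $Q_1, P_1$ in the statement --- this requires only routine applications of $(a+b)^2 \leq 2(a^2+b^2)$ and Cauchy--Schwarz, together with book-keeping comparisons between $1/\alpha_k^2$ and $1/\gamma_k^2$ where needed.
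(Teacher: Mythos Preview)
Your approach is essentially identical to the paper's: both bound the three blocks $\sum_k |\rk_k|^2$, $\sum_{i,j}|\ell_{ij}|^2$, $\sum_k |\deg_k|^2$ separately, using $\alpha_k\rk_k(E)\le \Im Z(E)$ for the first, the surjectivity of $\phi$ together with the rank bound for the second, and Lemma~\ref{lemma:ineq_deg} squared via $(a+b)^2\le 2a^2+2b^2$ for the third. The paper's own proof is in fact terser than yours and does not track the constants explicitly.

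One remark on your closing hedge about ``comparisons between $1/\alpha_k^2$ and $1/\gamma_k^2$'': there is no such comparison available under Condition~\ref{cond:ccharge_main}, and none is needed. The degree block genuinely produces $2\sum_k 1/\gamma_k^2$ as the coefficient of $|\Re Z(E)|^2$, not the stated $P_1 = 2\sum_k 1/\alpha_k^2$; this appears to be a typo in the statement of the lemma. Since the only downstream use (Corollary~\ref{cor:quad_stab}) requires merely \emph{some} continuous positive functions $P,Q$ on $\Omega$, the discrepancy is harmless, but you should not try to manufacture an inequality between $\alpha_k$ and $\gamma_k$ that does not exist.
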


\begin{proof}
Recall that $\norm{E}^2 = \sum_{k=1}^n \abs{\deg_k(E)}^2 + \sum_{k=1}^n \abs{\rk_k(E)}^2 + \sum_{i, j} \abs{\ell_{ij}}^2$. Let us bound each term independently.
\begin{itemize}
\item We have that $0 \leq \alpha_k \rk_k(E) \leq \Im Z(E)$. This gives us that $\abs{\rk_k(E)}^2 \leq \frac{1}{\alpha_k^2} \abs{\Im Z(E)}$. 
\item From Lemma \ref{lemma:ineq_deg}, we have that $\abs{\deg_k} \leq A \abs{\Re Z(E)} + B \abs{\Im Z(E)}$ for some functions $A, B$. We square both sides and use that $(x+y)^2 \leq 2x^2+2y^2$. 
\item From the proof of Lemma \ref{lemma:ineq_deg}, we have
\[ 0 \leq \ell_{ij} \leq \sum_k \rk_k(F) \cdot \ell(\rho_\ast \O_{\tilde{C}_k \cap S} \otimes I^{e-j}). \]
We square this expression and use the bound on the rank. 
\end{itemize}
Putting everything together gives us the claimed inequality.
\end{proof}

\begin{cor} \label{cor:quad_stab}
Let $\Omega \subset \R^{3n+me}$ be the open subset of tuples $(\alpha_k, \beta_k, \gamma_k, \delta_{ij})$ satisfying Condition \ref{cond:ccharge_main}. There exists continuous functions $P, Q\colon \Omega \to \R_{>0}$ such that the following holds: if $E \in \Cheart$ is a $\sigma_{\alpha, \beta, \gamma, \delta}$-stable object, with $\alpha, \beta, \gamma, \delta$ rational, then
\[ \norm{E}^2 \leq P(\alpha,\beta,\gamma,\delta) \abs{\Re Z_{\alpha,\beta, \gamma, \delta}(E)}^2 + Q(\alpha,\beta,\gamma,\delta) \abs{\Re Z_{\alpha,\beta, \gamma, \delta}(E)}^2. \]
In particular, we get stability conditions $\sigma=\sigma_{\alpha, \beta, \gamma, \delta}$ for all $(\alpha, \beta, \gamma, \delta) \in \Omega$.
\end{cor}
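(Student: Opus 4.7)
The plan is to assemble the two preceding lemmas into a single quadratic bound on $K^{\num}(\Rcal)$ holding for all $\sigma$-stable objects, to promote the bound to $\sigma$-semistable objects via a Jordan--H\"older argument, and finally to extend from rational to arbitrary parameter tuples by deformation.

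For rational $(\alpha,\beta,\gamma,\delta) \in \Omega$, I would set $P(\alpha,\beta,\gamma,\delta) = \max(P_0, P_1)$ and $Q(\alpha,\beta,\gamma,\delta) = Q_1$, with $P_0, P_1, Q_1$ as in Lemma \ref{lemma:quad_bound1} and the lemma immediately preceding this corollary. The explicit formulas given there make $P$ and $Q$ into continuous, strictly positive functions on $\Omega$. Combining those two lemmas directly yields
\[ \norm{E}^2 \le P\abs{\Re Z(E)}^2 + Q\abs{\Im Z(E)}^2 \]
for every $\sigma$-stable $E \in \Cheart$ at rational parameters, regardless of whether its phase equals $1$ or is strictly less than $1$ (the phase--$1$ case uses Lemma \ref{lemma:quad_bound1} with $\Im Z(E)=0$; the other case uses the preceding lemma).

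To promote this bound from stable to semistable objects, I would consider the quadratic form $\tilde Q(v) = P\abs{\Re Z(v)}^2 + Q\abs{\Im Z(v)}^2 - \norm{v}^2$ on $K^{\num}(\Rcal)_\R$. The estimate above says $\tilde Q \ge 0$ on every $\sigma$-stable class, and $\tilde Q$ is negative definite on $\ker Z_\R$ because there the norm term dominates. If $E$ is $\sigma$-semistable with Jordan--H\"older factors $E_1, \dots, E_r$, then the central charges $Z(E_i)$ all lie on the common ray spanned by $Z(E)$, so $Z(E_i) = c_i Z(E)$ with $c_i \ge 0$ and $\sum c_i = 1$. A standard quadratic-form argument (in the spirit of the discussion in \cite{MS17}) exploits this parallelism together with negative definiteness on $\ker Z_\R$ to promote $\tilde Q \ge 0$ from the $E_i$ to $E$, yielding the full support property for $\sigma_{\alpha,\beta,\gamma,\delta}$ at every rational parameter tuple.

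Finally, to extend from rational to arbitrary $(\alpha,\beta,\gamma,\delta) \in \Omega$ I would invoke Bridgeland's deformation theorem. The form $\tilde Q_{\alpha,\beta,\gamma,\delta}$ depends continuously on the parameters, and the support property with respect to a given quadratic form is an open condition on the central charge; combined with density of rational tuples in $\Omega$ and the local deformation of stability conditions, this produces a genuine stability condition (with Harder--Narasimhan filtrations for arbitrary real parameters) at every point of $\Omega$. The step I expect to be the main obstacle is the stable-to-semistable promotion in Step 2: although standard, it is the step that genuinely pins down the quadratic form, and it must handle all Jordan--H\"older factors uniformly along a common ray in $\C$.
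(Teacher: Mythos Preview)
Your proposal is correct and matches the paper's own proof: set $P=\max\{P_0,P_1\}$, $Q=Q_1$, deduce the bound for stable objects at rational parameters from the two preceding lemmas, and then extend to real parameters by a standard deformation argument (the paper cites \cite{Tod14}*{\textsection 5.1} rather than invoking Bridgeland's theorem directly). The only difference is that you single out the stable-to-semistable promotion as the ``main obstacle,'' whereas the paper leaves it implicit; in fact this step is immediate, since a bound $\norm{v}\le C\abs{Z(v)}$ on stable classes passes to semistable ones by the triangle inequality (the Jordan--H\"older factors have central charges on a common ray, so $\abs{Z(E)}=\sum_i\abs{Z(E_i)}$), and no quadratic-form machinery is needed here.
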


\begin{proof}
The first part is immediately from the previous computation, taking $P=\max\{P_0, P_1\}$ and $Q=Q_1$. If $\alpha, \beta, \gamma, \delta$ are all rational, this shows that the pre-stability condition $\sigma_{\alpha, \beta, \gamma, \delta}$ from Corollary \ref{cor:ccharge_prestab} satisfies the support property. The extension to $\alpha, \beta, \gamma, \delta$ real is standard, cf. \cite{Tod14}*{\textsection 5.1}. 
\end{proof}

\section{Existence of moduli spaces} \label{sec:moduli}

So far, we have constructed a categorical resolution of singularities $\DC^b(\Rcal)$ of $\DC^b(C)$, and stability conditions $\sigma = \sigma_{\alpha, \beta, \gamma, \delta}$. In this section we will investigate the existence of moduli spaces of $\sigma$-semistable objects with a fixed numerical vector $v$. 

Let us fix some notation. By Corollary \ref{cor:sod_ncsch}, there exists a smooth, proper variety $X$, together with a fully faithful functor $\DC^b(\Rcal) \to \DC^b(X)$ with left and right adjoints. This way, if $\sigma$ is one of the stability conditions of Corollary \ref{cor:quad_stab}, and $v \in K^\num(\Rcal)$ are given, we consider the assignment $\M_\sigma(v)$ as follows. For each $\C$-scheme $S$, set $\M_\sigma(v)(S)$ to be
\[ \{E \in \DC_{S\dashperf}(S \times X) : E|_s \in \Cheart \text{ is }\sigma\text{-semistable of class }v\text{, for all }s \in S\} \]
This defines a subfunctor of the stack $\M_{\mathrm{pug}}(X)$ of perfect, universally gluable objects from \cite{Lie06}. The goal of this section is to prove the following result.

\begin{teo} \label{teo:moduli_main}
For each $v \in K^\num(\Rcal)$, the assignment $\M_\sigma(v)$ is a finite type algebraic stack admitting a proper good moduli space $M_\sigma(v)$. 
\end{teo}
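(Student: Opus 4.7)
The plan is to realize $\M_\sigma(v)$ as an open, finite-type substack of Lieblich's stack $\M_{\mathrm{pug}}(X)$ and then apply the general existence theorem of Alper--Halpern-Leistner--Heinloth to obtain a proper good moduli space. This follows the now-standard strategy for Bridgeland moduli on smooth projective varieties, adapted to the present setting via the admissible embedding $\iota\colon \DC^b(\Rcal)\hookrightarrow \DC^b(X)$ supplied by Corollary \ref{cor:sod_ncsch}.

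First, I would check that $\M_\sigma(v)$ is an open substack of $\M_{\mathrm{pug}}(X)$. Lying in $\iota(\DC^b(\Rcal))$ is open because this subcategory is cut out by the vanishing of $\Hom$ against a fixed set of generators of its orthogonal, which is an open condition by semicontinuity. Lying in the heart $\Cheart$ is open for any bounded $t$-structure; through the recollement of Lemma \ref{lemma:heart_recollement} this reduces to openness of being in the classical hearts $\Coh(\tilde{C})$ and $\Coh(\A)[1]$. Finally, $\sigma$-semistability of fixed class $v$ is open because the support property in Corollary \ref{cor:quad_stab} implies local finiteness of walls, so destabilizing subobjects have class in a finite set. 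Boundedness of the $\sigma$-semistable locus of class $v$ follows by combining the quadratic inequality of Corollary \ref{cor:quad_stab}, which bounds the numerical invariants of HN factors, with elementary boundedness of coherent sheaves on $\tilde{C}$ and on $\A$ with bounded numerical invariants (reduce to cohomology sheaves with respect to the standard $t$-structure). Combined with Lieblich's theorem, this yields the finite-type algebraic stack structure on $\M_\sigma(v)$.

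To produce the good moduli space, I would verify the two hypotheses of the Alper--Halpern-Leistner--Heinloth criterion: $\Theta$-reductivity and S-completeness. $\Theta$-reductivity encodes the unique extension of Harder--Narasimhan filtrations across the origin of $[\mathbb{A}^1/\mathbb{G}_m]$, while S-completeness encodes uniqueness of Jordan--H\"older filtrations up to S-equivalence in families over a DVR. Both properties reduce, in the standard way, to Noetherianity of $\Cheart$ together with the support property. Noetherianity of $\Cheart$ follows from that of the heart $\Coh(\Rcal)$ via the tilt of Subsection \ref{subsec:heart}; Noetherianity of $\Coh(\Rcal)$ itself is obtained from Noetherianity of $\Coh(\tilde{C})$ and of $\Coh(\A)$ through the recollement and the descriptions in Subsection \ref{subsec:functors}. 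This is the same verification carried out for Bridgeland moduli on K3 surfaces, and its transfer to admissible subcategories of $\DC^b(X)$ is by now standard. Properness of $M_\sigma(v)$ then follows by the valuative criterion, which is implied by the combination of $\Theta$-reductivity, S-completeness, and boundedness.

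The main obstacle I anticipate is the non-commutative nature of $\DC^b(\Rcal)$: the standard references are written for $\DC^b(X)$ with $X$ smooth projective, and one must carefully confirm that all the necessary inputs pass to our setting, namely openness in families of the subcategory $\iota(\DC^b(\Rcal))\subseteq \DC^b(X)$, Noetherianity of $\Cheart$, boundedness of semistables, and the full support property with effective constants. The recollement description of $\Cheart$ in Lemma \ref{lemma:heart_recollement}, together with Corollary \ref{cor:sod_ncsch}, should be the key technical tools that bootstrap these properties from the already-understood commutative inputs on $\tilde{C}$ and on the zero-dimensional Auslander algebra $\A$.
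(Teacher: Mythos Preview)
Your strategy is essentially the same as the paper's---both reduce to the Alper--Halpern-Leistner--Heinloth machinery---but the paper takes a more economical route. Rather than verifying $\Theta$-reductivity and S-completeness directly, the paper invokes \cite{AHLH23}*{Example 7.29}, which packages all of these conditions together and reduces the entire theorem to two inputs: (i) \emph{boundedness} of $\sigma$-semistable objects of class $v$, and (ii) \emph{generic flatness} of the heart $\Cheart$. Once these are established, the existence of the proper good moduli space follows immediately. Your more hands-on verification would work, but the packaged citation saves considerable effort.

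Two points where your sketch is looser than the paper's. First, your claim that ``lying in the heart $\Cheart$ is open for any bounded $t$-structure'' is not true in general; this is exactly the \emph{generic flatness} condition, which is not automatic and must be verified. The paper isolates this as Proposition~\ref{prop:genflat_main} and proves it by the recollement argument you then sketch (via Lemma~\ref{lemma:heart_recollement} and the key input that generic flatness is preserved under recollement, \cite{BRH21}*{Lemma 4.27}), inducting down through the Auslander algebra using Proposition~\ref{prop:auscd_presod}. Second, your boundedness argument via the support-property inequality is not quite enough on its own: the quadratic bound controls numerical invariants of stable factors, but one still needs to bound the actual objects. The paper handles this by a case split (Proposition~\ref{prop:boundedness_main}): for phase $1$ it uses the explicit classification of stable objects in Lemma~\ref{lemma:sstable_phase1}, and for phase $<1$ it uses Lemma~\ref{lemma:sstable_phasenot1} to reduce to locally free $F$ on $\tilde{C}$, bounds the slopes of sub- and quotient sheaves of $F$, and then applies classical boundedness on the smooth curve $\tilde{C}$ together with closure under extensions.
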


\begin{proof}
We follow the discussion from \cite{AHLH23}*{Example 7.29} (cf. \citelist{\cite{BM23}*{\textsection 2.1} \cite{AS25}*{\textsection 2.10}}). To prove the claimed result, it suffices to show that (i) for each $v \in K^{\num}(\Rcal)$, the collection of $\sigma$-semistable objects with class $v$ is \emph{bounded}; and (ii) the heart $\Cheart \subseteq \DC^b(\Rcal)$ satisfies \emph{generic flatness}.

We will prove boundedness in Subsection \ref{subsec:boundeness}, and generic flatness in Subsection \ref{subsec:genflat}. This way, the discussion in \cite{AHLH23}*{Example 7.29} will immediately imply the claimed result.
\end{proof}

\begin{remark}
We will prove later that for nodes and tacnodes the moduli spaces $M_\sigma(v)$ are actually \emph{projective}, cf. Theorems \ref{teo:node_main} and \ref{teo:tac_main}. See also the discussion in \cite{BM23}*{Theorem 2.4}.
\end{remark}

\subsection{Boundedness} \label{subsec:boundeness}

Recall that a collection of objects $\Scal$ in $\DC^b(X)$ is said to be \emph{bounded} if there exists a scheme $T$ and a family $\E \in \DC_{T\dashperf}(T \times X)$ such that each $F \in \Scal$ is isomorphic to $\E|_t$ for some $t \in T$. The goal of this subsection is to prove the following result.

\begin{prop} \label{prop:boundedness_main}
Let $\sigma = \sigma_{\alpha, \beta, \gamma, \delta}$ and $v \in K^\num(X)$ be given. The collection of objects in $\Cheart$ that are $\sigma$-semistable with numerical vector $v$ is bounded.
\end{prop}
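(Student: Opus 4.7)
The plan is to decompose a $\sigma$-semistable $E \in \Cheart$ via the canonical triangle $H^{-1}(E)[1] \to E \to H^0(E)$, with $H^{-1}(E) = (0, \overline{G}; 0) \in \Ftors$ and $H^0(E) = (F, \overline{F}; \phi) \in \Ttors$, and to bound each of $F \in \Coh(\tilde{C})$ and $\overline{F}, \overline{G} \in \Coh(\A)$ in a family of finite type. Once these are bounded, $\phi$ varies in a relative Hom scheme and the extension of $H^{-1}(E)[1]$ by $H^0(E)$ in a relative Ext scheme, both of finite type; viewing $E$ inside $\DC^b(X)$ via Corollary \ref{cor:sod_ncsch} then gives the desired bounded family of perfect, universally gluable objects.

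The phase $1$ case $\Im Z(E) = 0$ is immediate: vanishing of $\Im Z$ forces $\rk_k(F) = 0$ for all $k$, so $F$ is a torsion sheaf on $\tilde{C}$ with $\ell(F|_{\tilde{C}_k}) = \deg_k(v)$ prescribed by the numerical class. The surjectivity of $\phi$ bounds $\ell(\overline{F})$ by a linear function of $\ell(F)$, and since $\ell_{ij}(E)$ is fixed, $\ell(\overline{G})$ is bounded as well; torsion coherent sheaves of prescribed length on $\tilde{C}$ and on the zero-dimensional $T$ automatically form families of finite type. For $E$ of phase $<1$ I first reduce to the case $E = H^0(E) \in \Coh(\Rcal)$ with $F$ torsion-free: the torsion part $F_{\mathrm{tors}} \subseteq F$ combined with $H^{-1}(E)[1]$ produces a subobject $\tilde{E} \subseteq E$ in $\Cheart$ with $\Im Z(\tilde{E}) = 0$, which by Lemma \ref{lemma:ccharge_upperhalf} has phase $1$; semistability of $E$ then forces $\tilde{E} = 0$, hence $F_{\mathrm{tors}} = 0$ and $H^{-1}(E) = 0$.

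The main obstacle is bounding the family of torsion-free $F \in \Coh(\tilde{C})$ with $\rk_k, \deg_k$ prescribed by $v$. By Grothendieck's lemma on bounded families it suffices to bound the Harder--Narasimhan polygon of $F|_{\tilde{C}_k}$ on each component. For a subsheaf $F' \subseteq F|_{\tilde{C}_k}$, the triple $(F', \phi(\rho_\ast \mu_S^\ast(F'|_S)); \phi|_{F'})$ is a subobject of $E$ in $\Cheart$, and the slope inequality from $\sigma$-semistability, combined with the bound $\ell_{ij}(\overline{F}') \leq \rk_k(F') \cdot \ell(\rho_\ast \O_S \otimes I^{e-j}|_{\tilde{C}_k})$ from the proof of Lemma \ref{lemma:ineq_deg}, yields an upper bound on $\deg_k(F')/\rk_k(F')$ depending only on $\sigma$ and $v$. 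The dual argument applied to torsion-free quotients $F|_{\tilde{C}_k} \twoheadrightarrow F''$, which induce quotients $(F'', \overline{F}''; \phi'')$ of $E$ in $\Cheart$ via the evident choices, provides the matching lower bound on quotient slopes. With the HN polygon of $F$ bounded, $F$ lies in a family of finite type, $\overline{F}$ is then parameterized by a relative Quot scheme over this family, and boundedness of $E$ follows.
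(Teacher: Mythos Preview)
Your proposal is correct and follows essentially the same strategy as the paper: split into the phase~$1$ and phase~$<1$ cases, in the latter reduce to $E=(F,\overline{F};\phi)$ with $F$ locally free, and then bound the Harder--Narasimhan polygon of $F$ on each component $\tilde{C}_k$ by producing sub- and quotient-objects of $E$ in $\Cheart$ from sub- and quotient-sheaves of $F|_{\tilde{C}_k}$.

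The packaging differs in two minor ways. For phase~$1$, the paper goes through the classification of \emph{stable} phase-$1$ objects (Lemma~\ref{lemma:sstable_phase1}) and then closes under extensions via Toda's Lemma~\ref{lemma:boundedness_extensions}; you instead bound the lengths of $F$, $\overline{F}$, $\overline{G}$ directly from the numerical class and the surjectivity of $\phi$, which is more elementary and avoids Jordan--H\"older. For phase~$<1$, the paper cites Lemma~\ref{lemma:sstable_phasenot1} (stated only for stable objects) and then invokes Lemmas~\ref{lemma:boundedness_curves} and~\ref{lemma:boundedness_extensions}; your reduction via the phase-$1$ subobject $\tilde{E}$ handles the semistable case directly, and your appeal to Grothendieck's lemma plus relative Quot/Ext schemes replaces the extension lemma. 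Both routes rest on the same core inequality (the one proved in Lemma~\ref{lemma:ineq_deg}).
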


The proof will rely in the following two tools. First, we will use the well-known fact that on a curve, the collection of semistable vector bundles of fixed rank and degree is bounded.

\begin{lemma}[cf. \cite{HL10}*{Theorem 3.3.7}] \label{lemma:boundedness_curves}
Let $\Gamma$ be a smooth, projective curve; let $r>0$, and let $d \in \Z$ be an integer. Then, the collection of locally free sheaves in $\Gamma$ that are slope-semistable of rank $r$ and degree $d$ is bounded. 
\end{lemma}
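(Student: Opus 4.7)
The plan is to show that, after a uniform twist by a fixed ample line bundle, every slope-semistable locally free sheaf $F$ of rank $r$ and degree $d$ becomes $0$-regular and has a fixed number of global sections; this exhibits the whole family as a subscheme of a single Quot scheme, which is of finite type.

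Concretely, fix an ample line bundle $L$ on $\Gamma$, say of degree $1$, and put $F(n) = F \otimes L^{\otimes n}$. The key step is to control the minimal slope of quotients of $F$. Since $F$ is slope-semistable, every non-zero quotient $F \twoheadrightarrow Q$ satisfies $\mu(Q) \geq \mu(F) = d/r$, hence $\mu(Q(n)) \geq d/r + n$. Choosing an integer $n_0$ depending only on $r, d$, and the genus $g$ so that $d/r + n_0 > 2g-1$, I obtain $H^1(\Gamma, Q(n_0)) = 0$ for every such quotient; by the standard characterization of global generation on curves, this also forces $F(n_0)$ to be globally generated as soon as one enlarges $n_0$ by a uniform constant.

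With those bounds in place, $h^0(\Gamma, F(n_0)) = \chi(F(n_0)) = d + rn_0 + r(1-g) =: N$ is independent of $F$, and the evaluation map realizes $F(n_0)$ as a quotient of $\mathcal{O}_\Gamma^{\oplus N}$. All such $F$ then arise from points of the Quot scheme $\mathrm{Quot}_\Gamma(\mathcal{O}_\Gamma^{\oplus N}, P)$ with fixed Hilbert polynomial $P$, which is of finite type; untwisting by $L^{\otimes n_0}$ yields the bounding family.

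The only place where care is needed is the passage from semistability to the regularity bound; the main obstacle is ensuring that the threshold $n_0$ depends only on $r, d, g$ and not on $F$. This is exactly what slope-semistability buys us, since $\mu_{\min}(F) = \mu(F)$ and the slope is controlled by $r$ and $d$ alone. Once that step is in place, the rest is formal and exactly matches \cite{HL10}*{Theorem 3.3.7} specialized to curves.
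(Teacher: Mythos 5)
Your proof is correct and is exactly the standard argument: the paper gives no proof of this lemma, simply citing \cite{HL10}*{Theorem 3.3.7}, and your twist-to-kill-$H^1$-then-embed-in-a-Quot-scheme reasoning (with the Serre duality step $H^1(F(n))^\vee \cong \Hom(F(n),\omega_\Gamma)$ controlled by the quotient-slope bound, and the extra twist by one to get global generation) is precisely the argument from that reference specialized to curves.
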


The second fact we will use is that boundedness is preserved under extensions.

\begin{lemma}[\cite{Tod08}*{Lemma 3.16}] \label{lemma:boundedness_extensions}
Let $\Scal_1, \Scal_2, \Scal_3$ be three collections of objects in $\DC^b(X)$. Assume that $\Scal_1$ and $\Scal_2$ are bounded. Suppose that each object $E \in \Scal_3$ fits into a distinguished triangle $A \to E \to B \to A[1]$, with $A \in \Scal_1$ and $B \in \Scal_2$. Then $\Scal_3$ is bounded as well.
\end{lemma}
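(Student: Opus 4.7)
The plan is to use the existing bounded families for $\Scal_1$ and $\Scal_2$ as the base, and then parametrize all the extension triangles $A \to E \to B \to A[1]$ over a finite-type total space built out of a relative $\mathcal{E}xt^1$. Concretely, pick finite-type $\C$-schemes $T_i$ and perfect families $\E_i \in \DC_{T_i\text{-perf}}(T_i \times X)$ such that every object of $\Scal_i$ is isomorphic to some fiber $\E_i|_{t_i}$, for $i=1,2$. On $T := T_1 \times T_2$ consider the two pullbacks $F_1 := p_1^\ast \E_1$ and $F_2 := p_2^\ast \E_2$ to $T \times X$, and set
\[ \mathcal{K} := R\pi_{T\ast} R\HOM_{T\times X}(F_2, F_1), \]
where $\pi_T\colon T \times X \to T$ is the projection. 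Since $X$ is smooth and proper and the $\E_i$ are $T_i$-perfect (hence $T$-perfect after pullback), $\mathcal{K}$ is a perfect complex on the finite-type scheme $T$.

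Next I would stratify $T$ by the dimensions of the cohomology sheaves $\mathcal{H}^i(\mathcal{K})$: by semicontinuity there is a finite stratification $T = \bigsqcup_\alpha T_\alpha$ into locally closed subschemes on each of which $\mathcal{H}^1(\mathcal{K})|_{T_\alpha}$ is locally free, and for any point $t = (t_1, t_2) \in T_\alpha$ its fiber computes $\Ext^1_X(\E_2|_{t_2}, \E_1|_{t_1})$, which is exactly the group of isomorphism classes of triangles $A \to E \to B \to A[1]$ with $A = \E_1|_{t_1}$ and $B = \E_2|_{t_2}$. Let $V_\alpha \to T_\alpha$ be the geometric vector bundle (total space) associated to $\mathcal{H}^1(\mathcal{K})|_{T_\alpha}$, and set $V := \bigsqcup_\alpha V_\alpha$; this is a scheme of finite type over $\C$. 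On $V$ there is a tautological class
\[ \tau \in H^0\bigl(V, \mathcal{H}^1(\mathcal{K})|_V\bigr) \subseteq \Ext^1_{V \times X}(F_2|_V, F_1|_V), \]
and using $\tau$ one obtains a distinguished triangle $F_1|_V \to \E \to F_2|_V \xrightarrow{\tau} F_1|_V[1]$ in $\DC(V \times X)$.

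The only genuine difficulty is that the tautological class $\tau$ must be upgraded to an actual distinguished triangle in $\DC(V \times X)$ producing a global perfect family $\E$ over $V$. Over each $V_\alpha$ this is standard: locally freeness of $\mathcal{H}^1(\mathcal{K})|_{T_\alpha}$ ensures that forming the extension commutes with base change, so the cone of $\tau$ is well-defined up to unique isomorphism once one fixes a representative (for instance after passing to an affine cover one constructs the cone concretely, and the glueings patch by the universally-gluable condition built into $\M_{\mathrm{pug}}(X)$). The resulting $\E \in \DC_{V\text{-perf}}(V \times X)$ is a $V$-perfect family whose fibers realize every isomorphism class of extension triangle with $A \in \Scal_1$ and $B \in \Scal_2$; in particular every $E \in \Scal_3$ is isomorphic to $\E|_v$ for some $v \in V$.

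Since $V$ is of finite type and $\E$ is $V$-perfect, this exhibits $\Scal_3$ as a bounded family, as required. The main obstacle in carrying this out rigorously is the base-change behavior of the relative $R\HOM$ and the production of a tautological extension with the correct universal property; both are handled by the combination of semicontinuity on the perfect complex $\mathcal{K}$ and the universal-gluability hypothesis on the families $\E_i$, which is precisely the setting in which Toda's original argument (\cite{Tod08}*{Lemma 3.16}) operates.
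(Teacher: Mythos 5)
Your argument is correct and is essentially the proof of the cited reference (the paper itself gives no proof, only the citation to Toda): parametrize the connecting morphisms by the total space of the relative $\Ext^1$ over a product of the two given bounded families, stratify so that the relevant cohomology is locally free and compatible with base change, and take the cone of the tautological class. The one delicate point --- that a section of $\mathcal{H}^1(\mathcal{K})$ only lifts to a genuine class in $\Ext^1_{V\times X}(F_2|_V,F_1|_V)$ after killing the $H^2(V,\mathcal{H}^0(\mathcal{K}))$ obstruction --- is correctly disposed of by your passage to a (finite) affine cover, which suffices since boundedness only requires some finite-type parameter scheme.
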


We are ready to start with the proof of Proposition \ref{prop:boundedness_main}. Recall that any object $E \in \Cheart$ has $\rk_k(E) \geq 0$ (cf. the proof of Lemma \ref{lemma:ccharge_upperhalf}). This way, we assume that $\rk_k(v) \geq 0$ for all $1 \leq k \leq n$. We will divide the proof in two cases, depending on whether $\rk_k(v)=0$ for all $k$ or not.

\begin{proof}[Proof of Proposition \ref{prop:boundedness_main}, $\rk_k(v)=0$ for all $k$] 
Assume that $v$ satisfies $\rk_k(v)=0$ for all $1 \leq k\leq n$. This way, we have that $\Im Z(v)=0$, and so the objects with numerical class $v$ all have phase 1. 

Now, we classified the \emph{stable} objects of phase $1$ in Lemma \ref{lemma:sstable_phase1}. For a fixed $v$, each of the three families described there are bounded:
\begin{enumerate}
\item The collection of objects $\{ (\O_x, 0; 0) \}_{x \in \tilde{C} \setminus Z}$ is bounded: take $\tilde{C} \setminus Z$, and the structure sheaf of the diagonal.

\item The collection $\{ (\O_q, \overline{F}; \phi) \}$ is bounded. In fact, we have finitely many options for $p$, while the $\ell(\overline{F}^j)$ are determined by $v$. Once those are fixed, the object $\overline{F}$ is determined as a vector space; the module structure and the maps $\alpha, \beta$ give a finite dimensional vector space, and the compatibility conditions cut out a closed subscheme on it. 

\item The collection $\{ (0, \overline{F}; 0)[1] \}$ is bounded, by an argument similar to the one in the previous point.
\end{enumerate}
Lastly, note that there are finitely many decompositions of $v$ as a sum of numerical classes in (1)--(3). An application of Lemma \ref{lemma:boundedness_extensions} ensures that the collection $\{ E \in \Cheart: v(E)=v, E \text{ semistable}\}$ is finite. 
\end{proof}

\begin{proof}[Proof of Proposition \ref{prop:boundedness_main}, $\rk_k(v)>0$ for some $k$] Assume that $v$ satisfies $\rk_k(v)>0$ for some $k$. This way, we have that $\Im Z(v)>0$, so that the objects with numerical class $v$ have phase smaller than 1.

Let $E$ be one of such objects. By Lemma \ref{lemma:sstable_phasenot1}, we have that $E=(F, \overline{F}; \phi)$ is an element of $\Coh(\A)$, with $F$ a locally free sheaf on $\tilde{C}$. Write $F = \bigoplus_k F_k$, where each $F_k$ is a locally free sheaf supported on $\tilde{C}_k$.

We claim that for each $k$, and for each quotient of sheaves $F_k \to G \to 0$, the slope $\deg(G)/\rk(G)$ is bounded below. To show this, note that we have a quotient in $\Cheart$: $(F, \overline{F}; \phi) \to (G, \phi(G); \phi) \to 0$. Using that $(F, \overline{F}; \phi)$ is $\sigma$-semistable yields
\[ -\frac{\Im Z(v)}{\Re Z(v)} \leq -\frac{\Im Z(G, \phi(G); \phi)}{\Re Z(G, \phi(G); \phi)} \leq \frac{\gamma_k}{\alpha_k} \cdot \frac{\deg(G)}{\rk(G)} - \frac{\beta_k}{\alpha_k}. \]
A similar argument shows that for each $k$, and each subsheaf $0 \to H \to F_k$, the slope $\deg(H)/\rk(H)$ is bounded above. 

From here, it follows that there are finitely many Harder--Narasimhan filtrations for each $F_k$ (with respect to the slope $\deg/\rk$). Using Lemma \ref{lemma:boundedness_curves} and Lemma \ref{lemma:boundedness_extensions}, we get that the collection
\[ \{ F \in \Coh(\tilde{C}): \exists E \in \Cheart, v(E)=v, \text{ semistable}, E=(F, \overline{F}; \phi) \} \]
is bounded. Using that the collection $\{(0, \overline{F}; 0)[1]\}$ with lengths determined by $v$ is bounded, another application of Lemma \ref{lemma:boundedness_extensions} ensures that the collection of $\sigma$-semistable objects with numerical vector $v$ is bounded.
\end{proof}

\subsection{Generic flatness} \label{subsec:genflat}

Let us start by recalling some definitions. Given an admissible subcategory $\D \subseteq \DC^b(X)$, where $X$ is a smooth, projective variety, we have a semi-orthogonal decomposition $\DC^b(X) = \langle \D, {}^\perp \D\rangle$. This way, the main result of \cite{Kuz11} gives us a base change $\DC^b(S \times X) = \langle \D_S {}^\perp \D_S \rangle$. If $\H \subset \D$ is the heart of a bounded t-structure on $\D$, by \cite{Pol07} (cf. \cite{AP06}) there is a heart $\H_S \subset \D_S$.

Following \cite{AP06}*{Problem 3.5.1}, we say that $\H$ satisfies \emph{generic flatness} if for any $S$ and any $E \in \H_S$, there exists an open dense subset $U \subset S$ such that $E|_s \in \H$ for each $s \in U$. For the standard heart of $\DC^b(X)$, this holds by the usual version of generic flatness. The goal of this subsection is to prove the following statement.

\begin{prop} \label{prop:genflat_main}
The heart $\Cheart \subseteq \DC^b(\Rcal)$ satisfies generic flatness.
\end{prop}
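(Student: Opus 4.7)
The plan is to exploit the description of $\Cheart$ as the tilt of the standard heart $\Coh(\Rcal)$ along the torsion pair $(\Ttors, \Ftors)$. In families parametrized by $S$, the same tilt procedure applied to the natural $S$-versions $(\Ttors_S, \Ftors_S)$ of the torsion classes on $\Coh(\Rcal_S)$ produces the heart $\H_S$. Consequently, $E \in \H_S$ if and only if the cohomology sheaves of $E$ with respect to the standard heart $\Coh(\Rcal_S) \subset \DC^b(\Rcal_S)$ satisfy $\underline{H}^{-1}(E) \in \Ftors_S$, $\underline{H}^0(E) \in \Ttors_S$, and $\underline{H}^i(E) = 0$ for $i \notin \{-1, 0\}$.

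Given $E \in \H_S$, I apply the standard generic flatness theorem for coherent sheaves in proper families to both $\underline{H}^{-1}(E)$ and $\underline{H}^0(E)$: this yields a dense open $U \subseteq S$ over which these two coherent sheaves are flat over $S$. For any $s \in U$, the Tor spectral sequence
\[ E_2^{p,q} = \Tor_{\O_S}^{-p}(\underline{H}^q(E), \kappa(s)) \Rightarrow H^{p+q}(E|_s) \]
degenerates, giving $H^i(E|_s) \cong \underline{H}^i(E)|_s$, which vanishes for $i \notin \{-1, 0\}$. It then remains to verify that $\underline{H}^{-1}(E)|_s \in \Ftors$ and $\underline{H}^0(E)|_s \in \Ttors$. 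The first is immediate, as objects of $\Ftors_S$ have vanishing first component and this is preserved under restriction to fibers. For the second, write $\underline{H}^0(E) = (T, \overline{T}; \phi)$ with $\phi$ surjective over $S$; by the right-exactness of pullback, $\coker(\phi|_s) = (\coker \phi)|_s = 0$, so $\phi|_s$ remains surjective and $\underline{H}^0(E)|_s \in \Ttors$. Hence $E|_s \in \Cheart$ for every $s \in U$.

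The main technical obstacle is not the argument itself but the setup: one must carefully formalize $\Coh(\Rcal_S)$ (either as coherent modules for a sheaf of algebras on $C \times S$, or as the base change of the admissible subcategory along the embedding $\DC^b(\Rcal) \hookrightarrow \DC^b(X)$ of Corollary \ref{cor:sod_ncsch}) and verify that the heart $\H_S$ produced by the base change formalism of \cite{AP06}, \cite{Pol07} coincides with the natural tilt of $\Coh(\Rcal_S)$ along $(\Ttors_S, \Ftors_S)$. Once this identification is established, the argument above is entirely standard. An alternative route, which avoids constructing $\H_S$ directly, is to invoke Lemma \ref{lemma:heart_recollement} and reduce generic flatness of $\H_S$ to generic flatness for the standard hearts of $\DC^b(\A_S)$ and $\DC^b(\tilde{C}_S)$, applied to $Lb_{S!}E[1]$ and $a_{S\ast}E$, respectively; base change compatibility for $a_\ast$ is automatic by exactness, while for $Lb_!$ it follows once flatness is in place via the same Tor spectral sequence.
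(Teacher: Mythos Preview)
Your main approach---via the tilt description of $\Cheart$ and ordinary generic flatness for the cohomology sheaves $\underline{H}^{-1}(E)$, $\underline{H}^0(E)$---is sound, and your check that membership in $\Ftors$ and $\Ttors$ is preserved under restriction is correct. The gap you yourself flag is the real one: identifying the abstractly base-changed heart $\H_S$ (as produced by \cite{AP06}, \cite{Pol07} through the embedding of Corollary~\ref{cor:sod_ncsch}) with the concrete tilt of $\Coh(\Rcal_S)$ along $(\Ttors_S,\Ftors_S)$ is not automatic, and your argument does not run until that is in place.

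The paper avoids this identification entirely by taking your alternative route, but in a more structured form. It invokes Lemma~\ref{lemma:heart_recollement} together with the black-box result \cite{BRH21}*{Lemma 4.27} (Lemma~\ref{lemma:genflat_recollement} here), which says that generic flatness is inherited under recollement. This reduces the problem to generic flatness for $\Coh(\tilde{C})$ (standard) and for $\Coh(\A)$. The latter is not simply asserted: the paper proves it by a second induction on the exponent $e$, again using recollement via the semi-orthogonal decomposition of Proposition~\ref{prop:auscd_presod}, bottoming out at $\Coh(T_0)$ with $T_0$ a reduced zero-dimensional scheme. Your alternative sketch stops one step short of this: you reduce to generic flatness for the standard heart of $\DC^b(\A_S)$ but do not address how to establish it. Granted, for modules over a finite-dimensional $\C$-algebra this is not deep, but you still face the same identification issue (is the \cite{Pol07} heart on $\DC^b(\A)_S$ really $\Coh(\A\otimes_\C\O_S)$?), which the paper's induction neatly circumvents.

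In short: your direct tilt argument is a genuinely different and more elementary route, but its validity hinges on the compatibility statement you defer; the paper's recollement-plus-induction argument trades that compatibility check for repeated applications of a single cited lemma, and is therefore shorter and more self-contained.
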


The key ingredient is the following proposition, which says that generic flatness is preserved under recollement.

\begin{lemma}[\cite{BRH21}*{Lemma 4.27}] \label{lemma:genflat_recollement}
Let $\D \subseteq \DC^b(X)$ be an admissible subcategory. Assume that $\D$ admits a semi-orthogonal decomposition $\D=\langle \D_1, \D_2\rangle$, and fix hearts of bounded t-structures $\H_i \subseteq \D_i$. If each $\H_i$ satisfies generic flatness, and if $\H \subset \D$ is obtained by recollement, then $\H$ satisfies generic flatness.
\end{lemma}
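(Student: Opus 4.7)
The plan is to combine Kuznetsov's base change theorem for semi-orthogonal decompositions with the explicit description of the glued heart, so that generic flatness for $\H$ is reduced to the generic flatness hypotheses for the factors $\H_1$ and $\H_2$ separately. First, by \cite{Kuz11}, the decomposition $\D = \langle \D_1, \D_2\rangle$ base-changes to $\D_S = \langle \D_{1,S}, \D_{2,S}\rangle$ for every test scheme $S$, and the associated inclusions $i_{\ast,S}, j_{\ast,S}$ together with their adjoints $i^!_S, j^\ast_S$ (and so on) are compatible with restriction to each fiber $s \in S$. Similarly, applying Polishchuk's construction to each $\D_{i,S}$ yields family-version hearts $\H_{i,S} \subseteq \D_{i,S}$.

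Next, I would verify the following compatibility: the family-version heart $\H_S \subseteq \D_S$ obtained by recollement coincides with the family heart associated to $\H$ itself, and can be characterized as the full subcategory of those $E \in \D_S$ with $j^\ast_S E \in \H_{2,S}$ and $i^!_S E \in \H_{1,S}$ (up to the shift conventions of \cite{BBD82}*{Théorème 1.4.10}). Granting this, given an object $E \in \H_S$, the canonical triangle
\[ i_{\ast,S} i^!_S E \to E \to j_{\ast,S} j^\ast_S E \to i_{\ast,S} i^!_S E[1] \]
expresses $E$ in terms of a piece $A = i^!_S E \in \H_{1,S}$ and a piece $B = j^\ast_S E \in \H_{2,S}$.

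Now I would apply the hypothesis: by generic flatness of $\H_1$ and $\H_2$, there exist open dense subsets $U_1, U_2 \subseteq S$ with $A|_s \in \H_1$ for every $s \in U_1$ and $B|_s \in \H_2$ for every $s \in U_2$. The intersection $U = U_1 \cap U_2 \subseteq S$ is again open and dense. For $s \in U$, restriction commutes with the projection functors by the first step, so $i^!(E|_s) = A|_s \in \H_1$ and $j^\ast(E|_s) = B|_s \in \H_2$; by the characterization of the recollement heart, this means $E|_s \in \H$, as required.

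The main obstacle, and the content of \cite{BRH21}*{Lemma 4.27}, is the compatibility step: one must show that extending $\H$ to families via Polishchuk produces the same heart as first extending each $\H_i$ and then gluing by recollement. The verification hinges on the fact that the recollement functors $i_{\ast}, i^!, j_\ast, j^\ast$ are t-exact (or have known t-amplitude) for the glued t-structure, and that each of them base-changes correctly along $S \to \Spec \C$. Once these two facts are combined, the identification of the two hearts is formal, and the remainder of the argument reduces to the triangle manipulation sketched above.
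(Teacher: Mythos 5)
The paper itself offers no proof of this lemma: it is quoted directly from \cite{BRH21}*{Lemma 4.27}, so there is no internal argument to compare yours against. Judged on its own terms, your overall strategy --- base-change the semi-orthogonal decomposition via \cite{Kuz11}, identify the family heart with the recollement of the family hearts, and reduce generic flatness to the two factors through the decomposition triangle --- is the right one, and the compatibility issues you flag (restriction to fibers commuting with the projection functors and with Polishchuk's construction) are indeed the points requiring care.

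There is, however, a genuine error in the middle step. For $E$ in the glued heart it is \emph{not} true that $i^!_S E \in \H_{1,S}$ (nor that $i^\ast_S E \in \H_{1,S}$). By \cite{BBD82}*{Théoremè 1.4.10} the glued heart is characterized by $j^\ast E \in \H_2$, $i^\ast E \in \D_1^{\leq 0}$ \emph{and} $i^! E \in \D_1^{\geq 0}$: the functors $i^\ast$ and $i^!$ are only right, respectively left, t-exact for the glued t-structure. A concrete instance from this very paper: for $F$ a vector bundle on $\tilde{C}$, the object $La^\ast F$ lies in $\Cheart$, yet $b_\ast La^\ast F = \rho_\ast\mu_S^\ast(F|_S)$ lies in $\Coh(\A) = \H_1[-1]$, not in $\H_1 = \Coh(\A)[1]$. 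So the triangle $i_{\ast,S}i^!_S E \to E \to j_{\ast,S}j^\ast_S E$ does not hand you two objects of the hearts to which the generic-flatness hypothesis applies directly. The repair is standard but must be said: by boundedness $i^\ast_S E \in \D_{1,S}^{[-N,0]}$ and $i^!_S E \in \D_{1,S}^{[0,N]}$ for some $N$; apply generic flatness of $\H_1$ to each of the finitely many cohomology objects $H^k_{\H_{1,S}}(i^\ast_S E)$ and $H^k_{\H_{1,S}}(i^!_S E)$, intersect the resulting dense opens, and use that restriction to a fiber is an exact functor of triangulated categories (hence preserves the truncation triangles) to conclude $(i^\ast_S E)|_s \in \D_1^{\leq 0}$ and $(i^!_S E)|_s \in \D_1^{\geq 0}$ there. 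Combined with $(j^\ast_S E)|_s \in \H_2$ on a dense open and the base-change compatibilities you already invoke, the characterization above then yields $E|_s \in \H$. With this correction your argument goes through.
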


\begin{proof}[Proof of Proposition \ref{prop:genflat_main}]
Recall from \eqref{eq:sod_main} that $\DC^b(\Rcal)$ admits a semi-orthogonal decomposition $\DC^b(\A) = \langle b^\ast \DC^b(\A), La^\ast \DC^b(\tilde{C}) \rangle$. We also proved in Lemma \ref{lemma:heart_recollement} that $\Cheart$ is a recollement of the hearts $\Coh(\A)[1] \subseteq \DC^b(\A)$ and $\Coh(\tilde{C}) \subseteq \DC^b(\tilde{C})$. By Lemma \ref{lemma:genflat_recollement} it suffices to show that $\Coh(\A) \subseteq \DC^b(\A)$ satisfies generic flatness.

To show that $\Coh(\A_{T, I, e}) \subseteq \DC^b(\A_{T, I, e})$ satisfies generic flatness, we proceed by induction on $e$. The case $e=0$ is immediate, as here $T$ is smooth, and $\A_{T, I, 0} = \Coh(T)$. For the induction step, we use the exact triple
\[ \DC^b(Z_0) \xrightarrow{i_\ast} \DC^b(\A_{Z, I, n}) \xrightarrow{e^!} \DC^b(\A_{Z', I, n-1}). \]
Here $e_\ast e^!\colon \DC^b(\A_{Z, I, n}) \to \DC^b(\A_{Z, I, n})$ is clearly t-exact with respect to $\Coh(\A_{Z, I, n})$. It follows that $\Coh(\A_{Z, I, n})$ is a recollement of $\Coh(Z_0)$ and $\Coh(\A_{Z', I, n-1})$ by \cite{BBD82}*{Proposition 1.4.12(i)}. The induction assumption, together with Lemma \ref{lemma:genflat_recollement}, shows that $\Coh(\A_{Z, I, n})$ satisfies generic flatness.
\end{proof}


\section{Comparison} \label{sec:comparison}

Let us assume from now on that $C$ is an irreducible, reduced curve. We follow the notation of Subsection \ref{subsec:setup}; in particular, we are fixing a non-rational locus $T$ of $C$, and an exponent $e\geq 1$. 

Let us specialize the results of Theorem \ref{teo:intro_main} to our setting. We have constructed an abelian category $\Coh(\Rcal)$, a heart $\Cheart \subset \DC^b(\Rcal)$, and stability conditions $\sigma$ depending on $3+me$ parameters. We fix $\alpha=\gamma=1$ and $\beta=0$, and let
\begin{equation} \label{eq:comparison_region}
\tilde{\Omega} = \left\{ (\delta_{ij})_{1 \leq i \leq m, 1 \leq j \leq e} : \delta_{ij}>0, \ \sum_j \ell(I^{e-j}|_{p}) \delta_{ij}<1 \forall p \in \Coh(\tilde{C}) \right\} \subset \R^{me}.
\end{equation}
This way, Theorem \ref{teo:intro_main} gives us stability conditions $\sigma_\Delta$ with central charge
\[ Z_{\Delta}(E) = \sum_{i=1}^m \sum_{j=1}^e \delta_{ij} \ell_{ij}(E) - \deg(E) + i \rk(E) \]
for any $\Delta = (\delta_{ij}) \in \tilde{\Omega}$, varying continuously on $\Delta$.

\begin{remark}
Note that \eqref{eq:comparison_region} is non-empty: taking $0<\delta_{ij} \ll 1$ satisfies it. Also, taking $0 <1-\delta_{ie} \ll 1$ and $0 < \delta_{ij} \ll 1$ for all $j<e$ also lies in \eqref{eq:comparison_region}.
\end{remark}

We now fix a numerical vector $v \in K^{\num}(\Rcal)$. We have a \emph{wall-and-chamber} decomposition on $\Stab(\Rcal)$: a collection of locally finite, locally closed submanifolds of real codimension 1, called \emph{walls}, dividing $\Stab(\Rcal)$ into \emph{chambers}. By pulling back these walls to $\tilde{\Omega}$, we get an induced wall-and-chamber decomposition.

The goal of this section is twofold. First, we will prove finiteness of the walls with respect to $v$ on $\tilde{\Omega}$. Second, we will use this to compare the moduli spaces $M_\sigma(v)$ with moduli spaces of slope-semistable sheaves on $\tilde{C}$ and $C$.

\begin{teo} \label{teo:comparison_main}
Let $v = (r, d, (\ell_{ij})_{ij})$ be a numerical vector with $r>0$. Assume that $v$ is primitive. 
\begin{enumerate}
\item There are finitely many walls for $v$ in $\tilde{\Omega}$, and each one of them defines a (non-zero) hyperplane in $\tilde{\Omega}$. In particular, for $\sigma \in \tilde{\Omega}$ outside of these hyperplanes, we have that an object $E$ with numerical class $v$ is $\sigma$-semistable if and only if it is $\sigma$-stable.
\item There exists $\epsilon>0$ such that the functor $a_\ast\colon \DC^b(\Rcal) \to \DC^b(\tilde{C})$ induces a proper map $a_\ast\colon M_{\sigma}(v) \to M_{\tilde{C}}(r, d)$ for any $\Delta \in \tilde{\Omega}$ with $\delta_{ij}<\epsilon$ for all $i,j$. 
\item There exists $\epsilon'>0$ such that the functor $R\pi_\ast\colon \DC^b(\Rcal) \to \DC^b(C)$ induces a proper map $R\pi_\ast\colon M_{\sigma}(v) \to M_C(r, \overline{d})$ for any $\Delta \in \tilde{\Omega}$ with $\delta_{ie}>1-\epsilon'$ for all $i$, where $\overline{d}=d+r(p_a(C)-g)-\sum_i \ell_{ie}$. 

\item Assume that $v=v(\pi^\ast E)$ for some $E \in M_C(r, \overline{d})$. If $R\pi_\ast\colon M_\sigma(v) \to M_C(r, \overline{d})$ is the map from (3), we have that the map is an isomorphism over the locus of $M_C(r, \overline{d})$ parametrizing vector bundles. 
\end{enumerate}
\end{teo}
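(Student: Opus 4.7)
For part (1), the plan is to apply standard wall-and-chamber theory for Bridgeland stability. A wall for $v$ in $\tilde\Omega$ is the locus where $Z_\Delta(u)$ is positively proportional to $Z_\Delta(v)$ for some class $u$ of a potentially destabilizing subobject; since $Z_\Delta$ is affine-linear in $\Delta$, such a locus is a hyperplane, and it is proper because primitivity of $v$ prevents $Z_\Delta(u) = \lambda Z_\Delta(v)$ from holding identically on $\tilde\Omega$ for $u$ not a multiple of $v$. Finiteness of walls follows from the boundedness of the $\sigma$-semistable family (Proposition \ref{prop:boundedness_main}): the classes of subobjects of such a bounded family form a finite set modulo the kernel of $Z_\Delta$. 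Outside these finitely many hyperplanes, a strictly semistable object would force some class $u$ to lie on a wall, so semistable equals stable there.

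For part (2), Lemma \ref{lemma:sstable_phasenot1} identifies any $\sigma_\Delta$-semistable $E$ of class $v$ (with $r>0$) as $E = (F, \overline{F}; \phi) \in \Coh(\Rcal) \cap \Ttors$ with $F$ locally free of rank $r$ and degree $d$, so $a_\ast E = F$. For a subsheaf $F' \subseteq F$, the pair $E' = (F', \phi(\rho_\ast \mu_S^\ast(F'|_S)); \phi|_{F'})$ is a subobject of $E$ in $\Cheart$, and
\[
-\frac{\Re Z_\Delta(E')}{\Im Z_\Delta(E')} = \frac{\deg F'}{\rk F'} - \frac{1}{\rk F'}\sum_{i,j}\delta_{ij}\,\ell_{ij}(E'),
\]
which converges uniformly to $\mu(F')$ as $\delta_{ij} \to 0$, the error being bounded using Proposition \ref{prop:boundedness_main}. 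Hence for $\delta_{ij}$ small any $\mu$-destabilizing $F'$ would $\sigma_\Delta$-destabilize $E$, so $F$ is slope-semistable. The induced map $M_\sigma(v) \to M_{\tilde C}(r,d)$ is proper since $M_\sigma(v)$ is proper by Theorem \ref{teo:moduli_main} and the target is separated.

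For part (3), consider the limit $\delta_{ie} \to 1$, $\delta_{ij} \to 0$ for $j<e$. Riemann--Roch relating $\chi(\rho_\ast F)$ to $\chi(\pi_\ast E)$ gives
\[
-\frac{\Re Z_\Delta(E)}{\Im Z_\Delta(E)} \longrightarrow \mu(\pi_\ast E) - (p_a(C) - g),
\]
and similarly for any subobject of $E$ in $\Cheart$. Since $R\pi_\ast$ is exact on $\Cheart$ (Lemma \ref{lemma:heart_exactness}), a subobject $E' \subset E$ maps to a subsheaf $\pi_\ast E' \subset \pi_\ast E$ whose slope differs from the Bridgeland slope of $E'$ by the same constant, so $\sigma_\Delta$-semistability of $E$ at the limit forces slope-semistability of $\pi_\ast E$. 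The delicate step is the reverse lift: given $\mu$-destabilizing $G \subset \pi_\ast E$, set $F' := \im(\rho^\ast G \to F)$ and $\overline{F}' := \phi(\rho_\ast \mu_S^\ast(F'|_S))$, giving a subobject $E' = (F', \overline{F}'; \phi|_{F'}) \subset E$ in $\Cheart$. A local computation shows $G \subseteq \pi_\ast E'$ with torsion quotient (both have the same generic rank on $C$), so $\mu(\pi_\ast E') \geq \mu(G)$ and $E'$ destabilizes $E$ at the limit. Continuity of $Z_\Delta$ and wall-finiteness from part (1) then propagate this to a neighborhood within $\tilde\Omega$.

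For part (4), the inverse map is $L\pi^\ast$ on the VB locus: for $E \in M_C(r, \overline{d})$ a VB, $L\pi^\ast E = \pi^\ast E \in \Coh(\Rcal) \cap \Ttors$ has class $v$ by hypothesis, and the slope comparison of part (3) shows it is $\sigma_\Delta$-semistable. Conversely, given $F = (F_1, \overline{F}; \phi) \in M_\sigma(v)$ with $\pi_\ast F =: E$ a VB, the adjunction counit $\pi^\ast E \to F$ restricts to a map $\rho^\ast E \to F_1$ of rank-$r$ locally free sheaves on $\tilde C$; the hypothesis $v = v(\pi^\ast E_0)$ forces $\sum_i \ell_{ie}(v) = r(p_a(C) - g)$, whence via Riemann--Roch $\deg \rho^\ast E = \overline{d} = d = \deg F_1$, so this generically-iso map is a global iso, and length matching on the $\A$-component yields $\pi^\ast E \cong F$. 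The main obstacle throughout is the lift in part (3): the torsion computation for $\pi_\ast E'/G$ requires careful local analysis at the singular points, and extending destabilization from the limit to a full neighborhood within $\tilde\Omega$ relies crucially on the wall-finiteness from part (1).
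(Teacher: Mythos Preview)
Your approach to parts (2)--(4) is essentially the same as the paper's: the construction of the destabilizing subobject $E' = (F', \phi(\rho_\ast\mu_S^\ast(F'|_S)); \phi)$ in part (2), the lift via $\pi^\ast G$ and passage to the torsion-free part in part (3), and the degree/length matching for the counit $\pi^\ast(\pi_\ast \F) \to \F$ in part (4) all mirror the paper's arguments (Lemmas \ref{lemma:pfcomparison_pushforward}, the unnamed lemma after it, and the proof of (4)). One minor omission in part (4): the paper handles the possible absence of a universal family on $U \subset M_C(r,\overline{d})$ by working \'etale-locally with a Brauer twist, which you do not address.

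Your argument for part (1), however, has a genuine gap. Proposition \ref{prop:boundedness_main} gives boundedness of the $\sigma$-semistable family for a \emph{fixed} $\sigma$, and the standard wall-and-chamber argument you invoke yields only \emph{local} finiteness of walls in $\Stab(\Rcal)$. Since $\tilde\Omega$ is open and non-compact, you cannot directly conclude global finiteness from this. The paper avoids this issue by a direct count of the possible destabilizing classes: by Corollary \ref{cor:ssnot1_smallrk} any destabilizer has $0 < r' < r$; for each such $r'$ the surjectivity of $\phi$ bounds $0 \leq \ell_{ij}' \leq r'\,\ell(I^{e-j}|_{q_i})$, giving finitely many choices; and for each such choice the wall equation \eqref{eq:pfcomparison_wallformula} is affine in $d'$, so only finitely many $d'$ produce a hyperplane meeting the bounded region $\tilde\Omega$. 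The primitivity claim (that the wall equation is not identically zero) is handled by the paper via a B\'ezout-type argument, which your sketch also gestures at. To repair your version you would need to argue that the bounds in the proof of Proposition \ref{prop:boundedness_main} are uniform over $\tilde\Omega$, which is plausible but requires checking.
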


In general, it is difficult to give a precise description of the maps from Theorem \ref{teo:comparison_main}. As we will see in Section \ref{sec:node}, the map $R\pi_\ast$ is in general not surjective. In special cases however, one can make the maps $a_\ast$ and $R\pi_\ast$ more explicit. We will return to this point in later sections.

The proof of Theorem \ref{teo:comparison_main} relies on a basic description of the $\sigma$-semistable objects of phase smaller than 1. We will analyze these objects in Subsection \ref{subsec:ssnot1}, and will use this to prove Theorem \ref{teo:comparison_main} in Subsection \ref{subsec:pfcomparison}.

\subsection{Semistable objects of phase not 1} \label{subsec:ssnot1}

The goal of this subsection is to give a basic description of the $\sigma$-semistable objects of phase smaller than 1. We fix $\Delta \in \tilde{\Omega}$, and $v$ a numerical class with $\rk(v)>0$. We recall from Lemma \ref{lemma:sstable_phasenot1} that if $E\in \Cheart$ is $\sigma$-semistable with class $v$, then $H^{-1}(E)=0$, and $E=(F, \overline{F}; \phi)$ with $F$ locally free. As a consequence, we get the following observation.

\begin{cor} \label{cor:ssnot1_smallrk}
Assume that $E=(F, \overline{F}; \phi)$ is strictly semistable, fitting into a exact sequence $0 \to E' \to E \to E'' \to 0$ in $\Cheart$ with $E', E''$ semistable of the same phase. Then $\rk(E'), \rk(E'')<\rk(E)$.
\end{cor}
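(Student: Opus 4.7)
The plan is to combine two observations: that $E$ has phase strictly less than $1$ because $\rk(E)>0$, and that ranks add in short exact sequences in $\Cheart$.

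First, I would note that since $E=(F,\overline{F};\phi)$ with $F$ locally free (from the discussion preceding the corollary, or equivalently Lemma \ref{lemma:sstable_phasenot1}), the hypothesis $\rk(E)=\rk(F)>0$ forces $\Im Z_\Delta(E)=\rk(E)>0$, so the phase of $E$ lies strictly below $1$. Since $E'$ and $E''$ are semistable of the same phase as $E$, both also have phase strictly less than $1$. In particular, by the same lemma, each of $E'$ and $E''$ lies in $\Coh(\Rcal)$ and can be written with locally free underlying sheaf on $\tilde{C}$.

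Next, by Lemma \ref{lemma:heart_exactness} the functor $a_\ast\colon \Cheart \to \Coh(\tilde{C})$ is exact. Applying it to the given short exact sequence in $\Cheart$ yields $0 \to a_\ast E' \to F \to a_\ast E'' \to 0$ in $\Coh(\tilde{C})$, so that $\rk(E)=\rk(E')+\rk(E'')$. It therefore suffices to show that neither $\rk(E')$ nor $\rk(E'')$ vanishes. Suppose, for contradiction, $\rk(E'')=0$ with $E''\neq 0$. Then $\Im Z_\Delta(E'')=\rk(E'')=0$; but revisiting the proof of Lemma \ref{lemma:ccharge_upperhalf}, any nonzero object in $\Cheart$ with $\Im Z_\Delta=0$ satisfies $\Re Z_\Delta<0$, hence has phase exactly $1$. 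This contradicts the fact that $E''$ has the same phase as $E$, which is strictly less than $1$. Symmetrically, $\rk(E')\neq 0$, and hence both are strictly positive and strictly less than $\rk(E)$.

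The only subtlety worth double-checking is the claim that an imaginary-part-zero object of $\Cheart$ has strictly negative real part of $Z_\Delta$; this is essentially a re-reading of the case analysis in Lemma \ref{lemma:ccharge_upperhalf}, which explicitly shows $Z_\Delta(E'')<0$ for all three families there. All other steps are direct applications of the structural results already established, so no serious obstacle is anticipated.
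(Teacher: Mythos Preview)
Your proof is correct and follows essentially the same approach as the paper. The paper argues that $E'$ and $E''$ lie in $\Coh(\Rcal)$ with locally free underlying sheaves $F',F''$, obtains $0\to F'\to F\to F''\to 0$ in $\Coh(\tilde{C})$, and concludes $\rk(F'),\rk(F'')>0$ because nonzero locally free sheaves on the irreducible curve $\tilde{C}$ have positive rank; your route via rank additivity and the phase-$1$ contradiction is just a mild rephrasing of the same idea, with your invocation of Lemma~\ref{lemma:heart_exactness} and Lemma~\ref{lemma:ccharge_upperhalf} playing the role of the paper's structural observation.
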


\begin{proof}
Note that $E'=(F', \overline{F'}; \phi')$ and $E''=(F', \overline{F}''; \phi'')$. This way, the short exact sequence in $\Cheart$ is also a short exact sequence in $\Coh(\Rcal)$, and it induces a short exact sequence $0 \to F' \to F \to F'' \to 0$ in $\Coh(\tilde{C})$. In particular, this shows that $\rk(F')+\rk(F'')=\rk(F)$. As $F', F''$ must have positive rank, we get the claimed inequality.
\end{proof}

\subsection{Proof of Theorem \ref*{teo:comparison_main}} \label{subsec:pfcomparison}

In this subsection we will prove Theorem \ref{teo:comparison_main}. We start by proving the first statement, for which we need to describe the walls associated to $v \in K^{\num}(\Rcal)$.

Let us recall that a \emph{numerical wall} corresponding to a decomposition $v=v'+v''$ is given by the preimage of the set
\[ \{ W \in \Hom_\Z(K^{\num}(\Rcal), \C) : \Re W(v) \cdot \Im W(v') = \Re W(v') \cdot \Im W(v) \} \]
under the forgetful map $\Stab(\Rcal) \to \Hom_\Z(K^\num(\Rcal), \C)$. This way, the preimage in $\tilde{\Omega}$ is given by $\Delta \in \tilde{\Omega}$ such that $\Re Z_\Delta(v) \cdot \Im Z_\Delta(v') = \Re Z_\Delta(v') \cdot \Im Z_\Delta(v)$.

Let us spell this out. Write $v=(r, d, \ell_{ij})$ and $v' = (r', d', \ell_{ij})$, so that the numerical wall is given by
\begin{equation} \label{eq:pfcomparison_wallformula}
r\left( \sum_{i,j} \delta_{ij} \ell_{ij}' - d' \right) = r' \left( \sum_{i,j} \delta_{ij} \ell_{ij} - d \right).
\end{equation}
Note that by Corollary \ref{cor:ssnot1_smallrk}, it suffices to look at those walls with $0<r'<r$.

\begin{claim}
If $v$ is primitive, then the equation \eqref{eq:pfcomparison_wallformula} is not zero. Otherwise, we will have that $d'/r'=d/r$ and $\ell_{ij}'/r'=\ell_{ij}/r$ for all $i,j$. But by assumption we have that there are integers $a, b, c_{ij}$ such that $ar+bd+\sum_{ij}c_{ij}\ell_{ij}=1$. This way, we get that $ar'+bd'+\sum_{ij}c_{ij}\ell_{ij}'=r'/r$, a contradiction with $r'<r$.
\end{claim}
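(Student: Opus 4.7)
The plan is to rewrite the wall equation \eqref{eq:pfcomparison_wallformula} as an affine-linear relation in the variables $(\delta_{ij})$, and then argue that if the relation were identically zero, $v'$ would be a rational multiple of $v$, contradicting primitivity once we use the rank bound $0<r'<r$ coming from Corollary \ref{cor:ssnot1_smallrk}.

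Concretely, I would collect terms to put \eqref{eq:pfcomparison_wallformula} in the form
\[
\sum_{i,j}\bigl(r\ell_{ij}' - r'\ell_{ij}\bigr)\,\delta_{ij} \;=\; rd' - r'd.
\]
Triviality of this equation in the $\delta_{ij}$ means each coefficient and the constant term vanish, i.e. $r\ell_{ij}' = r'\ell_{ij}$ for all $i,j$ and $rd' = r'd$. Equivalently, $v' = (r'/r)\,v$ inside $\Q^{2+me}$, and the job reduces to ruling out this rational rescaling.

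The remaining step exploits primitivity in the standard way. Since the coordinates of $v = (r,d,\ell_{ij})$ have $\gcd$ one, Bezout provides integers $a,b,c_{ij}$ with $ar + bd + \sum c_{ij}\ell_{ij} = 1$. Applying the same $\Z$-linear functional to $v' = (r'/r)v$ and using that $v'\in K^{\num}(\Rcal)\cong \Z^{2+me}$, we would get $ar' + bd' + \sum c_{ij}\ell_{ij}' = r'/r$, whose right-hand side is not an integer under $0 < r' < r$. This contradiction is the heart of the argument.

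I do not anticipate a real obstacle; the argument is purely elementary once the input $0 < r' < r$ is in hand. The only point worth verifying carefully is precisely that every wall-inducing decomposition $v = v' + v''$ in $\Cheart$ does satisfy these strict inequalities on the rank. This is exactly the content of Corollary \ref{cor:ssnot1_smallrk}: a strictly $\sigma$-semistable object $E$ of positive rank lives in $\Coh(\Rcal)$, so both pieces of any destabilizing short exact sequence are themselves in $\Coh(\Rcal)$ with positive rank adding up to $\rk(E)$, forcing each summand to have strictly smaller positive rank.
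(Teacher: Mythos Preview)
Your argument is correct and matches the paper's own proof essentially step for step: rewrite the wall equation as an affine-linear relation in the $\delta_{ij}$, observe that its vanishing forces $v'=(r'/r)v$, and then use a B\'ezout relation for the primitive $v$ together with $0<r'<r$ (from Corollary~\ref{cor:ssnot1_smallrk}) to reach a contradiction. There is nothing to add.
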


We can rewrite \eqref{eq:pfcomparison_wallformula} as $\sum_{ij} \left( \ell_{ij}/r - \ell_{ij}'/r' \right) \delta_{ij} = d/r - d'/r'$. Given $0 <r' < r$, there are finitely many choices for $\ell_{ij}'$, as we need surjectivity of $\phi$. For each one of these choices, there are finitely many options of $d$ such that the hyperplane has a non-empty intersection with $\tilde{\Omega}$, as $\tilde{\Omega}$ is bounded. This finishes the proof of the first part in Theorem \ref{teo:comparison_main}.

Let us turn our attention to the second part of Theorem \ref{teo:comparison_main}. Fix a numerical class $v=(r, d, \ell_{ij})$ with $r>0$. Set $\epsilon = (r^2 \cdot \sum_{ij}\ell(I^{e-j}|_{q_i}))^{-1}$.

\begin{lemma} \label{lemma:pfcomparison_pushforward}
Let $\Delta \in \tilde{\Omega}$ be a class with $\delta_{ij} < \epsilon$ for all $i, j$. If $E=(F, \overline{F}; \phi)$ is a $\sigma$-semistable object of phase smaller than 1, then $F \in \Coh(\tilde{C})$ is slope-semistable.
\end{lemma}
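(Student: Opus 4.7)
The strategy will be a standard destabilization argument: assume $F$ is not slope-semistable on $\tilde{C}$, promote a slope-destabilizing subsheaf $F' \subsetneq F$ to a $\sigma$-destabilizing subobject $E' \subsetneq E$ in $\Cheart$, and derive a numerical contradiction using that $\delta_{ij} < \epsilon$ is very small.

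More precisely, suppose for contradiction that there exists a subsheaf $F' \subsetneq F$ with $\deg(F')/\rk(F') > \deg(F)/\rk(F)$. Since $E \in \Cheart$ has phase less than $1$, Lemma \ref{lemma:sstable_phasenot1} (combined with the description of the torsion pair $(\Ttors,\Ftors)$) shows $E \in \Ttors \subseteq \Cheart$, so in particular $\phi$ is surjective. I would then define
\[ E' := (F', \overline{F}'; \phi'), \qquad \overline{F}' := \phi\bigl(\rho_\ast \mu_S^\ast(F'|_S)\bigr) \subseteq \overline{F}, \]
with $\phi'$ induced by restricting $\phi$. By construction $\phi'$ surjects onto $\overline{F}'$, so $E' \in \Ttors$; since $\rho_\ast \mu_S^\ast(-|_S)$ is right exact and $\phi$ is surjective, the quotient $E/E' = (F/F',\overline{F}/\overline{F}';\bar\phi)$ also lies in $\Ttors$. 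Hence $0 \to E' \to E \to E/E' \to 0$ is a short exact sequence in $\Cheart$.

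Next I apply $\sigma$-semistability of $E$. Writing slopes as $\mu_\sigma(-) = -\Re Z/\Im Z$, the inequality $\mu_\sigma(E') \leq \mu_\sigma(E)$ rearranges to
\[ \frac{\deg(F')}{\rk(F')} - \frac{\deg(F)}{\rk(F)} \;\leq\; \sum_{i,j} \delta_{ij}\left( \frac{\ell_{ij}(E')}{\rk(F')} - \frac{\ell_{ij}(E)}{\rk(F)} \right). \]
For the left-hand side, the cross-multiplied slope inequality yields a positive integer, so the difference is at least $1/(\rk(F')\rk(F)) \geq 1/r^2$. For the right-hand side, $F'$ is locally free (being a subsheaf of $F$ on the smooth curve $\tilde{C}$), and $\overline{F}'_j$ is a quotient of $\rho_\ast(F'|_S \otimes I^{e-j})$, which locally has length $\rk(F') \cdot \ell(I^{e-j}|_{q_i})$ at $q_i$. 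Dropping the nonnegative term $\ell_{ij}(E)/\rk(F)$ gives the bound
\[ \sum_{i,j} \delta_{ij} \frac{\ell_{ij}(E')}{\rk(F')} \;\leq\; \sum_{i,j} \delta_{ij}\, \ell(I^{e-j}|_{q_i}) \;<\; \epsilon \sum_{i,j} \ell(I^{e-j}|_{q_i}) \;=\; \frac{1}{r^2}. \]
Combining these two bounds gives $1/r^2 \leq \text{LHS} \leq \text{RHS} < 1/r^2$, a contradiction.

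The main technical points to verify carefully are (i) that $E'$ as defined is genuinely a subobject in $\Cheart$ (which reduces to checking both factors stay in $\Ttors$, i.e.\ surjectivity of $\phi'$ and $\bar\phi$), and (ii) the length estimate $\ell_{ij}(E')/\rk(F') \leq \ell(I^{e-j}|_{q_i})$, which is the crucial place where local freeness of $F'$ on the smooth $\tilde{C}$ gets used. The numerical choice of $\epsilon$ in the statement is exactly tuned so that the slope gap $1/r^2$ wins against the weighted length contribution, so step (ii) is really where the size of $\epsilon$ is forced.
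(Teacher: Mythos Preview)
Your proposal is correct and follows essentially the same argument as the paper: both promote a slope-destabilizing subbundle $F' \subset F$ to the subobject $E' = (F', \phi(\rho_\ast\mu_S^\ast(F'|_S)); \phi)$ in $\Cheart$, then use $\sigma$-semistability together with the length bound $\ell_{ij}(E') \leq \rk(F')\,\ell(I^{e-j}|_{q_i})$ and the choice of $\epsilon$ to force $1/r^2 < 1/r^2$. If anything, you are more explicit than the paper in checking that both $E'$ and $E/E'$ lie in $\Ttors$, so that the sequence is genuinely short exact in $\Cheart$.
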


\begin{proof}
We argue by contradiction. Assume that $F$ is not slope-semistable, and let $F' \subset F$ be a destabilizing vector bundle. This induces a subobject $E'=(F', \phi(\mu_S^\ast(F'|_S)); \phi)$ of $E$. Denote by $(r', d', \ell_{ij}')$ the numerical vector of $E'$

As $E$ is $\sigma$-semistable, we have that
\[ \frac{d-\sum_{ij}\delta_{ij}\ell_{ij}}{r} \geq \frac{d'-\sum_{ij} \delta_{ij}\ell_{ij}'}{r'}. \]
By the surjectivity, we have that $\ell_{ij}' \leq r' \ell(I^{e-j}|_{q_i})$. This, together with our choice of $\epsilon$, gives us
\[ \frac{d}{r} \geq \frac{d'-\sum_{ij} \delta_{ij}\ell_{ij}'}{r'} \geq \frac{d'}{r'} - \epsilon \frac{r'}{r} \sum_{ij} \ell(I^{e-j}|_{q_i}) > \frac{d'}{r'} - \frac{1}{r!}. \]
This shows that $1/r^2 > d'/r' - d/r >0$, a contradiction.
\end{proof}

This lemma shows that the functor $a_\ast\colon \DC^b(\Rcal) \to \DC^b(\tilde{C})$ maps $\sigma$-semistable objects of phase $v$ to slope-semistable sheaves of rank $r$ and degree $d$, provided that $\delta_{ij}<\epsilon$ for all $i, j$. In particular, we get a well-defined map $a_\ast\colon \M_\sigma(v) \to \M_{\tilde{C}}(r, d)$ at the level of stacks. This immediately descends to a morphism $a_\ast\colon M_\sigma(v) \to M_{\tilde{C}}(r, d)$ of the corresponding good moduli spaces (e.g. by \cite{Alp13}*{Theorem 6.6}). This mas is proper, as both sides are proper. This proves the second part of Theorem \ref{teo:comparison_main}.

The proof of the last part of Theorem \ref{teo:comparison_main} follows the same ideas. We fix $v=(r, d, \ell_{ij})$ with $r>0$, and set $\epsilon'=(2r^2 \sum_{ij}(I^{e-j}|_{q_i}))^{-1}$. If $E$ is a $\sigma$-semistable object with numerical vector $v$, then $E=(F, \overline{F}; \phi)$ with $E$ locally free. By construction, we have
\[ R\pi_\ast(E) = \pi_\ast(E) = \ker(\rho_\ast F \to \overline{F}_e), \]
cf. Subsection \ref{subsec:catres} (and the proof of Lemma \ref{lemma:catres_ff}). In particular, we have that $\pi_\ast(E)$ has rank $r$ and degree $\overline{d} := d+r(p_a(C)-g)-\sum_i \ell_{ie}$.

\begin{lemma}
Assume that $\delta_{ie}>1-\epsilon'$ for all $i$. Then, we have that $R\pi_\ast(E)$ is a slope-semistable pure dimension 1 sheaf in $C$.
\end{lemma}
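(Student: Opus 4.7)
My plan is to handle purity and slope-semistability separately. For purity: since $F$ is locally free on $\tilde{C}$ and $\rho\colon \tilde{C} \to C$ is finite, $\rho_\ast F$ is torsion-free on $C$ of rank $r$. Hence $R\pi_\ast E = \pi_\ast E = \ker(\rho_\ast F \to \overline{F}_e)$ is torsion-free, hence pure of dimension one. A standard Riemann--Roch computation using $\chi(\rho_\ast F) = \chi(F)$, together with the surjectivity of $\rho_\ast F \to \overline{F}_e$ (which follows from the surjectivity of $\phi$), confirms that $\rk(\pi_\ast E) = r$ and $\deg(\pi_\ast E) = \overline{d}$.

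For slope-semistability, I would argue by contradiction. Suppose $G \subset \pi_\ast E$ is a destabilizing subsheaf of rank $r'$ and degree $\overline{d}'$ satisfying $\overline{d}'/r' > \overline{d}/r$. The inclusion $G \hookrightarrow \rho_\ast F$ is adjoint to a morphism $\rho^\ast G \to F$; letting $F' \subset F$ be the saturation of its image yields a subbundle of rank $r'$ and some degree $d'$. Setting $\overline{F}' = \phi(\mu_S^\ast(F'|_S)) \subset \overline{F}$, which is automatically an $\A$-submodule because its structure is restricted from that of $\overline{F}$, produces a subobject $E' := (F', \overline{F}'; \phi|_{F'})$ of $E$ in $\Coh(\Rcal) \subset \Cheart$. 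The adjoint factorization guarantees $G \hookrightarrow \pi_\ast E'$, so
\begin{equation*}
\frac{\overline{d}'}{r'} \leq \frac{\deg \pi_\ast E'}{r'} = \frac{d' + r'(p_a(C) - g) - \sum_i \ell'_{ie}}{r'}.
\end{equation*}

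Finally, applying $\sigma_\Delta$-semistability of $E$ to $E'$ yields
\begin{equation*}
\frac{d' - \sum_{i,j} \delta_{ij} \ell'_{ij}}{r'} \leq \frac{d - \sum_{i,j} \delta_{ij} \ell_{ij}}{r}.
\end{equation*}
Writing $\delta_{ie} = 1 - \eta_i$ with $\eta_i < \epsilon'$, the defining constraint $\sum_j \delta_{ij} \ell(I^{e-j}|_{q_i}) < 1$ of $\tilde{\Omega}$ forces $\sum_{j<e} \delta_{ij} \ell(I^{e-j}|_{q_i}) < \epsilon' \ell(\O_{T, q_i})$, so all coefficients $\delta_{ij}$ with $j<e$ are of order $\epsilon'$. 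Combining this with the surjectivity bounds $\ell'_{ij} \leq r' \ell(I^{e-j}|_{q_i})$ and $\ell_{ij} \leq r \ell(I^{e-j}|_{q_i})$, the stability inequality rearranges to
\begin{equation*}
\frac{\overline{d}'}{r'} - \frac{\overline{d}}{r} \leq C \cdot \epsilon'
\end{equation*}
for a constant $C$ depending only on $r$ and on the lengths $\ell(I^{e-j}|_{q_i})$. The choice $\epsilon' = (2 r^2 \sum_{i,j} \ell(I^{e-j}|_{q_i}))^{-1}$ makes the right-hand side strictly smaller than $1/(r r')$, contradicting the integrality estimate $\overline{d}'/r' - \overline{d}/r \geq 1/(r r')$. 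The main obstacle is the bookkeeping of this error term: extracting from the geometry of $\tilde{\Omega}$ a uniform bound on the small $\delta_{ij}$ and verifying that the error absorbs into a clean $O(\epsilon')$, mirroring but more delicate than the argument in Lemma \ref{lemma:pfcomparison_pushforward}.
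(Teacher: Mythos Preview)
Your proof is correct and follows essentially the same strategy as the paper: produce from a destabilizing subsheaf $G\subset \pi_\ast E$ a subobject of $E$ in $\Cheart$ via adjunction, then play the resulting $\sigma$-semistability inequality against the slope inequality using that $1-\delta_{ie}$ and (via the defining constraint of $\tilde{\Omega}$) the remaining $\delta_{ij}$ are all $O(\epsilon')$. The only cosmetic difference is that you use the $\rho^\ast\dashv\rho_\ast$ adjunction and saturate the image inside $F$ to build $E'$, whereas the paper uses the $\pi^\ast\dashv\pi_\ast$ adjunction and passes to the torsion-free part of $\pi^\ast G$; these constructions yield the same subobject, and your treatment of purity and of the error bookkeeping is in fact slightly more explicit than the paper's.
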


\begin{proof}
As before, we argue by contradiction. We assume that $R\pi_\ast(E) = \pi_\ast E$ is not slope-semistable, and we let $G \in \Coh(C)$ be a saturated, destabilizing subsheaf.

By adjunction, the inclusion $G \to \pi_\ast E$ corresponds to $\pi^\ast G \to E$. Now, the left hand side is of the form $\pi^\ast G = (H, \overline{H}; \eta)$, where $H$ might have torsion (over $S$). We let $B'=H_{tf}$ the quotient of $H$ by its torsion subsheaf, and $\overline{B}' = \eta(\mu_S^\ast(H_{tf}|_S))$. We point out that the map $\pi^\ast G \to E$ factors through $A = (B', \overline{B}'; \eta)$. Moreover, the induced map $A \to E$ is a monomorphism in the category $\Cheart$. This way, if $v(A)=(r', d', \ell_{ij}')$, then we get the inequality
\[ \frac{d-\sum_{ij}\delta_{ij}\ell_{ij}}{r} \geq \frac{d'-\sum_{ij} \delta_{ij}\ell_{ij}'}{r'}. \]
Our choice of $\epsilon'$ allows us to bound the left hand side as
\[ \geq \frac{d'-\sum_i \ell_{ie}'}{r'} - \frac{\sum_{i} \sum_{j < e}\delta_{ij}\ell_{ij}'}{r'} \geq \frac{d'-\sum_i \ell_{ie}'}{r'} -\epsilon'. \]
Similarly, the left hand side can be bounded above by $(d-\sum_i \ell_{ie})/r + \epsilon'$. 

Finally, note that the natural map $G \to \pi_\ast A$ is generically an isomorphism; thus, the cokernel is supported at points. In particular, it has to be an isomorphism, as $G \to \pi_\ast A \to \pi_\ast E$ is a saturated subsheaf. We conclude as above, noting that $\deg G=d'-\sum_i \ell_{ie}'+r(p_a(C)-g)$ and similar for $\pi_\ast A$.
\end{proof}

As in the second part, this shows that $R\pi_\ast$ induces a morphism $R\pi_\ast\colon \M_\sigma(v) \to \M_C(r, \overline{d})$, and so a morphism at the level of good moduli spaces. To prove the last statement of Theorem \ref{teo:comparison_main}, we need the following lemma.

\begin{lemma} \label{lemma:pfcomparison_pullback}
If $E \in M_C(r, \overline{d})$, then $\pi^\ast E$ is $\sigma$-semistable.
\end{lemma}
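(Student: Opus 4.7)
The plan is to proceed by contradiction: assume $\pi^\ast E$ admits a destabilizing subobject, and let $A \hookrightarrow \pi^\ast E$ in $\Cheart$ be the first Harder--Narasimhan factor, so that $A$ is $\sigma$-semistable with $\mu_\sigma(A) > \mu_\sigma(\pi^\ast E)$. Since $E$ is a vector bundle, $L\pi^\ast E = \pi^\ast E$ lies in $\Coh(\Rcal) \cap \Cheart = \Ttors$; in particular $\Im Z_\sigma(\pi^\ast E) > 0$, so $A$ has phase smaller than $1$. By Lemma \ref{lemma:sstable_phasenot1}, $A$ belongs to $\Coh(\Rcal)$ and takes the form $A = (F_A, \overline F_A; \phi_A)$ with $F_A$ locally free of positive rank $r_A$, with $F_A \hookrightarrow \rho^\ast E$, and with $\phi_A$ surjective.

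The next step is to apply the exact functor $R\pi_\ast\colon \Cheart \to \Coh(C)$ of Lemma \ref{lemma:heart_exactness}. This produces an inclusion $\pi_\ast A \hookrightarrow R\pi_\ast(\pi^\ast E) = E$ of torsion-free sheaves of rank $r_A$ on $C$, using the identity $R\pi_\ast \circ L\pi^\ast = \id$ from Lemma \ref{lemma:catres_ff}. Since $\phi_A$ is surjective and $\rho$ is finite, the induced map $\rho_\ast F_A \to (\overline F_A)_e$ is also surjective, giving a short exact sequence
\[ 0 \to \pi_\ast A \to \rho_\ast F_A \to (\overline F_A)_e \to 0. \]
Combined with the degree identity $\deg \rho_\ast F_A = d_A + r_A(p_a(C) - g)$ for a vector bundle on $\tilde C$, this yields $\deg \pi_\ast A = d_A + r_A(p_a(C) - g) - \sum_i \ell_{ie}^A$. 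The slope-semistability of $E$ then gives $\deg \pi_\ast A / r_A \leq \overline d / r$, which upon substituting $\overline d = d + r(p_a(C) - g) - \sum_i \ell_{ie}$ collapses to the key inequality
\begin{equation} \tag{$\ast$}
\frac{d_A - \sum_i \ell_{ie}^A}{r_A} \leq \frac{d - \sum_i \ell_{ie}}{r}.
\end{equation}

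The last step is to contradict $\mu_\sigma(A) > \mu_\sigma(\pi^\ast E)$ using $(\ast)$. Note that $(\ast)$ is exactly the $\sigma$-semistability condition at the boundary point $\delta_{ie} = 1$, $\delta_{ij} = 0$ (for $j < e$) of $\tilde\Omega$; for general $\sigma = \sigma_\Delta$ in the regime of part (3) of Theorem \ref{teo:comparison_main}, the genuine $\sigma$-semistability inequality for $A$ differs from $(\ast)$ by correction terms linear in $1 - \delta_{ie}$ and in $\delta_{ij}$ for $j < e$. When $(\ast)$ is strict, the boundedness of $r_A$ and of the $\ell_{ij}^A$ (via Proposition \ref{prop:boundedness_main}) lets one shrink $\epsilon'$ so that these corrections cannot flip the sign, producing the desired contradiction.

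The hard case is when $(\ast)$ is an equality, which happens precisely when $\pi_\ast A$ is a Jordan--H\"older factor of $E$ of the same slope (in particular, $E$ is strictly slope-semistable). Here the correction is first-order and could in principle overturn the non-strict inequality. My expectation is to resolve this by showing $A$ is forced to be $\pi^\ast G$ for a saturated subsheaf $G \subseteq E$ of matching slope: the explicit description $(\overline F_E)_j \cong E|_T \otimes_T (\rho_\ast J^{e-j}/I^{e-j})$ identifies the module part of $\pi^\ast E$, and combined with the adjunction map $L\pi^\ast \pi_\ast A \to A$ coming from $\pi_\ast A = G$, it pins $A$ down up to isomorphism to $\pi^\ast G$. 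This yields $v(A) = (r_A/r) v(\pi^\ast E)$, hence $\mu_\sigma(A) = \mu_\sigma(\pi^\ast E)$, again contradicting strict destabilization.
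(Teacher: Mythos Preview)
Your overall strategy coincides with the paper's: assume a destabilizing subobject $A\hookrightarrow\pi^\ast E$ exists, push it forward to $\pi_\ast A\subset E$, compute $\deg(\pi_\ast A)$ from the short exact sequence $0\to\pi_\ast A\to\rho_\ast F_A\to(\overline F_A)_e\to 0$, and play the slope inequality for $E$ against the $\sigma$-slope inequality for $A$ using the size of $\epsilon'$. Your derivation of $(\ast)$ is correct and matches the paper's (terser) computation. The paper does not separate cases; its displayed strict inequality is really the $\leq$ coming from slope-semistability, and the $1/r^2$-gap step implicitly assumes the strict case. So you are right to flag the equality case as the only non-routine point.

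However, the sketch you give for the equality case does not close the gap. The claim that $A$ is forced to equal $\pi^\ast G$ with $G=\pi_\ast A$ is not justified. A subobject $A\hookrightarrow\pi^\ast E$ in $\Cheart$ lying in $\Ttors$ is determined by the sub-bundle $F_A\subset\rho^\ast E$ together with $\overline F_A=\operatorname{im}\bigl(\rho_\ast\mu_S^\ast(F_A|_S)\to\overline{\pi^\ast E}\bigr)$; nothing forces $F_A$ to be of the form $\rho^\ast G$, even when $\mu(\pi_\ast A)=\mu(E)$. The adjunction map $L\pi^\ast G\to A$ you invoke compares $\rho^\ast G$ (which can have torsion, since $G$ is only torsion-free on the singular curve $C$) with the locally free $F_A$, and there is no mechanism making this an isomorphism. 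Even granting $A\cong\pi^\ast G$, the conclusion $v(A)=(r_A/r)\,v(\pi^\ast E)$ would need $G$ to be a vector bundle so that the lengths $\ell_{ij}$ scale linearly with rank; a torsion-free subsheaf of $E$ need not be locally free at the singular points. So the equality case remains open in your write-up, and your ``expectation'' is not a proof.
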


\begin{proof}
We argue by contradiction; so that there exists a short exact sequence $0 \to \F \to \pi^\ast E \to \G \to 0$ in $\Cheart$ destabilizing $E$. As before, we may assume that $\F, \G$ lie in $\Coh(\Rcal)$, and that the corresponding component in $\Coh(\tilde{C})$ is locally free. This way, we apply $R\pi_\ast$ by directly evaluating $\pi_\ast$, cf. the proof of Lemma \ref{lemma:catres_ff}. We get the short exact sequence
\[ 0 \to \pi_\ast \F \to E \to \pi_\ast G \to 0 \]
in $\Coh(C)$. 

Now, the fact that $E$ is slope-semistable gives us the inequality
\[ \frac{\deg(\pi_\ast \F)}{\rk(\pi_\ast \F)} < \frac{\deg E}{\rk E} \implies \frac{\deg (\pi_\ast \F)}{\rk(\pi_\ast \F)} + \frac{1}{r^2} \leq \frac{\deg E}{\rk E}. \]
We now use that $\deg(\pi_\ast \F) = \deg F + \rk(F)(p_a(C)-g) - \sum_i \ell_{ie}(F)$, together with the fact that $\F$ destabilizes $\pi^\ast E$, to reach a contradiction. 
\end{proof}

\begin{proof}[Proof of Theorem \ref{teo:comparison_main}, (4)]
We let $U \subset M_C(r, \overline{d})$ be the open subset parametrizing vector bundles. We have a twisted universal family $\E \in \DC^b(U \times C, \alpha \boxtimes 1)$, for some Brauer class $\alpha$. (See \cite{HL10}*{\textsection 4.6} and the references therein.)

Assume first that $\alpha=0$, so that $\E \in \DC^b(U \times C)$. We apply $L\pi^\ast$ to get $(1 \times L\pi^\ast)(\E)$ in $\DC^-(\Rcal)_U$. By flat base change, we have that over each point $p \in U$, the object $(1 \times L\pi^\ast)(\E)$ restricts to $L\pi^\ast E=\pi^\ast E$, where $E$ is the vector bundle corresponding to $U$. In particular, this shows that $(1 \times L\pi^\ast)(\E) \in \DC^b(\Rcal)_U$. Moreover, by Lemma \ref{lemma:pfcomparison_pullback}, we have that this is a family of $\sigma$-semistable objects. This way, the family $(1 \times L\pi^\ast) \E$ defines a map $U \to \M_\sigma(v)$, and so a map $s\colon U \to M_\sigma(v)$ by composition. By construction, we get that the induced composition $U \to M_\sigma(v) \to M_C(r, \overline{d})$ is the identity on $U$, and so $s$ is a section of $R\pi_\ast\colon M_\sigma(v) \to M_C(r, \overline{d})$ over $U$. 

To conclude, we need to show that $(R\pi_\ast)^{-1}(U) = s(U)$; i.e., that there are no other objects in $M_\sigma(v)$ mapping to $U$. We argue by contradiction: denote by $\F \in M_\sigma(v)$ an object satisfying $f\colon E \cong \pi_\ast \F \in M_C(r, \overline{d})$. By adjunction, the map $f$ induces a map $f'\colon \pi^\ast E \to \F$. By assumption, we have that $v(\pi^\ast E)=v(\F)$.

We claim that $f'$ is an isomorphism. To show this, let us write $\pi^\ast E = (G, \overline{G}; \psi)$ and $\F=(F, \overline{F}; \phi)$. The map $G \to F$ is generically an isomorphism, as $f$ is an isomorphism. As $G$ is torsion-free, and that $G, F$ have the same degree and rank, we conclude that the map $G \to F$ is an isomorphism. The surjectivity of $\psi$ ensures that the kernel of $\overline{G} \to \overline{F}$ is trivial; thus, it is also an isomorphism as $v(\overline{G})=v(\overline{F})$. We get that $f'$ is an isomorphism, as required. This finishes the proof of Theorem \ref{teo:comparison_main}, under the assumption that $\alpha=0$.

For the general case, pick an étale cover $\{V_i \to U\}$ representing $\alpha$, so that $\E|_{V_i} \in \DC^b(V_i \times C)$. By construction, the restrictions of $\E|_{V_i}$ and $\E|_{V_j}$ to $V_{ij} = V_i \times_U V_j$ are isomorphic, and the isomorphism is induced by an element $\lambda_{ij} \in \Gamma(V_{ij}, \O^\times)$. The previous constructions gives étale-local sections $V_i \to \M_\sigma(v)$ that differ by $\lambda_{ij}$, and so the induced sections $V_i \to M_\sigma(v)$ agree in the intersections. By descent, we get an honest section $U \to M_\sigma(v)$. 
\end{proof}

\section{Nodes and cusps} \label{sec:node}

In this section we will specialize to the case when $C$ is an irreducible curve with a single node or a cusp. We modify the notation of Subsection \ref{subsec:setup} accordingly: denote $q \in C$ the singular point of the curve, and $S \subset \tilde{C}$ its set-theoretic preimage, which is a subscheme of length 2. We also let $g$ be the genus of $\tilde{C}$, so that $C$ has arithmetic genus $g+1$. For the sake of simplicity we will assume $g \geq 1$; see Remark \ref{remark:noderktwo_genuszero} for some comments about $g=0$. 

Let us start by specializing Theorem \ref{teo:intro_main}. In this case, we have seen in Proposition \ref{prop:nrsmall_classification} that $T=\{q\}$, endowed with its reduced structure, is a non-rational locus for $C$. This way, the abelian category $\Coh(\Rcal)$ from Subsection \ref{subsec:abcat} can be written as 
\[ \Coh(\Rcal) = \{ (F, \overline{F}; \phi) : F \in \Coh(\tilde{C}), \overline{F} \in \Coh(q), \phi\colon \rho_\ast F|_S \to \overline{F} \}. \]
The functors of Subsection \ref{subsec:functors} specialize to
\begin{gather*}
a_\ast(F, \overline{F}; \phi) = F, \qquad a^\ast(M) = (M, \rho_\ast(M|_S); \id); \\
b_\ast(F, \overline{F}; \phi) = \overline{F}, \quad b^\ast(N) = (0, N; 0).
\end{gather*}
Similarly, the two functors of Subsection \ref{subsec:catres} can be written as 
\begin{gather*}
\pi_\ast(F, \overline{F}; \phi) = \ker(\rho_\ast F \to \rho_\ast (F|_S)\xrightarrow{\phi} \overline{F}), \\
\pi^\ast(M) = (\rho^\ast M, \coker(M|_T \to (\rho_\ast \rho^\ast M|_T)); \text{res}).
\end{gather*}

This way, the category $\DC^b(\Rcal)$ has $K^\num(\Rcal) \cong \Z^3$, induced by the maps $\deg(-)$, $\rk(-)$, $\ell(-)$ from Subsection \ref{subsec:Kgrp}. We use them to identify $v \in K^\num(\Rcal)$ with a triple $v=(r, d, \ell)$.

We constructed in Subsection \ref{subsec:heart} a heart $\Cheart \subset \DC^b(\Rcal)$. An object $E \in \DC^b(\Rcal)$ lies in $\Cheart$ if $H^{-1}(E) = (0, \O_q^{\oplus n}; 0)$, $H^0(E)=(F, \overline{F}; \phi)$ with $\phi$ surjective, and $H^i(E)=0$ otherwise. Lastly, we proved in Corollary \ref{cor:quad_stab} that the central charge
\[ Z(E) = \delta \ell(E) - \gamma \deg(E) + \beta \rk(E) + i \alpha \rk(E) \]
defines a stability condition on $\Cheart$, for $\alpha, \beta, \gamma, \delta$ real numbers subject to the restrictions $\alpha>0$, $\gamma>\delta>0$. We will focus on the case $\alpha=1$, $\beta=0$, $\gamma=1$, and $\delta=t$, for $t \in (0, 1)$. This defines a path $(\sigma_t)_{t \in (0, 1)}$ in the stability manifold $\Stab(\Rcal)$.

\begin{teo} \label{teo:node_main}
Let $r>0$ and $d, \ell \in \Z$ be given, and assume that $v=(r, d, \ell)$ is primitive. For each $t \in (0, 1)$, we let $\M_t(v)=\M_{\sigma_t}(v)$ be the moduli stack of $\sigma_t$ semistable objects with numerical class $v$, and let $\M_t(v) \to M_t(v)$ be the corresponding good moduli space.
\begin{enumerate}
\item For all but finitely many values of $t$, the moduli space $M_t(v)$ parametrizes only stable objects.
\end{enumerate}
Denote by $0<t_1<\dots<t_N<1$ the finitely many values of $t$ for which $M_t(v)$ parametrizes some strictly semistable objects. 
\begin{enumerate}[resume]
\item If $t_i < t, t' < t_{i+1}$, the two moduli spaces $M_t(v), M_{t'}(v)$ are isomorphic.
\item For all $t, t'$, the two moduli spaces $M_t(v)$, $M_{t'}(v)$ are birational.  
\item For $t \neq t_i$, the moduli space $M_t(v)$ is smooth, projective of dimension $r^2(g-1)+1+\ell(2r-\ell)$, and the map $\M_t(v) \to M_t(v)$ is a $\mathbb{G}_m$-gerbe.
\item For $0<t<t_1$, the functor $a_\ast$ induces a morphism $M_t(v) \to M_{\tilde{C}}(r, d)$. If $\gcd(r, d)=1$, the map $M_t(v) \to M_{\tilde{C}}(r, d)$ is a $\Gr(2r, \ell)$-bundle. (In particular, $M_t(v)$ is irreducible for all $t$.)
\item For $t_N <t<1$, the functor $R\pi_\ast$ induces a morphism $M_t(v) \to M_C(r, d+r-\ell)$. If $\ell= r$, then the map is birational.
\end{enumerate}
\end{teo}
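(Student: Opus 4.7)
My plan is to derive parts (1) and (2) directly from Theorem \ref{teo:comparison_main}(1): primitivity of $v$ guarantees finitely many walls cut out by non-zero linear equations in $\tilde{\Omega} = (0,1)$, producing the $t_i$, and between consecutive walls the set of $\sigma_t$-semistable objects with class $v$ is constant, giving the canonical isomorphism in (2) and the coincidence of stability and semistability in (1).

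For part (5), I would use the proper morphism $a_\ast\colon M_t(v) \to M_{\tilde{C}}(r, d)$ from Theorem \ref{teo:comparison_main}(2). When $\gcd(r,d) = 1$, the base admits a (possibly twisted) universal bundle $\mathcal{U}$, and the fiber of $a_\ast$ over $[F] \in M_{\tilde{C}}(r, d)$ parametrizes surjections $\phi\colon \rho_\ast F|_S \twoheadrightarrow \bar F$ with $\ell(\bar F) = \ell$, i.e., $\ell$-dimensional quotients of a $2r$-dimensional vector space: this is $\Gr(2r, \ell)$. The rank-$2r$ bundle $\rho_\ast \mathcal{U}|_S$ on $M_{\tilde{C}}(r,d)$ produces a relative Grassmannian bundle, Zariski-locally trivial as Grassmannian bundles on vector bundles always are, whose tautological family consists of $\sigma_t$-stable objects for $t \in (0, t_1)$ by the argument of Lemma \ref{lemma:pfcomparison_pushforward}. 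This identifies $M_t(v)$ with this bundle for $t < t_1$, yielding smoothness, projectivity, dimension $r^2(g-1) + 1 + \ell(2r - \ell)$, and the $\mathbb{G}_m$-gerbe structure on $\M_t(v)$ from simplicity of stable objects.

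For parts (3) and (4), the main task is to propagate smoothness and projectivity to all $t \neq t_i$. I plan to prove $\Ext^2_\Rcal(E, E) = 0$ for every $\sigma_t$-stable $E$ with class $v$ uniformly in $t$, using the semi-orthogonal decomposition $\DC^b(\Rcal) = \langle b^\ast \DC^b(\A), La^\ast \DC^b(\tilde{C})\rangle$ (with $\A = \O_q \cong \C$ in this setup). For such $E = (F, \bar F; \phi)$ with $F$ locally free and $\phi$ surjective (Lemma \ref{lemma:sstable_phasenot1}), the defining triangle $La^\ast a_\ast E \to E \to b^\ast Lb_! E$, together with $\Hom(La^\ast M, b^\ast N) = 0$ from the SOD and the direct computation $Lb_! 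E = \ker \phi[1]$, reduces $\Ext^{\geq 2}_\Rcal(E,E)$ to $\Ext^{\geq 2}_{\tilde{C}}(F, F)$ (zero since $\tilde{C}$ is a smooth curve) and $\Ext^{\geq 1}_\A(\ker \phi, \ker \phi)$ (zero since $\A$ is semisimple). Standard deformation theory then gives smoothness of $\M_t(v)$; projectivity of $M_t(v)$ follows from smoothness combined with properness (Theorem \ref{teo:moduli_main}), and the dimension is constant via $\chi(E, E)$. Birationality in (3) then follows from the standard wall-crossing picture: by Corollary \ref{cor:ssnot1_smallrk} strictly semistable objects at each $t_i$ have subobjects of strictly smaller rank, so a dimension estimate places their locus in positive codimension, and the stable locus is canonically shared across adjacent chambers.

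For part (6), Theorem \ref{teo:comparison_main}(3) gives a proper map $R\pi_\ast\colon M_t(v) \to M_C(r, d + r - \ell)$ for $t$ close to $1$. When $\ell = r$, I would verify $v(\pi^\ast E) = (r, d, r) = v$ for any rank-$r$ vector bundle $E$ on $C$ of degree $d$, using that $\rho_\ast \O_{\tilde{C}}/\O_C$ is a length-$1$ skyscraper at $q$ for both the node and the cusp, so that $\coker(E|_q \to (\rho_\ast\rho^\ast E)|_T)$ has length $r$. Theorem \ref{teo:comparison_main}(4) then identifies $R\pi_\ast$ with an isomorphism over the dense open locus $U \subset M_C(r, d)$ of vector bundles, and irreducibility of $M_t(v)$ (inherited from part (5) via the birational equivalences established in (3)) yields birationality of $R\pi_\ast$. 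The principal obstacle is the $\Ext^2$-vanishing step underpinning smoothness: the SOD reduction above should suffice, but care is needed to track the long exact Ext sequences and the interaction between the $\tilde{C}$-part and the $\A$-part when $\phi$ is surjective but not injective.
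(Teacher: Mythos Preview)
Your overall strategy matches the paper's: parts (1), (2), (5), (6) go through essentially as you outline, and your $\Ext^2$-vanishing argument via the SOD triangle $La^\ast F \to E \to b^\ast(\ker\phi)[1]$ is exactly the content of the paper's Lemma \ref{lemma:nodebasic_main}(1)--(2) and Corollary \ref{cor:nodebasic_smooth}.

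There are two places where your sketch is thin relative to what is actually needed. First, for part (3) you write that ``a dimension estimate places their locus in positive codimension,'' but this estimate is the substance of the argument and is not automatic. The paper (Proposition \ref{prop:nodewalls_bir}) fixes a wall decomposition $v=u+w$ with $u=(r',d',\ell')$, $w=(r'',d'',\ell'')$, observes that the tangent space to the destabilized locus lies in the kernel of a \emph{surjective} map $\Ext^1(E,E)\to\Ext^1(A,B)$ (surjectivity uses your $\Ext^2$-vanishing), and then must show $\dim\Ext^1(A,B)>0$. This last step uses the Euler form from Lemma \ref{lemma:nodebasic_main}(3), the wall equation \eqref{eq:nodewalls_formula} to eliminate $d',d''$, and an elementary but not entirely obvious positivity check that fails only in the degenerate cases $\ell'=\ell''=0$ or $\ell'=2r',\,\ell''=2r''$; these are then excluded using primitivity of $v$. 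Without this computation you do not have positive codimension, hence no birationality.

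Second, projectivity in part (4) does not follow from ``smoothness combined with properness'': Theorem \ref{teo:moduli_main} only gives a proper good moduli space. The paper invokes \cite{BM23}*{Theorem 2.4}, which supplies projectivity once the moduli space is known to be smooth; you should cite this (or an equivalent result) rather than asserting it. A minor additional point: when $\gcd(r,d)=1$ the universal bundle on $M_{\tilde C}(r,d)$ is genuinely untwisted (see the paper's footnote citing \cite{Ses82} and \cite{HL10}), so the ``possibly twisted'' hedge is unnecessary and in fact the Zariski-local triviality relies on having an honest vector bundle $\rho_\ast\mathcal{U}|_S$.
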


\begin{proof}
Parts (1) and (2) are Lemma \ref{lemma:nodewalls_finiteness} and Corollary \ref{cor:nodewalls_samestack}. (The projectivity follows by \cite{BM23}*{Theorem 2.4}, which applies as $M_t(v)$ is smooth.) Part (3) is Proposition \ref{prop:nodewalls_bir}. For part (4), the smoothness is Corollary \ref{cor:nodebasic_smooth}, the dimension is a consequence of Lemma \ref{lemma:nodebasic_main}, while the $\mathbb{G}_m$-gerbe follows from (1). Part (5) is proven in Subsection \ref{subsec:nodecompzero}, and part (6) follows from the discussion in Subsection \ref{subsec:nodecompone}.
\end{proof}

\subsection{Basic properties} \label{subsec:nodebasic}

In this subsection we will give some basic properties of objects in $\Cheart$ corresponding to $\sigma_t$-semistable objects of phase smaller than 1, for some $t$. By Lemma \ref{lemma:sstable_phasenot1}, we have that such objects are of the form $E=(F, \overline{F}; \phi)$, with $F$ locally free.

\begin{lemma} \label{lemma:nodebasic_main}
Let $E=(F, \overline{F}; \phi)$ and $E'=(F', \overline{F}'; \phi')$ be two objects in $\Cheart$, with $F, F'$ locally free. Denote $r=\rk(E)$, $d=\deg(E)$ and $\ell=\ell(E)$, and similarly $r', d', \ell'$. We have the following properties.

\begin{enumerate}
\item There is a distinguished triangle $La^\ast F \to E \to b^\ast \O_q^{\oplus 2r-\ell}[1]$.
\item We have $\Hom^i(E, E')=0$ unless $i=0, 1$. 
\item We have $\chi_\Rcal(E, E') = d'r-dr'+rr'-rr'g-2r\ell' +\ell \ell'$.
\item Assume that $\Hom(E, E)=\C$. Then $\dim \Hom^1(E, E) \leq r^2g +1$, with equality only if $\ell = r$. 
\end{enumerate}
\end{lemma}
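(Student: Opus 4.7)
My plan is to prove the four statements in order, using the semi-orthogonal decomposition of Proposition \ref{prop:sod_main} together with the fact that $\A = \O_q$ is just the structure sheaf of a reduced point in this node/cusp setting (so $\DC^b(\A)$ consists of complexes of finite-dimensional vector spaces with vanishing higher Ext).

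For part (1), since $F$ is locally free, $La^\ast F = a^\ast F = (F, \rho_\ast(F|_S); \id)$. The unit of adjunction $La^\ast a_\ast E \to E$ is the map that is the identity on the $\tilde{C}$-component and $\phi$ on the $\A$-component. This is a morphism in the abelian category $\Coh(\Rcal)$, surjective because $\phi$ is surjective, with kernel $(0, \ker\phi; 0)$. Since $\ker\phi$ is a quotient of the $\O_q$-module $\rho_\ast(F|_S) \cong \O_q^{\oplus 2r}$ of length exactly $2r - \ell$, and $\O_q$ is a field, we obtain $\ker\phi \cong \O_q^{\oplus 2r - \ell}$ and thus a short exact sequence $0 \to b^\ast \O_q^{\oplus 2r - \ell} \to La^\ast F \to E \to 0$ in $\Coh(\Rcal)$, which yields the desired triangle.

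For parts (2) and (3), I apply $\Hom^\bullet(-, E')$ and then $\Hom^\bullet(?, -)$ to the triangles of part (1) for $E$ and $E'$. This reduces the computation to four terms. The cross term $\Hom^i(La^\ast F, b^\ast \O_q^{2r'-\ell'}[1])$ vanishes by the semi-orthogonality in \eqref{eq:sod_main}. For the other three, the adjunctions $La^\ast \dashv a_\ast$ and $b^\ast \dashv b_\ast$ give
\begin{align*}
\Hom^i_\Rcal(La^\ast F, La^\ast F') &= \Hom^i_{\tilde{C}}(F, F'), \\
\Hom^i_\Rcal(b^\ast \O_q^{m}[1], La^\ast F') &= \Hom^{i-1}_\A(\O_q^{m}, \rho_\ast(F'|_S)), \\
\Hom^i_\Rcal(b^\ast \O_q^{m}[1], b^\ast \O_q^{m'}[1]) &= \Hom^i_\A(\O_q^{m}, \O_q^{m'}),
\end{align*}
where $m = 2r - \ell$, $m' = 2r' - \ell'$. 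The first is concentrated in degrees $0,1$ (since $\tilde{C}$ is a smooth curve) and the last two are concentrated in single degrees (since $\A$-modules on a reduced point have no higher Ext). Combining these degree bounds yields $\Hom^i(E, E') = 0$ for $i \neq 0, 1$, giving part (2). For part (3), additivity of $\chi$ over distinguished triangles gives $\chi_\Rcal(E, E')$ as the sum of the four Euler characteristics above; evaluating via Riemann--Roch on $\tilde{C}$ (producing $rd' - r'd + rr'(1-g)$) and by taking dimensions over $\A = \C$ (producing $-(2r-\ell)(2r')$ and $(2r-\ell)(2r'-\ell')$), and collecting terms, yields the stated formula.

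Part (4) is then a one-line consequence of (3). Setting $E = E'$ gives $\chi_\Rcal(E,E) = r^2(1-g) - 2r\ell + \ell^2 = (r-\ell)^2 - r^2 g$, and the assumption $\Hom(E, E) = \C$ together with the vanishing from (2) give
\[
\dim \Hom^1(E, E) = 1 - \chi_\Rcal(E, E) = r^2 g + 1 - (r - \ell)^2,
\]
which is bounded by $r^2 g + 1$ with equality exactly when $\ell = r$. The main place requiring care is part (1), where I must verify that the unit $a^\ast F \to E$ really identifies with the asserted map in $\Coh(\Rcal)$ and that the kernel sits in the image of $b^\ast$; everything downstream is then formal manipulation of the resulting triangle.
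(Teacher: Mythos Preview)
Your proof is correct and follows essentially the same route as the paper: identify the triangle in (1) via the surjection $a^\ast F \to E$ in $\Coh(\Rcal)$ with kernel $b^\ast(\ker\phi)$, then feed it through the semi-orthogonal decomposition \eqref{eq:sod_main}, the adjunctions $La^\ast \dashv a_\ast$ and $b^\ast \dashv b_\ast$, and Riemann--Roch on $\tilde{C}$. One small slip: $\ker\phi$ is a \emph{submodule} (not a quotient) of $\rho_\ast(F|_S)\cong \O_q^{\oplus 2r}$, though this is harmless here since $\O_q$ is a field.
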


\begin{proof}
\begin{enumerate}
\item There is a canonical map $La^\ast F \to E$. Using that $\phi$ is surjective, it follows that $La^\ast F \to E$ is a surjection in $\Coh(\A)$, and its kernel is isomorphic to $(0, \O_q^{\oplus 2r-\ell}; 0)$. The result follows immediately.

\item We take the long exact sequences induced by the triangle in the first part. The result follows immediately by noting the following:
\begin{itemize}
\item $\Hom^i_{\tilde{C}}(F, F')=0$ for $i \neq 0, 1$, as $\tilde{C}$ is smooth.
\item $\Hom^i(La^\ast F, b^\ast \O_q)=0$ for all $i$, by \eqref{eq:sod_main}.
\item $\Hom^i(b^\ast \O_q, La^\ast F)=0$ for $i \neq 0$, by adjunction.
\item $\Hom^i_Y(\O_q, \O_q)=0$ for $i \neq 0$. 
\end{itemize}

\item Follows directly from the (1) and by Riemann--Roch on $\tilde{C}$.

\item From (3) we get that $\dim \Hom^1(E, E) = r^2(g-1) +1 + \ell(2r-\ell)$. Here $0 \leq \ell \leq 2r$, from where the result follows immediately. \qedhere
\end{enumerate}
\end{proof}

A direct application of Lemma \ref{lemma:sstable_phasenot1} ensures that $E$ satisfies the condition of Lemma \ref{lemma:nodebasic_main} if $E$ is $\sigma_t$-semistable of phase smaller than 1 for some $t$.

\begin{cor} \label{cor:nodebasic_smooth}
Let $v=(r, d, \ell)$ be a numerical class. The moduli stack $\M_t(v)$ is smooth. Moreover, if $E$ is stable, then the good moduli space $M_t(v)$ is smooth at the point corresponding to $E$. 
\end{cor}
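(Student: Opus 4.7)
The plan is to derive both claims from the Ext-vanishing established in Lemma \ref{lemma:nodebasic_main}(2), together with the standard deformation theory for moduli of objects in a smooth admissible subcategory of $\DC^b(X)$, $X$ smooth projective.

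I would first address smoothness of the stack $\M_t(v)$. By Corollary \ref{cor:sod_ncsch}, $\DC^b(\Rcal)$ is admissible in $\DC^b(X)$ for some smooth projective $X$, so $\Hom$-spaces computed on either side agree, and $\M_t(v)$ is an open substack of Lieblich's stack $\M_{\mathrm{pug}}(X)$ of perfect universally gluable objects on $X$. At a $\C$-point $E$, the deformation theory of $\M_{\mathrm{pug}}(X)$ is controlled by the tangent space $\Ext^1(E,E)$ and obstruction space $\Ext^2(E,E)$. Since $E$ is $\sigma_t$-semistable of phase smaller than $1$, Lemma \ref{lemma:sstable_phasenot1} writes $E = (F, \overline{F}; \phi)$ with $F$ locally free, so Lemma \ref{lemma:nodebasic_main}(2) gives $\Ext^{\geq 2}(E,E) = 0$. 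Vanishing of the obstruction space yields smoothness of $\M_t(v)$ at $E$.

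For the good moduli space, assume further that $E$ is $\sigma_t$-stable, so that $\Aut(E) = \mathbb{G}_m$ acts on $E$ by scalars. This stabilizer is linearly reductive, and since it acts by scalars on both factors of $\Ext^1(E,E)$, the induced action is trivial. By the étale local structure theorem for good moduli spaces \cite{AHLH23}, the morphism $\M_t(v) \to M_t(v)$ is étale-locally around $E$ modelled on the projection $[\Ext^1(E,E)/\mathbb{G}_m] \to \Ext^1(E,E)$; since the right-hand side is smooth, $M_t(v)$ is smooth at the point corresponding to $E$.

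The only subtlety is identifying the $\Ext$-groups computed inside $\DC^b(\Rcal)$ with those governing deformations in $\M_{\mathrm{pug}}(X)$, which is automatic for admissible embeddings, and invoking the correct form of the slice theorem; neither presents a conceptual obstacle once Lemma \ref{lemma:nodebasic_main}(2) is in hand.
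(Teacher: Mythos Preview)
Your argument is correct and follows the same approach as the paper: both deduce smoothness of $\M_t(v)$ from the vanishing $\Ext^2(E,E)=0$ provided by Lemma \ref{lemma:nodebasic_main}(2), together with Lieblich's obstruction theory. One small imprecision: Lemma \ref{lemma:sstable_phasenot1} is stated for $\sigma$-\emph{stable} objects, so to apply it to a semistable $E$ you should note (as the paper does just before the corollary) that the conclusion persists along Jordan--H\"older filtrations; this is routine since extensions of objects $(F,\overline{F};\phi)$ with $F$ locally free and $\phi$ surjective are again of this form on a smooth curve.

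For the second claim the paper simply says both results follow immediately, whereas you spell out the \'etale slice argument via \cite{AHLH23}. Either route works: one can also observe directly that over the stable locus $\M_t(v)\to M_t(v)$ is a $\mathbb{G}_m$-gerbe, so smoothness upstairs gives smoothness downstairs.
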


\begin{proof}
If $E \in \Cheart$ is semistable with numerical class $v$, then Lemma \ref{lemma:nodebasic_main} ensures that $\Hom^2(E, E)=0$. But by \cite{Lie06}, the deformation functor $\Def_E$ has an obstruction theory with values in $\Hom^2(E, E)$. The two results follow immediately. 
\end{proof}

\subsection{Walls} \label{subsec:nodewalls}

The goal of this subsection is to discuss the following result.

\begin{lemma} \label{lemma:nodewalls_finiteness}
Let $(r, d, \ell) \in K^\num(\Rcal)$ be a numerical vector, with $r>0$ and $\gcd(r, d, \ell)=1$. There are finitely many values of $t$, say $0<t_1<\dots<t_N <1$, for which the moduli stack $\M_t(v)$ parametrizes some strictly semistable objects. 
\end{lemma}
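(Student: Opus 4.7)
The plan is to reduce the statement to a finite bounding argument on the numerical classes of destabilizing subobjects, using the formula \eqref{eq:pfcomparison_wallformula} specialized to this setting.

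First, I would observe that since $r = \rk(v) > 0$, any $\sigma_t$-semistable object $E$ with $v(E) = v$ has phase strictly less than $1$. If $E$ is strictly $\sigma_t$-semistable, then by the existence of Jordan--Hölder filtrations we get a short exact sequence $0 \to E' \to E \to E'' \to 0$ in $\Cheart$ with $E', E''$ both $\sigma_t$-semistable of the same phase. Corollary \ref{cor:ssnot1_smallrk} then gives $0 < \rk(E') < r$. Writing $v(E') = (r', d', \ell')$, the equal-slope condition translates into the wall equation
\[ r(t\ell' - d') = r'(t\ell - d), \qquad \text{i.e.,} \qquad (r\ell' - r'\ell)\, t = rd' - r'd. \]

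Next, I would rule out the degenerate case $r\ell' - r'\ell = 0 = rd' - r'd$ using primitivity. If both vanish, then $(r', d', \ell') = (r'/r)(r, d, \ell)$ in $\mathbb{Q}^3$. Writing $ar + bd + c\ell = 1$ for some $a, b, c \in \Z$ (which exists since $v$ is primitive) and evaluating on $(r', d', \ell')$ gives $r'/r \in \Z$, contradicting $0 < r' < r$. Hence on any genuine wall we have $r\ell' - r'\ell \neq 0$, and $t = (rd' - r'd)/(r\ell' - r'\ell)$ is uniquely determined by $(r', d', \ell')$.

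The remaining step is to bound the possible triples $(r', d', \ell')$. There are only finitely many choices for $r' \in \{1, \dots, r-1\}$. For each $r'$, the surjectivity of $\phi'$ (part of the definition of $\Cheart$, since $\ker \phi' \subseteq \ker \phi$) implies $\ell' \leq \ell(\rho_\ast (F'|_S)) = 2r'$, so $\ell' \in \{0, 1, \dots, 2r'\}$ is bounded. Finally, for each admissible pair $(r', \ell')$ with $r\ell' \neq r'\ell$, the constraint $t \in (0,1)$ together with the formula
\[ t = \frac{rd' - r'd}{r\ell' - r'\ell} \]
forces $d'$ to lie in a bounded interval of $\mathbb{Z}$. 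Thus there are only finitely many triples $(r', d', \ell')$ and hence finitely many wall-values $t_1 < \dots < t_N \in (0,1)$.

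The main subtlety, rather than an obstacle, is the primitivity argument ensuring that every relevant numerical wall is a proper hyperplane (here a point) and not the whole interval $(0,1)$; this is where the hypothesis $\gcd(r, d, \ell) = 1$ enters. The remaining ingredients — boundedness of $\ell'$ via surjectivity of $\phi'$ and boundedness of $d'$ via $t \in (0,1)$ — are elementary.
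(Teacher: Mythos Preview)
Your proof is correct and follows essentially the same approach as the paper. The paper simply notes that the lemma is a direct consequence of Theorem \ref{teo:comparison_main}(1), whose proof in Subsection \ref{subsec:pfcomparison} is exactly the argument you wrote, specialized from the general $\tilde{\Omega}\subset\R^{me}$ to the one-parameter interval $t\in(0,1)$: bound $r'$ by Corollary \ref{cor:ssnot1_smallrk}, bound $\ell'$ by surjectivity of $\phi'$, use primitivity to ensure the wall equation is nontrivial, and then bound $d'$ by boundedness of the parameter region.
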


Note that, as written, this is a direct consequence of Theorem \ref{teo:comparison_main}. For applications however, it is useful to give an explicit formula for the possible walls.

To do so, fix $r, d, \ell$ as above. Let $E=(F, \overline{F}; \phi)$ be an object of $\Cheart$, with $F$ locally free, with $\rk(E)=r$, $\deg(E)=d$, $\ell(E)=\ell$. Assume that for some $t \in (0, 1)$, there is a proper subobject $E'=(F', \overline{F}'; \phi')$ of $E$ (in $\Cheart$) such that the $\sigma_t$-phase of $E$ and $E'$ agree. The condition that $E$ and $E'$ have the same phase is:
\begin{align}
\frac{-\Im Z_t(E)}{\Re Z_t(E)} = \frac{-\Im Z_t(E')}{\Re Z_t(E')} &\iff \frac{-t \ell +d -\beta r}{r} = \frac{-t \ell'+d'-\beta r'}{r'} \nonumber \\
&\iff t\left( \frac{\ell'}{r'} - \frac{\ell}{r} \right) = \frac{d'}{r'} - \frac{d}{r}. \label{eq:nodewalls_formula}
\end{align}

Note from Theorem \ref{teo:comparison_main} that there are finitely many walls. Thus, there is at most one value of $t$ for which \eqref{eq:nodewalls_formula} holds, namely $t=(d'r-dr')/(\ell'r-\ell r')$. This can be used to compute the walls explicitly in examples. We will go back to this idea in Subsection \ref{subsec:noderktwo}.

\begin{cor} \label{cor:nodewalls_samestack}
Under the previous assumptions, the moduli stacks $\M_t(v)$, $\M_{t'}(v)$ agree if $t_i < t, t' < t_{i+1}$.
\end{cor}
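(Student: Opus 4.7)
The plan is to reduce the equality of stacks to the following pointwise statement: for any $t, t' \in (t_i, t_{i+1})$, an object $E \in \Cheart$ with numerical class $v$ is $\sigma_t$-semistable if and only if it is $\sigma_{t'}$-semistable. Once this is established, the two substacks $\M_t(v), \M_{t'}(v) \subset \M_{\mathrm{pug}}(X)$ coincide: both are cut out by the same fiberwise condition on families $\E \in \DC_{S\dashperf}(S \times X)$, and the sets of qualifying objects agree fiber-by-fiber, so the defining open subfunctors are the same.

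On $(t_i, t_{i+1})$ the definition of the $t_i$ says that no object of class $v$ is strictly $\sigma_s$-semistable, so semistability and stability coincide for class $v$ throughout the open interval. The real content is therefore to show that the set $U_E := \{s \in (0,1) : E \text{ is } \sigma_s\text{-stable}\}$, which is open in $(0,1)$, cannot have a boundary point inside $(t_i, t_{i+1})$. I would first reduce to subobjects $F \subset E$ in $\Cheart$ with $\rk(F) > 0$: a subobject with $\rk(F) = 0$ lies in the ``phase $1$'' part (cf.\ the proof of Lemma \ref{lemma:ccharge_upperhalf}) while $E$ itself has phase $<1$ since $\rk(E) = r > 0$, so the existence of such an $F$ would destabilize $E$ at every $s$; we may thus assume none exists. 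For subobjects $F$ with $\rk(F) > 0$ the phase $\phi_s(F)$ is a continuous function of $s \in (0,1)$, since $\Im Z_s(F)$ is strictly positive and both $\Re Z_s, \Im Z_s$ depend affinely on $s$.

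The key step is the boundary principle: if $s_0 \in (0,1)$ is a boundary point of $U_E$, then $E$ is strictly $\sigma_{s_0}$-semistable. Choosing $s_n \to s_0$ with $s_n \in U_E$, for each proper subobject $F$ as above the inequality $\phi_{s_n}(F) < \phi_{s_n}(E)$ passes to the limit $\phi_{s_0}(F) \leq \phi_{s_0}(E)$ by continuity, so $E$ is $\sigma_{s_0}$-semistable; since $s_0 \notin U_E$, the semistability is strict, forcing $s_0 \in \{t_1, \dots, t_N\}$. Consequently the connected component of $U_E$ meeting $(t_i, t_{i+1})$ cannot have a boundary point there, so $U_E$ contains the entire interval as soon as it contains a single $t \in (t_i, t_{i+1})$, which yields the required pointwise equivalence. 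I expect the boundary principle to be the main technical step, as it requires combining continuity of phases with the openness of stability and the reduction to positive-rank subobjects; the descent from the pointwise result to the equality of stacks is then essentially formal.
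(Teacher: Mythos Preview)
Your argument is correct. The paper gives no proof for this corollary, treating it as immediate from the preceding discussion: once the finitely many walls $t_1,\dots,t_N$ are identified (Lemma~\ref{lemma:nodewalls_finiteness}) and the support property is in place (Corollary~\ref{cor:quad_stab}), constancy of the moduli stack within a chamber is part of the standard wall-and-chamber package for Bridgeland stability. Your proof unpacks precisely this content in the present one-parameter setting: since the heart $\Cheart$ is independent of $t$, the subobjects of $E$ do not change, phases of positive-rank subobjects vary continuously in $t$, and your boundary principle shows that any boundary point of the stable locus for $E$ forces strict semistability there and hence lies among the $t_i$. So the two approaches are not genuinely different---you have written out explicitly what the paper relegates to general theory---but your version has the virtue of being self-contained and of making visible why the fixed heart simplifies the argument.
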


\begin{prop} \label{prop:nodewalls_bir}
Let $(r, d, \ell) \in K^\num(\Rcal)$ be a numerical vector satisfying $r>0$ and $\gcd(r, d, \ell)=1$. For any $t, t' \in (0, 1)$, we have that $M_t(v)$ and $M_{t'}(v)$ are birational.
\end{prop}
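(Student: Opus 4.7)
The plan is to reduce to a local statement at each wall. By Corollary \ref{cor:nodewalls_samestack}, the moduli space $M_t(v)$ is constant on each open chamber $(t_i, t_{i+1})$, so it suffices to produce, for each wall $t_i$ and sufficiently small $\epsilon > 0$, a birational map $M_{t_i - \epsilon}(v) \dashrightarrow M_{t_i + \epsilon}(v)$.

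The idea is that the two sides of the wall differ only in how objects with a nontrivial $\sigma_{t_i}$-Jordan--Hölder filtration are reorganized. Concretely, I would introduce the open locus $U^\pm \subseteq M_{t_i \pm \epsilon}(v)$ of objects that are additionally $\sigma_{t_i}$-stable. Openness of stability in families (using the smoothness of the moduli stack from Corollary \ref{cor:nodebasic_smooth}) ensures that any such object is $\sigma_s$-stable for all $s$ in a neighborhood of $t_i$. Hence the universal families on the two sides restrict to families of objects stable for both $\sigma_{t_i - \epsilon}$ and $\sigma_{t_i + \epsilon}$, giving a canonical isomorphism $U^- \cong U^+$; the birationality will follow once both are shown to be dense.

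To establish density, I would analyze the complement $W^\pm = M_{t_i \pm \epsilon}(v) \setminus U^\pm$, which is the locus of strictly $\sigma_{t_i}$-semistable objects with numerical class $v$. It decomposes as a finite union over numerical decompositions $v = v_1 + \cdots + v_n$ lying on the wall $t_i$ with $0 < \rk(v_k) < r$ (finiteness from Theorem \ref{teo:comparison_main}(1) and Corollary \ref{cor:ssnot1_smallrk}). Each stratum is dominated by the space of iterated extensions of $\sigma_{t_i}$-polystable factors, and its dimension is bounded by $\sum_k \dim M_{t_i}^{\mathrm{ss}}(v_k) + \sum_{k < \ell} \dim \Ext^1(E_\ell, E_k)$, with the $\Ext^1$ controlled by $-\chi_\Rcal$ via Lemma \ref{lemma:nodebasic_main}(3). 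Comparing against the expected dimension $r^2(g-1) + 1 + \ell(2r-\ell)$ from Lemma \ref{lemma:nodebasic_main}(4), the primitivity of $v$ forces the decomposition $(v_1, \ldots, v_n)$ to be nonproportional, yielding a strict dimension drop.

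The main obstacle is making the dimension estimate for $W^\pm$ rigorous: the classes $v_k$ need not be primitive, so $M_{t_i}^{\mathrm{ss}}(v_k)$ may carry strictly semistable points and its dimension is not immediately controlled by Lemma \ref{lemma:nodebasic_main}(4). The cleanest route is an induction on rank, using Corollary \ref{cor:ssnot1_smallrk} to reduce the bound on $\dim M_{t_i}^{\mathrm{ss}}(v_k)$ to the already-established bounds for smaller-rank strata, combined with the boundedness output of Proposition \ref{prop:boundedness_main} to ensure finiteness of the pieces involved. Once $\dim W^\pm < \dim M_{t_i \pm \epsilon}(v)$ is in hand, density of $U^\pm$ is automatic and the birational equivalence $M_{t_i - \epsilon}(v) \dashrightarrow M_{t_i + \epsilon}(v)$ follows at once, completing the induction over walls.
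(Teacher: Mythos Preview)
Your strategy is sound and would work, but the paper takes a more direct route that sidesteps the main obstacle you identify. Rather than stratifying the strictly semistable locus $W^\pm$ by full Jordan--H\"older type and inducting on rank, the paper works pointwise with a tangent space argument and only a \emph{two-term} destabilizing filtration $0 \to A \to E \to B \to 0$ at the wall $t_i$. The key observation is that if $\eta \in T_{Z,[E]} \subseteq \Ext^1(E,E)$ is tangent to the destabilized locus $Z$, then $\eta$ lies in the kernel of the composition map $\Theta\colon \Ext^1(E,E) \to \Ext^1(A,B)$ (obtained by pre- and post-composing with $A \to E$ and $E \to B$). Since $\Ext^2$ vanishes by Lemma~\ref{lemma:nodebasic_main}(2), the map $\Theta$ is surjective, so the codimension of $Z$ is at least $\dim \Ext^1(A,B)$. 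The problem then reduces to a single inequality $-\chi(A,B) > 0$, which is checked directly from the Euler form in Lemma~\ref{lemma:nodebasic_main}(3) together with the same-phase condition at $t_i$; primitivity of $v$ rules out the degenerate cases $\ell'=\ell''=0$ or $\ell'=2r',\ \ell''=2r''$.

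The advantage of the paper's approach is that $A$ and $B$ need not be stable or have primitive class: the tangent space containment and the surjectivity of $\Theta$ hold for any two-term filtration of the same phase, so no induction on rank is needed and the non-primitivity issue you flag simply does not arise. Your global dimension count would also succeed, but it requires carefully tracking the dimensions of moduli of semistable factors and iterated extensions, which is heavier bookkeeping for the same conclusion.
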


\begin{proof}
It suffices to show that for any $t \in (t_i, t_{i+1})$, the maps $M_t(v) \to M_{t_i}(v)$, $M_{t_{i+1}}(v)$ are birational. To do so, we will bound compute the dimension of the strictly semistable loci. 

First of all, note that there are finitely many decompositions $v=u+w$ corresponding to the walls passing through $\sigma_{t_i}$. This way, it suffices to bound the dimension of the locus of $M_t(v)$ that becomes strictly semistable at $\sigma_{t_i}$ with a fixed decomposition $u=v+w$. Denote by $Z \subseteq M_t(v)$ such locus.

Let $E$ be a $\sigma_t$-stable object, that becomes strictly semistable at $\sigma_{t_i}$, fitting in an extension $0 \to A \to E \to B \to 0$. Assume that $A$ and $B$ have all the same phase with respect to $\sigma_{t_i}$, and write $u=[A]=(r', d', \ell')$, $w=[B]=(r'', d'', \ell'')$. 

If $\eta \in T_{Z, [E]} \subseteq \Ext^1(E, E)$ is a tangent vector on $Z$, we have that $\eta$ can be extended into a diagram
\[ \begin{tikzcd}
A \arrow[r] \arrow[d, dashed] & E \arrow[r] \arrow[d, "\eta"] & B \arrow[r] \arrow[d, dashed] & {A[1]} \arrow[d, dashed] \\ {A[1]} \arrow[r] & {E[1]} \arrow[r] & {B[1]} \arrow[r] & {A[2].}
\end{tikzcd} \]
This way, the induced map $A \to B[1]$ must be zero. In other words, we must have that $\eta$ is in the kernel of the map $\Theta\colon \Ext^1(E, E) \to \Ext^1(A, B)$ obtained by composing with the two maps $A \to E$ and $E \to B$. 

Note that $\Theta$ is surjective, as $\Ext^2(E, A)=\Ext^2(B, B)=0$ by Lemma \ref{lemma:nodebasic_main}. This way, it suffices to show that $\dim \Ext^1(A, B)>0$. We can compute this dimension using that $\Hom(A, B)=0$ and Lemma \ref{lemma:nodebasic_main} once again, yielding
\[ \dim \Ext^1(A, B) = r'r''(g-1) + d'r''-d''r'+\ell''(2r'-\ell'). \]
Now, we use that $A, B, E$ have all the same phase at $t_i$, so that$(-t_i \ell'+d')/r' = (-t_i \ell''+d'')/r''=(-t\ell+d)/r$. We get
\begin{align*}
\dim \Ext^1(A, B) &= r'r''(g-1) + t_ir'\ell''-t_i\ell''r' + 2r'\ell''-\ell'\ell'' \\
&\geq t_ir''\ell'-t_i\ell''r' + 2r'\ell''-\ell'\ell'',
\end{align*}
where we used that $g \geq 1$. An elementary argument shows that the right hand side is greater than zero, unless $\ell'=\ell''=0$ or $\ell'=2r'$, $\ell''=2r''$. These cases are excluded by \eqref{eq:nodewalls_formula}, as otherwise $v$ will not be primitive. This shows that $\dim \Ext^1(A, B)>0$, hence the destabilized locus has non-zero codimension.
\end{proof}

\subsection{Comparison at zero} \label{subsec:nodecompzero}

Fix $v=(r, d, \ell)$ with $r>0$ and $\gcd(r, d, \ell)=1$, and fix $0 < t <t_1$, where $t_1$ is the first $v$-wall (as described in Subsection \ref{subsec:nodewalls}). The goal of this subsection is to compare the moduli spaces $M_t(v)$ and $M_{\tilde{C}}(r, d)$.

We proved in Section \ref{sec:comparison} that the functor $a_\ast\colon \DC^b(\Rcal) \to \DC^b(\tilde{C})$ induces a map $\M_t(v) \to \M_{\tilde{C}}(r, d)$ for $t \ll 1$. By the wall-and-chamber decomposition, it suffices to assume that $t<t_1$.  We get an induced map $M_t(v) \to M_{\tilde{C}}(r, d)$ between the corresponding good moduli spaces. Note that the morphism is proper, as $M_t(v)$ and $M_{\tilde{C}}(r, d)$ are proper.

\begin{lemma} \label{lemma:nodecompzero_grass}
Assume that $\gcd(r, d)=1$. We have that $M_t(v) \to M_{\tilde{C}}(r, d)$ is a $\Gr(2r, \ell)$-bundle for the Zariski topology. 
\end{lemma}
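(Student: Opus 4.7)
The plan is to realize $M_t(v)$ for $0<t<t_1$ as the relative Grassmannian $G := \Gr_M(\mathcal{E}, \ell)$ over $M := M_{\tilde{C}}(r,d)$, where $\mathcal{E}$ is a canonically defined rank-$2r$ vector bundle on $M$.

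First, I would characterize the $\sigma_t$-stable objects of class $v=(r,d,\ell)$ for $0<t<t_1$: they are precisely the triples $E=(F,\overline{F};\phi)$ with $F$ slope-stable of rank $r$ and degree $d$ on $\tilde{C}$, and $\phi\colon \rho_\ast F|_S\twoheadrightarrow \overline{F}$ an arbitrary surjection onto a length-$\ell$ sheaf. The forward direction combines Lemma~\ref{lemma:sstable_phasenot1} (which forces $F$ locally free and, via $E\in\Ttors$, forces $\phi$ surjective) with Lemma~\ref{lemma:pfcomparison_pushforward} and the chamber-invariance of $M_t(v)$ on $(0,t_1)$, giving that $F$ is slope-semistable; the hypothesis $\gcd(r,d)=1$ then upgrades this to slope-stability. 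For the converse, any proper subobject $E'\hookrightarrow E$ in $\Cheart$ lies in $\Ttors$ with underlying sheaf $F'\subsetneq F$ (by applying the long exact sequence in cohomology to the triangle $E'\to E\to E''$, using that $H^{-1}(E)=0$ and $H^{-1}(E'')\in\Ftors$ has vanishing $F$-component); the required $\sigma_t$-slope inequality for $E'$ then follows from the strict inequality $d'/r'<d/r$ coming from slope-stability, the trivial bound $|\ell'/r'-\ell/r|\leq 2$, and the smallness of $t$ (namely $t<t_1$).

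Second, since $\gcd(r,d)=1$, the moduli space $M$ is fine and carries a universal family $\mathcal{F}$ on $M\times\tilde{C}$. Let $p\colon M\times S\to M$ be the projection and set $\mathcal{E}:=p_\ast(\mathcal{F}|_{M\times S})$; since $S\subset\tilde{C}$ is finite flat of length $2$ and $\mathcal{F}$ is $M$-flat, base change makes $\mathcal{E}$ a locally free sheaf of rank $2r$ on $M$. Form the relative Grassmannian $G:=\Gr_M(\mathcal{E},\ell)$, a Zariski-locally-trivial $\Gr(2r,\ell)$-bundle over $M$. Let $q\colon G\to M$ be the structure map and $q^\ast\mathcal{E}\twoheadrightarrow\mathcal{Q}$ the universal rank-$\ell$ quotient; pulling $\mathcal{F}$ back to $G\times\tilde{C}$ and combining with $\mathcal{Q}$ yields a family $\mathcal{E}_{\mathrm{univ}}=(\mathcal{F}_G,\mathcal{Q};\phi_{\mathrm{univ}})$ of objects in the relative heart $\Cheart_G$, each fiber of which is $\sigma_t$-stable of class $v$ by Step~1. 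This family induces a morphism $G\to \M_t(v)$ and, by composition with the good-moduli-space map, a morphism $f\colon G\to M_t(v)$ commuting with the maps to $M$.

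Finally, I would conclude that $f$ is an isomorphism. Both $G$ and $M_t(v)$ are smooth (Corollary~\ref{cor:nodebasic_smooth} for $M_t(v)$; immediate from smoothness of $M$ for $G$) and proper over $\Spec\C$ (Theorem~\ref{teo:moduli_main} for $M_t(v)$; for $G$ as a proper bundle over the proper variety $M$). By Step~1 the map $f$ is bijective on $\C$-points, hence finite and birational. Since $M_t(v)$ is smooth and therefore normal, Zariski's main theorem implies $f$ is an isomorphism. The main obstacle is verifying that the relative family $\mathcal{E}_{\mathrm{univ}}$ genuinely defines an object of $\M_t(v)(G)$ in the non-commutative framework of Section~\ref{sec:moduli}: one must check it lies in $\Cheart_G\subseteq \DC^b(\Rcal)_G$ as a $G$-perfect family, which requires carefully unwinding the $\Rcal$-module structure in families and invoking the base-change results of \cite{Kuz11} and \cite{Pol07}.
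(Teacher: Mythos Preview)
Your proposal is correct and follows essentially the same approach as the paper: both construct the relative Grassmannian $G=\Gr(\rho_\ast(\F|_S),\ell)$ over $M_{\tilde{C}}(r,d)$ using the universal family (available since $\gcd(r,d)=1$), build a tautological family of $\sigma_t$-stable objects on $G$, and show the induced map $G\to M_t(v)$ is an isomorphism. The only minor divergence is in the final step---the paper checks bijectivity on closed points and tangent spaces and then invokes smoothness of $M_t(v)$, whereas you instead argue finite plus birational onto a normal target and apply Zariski's main theorem; both conclusions are valid.
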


\begin{proof}
Let $F$ be a slope-stable vector bundle on $\tilde{C}$ of rank $r$ and degree $d$. Given any surjection $\phi\colon \rho_\ast(F|_S) \to \O_q^{\oplus \ell}$, we claim that the corresponding object $E =(F, \overline{F}; \phi) \in \Cheart$ is $\sigma_t$-stable for $t <t_1$. The proof follows the same lines of Lemma \ref{lemma:pfcomparison_pushforward}. 

This way, we get that for any $F \in M_{\tilde{C}}(r, d)$, the fiber over $F$ is isomorphic to the collection of surjections $\rho_\ast(F|_S) \to \O_q^{\oplus \ell}$ up to isomorphism. This agrees with the Grassmannian $\Gr(\rho_\ast(F|_S), \ell)$. 

We can globalize the previous description. If $\F \in \Coh(M_{\tilde{C}}(r, d) \times \tilde{C})$ is a universal family\footnote{See \citelist{\cite{Ses82}*{Théoremè 18, p. 22} \cite{HL10}*{Corollary 4.6.6}} for the existence of such family.}, consider the relative Grassmannian $G=\Gr(\rho_\ast(\F|_S), \ell)$ over $M_{\tilde{C}}(r, d)$. We get a universal family $\E$ on $\DC^b(\Rcal)_G$, the base change of $\DC^b(\Rcal)$ over $G$, whose fiber over $\phi\colon \rho_\ast(F|_S) \to \O_q^\ell$ is the object $(F, \O_q^\ell; \phi)$. The previous discussion ensures that the induced map $G \to M_t(v)$ is bijective on closed points, and we directly check that is bijective on tangent spaces. By construction, the composition $G \to M_t(v) \to M_{\tilde{C}}(r, d)$ agrees with the bundle map $G \to M_{\tilde{C}}(r, d)$. 

Finally, note that $M_t(v)$ is smooth, as it parametrizes only stable objects. This way, the map $G \to M_t(v)$ is an isomorphism.
\end{proof}

\begin{remark}
Note that Lemma \ref{lemma:nodecompzero_grass} implies that $M_t(v)$ is irreducible if $\gcd(r, d)=1$. Together with Proposition \ref{prop:nodewalls_bir}, we get $M_t(v)$ are irreducible for all $t \in (0, 1)$. 
\end{remark}

\subsection{Comparison at one} \label{subsec:nodecompone}

Fix $v=(r, d, \ell)$ with $r>0$ and $\gcd(r, d, \ell)=1$. Fix $t_N <t<1$, where $t_N$ is the last $v$-wall (as described in Subsection \ref{subsec:nodewalls}). In this subsection we will compare the moduli spaces $M_t(v)$ and $M_C(r, d+r-\ell)$. 

To start, note that by Theorem \ref{teo:comparison_main}, the derived functor $R\pi_\ast$ defines a morphism $M_t(v) \to M_C(r, d+r-\ell)$. Now, note that if $E$ is a vector bundle on $C$, then $v(\pi^\ast E)=(r, d, r)$. This way, in the case that $r=\ell$, we get that the map $R\pi_\ast\colon (R\pi_\ast)^{-1}(U) \to U$ is an isomorphism, where $U \subset M_C(r, d)$ is the open subset parametrizing vector bundles. 

From here, note that $\dim U = r^2(p_a(C)-1)+1 = \dim M_t(v)$. Using Proposition \ref{prop:nodewalls_bir} and Lemma \ref{lemma:nodecompzero_grass}, we know that $M_t(v)$ is irreducible. This way, the map $R\pi_\ast$ is birational.

\subsection{Rank one} \label{subsec:noderkone}

In this subsection we wish to analyze the moduli spaces $M_t(1, d, \ell)$ for all $d \in \Z$ and $\ell=0, 1, 2$. We will then relate the description for $\ell=1$ with the compactified Jacobian of $C$, as discussed by Oda--Seshadri. 

Let us start with $\ell=0$. For general $t$, the moduli space $M_t(1, d, 0)$ is a smooth, projective variety of dimension $g$, thanks to Theorem \ref{teo:node_main}. For $t \ll 1$, we have an induced map $M_t(v) \to M_{\tilde{C}}(1, d)$, which is an isomorphism by Lemma \ref{lemma:nodecompzero_grass}.

Now, note that there are no walls for $t$, thanks to the explicit description in Subsection \ref{subsec:nodewalls}. In fact, the only (numerical) walls for the vector $(r', d', \ell')$ must have $r' <1$, hence $r'=0$. But this is impossible, as $\Im Z_t(0, d', \ell')=0$. It follows that the moduli space $M_t(1, d, 0)$ is independent of $t$.

Note that the map $M_t(1, d, 0) \to M_C(1, d+1)$, factors through $M_C^0(1, d+1)$, the set of non-vector bundles in $C$. This way, the two maps induced by $a_\ast$ and $R\pi_\ast$ fit in the diagram
\[ M_{\tilde{C}}(1, d) \xleftarrow[\cong]{a_\ast} M_t(1, d, 0) \xrightarrow{R\pi_\ast} M_C^0(1, d+1) \subset M_C(1, d+1). \]

The argument goes almost verbatim for $\ell=2$: there are no walls, and the two maps $a_\ast$, $R\pi_\ast$ are isomorphisms. We get the description:
\[ M_{\tilde{C}}(1, d) \xleftarrow[\cong]{a_\ast} M_t(1, d, 2) \xrightarrow[\cong]{R\pi_\ast} M_C^0(1, d-1) \subset M_C^0(1, d-1). \]

The most interesting case is for $\ell=1$. In this case, the map $M_t(1, d, 1) \to M_{\tilde{C}}(1, d)$ is a $\P^1$-bundle. The map $M_t(1, d, 1) \to M_{\tilde{C}}(1, d)$ is birational, and an isomorphism over $M_{\tilde{C}}^1(1, d)$.
\[ M_{\tilde{C}}(1, d) \xleftarrow[\P^1]{a_\ast} M_t(1, d) \xrightarrow[\text{bir}]{R\pi_\ast} M_C(1, d). \]
This recovers the description in \cite{OS79}*{p. 83}.

\subsection{Rank two} \label{subsec:noderktwo}

Our last goal is to compare the moduli spaces $M_C(2, d)$ and $M_{\tilde{C}}(2, d)$ for $d=2k+1$ odd. To do so, we consider the moduli spaces $M_t(2, d, 2)$ for $t \in (0, 1)$, together with the results of Theorem \ref{teo:node_main}.

Let us start by computing the walls. The discussion of Subsection \ref{subsec:nodewalls} gives us a way to compute the $v$-walls for $v=(2, 2k+1, 2)$. We need to look at vectors $v'=(1, d', \ell')$ for $\ell'=0, 1, 2$. The formula $t=(d'r-dr')/(\ell'r-\ell r')$, together with the fact that $t \in (0, 1)$, shows that the only wall corresponds to the decomposition $v=u+w$ with $u=(1, k, 0)$, $w=(1,k+1,2)$. This wall has $t=1/2$. 

This way, there are three distinct moduli spaces $M_t(v)$, for $t \in (0, 1/2)$, $1/2$, and $(1/2, 1)$. We pick a value of $t$ on each interval, say $t=1/4$ and $t=3/4$ respectively. Here, the two inclusions $\M_{1/4}(v), \M_{3/4}(v) \to \M_{1/2}(v)$ induce (proper) morphisms
\[ M_{1/4}(v) \to M_{1/2}(v) \leftarrow M_{3/4}(v) \]
on the corresponding good moduli spaces. 

Let us describe the destabilized loci. If $E \in M_{1/4}(v)$ is strictly semistable at $t=1/2$, it must fit in a short exact sequence $0 \to U \to E \to W \to 0$ in $\Cheart$, with $U \in M_t(u)$ and $W \in M_t(w)$. Using Lemma \ref{lemma:nodebasic_main}, together with the fact that $\Hom(U, W), \Hom(W, U)=0$, we get that $\Hom^1(B, U)$ is $g$-dimensional. 

Similarly, the objects $E' \in M_{3/4}(v)$ that are strictly semistable at $t=1/2$ fit into a sequence $0 \to W \to E' \to U \to 0$ in $\Cheart$, with $U \in M_t(u)$ and $W \in M_t(w)$. A similar computation shows that $\dim \Hom^1(U, W) = g-2$. 

We can globalize the previous description, as follows.  Pick universal families $\U \in \DC^b(\A)_{M_t(u)}$ and $\W \in \DC^b(\A)_{M_t(w)}$. Thanks to Subsection \ref{subsec:noderkone}, the two moduli spaces $M_t(u)$, $M_t(1, k+1, 0)$ are smooth of dimension $g$. We pullback the two universal families to $\DC^b(\A)_{M_t(u) \times M_t(w)}$, take the relative Hom, and take the pushforward to $\DC^b(M_t(u) \times M_t(w))$. We get $\E = R\rho_\ast R\HOM(\pr_2^\ast \W, \pr_1^\ast \U)$, where $R\rho_\ast\colon \DC^b(\A)_{M_t(u) \times M_t(w)} \to \DC^b(M_t(u) \times M_t(w))$, and $\pr_1, \pr_2$ are induced by the projections from $M_t(u) \times M_t(w)$. 

By cohomology and base change, we have that $\E$ is a locally free sheaf of rank $g$ on $M_t(u) \times M_t(w)$. This way, we take $P$ to be the projectivization of $\E$, which is a $\P^{g-1}$-bundle over $M_t(u) \times M_t(w)$. Moreover, it carries a universal family $\Pfam \in \DC^b(\A)_P$, parametrizing all extensions $0 \to A \to E \to B \to 0$ up to isomorphism. This universal family induces a closed embedding $P \hookrightarrow M_{1/4}(v)$, corresponding to the destabilized loci. We point out that $P$ has dimension $(g-1)+g+g=3g-1$.

The same construction applies for the other extensions: the projectivization of $\E' = R\rho_\ast R\HOM(\pr_1^\ast \U, \pr_2^\ast \W)$ gives us a $\P^{g+1}$-bundle $P' \to M_t(u) \times M_t(w)$ parametrizing extensions $0 \to B \to E' \to A \to 0$. This induces an embedding $P' \hookrightarrow M_{3/4}(v)$, corresponding to the destabilized loci at $t=1/2$. Here $P'$ has dimension $3g+1$. 

Finally, let us compare the two moduli $M_{1/4}(v)$, $M_{3/4}(v)$ with the corresponding $M_{\tilde{C}}(2, d)$, $M_C(2, d)$. The map $a_\ast\colon M_{1/4}(v) \to M_{\tilde{C}}(2, d)$ is a $\Gr(4, 2)$-bundle, while $R\pi_\ast\colon M_{3/4}(v) \to M_C(2, d)$ is birational. We point out that $M_t(v)$ has dimension $4g+1$. Putting all together, we have proven the following description.

\begin{teo} \label{teo:noderktwo_main}
Let $d=2k+1$ be an odd integer, and denote $v=(2, d, 2)$. For $t \neq 1/4$, the moduli spaces $M_t(v)$ only parametrize stable objects. For $t<1/2$, the map $a_\ast\colon M_t(v) \to M_{\tilde{C}}(2, d)$ is a $\Gr(4, 2)$-bundle over $M_{\tilde{C}}(2, d)$. For $t>1/2$, the map $R\pi_\ast\colon M_t(v) \to M_C(2, d)$ is birational, and an isomorphism over $M^0_{\tilde{C}}(2, d)$. 

Lastly, for $t=1/2$ there is a wall corresponding to the decomposition $v=u+w$, where $u=(1,k,0)$ and $w=(1,k+1,2)$. For $t<1/2$, the map $M_t(v) \to M_{1/2}(v)$ contracts a $\P^{g-1}$ bundle $P \to M_{1/2}(u) \times M_{1/2}(w)$ to its base. For $t>1/2$, the map $M_t(v) \to M_{1/2}(v)$ contracts a $\P^{g+1}$ bundle $P' \to M_{1/2}(u) \times M_{1/2}(w)$ to its base. In particular, the maps $M_t(v) \to M_{1/2}(v)$ are birational, and so is the map $M_{1/4}(v) \dashrightarrow M_{3/4}(v)$.
\end{teo}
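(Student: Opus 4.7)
The plan is to prove the theorem in three stages: (i) locate the unique $v$-wall in $(0, 1)$; (ii) describe the moduli in each chamber via Theorem \ref{teo:comparison_main}; and (iii) carry out the wall-crossing via projective bundles of extensions. For (i), I substitute $v = (2, d, 2)$ into the wall equation \eqref{eq:nodewalls_formula}. By Corollary \ref{cor:ssnot1_smallrk} a destabilizing subobject must have rank $1$, and $\phi$-surjectivity forces its length to lie in $\{0, 1, 2\}$. The equation $t(\ell' - 1) = d' - d/2$, together with $d$ odd and $t \in (0, 1)$, rules out $\ell' = 1$ and yields the unique decomposition $v = u + w$ at $t = 1/2$ with $u = (1, k, 0)$ and $w = (1, k + 1, 2)$. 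Theorem \ref{teo:node_main}(1) then furnishes the stability-only and wall-location statements.

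For (ii), at $t < 1/2$ Theorem \ref{teo:comparison_main}(2) provides the proper morphism $a_\ast \colon M_t(v) \to M_{\tilde{C}}(2, d)$; since $\rho_\ast(F|_S)$ has length $4$ at $q$, the fiber-wise argument of Lemma \ref{lemma:nodecompzero_grass} upgrades it to a Zariski-locally trivial $\Gr(4, 2)$-bundle. At $t > 1/2$, Theorem \ref{teo:comparison_main}(3) gives $R\pi_\ast \colon M_t(v) \to M_C(2, d)$; the condition $r = \ell = 2$ places us in the setting of Theorem \ref{teo:comparison_main}(4), and birationality follows from the common dimension $4g + 1$ computed via Lemma \ref{lemma:nodebasic_main}.

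For (iii), I would use that $M_{1/2}(u)$ and $M_{1/2}(w)$ are smooth $g$-folds (Subsection \ref{subsec:noderkone}) admitting (possibly twisted) universal families $\mathcal{U}, \mathcal{W}$. Lemma \ref{lemma:nodebasic_main}(3) gives $\chi(W, U) = -g$ and $\chi(U, W) = -g - 2$; combined with $\Hom(W, U) = \Hom(U, W) = 0$ (from $\sigma_{1/2}$-stability together with distinct numerical classes) and cohomology-and-base-change, one obtains locally free relative $\mathrm{Ext}^1$-sheaves $\mathcal{E}, \mathcal{E}'$ of ranks $g$ and $g + 2$ on $M_{1/2}(u) \times M_{1/2}(w)$. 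Their projectivizations $P = \mathbb{P}(\mathcal{E})$ and $P' = \mathbb{P}(\mathcal{E}')$ carry canonical universal extensions, inducing closed embeddings $P \hookrightarrow M_{1/4}(v)$ and $P' \hookrightarrow M_{3/4}(v)$.

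The hard part will be showing that $P$ and $P'$ are precisely the exceptional loci of $M_{1/4}(v) \to M_{1/2}(v)$ and $M_{3/4}(v) \to M_{1/2}(v)$, and that both contractions are isomorphisms elsewhere. For this I would argue that a $\sigma_{1/4}$-stable object $E$ which is $S$-equivalent to $U \oplus W$ at $t = 1/2$ must sit in a non-split triangle $0 \to U \to E \to W \to 0$: the reversed orientation is excluded because $U$ would become a $\sigma_{1/4}$-destabilizing quotient, and splitness fails $\sigma_{1/4}$-stability; a symmetric argument handles $P'$. Off these exceptional loci, the stable objects for $\sigma_{1/4}$ (resp.\ $\sigma_{3/4}$) coincide with those for $\sigma_{1/2}$, so smoothness of $\mathfrak{M}_t(v)$ from Corollary \ref{cor:nodebasic_smooth} upgrades the resulting bijections to isomorphisms of good moduli spaces. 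Composing them across the wall then yields the birational map $M_{1/4}(v) \dashrightarrow M_{3/4}(v)$ claimed in the theorem.
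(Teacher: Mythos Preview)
Your proposal is correct and follows essentially the same route as the paper's own argument in Subsection~\ref{subsec:noderktwo}: compute the unique wall at $t=1/2$ via \eqref{eq:nodewalls_formula}, identify the two chambers with $M_{\tilde C}(2,d)$ and $M_C(2,d)$ through Lemma~\ref{lemma:nodecompzero_grass} and Theorem~\ref{teo:comparison_main}, and build the exceptional loci $P, P'$ as projectivized relative $\Ext^1$-sheaves over $M_{1/2}(u)\times M_{1/2}(w)$ using the Euler-pairing computation of Lemma~\ref{lemma:nodebasic_main}. Your justification for the orientation of the extensions (via which of $U$, $W$ would be a destabilizing quotient at $t=1/4$) is slightly more explicit than the paper's, but otherwise the arguments coincide; the only point you leave implicit is the irreducibility of $M_t(v)$ needed to turn the dimension count into birationality of $R\pi_\ast$, which the paper obtains from Lemma~\ref{lemma:nodecompzero_grass} together with Proposition~\ref{prop:nodewalls_bir}.
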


The moduli spaces described in Theorem \ref{teo:noderktwo_main} fit in the following diagram:
\[ \begin{tikzcd}[column sep=small, row sep=small]
& P \arrow[rd, near start, "\P^{g-1}"] \arrow[dd, hook'] & & P' \arrow[ld, near start, "\P^{g+1}"'] \arrow[dd, hook] \\ & & M_{1/2}(u) \times M_{1/2}(w) \arrow[dd, hook] \\ & M_{1/4}(v) \arrow[rr, <->, dashed, near start, "\text{bir}"] \arrow[rd, "\text{bir}"'] \arrow[ld, "{\Gr(4,2)}"] & & M_{3/4}(v) \arrow[ld, "\text{bir}"] \arrow[rd, "\text{bir}"'] \\ M_{\tilde{C}}(2,d) & & M_{1/2}(v) & & M_C(2,d)
\end{tikzcd} \]

\begin{remark} \label{remark:noderktwo_genuszero}
Let us say a few words about the $g=0$ case. Here, $\M_{1/4}(v) = \varnothing$, due to the fact that $M_{\tilde{C}}(2, d) = \varnothing$. As a consequence, we get that $M_{3/4}(v) \cong P'$ is isomorphic to $\P^1$. We point out that in this case $M_{1/4}(v)$ and $M_{3/4}(v)$ are birational.
\end{remark}

\subsection{Higher rank} \label{subsec:nodehigher}

Let us finish our exposition with a few words on the higher rank moduli spaces. Given $(r, d)$ coprime, we would like to relate $M_{\tilde{C}}(r, d)$ with $M_C(r, d)$ as in Theorem \ref{teo:noderktwo_main}. Our main obstruction is that for $r \geq 3$, the walls might involve destabilizing subobjects corresponding to triples $(r', d', \ell')$ that are not primitive. Thus, the moduli spaces $M_t(r', d', \ell')$ might parametrize strictly semistable objects.

Let us say that a wall $v=u+w$ is \emph{simple} if $u$ and $w$ are primitive. The following two examples show that non-simple walls exist; moreover, it is possible that neither $u$ nor $w$ are primitive. 

\begin{example}
\begin{enumerate}
\item For $v=(4, 1, 4)$, there is a wall at $t=1/4$, corresponding to the decomposition $(2, 1, 4)+2(1, 0, 0)$.
\item For $v=(23, 5, 23)$, there is a wall at $t=4/23$, corresponding to the decomposition $3(3, 1, 5) +2(7, 1, 4)$. 
\end{enumerate}
\end{example}

The second type of phenomenon that can occur is having multiple walls for the same value of $t$. In Table \ref{table:nodehigher_616} we described the walls for $v=(6, 1, 6)$. Note that for $t=1/2$ there are two walls. One could overcome this difficulty by deforming our path $\sigma_t$ inside the stability manifold $\Stab(\Rcal)$. The table is computed using \eqref{eq:nodewalls_formula}. (The last column of the table will be explained after Proposition \ref{prop:nodehigher_simpleflip}.)

\begin{table}[htbp]
\centering
\caption{Walls for $v=(6, 1, 6)$.}
\label{table:nodehigher_616}
\begin{tabular}{|c|c|c|c|}
\hline
$t$ & Decomposition & Simple? & Flipped locus \\ \hline \hline
$1/6$ & $(3, 1, 6) + 3(1, 0, 0)$ & No & N/A \\ \hline
$1/4$ & $(3, 1, 5) + (3, 0, 1)$ & Yes & $\P^{9g-6} \to \ast \leftarrow \P^{9g+12}$ \\ \hline
$1/3$ & $(2, 1, 4) + 2(2, 0, 1)$ & No & N/A \\ \hline
\multirow{2}{*}{$1/2$} & $(3, 1, 4) + (3, 0, 2)$ & Yes & $\P^{9g-3} \to \ast \leftarrow \P^{9g+3}$ \\ \cline{2-4}
& $(3, 2, 6) +(3, -1, 0)$ & Yes & $\P^{9g-1} \to \ast \leftarrow \P^{9g+17}$ \\ \hline
$2/3$ & $2(2,1,3) + (2,-1,0)$ & No & N/A \\ \hline
$3/4$ & $(3, 2, 5)+(3, -1, 1)$ & Yes & $\P^{9g} \to \ast \leftarrow \P^{9g+6}$ \\ \hline
$5/6$ & $3(1, 1, 2) + (3, -2, 0)$ & No & N/A \\ \hline
\end{tabular}
\end{table}

The description of the wall-crossing at $t=1/2$ from Theorem \ref{teo:noderktwo_main} generalizes almost verbatim to simple walls.

\begin{prop} \label{prop:nodehigher_simpleflip}
Let $v=(r, d, \ell)$ be a primitive numerical vector with $r>0$. Assume that for some $t \in (0, 1)$ there is a single simple wall $v=u+w$, with $u=(r_1, d_1, \ell_1)$ and $w=(r_2, d_2, \ell_2)$. Without loss of generality, assume that $\ell_1/r_1<\ell_2/r_2$. Then, the birational map $M_{t+\epsilon}(v) \dashrightarrow M_{t-\epsilon}(v)$ is a standard flip, obtained by replacing a $\P^{b-1}$-bundle over $M_t(u) \times M_t(w)$ with a $\P^{c-1}$-bundle over the same base, where $b=d_1r_2-d_2r_1+r_1r_2(g-1) + 2r_1\ell_2 - \ell_1\ell_2$ and $c=d_2r_1-d_1r_2 + r_1r_2(g-1) + 2r_2\ell_1 - \ell_1\ell_2$.
\end{prop}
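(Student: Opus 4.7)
The strategy mimics the rank-two argument of Subsection \ref{subsec:noderktwo}, now with arbitrary ranks. The plan is to identify the wall-crossing loci with projective bundles of extensions over $Y := M_{t_0}(u) \times M_{t_0}(w)$ (where $t_0$ denotes the wall value), and then verify the standard flip structure via a normal-bundle computation.

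First, I identify the destabilized loci. The simple-wall hypothesis, combined with primitivity of $u, w$, forces every $\sigma_{t_0}$-semistable object of class $u$ (resp.\ $w$) to be stable, i.e.\ to lie in $M_{t_0}(u)$ (resp.\ $M_{t_0}(w)$). The assumption $\ell_1/r_1 < \ell_2/r_2$, combined with the explicit wall formula \eqref{eq:nodewalls_formula}, yields $\phi_{t_0-\epsilon}(U) < \phi_{t_0-\epsilon}(W)$ for $U \in M_{t_0}(u)$, $W \in M_{t_0}(w)$, with the reverse inequality at $t_0+\epsilon$. Consequently the locus $D^- \subset M_{t_0-\epsilon}(v)$ of objects becoming strictly semistable at $t_0$ consists exactly of nontrivial extensions $0 \to U \to E \to W \to 0$, while $D^+ \subset M_{t_0+\epsilon}(v)$ consists of nontrivial extensions $0 \to W \to E \to U \to 0$; stability of such $E$ on the appropriate side follows as in the proof of Proposition \ref{prop:nodewalls_bir}.

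Second, I compute the relevant $\Ext$-dimensions and construct the candidate projective bundles. By Lemma \ref{lemma:nodebasic_main}(2) only $\Hom$ and $\Ext^1$ can be nonzero between $U$ and $W$, and $\Hom(U,W) = \Hom(W,U) = 0$ since $U \not\cong W$ are stable of the same phase; the Euler pairing of Lemma \ref{lemma:nodebasic_main}(3) then gives $\dim \Ext^1(U, W) = b$ and $\dim \Ext^1(W, U) = c$. Passing to an étale cover to absorb any Brauer obstruction, as in the proof of Theorem \ref{teo:comparison_main}(4), I fix universal families $\U, \W$ over $M_{t_0}(u), M_{t_0}(w)$ and form on $Y$ the complexes
\[ \F^- := R\rho_\ast R\HOM(\pr_2^\ast \W, \pr_1^\ast \U), \qquad \F^+ := R\rho_\ast R\HOM(\pr_1^\ast \U, \pr_2^\ast \W), \]
where $\rho$ is the appropriate base-changed projection. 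Cohomology and base change, combined with the pointwise $\Ext$-vanishings, makes these locally free of ranks $c$ and $b$ respectively. The projectivizations $P := \P_Y(\F^-)$ and $P' := \P_Y(\F^+)$ support universal extensions, yielding classifying morphisms $P \to M_{t_0-\epsilon}(v)$ and $P' \to M_{t_0+\epsilon}(v)$. These are shown to be closed embeddings onto $D^-$ and $D^+$ by the same argument as in Subsection \ref{subsec:noderktwo}.

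The main obstacle is verifying that the wall-crossing is a \emph{standard} flip in the precise sense, which amounts to the normal-bundle identifications
\[ N_{P/M_{t_0-\epsilon}(v)} \cong \O_P(-1) \otimes \pi^\ast \F^+, \qquad N_{P'/M_{t_0+\epsilon}(v)} \cong \O_{P'}(-1) \otimes \pi^\ast \F^-. \]
Granting these, $\mathrm{Bl}_P(M_{t_0-\epsilon}(v))$ and $\mathrm{Bl}_{P'}(M_{t_0+\epsilon}(v))$ are canonically isomorphic with common exceptional divisor $P \times_Y P'$, which is by definition a standard flip. Pointwise, at $[E] \in P$ over $(U, W) \in Y$, applying $\Hom(E, -)$ and $\Hom(-, W)$ to $0 \to U \to E \to W \to 0$ and using the $\Ext^2$-vanishing of Lemma \ref{lemma:nodebasic_main}(2) produces a canonical surjection $\Ext^1(E, E) \twoheadrightarrow \Ext^1(U, W)$ whose kernel is exactly $T_{P, [E]}$; thus the normal fiber is canonically $\Ext^1(U, W)$, of dimension $b$. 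Globalizing this identification and recovering the $\O_P(-1)$-twist requires realizing the universal extension on $P$ as the tautological section of $\O_P(1) \otimes \pi^\ast \F^+$ and performing a relative $R\HOM$ computation on the universal family, analogous to the normal-bundle analyses in Bayer--Macrì-style wall-crossings. The symmetric computation on $P'$ then concludes the standard flip verification, giving the claimed birational description.
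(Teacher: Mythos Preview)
Your approach is correct in outline but takes a genuinely different route from the paper's. You work \emph{globally}: build the projective bundles $P, P'$ of extensions over $Y$, embed them in $M_{t_0\mp\epsilon}(v)$, and aim to verify the standard-flip structure via the normal-bundle identifications $N_{P/M} \cong \O_P(-1)\otimes\pi^\ast\F^+$ (and symmetrically). The paper instead works \emph{locally} at a strictly semistable point $p=[U\oplus W]\in M_{t_0}(v)$: it invokes an \'etale-slice result (\cite{CPZ24}*{Lemma 2.7}) together with the smoothness of the stack from Corollary \ref{cor:nodebasic_smooth} to compute $\hat{\O}_{M_{t_0}(v),p}\cong\C[[\Ext^1(U\oplus W,U\oplus W)]]^{(\C^\ast)^2}$, decomposes the $\Ext^1$ into weight spaces, and reads off the invariant ring as $\C[[x_i,y_jz_k]]$, which is precisely the affine cone of the standard flip with the claimed fibre dimensions. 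The paper then finishes by computing $b-c=2r_1r_2(1-t)(\ell_2/r_2-\ell_1/r_1)>0$ to fix the flip direction.

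The trade-off: the paper's local argument is short and essentially self-contained once one accepts the slice theorem, and it avoids the delicate global normal-bundle verification you flag as ``analogous to the normal-bundle analyses in Bayer--Macr\`i-style wall-crossings''. That step in your proposal is the only substantive gap---you identify the fibre of the normal bundle correctly via the $\Ext$ long exact sequence, but you do not carry out the relative computation recovering the $\O_P(-1)$ twist, and this is exactly the technical heart of your approach. Your route has the advantage of being more geometric and producing the global bundle structure directly, but to complete it you would need to actually perform that relative $R\HOM$ computation rather than cite it by analogy.
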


\begin{proof}
Let us start by determining the local structure of the good moduli space $M_t(v)$ at a point $p$ corresponding to a strictly semistable object. Write $p = [U \oplus W]$, where $U$, $W$ are $\sigma_t$-stable objects of class $u$, $w$ respectively. By Corollary \ref{cor:nodebasic_smooth}, the moduli stack $\M_t(v)$ is smooth at $p$. Together with \cite{CPZ24}*{Lemma 2.7}, we have
\[ \hat{\O}_{M, p} = \C[[\Ext^1(U \oplus W, U \oplus W)]]^{\Aut(U \oplus W, U \oplus W)}. \]

Let us compute all the terms involved in the previous formula. We have that $\Aut(U \oplus W, U \oplus W) \cong (\C^\ast)^2$, as there are no morphisms between $U$ and $W$. We also have
\[ \Ext^1(U \oplus W, U \oplus W) \cong \Ext^1(U, U) \oplus \Ext^1(W, W) \oplus \Ext^1(U, W) \oplus \Ext^1(W, U). \]
The respective dimensions can be computed with Lemma \ref{lemma:nodebasic_main}, giving us $a_1=r_1^2(g-1)+1+\ell_1(2r_1-\ell_1)$, $a_2=r_2^2(g-1)+1 + \ell_2(2r_2-\ell_2)$, $b=d_1r_2-d_2r_1+r_1r_2(g-1) + 2r_1\ell_2 - \ell_1\ell_2$, and $c=d_2r_1-d_1r_2 + r_1r_2(g-1) + 2r_2\ell_1 - \ell_1\ell_2$, respectively. Note that the automorphism group acts by weights $(0,0)$, $(0,0)$, $(1,-1)$, $(-1,1)$, respectively.

This way, we can pick bases $\{x_i\}_{1 \leq i \leq a_1+a_2}$, $\{y_j\}_{1 \leq j \leq b}$, $\{z_k\}_{1 \leq k \leq c}$, so that
\begin{align*}
\hat{\O}_{M, p} &\cong \C[[x_i, y_j, z_k]]^{(\C^\ast)^2} \\
&\cong \C[[x_i, y_jz_k: 1 \leq i \leq a_1+a_2, 1 \leq j \leq b, 1 \leq k \leq c]].
\end{align*}
This corresponds to the contraction of the standard flip at $([U], [W]) \in M_t(u) \times M_t(w)$.

From the construction, we get that the fiber of the map $M_{t+\epsilon}(v) \to M_t(v)$ over $p$ is isomorphic to $\P^{b-1}$, while the fiber of $M_{t-\epsilon}(v) \to M_t(v)$ is isomorphic to $\P^{c-1}$. It is also clear that the two maps correspond to the two small resolutions of $\hat{\O}_{M, p}$. 

To conclude, we have that
\[ b-c = 2d_1r_2 - 2d_2r_1 + 2r_1\ell_2-2r_2\ell_1 = 2r_1r_2(1-t) \left( \frac{\ell_2}{r_2} - \frac{\ell_1}{r_1} \right)>0, \]
as $\ell_2/r_2>\ell_1/r_1$ by assumption. This shows that $b>c$, and so the map $M_{t+\epsilon}(v) \dashrightarrow M_{t-\epsilon}(v)$ is the standard flip. 
\end{proof}

In Table \ref{table:nodehigher_616} we have computed the dimensions of the fibers of the flipped locus $M_{t-\epsilon}(v) \to M_t(v) \leftarrow M_{t+\epsilon}(v)$, denoted by $\P^{c-1} \to \ast \leftarrow \P^{b-1}$.

\section{Tacnodes} \label{sec:tac}

In this section we will specialize to the case when $C$ is a projective, a irreducible curve with a single $A_3$ singularity, also known as a \emph{tacnode}. This is, we assume that there is a point $q \in C$ such that $C \setminus \{q\}$ is smooth, and $\hat{\O}_{C, q} \cong \C[[x, y]]/(y^2-x^4)$. We let $\rho\colon \tilde{C} \to C$ be its normalization, and we get $g$ be the genus of $\tilde{C}$. Denote by $p_1, p_2$ the two preimages of $q$.

To start, note that in this case that $\{q\}$ with its reduced structure is \emph{not} a non-rational locus for $\rho$. Instead, we take $T=V(x^2, y)$, the first-order thickening of $q \in C$ along the unique tangent direction of $C$ at $q$. By the description in Table \ref{table:nrlocal_planar}, we have that this is a non-rational locus for $\rho$. We point out that in this case $S=\rho^{-1}(T)$ has length $2$ at each point $p_i$. 

Let us specialize Theorem \ref{teo:intro_main} to our setting. The categorical resolution from Subsection \ref{subsec:abcat} is
\[ \Coh(\Rcal) = \{ (F, \overline{F}; \phi)\colon F \in \Coh(\tilde{C}), \overline{F} \in \Coh(\A_T), \phi\colon \rho_\ast(\mu_S^\ast (F|_S)) \to \overline{F} \}, \]
where $\Coh(\A_T) = \Coh(\A_{T, \m, 2})$ is the Auslander resolution as in Subsection \ref{subsec:ausalg}. The discussion of Subsection \ref{subsec:Kgrp} allows us to identify $K^\num(\Rcal) \cong \Z^4$, via the maps $\rk, \deg, \ell_1, \ell_2$. Thus, we write $v \in K^\num(\Rcal)$ as a tuple $v=(r, d, \ell_1, \ell_2)$. 

Denote by $\Cheart \subset \DC^b(\Rcal)$ the heart from Subsection \ref{subsec:heart}. As a consequence of Corollary \ref{cor:quad_stab} (cf. the discussion at the beginning of Section \ref{sec:comparison}), we get central charges
\[ Z_{\delta_1, \delta_2}(E) = \delta_1 \ell_1(E) + \delta_2 \ell_2(E)-\deg(E) + i \rk(E), \]
for any $\delta_1, \delta_2 >0$ satisfying $\delta_1 +\delta_2<1$.

\begin{teo} \label{teo:tac_main}
Let $v=(r, d, \ell_1, \ell_2)$ be a vector with $r>0$, $\gcd(r, d)=1$, and $0 \leq \ell_1 \leq 2r$, $0 \leq \ell_2-\ell_1 \leq 2r$. Given $\delta_1, \delta_2>0$ with $\delta_1+\delta_2<1$, we denote by $M_{\delta_1, \delta_2}(v) := M_{\sigma_{\delta_1, \delta_2}}(v)$.
\begin{enumerate}
\item There are finitely many $v$-walls. If $\sigma_{\delta_1, \delta_2}$ and $\sigma_{\delta_1', \delta_2'}$ are outside of the walls, then $M_{\delta_1, \delta_2}(v)$ and $M_{\delta_1', \delta_2'}(v)$ are birational. 
\item For $\delta_1, \delta_2 \ll 1$, the map $M_{\delta_1, \delta_2}(v) \to M_{\tilde{C}}(r, d)$ is a bundle, locally trivial in the Zariski topology, with fiber
\[ \{ (V_1, V_2): V_1 \subset \C^{\oplus 2r}, V_2 \subset \C^{\oplus 4r}, V_1 \times \{0\} \subset V_2 \subset \C^{2r} \times V_1, \dim(V_i) = \ell_i \}. \]

\item Assume that $v=(1, d, 1, 2)$. Then, the moduli space $M_{\delta_1, \delta_2}(v)$ is independent of $(\delta_1, \delta_2)$. We have a diagram
\[ M_{\tilde{C}}(1, d) \xleftarrow{a_\ast} M_{\delta_1, \delta_2}(v) \xrightarrow{R\pi_\ast} M_C(1, d), \]
where the map $a_\ast$ is an $\mathbb{F}_2$-bundle, and the map $R\pi_\ast$ is birational. 

\item Assume that $v=(2, d, 2, 4)$ with $d$ odd. Then, there is a diagram
\[ M_{\tilde{C}}(2, d) \leftarrow M_1 \dasharrow M_2 \dasharrow M_3 \dasharrow M_4 \to M_C(2, d), \]
where $M_1 \to M_{\tilde{C}}(2, d)$ is a bundle, locally trivial in the Zariski topology; each $M_i \dashrightarrow M_{i-1}$ is a standard flip; and $M_4 \to M_C(2, d)$ is birational. 
\end{enumerate}
\end{teo}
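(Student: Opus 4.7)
For part (1), the plan is to combine the finiteness of walls from Theorem~\ref{teo:comparison_main}(1) with the codimension argument of Proposition~\ref{prop:nodewalls_bir}. First I would establish a tacnode analogue of Lemma~\ref{lemma:nodebasic_main}: for any $E=(F,\overline{F};\phi)\in\Cheart$ with $F$ locally free there is a triangle $La^\ast F\to E\to b^\ast(\coker\phi)[1]$, which yields $\Hom^i(E,E')=0$ outside degrees $0,1$ and an explicit Euler pairing $\chi(E,E')$ polynomial in $(r,d,\ell_1,\ell_2)$ and the primed data. Then the phase relation at a wall together with a direct estimate should give $\dim\Ext^1(A,B)>0$ for the destabilizing sub/quotient pair $(A,B)$, so that the destabilized locus has positive codimension in $M_\sigma(v)$.

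For part (2), I would use Theorem~\ref{teo:comparison_main}(2) to produce the map $a_\ast\colon M_{\delta_1,\delta_2}(v)\to M_{\tilde C}(r,d)$ for $\delta_i\ll 1$. Its fiber over a slope-stable $F$ parametrizes Auslander-compatible surjections $\rho_\ast\mu_S^\ast(F|_S)\twoheadrightarrow(\overline{M}_1,\overline{M}_2)$ with $\dim\overline{M}_j=\ell_j$. Locally at $q$ one computes $M_1=\rho_\ast(F|_S\otimes J)\cong\C^{2r}$ (killed by the maximal ideal of $\O_T$), $M_2=\rho_\ast(F|_S)\cong\C^{4r}$, and $\alpha\colon M_1\hookrightarrow M_2$ identifies $\alpha(M_1)$ with $xM_2$. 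Such a quotient is then equivalent to a pair of kernels $(K_1,K_2)$ with $K_1\subseteq M_1$, $K_2$ an $\O_T$-submodule of $M_2$, and $\alpha(K_1)\subseteq K_2$; dualizing and splitting $M_2\cong W\oplus M_1$ will recover the flag $V_1\times\{0\}\subset V_2\subset\C^{2r}\times V_1$. Globalizing over $M_{\tilde C}(r,d)$ with the (Brauer-twisted) universal bundle and a relative flag construction, as in Lemma~\ref{lemma:nodecompzero_grass}, gives the Zariski-local triviality.

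Part (3) is a short application of (2). By Corollary~\ref{cor:ssnot1_smallrk} no non-trivial decomposition $v=v'+v''$ with positive ranks is possible for $r=1$, so $\tilde\Omega$ has no $v$-walls and $M_{\delta_1,\delta_2}(v)$ is independent of $(\delta_1,\delta_2)$. The fiber of $a_\ast$ from part (2) with $r=\ell_1=1,\ \ell_2=2$ has $V_1\in\P^1$ and $V_2$ a quotient line of $(\C^2\times V_1)/(V_1\times 0)$, which globalizes to the projectivization $\P(\O_{\P^1}(1)\oplus\O_{\P^1}(-1))\cong\mathbb{F}_2$. A direct computation using the formula for $\pi^\ast$ from Section~\ref{sec:catres} verifies $v=v(\pi^\ast L)$ for any line bundle $L\in M_C(1,d)$, so Theorem~\ref{teo:comparison_main}(4) then yields the birationality of $R\pi_\ast$.

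For part (4), I would enumerate the walls in $\tilde\Omega$ for $v=(2,d,2,4)$: they come from decompositions $v=v'+v''$ with $r'=r''=1$, cut out by the tacnode analogue of \eqref{eq:nodewalls_formula},
\[ \delta_1(\ell_1'-1)+\delta_2(\ell_2'-2)=d'-d/2, \]
with the right-hand side a half-integer when $d$ is odd, so only finitely many lines arise. A careful enumeration, plus perturbation of a path from $\{\delta_1,\delta_2\ll 1\}$ to $\{1-\delta_2\ll 1,\ \delta_1\ll 1\}$ to resolve accidental coincidences, should leave exactly three walls, each simple (with both $v',v''$ primitive). The tacnode version of Proposition~\ref{prop:nodehigher_simpleflip}, whose proof requires only the Ext-computation established in (1), then shows each crossing is a standard flip with center $M_\sigma(v')\times M_\sigma(v'')$. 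Finally, $M_1\to M_{\tilde C}(2,d)$ is identified by part (2) and $M_4\to M_C(2,d)$ by Theorem~\ref{teo:comparison_main}(4). The hardest step will be the wall analysis in (4): the 2-parameter space $\tilde\Omega$ admits several distinct numerical walls defining the same line, so constructing a path that crosses exactly the three stated simple flips---and ruling out extra destabilizing configurations---requires genuine care beyond the 1-parameter node case.
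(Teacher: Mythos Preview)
Your proposal is correct and follows essentially the paper's own route: part (1) via Theorem~\ref{teo:comparison_main} plus the tacnode analogue of Lemma~\ref{lemma:nodebasic_main} and the codimension argument of Proposition~\ref{prop:nodewalls_bir} (carried out as Proposition~\ref{prop:tacwalls_bir}); part (2) is Lemma~\ref{lemma:taccompzero_flag}; part (3) uses $r=1$ to kill all walls and specializes the fiber description; part (4) enumerates walls by \eqref{eq:tacwalls_formula} and applies the simple-flip argument of Proposition~\ref{prop:nodehigher_simpleflip}.

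Two small remarks. First, your explicit identification of the rank-one fiber as $\P(\O_{\P^1}(1)\oplus\O_{\P^1}(-1))\cong\mathbb{F}_2$ is sharper than what the paper writes down; the paper just invokes Lemma~\ref{lemma:taccompzero_flag} and the description of the flag variety as a Grassmannian bundle. Second, your worry that ``several distinct numerical walls define the same line'' turns out to be unfounded here: the enumeration in Subsection~\ref{subsec:tacrktwo} produces exactly four distinct lines $W_1,\dots,W_4$ in $\tilde\Omega$, cutting out five chambers, and the straight path from the region near the origin to the region with $\delta_2$ near $1$ crosses precisely $W_2,W_3,W_1$ (the fourth wall $W_4=\{\delta_1=1/2\}$ is simply avoided). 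So no perturbation is needed and the three flips fall out directly.
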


\begin{proof}
Part (1) is a consequence of Theorem \ref{teo:comparison_main} and Proposition \ref{prop:nodewalls_bir}; part (2) is Lemma \ref{lemma:taccompzero_flag}; part (3) follows from the discussion of Subsection \ref{subsec:tacrkone}; part (4) is proven in Subsection \ref{subsec:tacrktwo}.
\end{proof}

We will start by discussing some general properties of the Auslander resolution of $\C[\epsilon]/\epsilon^2$ in Subsection \ref{subsec:tacaus}. We will use this description to describe basic properties of the semistable objects in Subsection \ref{subsec:tacbasic}. We will then specialize Theorem \ref{teo:comparison_main} to give a formula for the walls in Subsection \ref{subsec:tacwalls}. Finally, we will describe carefully the situation for rank one and two in Subsections \ref{subsec:tacrkone} and \ref{subsec:tacrktwo}, respectively.

\subsection{The Auslander resolution} \label{subsec:tacaus}

In this section we will describe carefully the category of modules over $\A=\A_{T, \m, 2}$, for $T=\C[\epsilon]/\epsilon^2$. We recall from Lemma \ref{lemma:ausalg_quiver} that $\Coh(\A)$ can be identified with the collection of tuples $(N_1, N_2, \alpha, \beta)$, where $N_1, N_2$ are in $\Coh(T)$, $\alpha\colon N_1 \to N_2$, $\beta\colon N_2 \otimes_T I \to N_1$, and these maps fit into the diagram
\begin{equation} \label{eq:tacaus_prequiver}
\begin{tikzcd} & N_1 \otimes_T I \arrow[r, "\alpha \otimes \id"] \arrow[d, "\times"] \arrow[dl] & N_2 \otimes_T I \arrow[d, "\times"] \arrow[dl, "\beta"'] \\ 0 \arrow[r] & N_1 \arrow[r, "\alpha"] & N_2. \end{tikzcd}
\end{equation}

\begin{lemma}[cf. \cite{KL15}*{Example 5.3}]
The category $\Coh(\A)$ is equivalent to the category of triples $(N_1, N_2; \alpha, \beta)$, where $N_1, N_2$ are finite dimensional $\C$-vector spaces, $\alpha\colon N_1 \to N_2$, $\beta\colon N_2 \to N_1$, subject to $\beta \alpha=0$. 
\end{lemma}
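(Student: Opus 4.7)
The strategy is to apply Lemma \ref{lemma:ausalg_quiver} with $n=2$, $Z=T$, and $I=\m$, and then translate the resulting description into the promised quiver with relation. An object of $\Coh(\A)$ is a tuple $(N_1, N_2, \alpha, \beta_1)$ with $N_i \in \Coh(T)$, a $T$-linear $\alpha: N_1 \to N_2$, and a $T$-linear $\beta_1: N_2 \otimes_T \m \to N_1$, subject to the commutativity of the diagram \eqref{eq:tacaus_prequiver}. Chasing the diagram gives three conditions: the triangle $\{N_1 \otimes \m, 0, N_1\}$ forces the multiplication $\times: N_1 \otimes_T \m \to N_1$ to be zero, so $\m N_1 = 0$ and $N_1$ becomes a $\C$-vector space; the triangle $\{N_2 \otimes \m, N_1, N_2\}$ identifies the $\epsilon$-action on $N_2$ with $\alpha \circ \beta_1$; and the triangle $\{N_1 \otimes \m, N_2 \otimes \m, N_1\}$ gives $\beta_1 \circ (\alpha \otimes \id_\m) = 0$. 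Since $\m \subset T$ is not flat, I would next use the presentation $T \xrightarrow{\cdot\epsilon} T \twoheadrightarrow \m$ to compute $N \otimes_T \m \cong N/\epsilon N$ for any $T$-module $N$; in particular $N_1 \otimes_T \m \cong N_1$ since $\m N_1 = 0$. Under these identifications, setting $\beta := \beta_1 \circ \pi_{N_2}: N_2 \to N_1$ (where $\pi_{N_2}: N_2 \twoheadrightarrow N_2/\epsilon N_2$), the condition $\beta_1 \circ (\alpha \otimes \id) = 0$ becomes exactly $\beta \circ \alpha = 0$. This yields the forward functor $\Phi$ to the target quiver category.

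For the inverse $\Psi$, given $(N_1, N_2, \alpha, \beta)$ with $\beta\alpha = 0$, I would equip $N_1$ with the trivial $\m$-action and $N_2$ with the $\m$-action $\epsilon \cdot n := \alpha\beta(n)$; the hypothesis $\beta\alpha = 0$ makes this action square-zero. The map $\alpha$ stays $T$-linear (since $\alpha\beta\alpha = 0$), and $\beta$ descends to a well-defined $\beta_1: N_2/\epsilon N_2 \to N_1$ because $\beta(\epsilon n) = \beta\alpha\beta(n) = 0$; the three commutativity conditions of \eqref{eq:tacaus_prequiver} then follow by direct inspection. At the level of morphisms, any pair of $\C$-linear maps $(f_1, f_2)$ intertwining $(\alpha, \beta)$ is automatically $T$-linear on $N_2$, since $f_2(\epsilon n) = f_2\alpha\beta(n) = \alpha' f_1 \beta(n) = \alpha'\beta' f_2(n) = \epsilon f_2(n)$; this makes $\Phi$ and $\Psi$ fully faithful, and they are visibly mutually inverse on objects. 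The main subtlety throughout is the non-flatness of $\m$ over $T$, but once the uniform identification $-\otimes_T \m \cong -/\epsilon(-)$ is in place, the rest of the argument is elementary diagram chasing.
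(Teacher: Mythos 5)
Your proposal is correct and follows essentially the same diagram chase as the paper: the leftmost triangle kills the $\m$-action on $N_1$, the rightmost triangle recovers the $\epsilon$-action on $N_2$ as $\alpha\circ\overline{\beta}$, and the middle triangle yields $\overline{\beta}\alpha=0$, with your $\beta_1\circ\pi_{N_2}$ agreeing with the paper's $\overline{\beta}=\beta_1(-\otimes\epsilon)$ under the identification $N_2\otimes_T\m\cong N_2/\epsilon N_2$. The only difference is that you spell out the inverse functor and the morphism-level check, which the paper compresses into ``the process can clearly be reversed.''
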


\begin{proof}
Let us look at the diagram \eqref{eq:tacaus_prequiver} carefully. Note from the commutativity of the leftmost triangle that the multiplication map $N_1 \otimes_T I \to N_1$ is zero. Thus, $N_1$ can be identified with a coherent module over $\O_q$, i.e. a vector space. 

On the other hand, note that the map $\beta\colon N_2 \otimes_T I \to N_1$ is determined by $\overline{\beta} = \beta(- \otimes \epsilon) \colon N_2 \to N_1$. Here, the commutativity of the rightmost triangle shows that
\[ \epsilon n = \alpha(\beta(n \otimes \epsilon)) = \alpha \overline{\beta}(n), \]
for any $n \in N_2$. Thus, the $T$-module structure is determined by $\beta$ (together with the $\C$-vector space structure). Finally, the middle triangle gives us the relation $\overline{\beta}\alpha=0$. The process can clearly be reversed.
\end{proof}

\begin{claim}
The category $\Coh(\A)$ has global dimension $\leq 2$. In fact, a more general result is shown in \cite{KL15}*{Proposition A.14}.
\end{claim}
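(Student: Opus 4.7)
Under the equivalence just established, $\Coh(\A)$ is the category of finite-dimensional representations of the quiver $Q$ with two vertices $1,2$ and arrows $\alpha\colon 1 \to 2$, $\beta\colon 2 \to 1$, subject to the single relation $\beta\alpha = 0$; equivalently, it is the category of finitely generated modules over the finite-dimensional $\C$-algebra $B = \C Q/(\beta\alpha)$. For such a category the global dimension is the supremum of the projective dimensions of the simple modules, so the plan is to build a projective resolution of length at most two for each simple.

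The two simples are $S_1 = (\C, 0; 0, 0)$ and $S_2 = (0, \C; 0, 0)$. A direct computation of $Be_1$ and $Be_2$ identifies their projective covers as
\[ P_1 = (\C, \C; \id_\C, 0), \qquad P_2 = (\C, \C^{\oplus 2}; \alpha_{P_2}, \beta_{P_2}), \]
where $\alpha_{P_2}$ and $\beta_{P_2}$ land in and project from opposite coordinates of $\C^{\oplus 2}$, so that $\beta_{P_2}\alpha_{P_2} = 0$. Projectivity of both is immediate from the exact evaluation identifications $\Hom_\A(P_i, -) = (-)_i$.

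I would then write down two short exact sequences by inspection,
\[ 0 \to S_2 \to P_1 \to S_1 \to 0, \qquad 0 \to P_1 \to P_2 \to S_2 \to 0, \]
where the first embeds $S_2$ into the vertex-$2$ slot of $P_1$ and the second embeds $P_1$ as the subrepresentation of $P_2$ generated by $\beta \in e_1 B e_2$. Splicing them produces the length-two projective resolution $0 \to P_1 \to P_2 \to P_1 \to S_1 \to 0$ of $S_1$, and the second sequence alone is a length-one projective resolution of $S_2$, establishing the claim. The only computation requiring a moment of care is the identification of $P_2$: unlike $P_1$, it has dimension vector $(1,2)$ rather than $(1,1)$, because the single relation $\beta\alpha=0$ does not kill the path $\alpha\beta$, so $Be_2$ carries an extra generator beyond the idempotent $e_2$.
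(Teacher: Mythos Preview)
Your proof is correct. The paper does not give its own proof of this claim: it simply asserts it and defers to the general statement in \cite{KL15}*{Proposition~A.14}. Your argument is a direct, elementary verification for this particular algebra, and it works exactly as written. The projectives $P_1$ and $P_2$ you identify coincide with those the paper records in Claim~\ref{claim:tacaus_projmodules}, and the two short exact sequences $0 \to S_2 \to P_1 \to S_1 \to 0$ and $0 \to P_1 \to P_2 \to S_2 \to 0$ are immediate to check from the explicit matrices. Splicing them gives the length-two projective resolution of $S_1$, and the standard fact that the global dimension of a finite-dimensional algebra equals the maximum projective dimension of its simples finishes the argument. This is a clean self-contained proof that avoids invoking the general Auslander-algebra machinery; the paper's citation, by contrast, covers arbitrary $\A_{Z,I,n}$ at once.
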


Our next goal is to describe the Euler pairing in $\DC^b(\A)$. To do so, we will use the following observation.

\begin{claim} \label{claim:tacaus_projmodules}
Note that the modules
\[ P^1 = (\C, \C; \id, 0), \qquad P^2 = (\C, \C^2; (1, 0)^T, (0, 1)) \]
are projective. In fact, a fast computation shows that $\Hom(P^1, N) \cong N_1$ and $\Hom(P^2, N) \cong N_2$ canonically, where $N=(N_1, N_2; \alpha, \beta)$.

We also note that $P^2$ is an injective module. In fact, one quickly checks that $N_2^\vee \to \Hom(N, P^2)$, $\psi \mapsto (\psi \circ \alpha, (\psi, \psi \circ \alpha \circ \beta))$ is an isomorphism. 
\end{claim}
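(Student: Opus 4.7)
The strategy is to unwind everything in the explicit quiver description of $\Coh(\A)$, in which a morphism $(M_1, M_2; \alpha_M, \beta_M) \to (N_1, N_2; \alpha_N, \beta_N)$ is a pair $\phi_i\colon M_i \to N_i$ satisfying $\phi_2 \alpha_M = \alpha_N \phi_1$ and $\phi_1 \beta_M = \beta_N \phi_2$. The underlying observation driving the whole argument is that the component functors $N \mapsto N_1$ and $N \mapsto N_2$ are exact, since kernels and cokernels in $\Coh(\A)$ are computed componentwise. So once I identify $\Hom(P^1, -)$, $\Hom(P^2, -)$, and $\Hom(-, P^2)$ with these component functors (or their linear duals) naturally in $N$, projectivity and injectivity will follow automatically.

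For $\Hom(P^1, N)$ with $P^1 = (\C, \C; \id, 0)$, I would set $n = \phi_1(1) \in N_1$: the first relation forces $\phi_2(1) = \alpha_N(n)$, and the second reads $\beta_N \phi_2 = 0$, which holds automatically from $\beta_N \alpha_N = 0$. Hence $(\phi_1, \phi_2) \mapsto \phi_1(1)$ gives a natural isomorphism $\Hom(P^1, N) \cong N_1$. For $P^2 = (\C, \C^2; (1, 0)^T, (0, 1))$, writing $\phi_2 = (a, b)$ with $a, b \in N_2$, the relations boil down to $a = \alpha_N \phi_1(1)$ (whence $\beta_N a = 0$ is automatic) and $\phi_1(1) = \beta_N b$, so $b \in N_2$ is a free parameter that determines the morphism; this gives $\Hom(P^2, N) \cong N_2$. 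Both $P^1$ and $P^2$ are therefore projective.

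For injectivity of $P^2$, I would compute $\Hom(N, P^2)$ analogously. Decomposing $\phi_2 = (\psi_1, \psi_2)^T$ with $\psi_i \in N_2^\vee$, the intertwining conditions read $\psi_1 \alpha_N = \phi_1$, $\psi_2 \alpha_N = 0$, and $\psi_2 = \phi_1 \beta_N$. Taking $\psi := \psi_1$ as the free parameter, the first equation gives $\phi_1 = \psi \alpha_N$, the third forces $\psi_2 = \psi \alpha_N \beta_N$, and the remaining condition $\psi \alpha_N \beta_N \alpha_N = 0$ is automatic from $\beta_N \alpha_N = 0$. This recovers precisely the formula in the statement and exhibits $\Hom(-, P^2) \cong (-)_2^\vee$ naturally in $N$, so $P^2$ is injective by exactness of $(-)_2$ together with the exactness of finite-dimensional linear duality. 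There is no genuine obstacle in any of this; the only place requiring a moment of care is the asymmetric role of the two factors of $\C^2$ inside $P^2$, namely as the image of $\alpha_{P^2}$ versus the source of the nontrivial component of $\beta_{P^2}$, but this is dictated unambiguously by the definitions.
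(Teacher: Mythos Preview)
Your proposal is correct and is precisely the computation the paper gestures at with ``a fast computation shows'' and ``one quickly checks''; the paper gives no further argument beyond stating the formulas, and your unwinding of the quiver relations reproduces exactly the map $\psi \mapsto (\psi\circ\alpha,(\psi,\psi\circ\alpha\circ\beta))$ written there.
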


\begin{prop} \label{prop:tacaus_Euler}
We have that $K^{\num}(\A) \cong \Z^2$, via $N \mapsto (\ell_1(N), \ell_2(N))$, where $\ell_i(N)=\ell(N_i)$. The Euler pairing is given by the formula
\[ \chi(N, N') = 2\ell_1\ell_1' - \ell_1\ell_2' - \ell_2\ell_1' + \ell_2\ell_2', \]
where we denoted by $\ell_i = \ell_i(N)$, $\ell_i'=\ell_i(N)$.  
\end{prop}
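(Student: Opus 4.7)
The plan is to first identify $K^{\num}(\A)$ with $\Z^2$, and then compute the Euler pairing on the basis of simple modules.

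For the first statement, Corollary \ref{cor:ausdc_sod} already gives that $\DC^b(\A)$ admits a semi-orthogonal decomposition with two factors equivalent to $\DC^b(\mathrm{pt})$, so $K^{\num}(\A)$ is free of rank $2$. To give a concrete basis, I would consider the simple modules $S^1 = (\C, 0; 0, 0)$ and $S^2 = (0, \C; 0, 0)$. Any $N = (N_1, N_2; \alpha, \beta)$ admits a Jordan--Hölder filtration whose subquotients are copies of $S^1$ and $S^2$, with $S^i$ appearing exactly $\ell_i(N) = \dim N_i$ times, so $[N] = \ell_1(N)[S^1] + \ell_2(N)[S^2]$ in $K(\A)$. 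Using Claim \ref{claim:tacaus_projmodules} together with the projectivity of $P^1, P^2$, one computes $\chi(P^i, S^j) = \dim \Hom(P^i, S^j) = \delta_{ij}$, which shows that $[S^1], [S^2]$ are $\Z$-linearly independent in $K^{\num}(\A)$, and hence form a $\Z$-basis. This establishes the isomorphism $K^{\num}(\A) \cong \Z^2$ via $N \mapsto (\ell_1(N), \ell_2(N))$.

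For the Euler pairing formula, by bilinearity it suffices to compute $\chi(S^i, S^j)$ for all $i, j \in \{1, 2\}$. To do this, I would first verify the two short exact sequences
\[ 0 \to S^2 \to P^1 \to S^1 \to 0, \qquad 0 \to P^1 \to P^2 \to S^2 \to 0, \]
where the maps are determined by the obvious identifications (the inclusion $S^2 \hookrightarrow P^1$ is the identity on the second component; the inclusion $P^1 \hookrightarrow P^2$ is the identity on $N_1$ and the map $a\mapsto (a,0)$ on $N_2$). The second yields a projective resolution of $S^2$ of length $1$, and splicing produces a projective resolution $0 \to P^1 \to P^2 \to P^1 \to S^1 \to 0$ of $S^1$. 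Applying $\Hom(-, S^j)$ and using $\Hom(P^i, N) = N_i$ from Claim \ref{claim:tacaus_projmodules} reduces each computation to the cohomology of a short complex of finite-dimensional vector spaces; the resulting values are
\[ \chi(S^1, S^1) = 2, \quad \chi(S^1, S^2) = -1, \quad \chi(S^2, S^1) = -1, \quad \chi(S^2, S^2) = 1. \]
Substituting $[N] = \ell_1[S^1] + \ell_2[S^2]$ and the analogous expression for $N'$ into the bilinear expansion $\chi(N, N') = \sum_{i,j} \ell_i(N)\ell_j(N')\, \chi(S^i, S^j)$ then gives precisely the claimed formula.

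All the steps involved are elementary, so there is no genuine obstacle; the main care is in writing down the correct maps in the projective resolution of $S^1$ and confirming exactness, since the module $P^2$ has a slightly non-obvious structure. Once that is done, all the Ext computations collapse to reading off the dimensions of $N_1$ and $N_2$ via the formula $\Hom(P^i, N) = N_i$.
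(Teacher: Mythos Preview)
Your argument is correct. The difference from the paper is purely the choice of basis on which you evaluate the Euler pairing. The paper computes on the projective basis $P^1, P^2$: since these are projective, $\chi(P^i, N) = \dim\Hom(P^i, N) = \ell_i(N)$ directly from Claim~\ref{claim:tacaus_projmodules}, with no higher Ext terms and no resolutions needed; one then writes $[N]$ in the $P^i$-basis and expands. You instead use the simple basis $S^1, S^2$, which makes the expression $[N] = \ell_1[S^1] + \ell_2[S^2]$ immediate but forces you to build the projective resolutions $0 \to P^1 \to P^2 \to S^2 \to 0$ and $0 \to P^1 \to P^2 \to P^1 \to S^1 \to 0$ before you can read off $\chi(S^i, S^j)$. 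Both routes land on the same bilinear form; the paper's is a little shorter because it avoids the resolution step entirely, while yours has the minor advantage that the coordinates $(\ell_1, \ell_2)$ are manifestly the coefficients in your chosen basis.
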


\begin{proof}
The first fact is a consequence of the discussion of Subsection \ref{subsec:Kgrp}. For the second one, note that the Euler pairing is determined by its values on a basis of $K^{\num}(\A)$. It is clear that $P^1, P^2$ provide such a basis; the description of Claim \ref{claim:tacaus_projmodules} allows us to compute directly. 
\end{proof}

The next lemmas will be used in future sections.

\begin{lemma} \label{lemma:tacaus_Ext}
Let $n$ be a positive integer and let $K \subset P_2^{\oplus n}$ be a submodule. We have that $\Ext^i(K, -) = 0$ for all $i \geq 2$, and that $\Ext^1(K, P_2)=0$.
\end{lemma}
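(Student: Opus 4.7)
The key observation is that, by Claim \ref{claim:tacaus_projmodules}, the module $P_2$ is simultaneously projective and injective, and by the Claim preceding Proposition \ref{prop:tacaus_Euler} the category $\Coh(\A)$ has global dimension at most $2$. The plan is to exploit the short exact sequence
\[ 0 \to K \to P_2^{\oplus n} \to Q \to 0, \]
where $Q$ denotes the cokernel, and to apply the long exact sequence of $\Ext^\bullet(-, M)$ groups.

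Since $P_2^{\oplus n}$ is projective, $\Ext^i(P_2^{\oplus n}, M) = 0$ for every $i \geq 1$ and every $M \in \Coh(\A)$. The long exact sequence therefore collapses to canonical isomorphisms $\Ext^j(K, M) \cong \Ext^{j+1}(Q, M)$ for all $j \geq 1$. The global dimension bound gives $\Ext^i(Q, M) = 0$ for all $i \geq 3$, so the first assertion $\Ext^j(K, -) = 0$ for $j \geq 2$ follows immediately.

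For the second assertion I would take $M = P_2$. Injectivity of $P_2$ yields $\Ext^i(-, P_2) = 0$ for all $i \geq 1$, and in particular $\Ext^2(Q, P_2) = 0$; combined with the isomorphism above, this forces $\Ext^1(K, P_2) = 0$.

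There is essentially no obstacle to overcome: both assertions follow directly from the two structural facts already established (the injectivity-projectivity of $P_2$ and the bound on global dimension). The main conceptual point is to recognize that the inclusion $K \hookrightarrow P_2^{\oplus n}$ serves as a one-step projective coresolution, converting the question about $\Ext^\bullet(K, -)$ into a question about $\Ext^\bullet(Q, -)$ shifted by one degree, where the hypotheses apply cleanly.
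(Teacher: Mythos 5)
Your proof is correct and follows essentially the same route as the paper: the short exact sequence $0 \to K \to P_2^{\oplus n} \to Q \to 0$, the global dimension bound for the first claim, and the injectivity of $P_2$ for the second. The only cosmetic difference is that for the second claim the paper invokes $\Ext^1(K, P_2)=0$ directly from injectivity of $P_2$, whereas you pass through the isomorphism with $\Ext^2(Q, P_2)$ first; both are valid.
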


\begin{proof}
Let $Q=P_2^{\oplus n}/K$. The first part follows directly from the long exact sequence associated to $0 \to K \to P_2^{\oplus n} \to Q \to 0$, together with the fact that $\Ext^i(Q, -)=0$ for $i \geq 3$. For the second claim, note that $\Ext^1(-, P_2)=0$ as $P_2$ is injective. 
\end{proof}

\begin{lemma} \label{lemma:tacaus_subobj}
Let $n$ be a positive integer and let $K \subset P_2^{\oplus n}$ be a submodule. Then $\ell_1=\ell_1(K)$, $\ell_2=\ell_2(K)$ satisfy the inequalities $0 \leq \ell_1 \leq n$, $0 \leq \ell_2-\ell_1 \leq n$. 

Moreover, for any $\ell_1, \ell_2$ satisfying these inequalities, there exists an submodule $K \subset P_2^{\oplus n}$ with $\ell_1=\ell_1(K)$ and $\ell_2=\ell_2(K)$.
\end{lemma}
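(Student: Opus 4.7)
The plan is to reduce to a direct linear algebra computation using the explicit quiver description of $\Coh(\A)$. First, I would spell out $P_2^{\oplus n}$ concretely: decomposing $\C^{2n} = \C^n_a \oplus \C^n_b$, the module $P_2^{\oplus n}$ corresponds to the tuple $(\C^n, \C^n_a \oplus \C^n_b; \alpha, \beta)$ with $\alpha(x) = (x, 0)$ the inclusion into the $a$-factor and $\beta(a, b) = b$ the projection onto the $b$-factor. In particular $\alpha$ is injective, $\beta$ is surjective with $\ker \beta = \C^n_a$, and $\beta\alpha = 0$.

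Next, I would identify a submodule $K = (K_1, K_2; \alpha|_{K_1}, \beta|_{K_2})$ with a pair of subspaces $K_1 \subseteq \C^n$ and $K_2 \subseteq \C^{2n}$ subject to the two containments $\alpha(K_1) \subseteq K_2$ and $\beta(K_2) \subseteq K_1$. From $K_1 \subseteq \C^n$ I get $0 \leq \ell_1 \leq n$. From $\alpha(K_1) \subseteq K_2$ and the injectivity of $\alpha$, I obtain $\ell_1 \leq \ell_2$. From $K_2 \subseteq \beta^{-1}(K_1)$ and the fact that $\dim \beta^{-1}(K_1) = \dim \ker \beta + \dim K_1 = n + \ell_1$, I conclude $\ell_2 \leq n + \ell_1$, i.e. $\ell_2 - \ell_1 \leq n$.

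For the converse, given admissible $(\ell_1, \ell_2)$, I would choose $K_1 \subseteq \C^n$ arbitrary of dimension $\ell_1$, set $L = \alpha(K_1)$ and $M = \beta^{-1}(K_1)$, and observe $L \subseteq M$ because $\beta \alpha = 0$. Since $\dim M/L = (n + \ell_1) - \ell_1 = n$ and $0 \leq \ell_2 - \ell_1 \leq n$, there exists a subspace $K_2$ with $L \subseteq K_2 \subseteq M$ and $\dim K_2 = \ell_2$. By construction $(K_1, K_2; \alpha|, \beta|)$ is a submodule of $P_2^{\oplus n}$ with the prescribed invariants.

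No real obstacle arises here: the argument is pure linear algebra once the $\alpha, \beta$ on $P_2^{\oplus n}$ are written down. The only mild care required is tracking the two containment conditions simultaneously when producing $K_2$, which is handled by exhibiting the sandwich $\alpha(K_1) \subseteq K_2 \subseteq \beta^{-1}(K_1)$.
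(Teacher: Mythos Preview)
Your proof is correct and follows essentially the same approach as the paper: both write $P_2^{\oplus n}$ explicitly as $(\C^n,\C^n\oplus\C^n;\id\oplus 0,(0,\id))$, identify a submodule with a pair of subspaces satisfying $\alpha(K_1)\subseteq K_2$ and $\beta(K_2)\subseteq K_1$, and read off the inequalities from injectivity of $\alpha$ and the dimension of $\beta^{-1}(K_1)$. For the converse the paper writes down explicit coordinate subspaces while you use the cleaner sandwich $\alpha(K_1)\subseteq K_2\subseteq\beta^{-1}(K_1)$, but this is a cosmetic difference.
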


\begin{proof}
Note that $P_2^{\oplus n} \cong (\C^n, \C^n \oplus \C^n; \id \oplus 0, (0, I))$. This way, if $K=(K_1, K_2; \alpha, \beta)$ is a subobject, we must have the commutative diagrams:
\[ \begin{tikzcd} K_1 \arrow[r, "\alpha"] \arrow[d, hook] & K_2 \arrow[d, hook] & K_2 \arrow[r, "\beta"] \arrow[d, hook] & K_1 \arrow[d, hook] \\ \C^n \arrow[r, hook, "\id \oplus 0"'] & \C^n \oplus \C^n & \C^n \oplus \C^n \arrow[r, "{(0, \id)}"'] & \C^n. \end{tikzcd} \]
Note that $\ell_1 \leq n$, $\ell_2 \leq 2n$ are automatic. The first diagram implies that $\alpha$ is injective, and so $\ell_1 \leq \ell_2$. A simple diagram chasing on the left diagram shows that $\coker \alpha \hookrightarrow \C^n$, and so $\ell_2-\ell_1 \leq n$. 

Now, assume that $\ell_1, \ell_2$ satisfy the given conditions. If $e_1, \dots, e_n$ is the canonical basis of $\C^n$, take $K_1$ to be the span of $e_1, \dots, e_{\ell_1}$, and $K_2$ to be the span of $e_1 \oplus 0, \dots, e_m \oplus 0$, $0 \oplus e_1, \dots, 0 \oplus e_{m'}$, where $m=\max \{\ell_1, \ell_2-\ell_1\}$ and $m' = \min \{\ell_1, \ell_2-\ell_1\}$. 
\end{proof}

\subsection{Basic properties} \label{subsec:tacbasic}

In this subsection we describe some basic properties of the $\sigma$-semistable objects of phase smaller than 1, in the same spirit of Subsection \ref{subsec:nodebasic}.

\begin{lemma} \label{lemma:tacbasic_main}
Let $E=(F, \overline{F}; \phi)$ and $E' = (F', \overline{F}'; \phi')$ be two objects in $\Cheart$, with $F, F'$ locally free. Write $v(E)=(r, d, \ell_1, \ell_2)$ and $v(E')=(r', d', \ell_1', \ell_2')$. 
\begin{enumerate}
\item There is a distinguished triangle $La^\ast F \to E \to b^\ast K[1]$, where $K$ is a submodule of $\rho_\ast(\mu_S^\ast(F|_S))$ with $\ell_1(K)=2r-\ell_1$, $\ell_2(K)=4r-\ell_2$. In particular, we have $0 \leq \ell_1 \leq 2r$ and $0 \leq \ell_2-\ell_1 \leq 2r$. 
\item We have $\Hom^i(E, E')=0$ unless $i=0, 1$. 
\item We have $\chi_\Rcal(E, E') = d'r-dr'+rr'(1-g) -2r\ell_2' + 2\ell_1\ell_1' -\ell_1\ell_2' - \ell_2\ell_1' + \ell_2\ell_2'$.
\end{enumerate}
\end{lemma}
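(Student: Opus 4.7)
The plan is to parallel the proof of Lemma \ref{lemma:nodebasic_main}, with the one-point picture there replaced by the Auslander description developed in Subsection \ref{subsec:tacaus}. For part (1), the assumption that $E \in \Cheart$ with $F$ locally free forces $E \in \Coh(\Rcal)$ with $\phi$ surjective, so the canonical adjunction map $a^\ast F \to E$ is a surjection in $\Coh(\Rcal)$ with kernel $b^\ast K$, where $K = \ker\phi \subseteq \rho_\ast(\mu_S^\ast(F|_S))$. Since $F$ is locally free, $La^\ast F = a^\ast F$ by Lemma \ref{lemma:functors_Labounded}, and the triangle is obtained by rotating the resulting short exact sequence. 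The equalities $\ell_1(K) = 2r - \ell_1$ and $\ell_2(K) = 4r - \ell_2$ come from a local computation: at each of the two preimages $p_i$, $\mu_{S_i}^\ast(F|_{S_i}) = (F|_{S_i} \otimes I_{S_i},\, F|_{S_i})$ has $\O_{p_i}$-length $r$ in its first slot and $\O_T$-length $2r$ in its second, so summing over the two branches gives total lengths $(2r, 4r)$ for $\rho_\ast(\mu_S^\ast(F|_S))$. The inequalities then follow from Lemma \ref{lemma:tacaus_subobj} applied to $K \subseteq \rho_\ast(\mu_S^\ast(F|_S))$, once one observes that this ambient module is locally isomorphic to $P_2^{\oplus 2r}$ as an $\A$-module.

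Part (2) is a diagram chase with the triangles from (1): applying $\Hom^i(-, E')$ to the triangle for $E$ and then $\Hom^i(La^\ast F, -)$ and $\Hom^i(b^\ast K[1], -)$ to the triangle for $E'$ produces two nested long exact sequences. The required vanishings are (i) $\Hom^i_{\tilde{C}}(F, F') = 0$ for $i \neq 0, 1$, as $\tilde{C}$ is smooth; (ii) $\Hom^i(La^\ast F, b^\ast K'[1]) = 0$ for all $i$ by the semi-orthogonal decomposition of Proposition \ref{prop:sod_main}; (iii) $\Ext^i_\A(K, K') = 0$ for $i \geq 2$ by Lemma \ref{lemma:tacaus_Ext}; and (iv) using the adjunction $b^\ast \dashv b_\ast$ together with exactness of $b_\ast$, $\Ext^i(b^\ast K, La^\ast F') = \Ext^i_\A(K, \rho_\ast(\mu_S^\ast(F'|_S))) = 0$ for $i \geq 1$, again by Lemma \ref{lemma:tacaus_Ext} and the local identification of $\rho_\ast(\mu_S^\ast(F'|_S))$ with $P_2^{\oplus 2r'}$. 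Combining these shows $\Hom^i(E, E') = 0$ for $i \neq 0, 1$.

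For part (3), additivity of $\chi_\Rcal$ along the two triangles expresses the Euler pairing as the sum of four terms $\chi(La^\ast F, La^\ast F')$, $\chi(La^\ast F, b^\ast K'[1])$, $\chi(b^\ast K[1], La^\ast F')$ and $\chi(b^\ast K[1], b^\ast K'[1])$. The first equals $\chi_{\tilde{C}}(F, F') = d'r - dr' + rr'(1-g)$ by Riemann--Roch on $\tilde{C}$; the second vanishes by semi-orthogonality; the third reduces, by the argument of (2), to $-\dim\Hom_\A(K, P_2^{\oplus 2r'}) = -2r'\ell_2(K)$ via Claim \ref{claim:tacaus_projmodules}; and the fourth equals $\chi_\A(K, K')$, given by Proposition \ref{prop:tacaus_Euler} as a quadratic form in $\ell_i(K), \ell_i(K')$. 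Substituting the values $\ell_1(K) = 2r - \ell_1$, $\ell_2(K) = 4r - \ell_2$ and the analogous expressions for $K'$, and collecting terms, yields the stated formula.

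The main technical bridge, and hence the step to set up most carefully, is the local identification of $\rho_\ast(\mu_S^\ast(F|_S))$ with a direct sum of copies of $P_2$: this is what unlocks the use of Lemmas \ref{lemma:tacaus_Ext} and \ref{lemma:tacaus_subobj} and Proposition \ref{prop:tacaus_Euler}. Once it is in place, no step is conceptually difficult; the labor concentrates in the Euler-pairing bookkeeping of part (3).
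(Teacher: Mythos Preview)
Your proposal is correct and follows essentially the same approach as the paper: both reduce to the nodal template of Lemma \ref{lemma:nodebasic_main}, replacing the single-point vanishings there by the identification $\rho_\ast(\mu_S^\ast(F|_S)) \cong P_2^{\oplus 2r}$ together with Lemmas \ref{lemma:tacaus_Ext}, \ref{lemma:tacaus_subobj} and Proposition \ref{prop:tacaus_Euler}. Your write-up simply makes explicit the bookkeeping that the paper leaves to the reader.
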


\begin{proof}
We follow the same ideas of Lemma \ref{lemma:nodebasic_main}, with the following changes.
\begin{enumerate}
\item The canonical map $La^\ast F \to E$ is a surjection in $\Coh(\A)$, and the kernel is a submodule of $\rho_\ast \mu_S^\ast(F|_S) \cong P_2^{\oplus 2r}$. 
\item We use the vanishing results proven in Lemma \ref{lemma:tacaus_Ext}. \qedhere
\item We use Proposition \ref{prop:tacaus_Euler}. \qedhere
\end{enumerate}
\end{proof}

\begin{cor} \label{cor:tacbasic_smooth}
Let $v=(r, d, \ell_1, \ell_2)$ be a numerical class. The moduli stack $\M_{\delta_1, \delta_2}(v)$ is smooth. Moreover, if $E$ is stable, then the good moduli space $M_{\delta_1, \delta_2}(v)$ is smooth at the point corresponding to $E$. 
\end{cor}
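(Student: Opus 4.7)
The plan is to mimic the proof of Corollary \ref{cor:nodebasic_smooth} almost verbatim, with Lemma \ref{lemma:tacbasic_main} playing the role of Lemma \ref{lemma:nodebasic_main}. The key input is the vanishing of $\Hom^2$ between semistable objects of phase smaller than 1, which follows from part (2) of Lemma \ref{lemma:tacbasic_main}, together with the description of semistable objects of phase 1 supplied by Lemma \ref{lemma:sstable_phase1}.

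First I would observe that any $\sigma_{\delta_1,\delta_2}$-semistable object $E$ with numerical class $v$ lies in $\Cheart$ and, by Lemma \ref{lemma:sstable_phasenot1}, has the form $E=(F,\overline F;\phi)$ with $F$ locally free whenever its phase is strictly less than 1. In that case Lemma \ref{lemma:tacbasic_main}(2) applied to $E'=E$ gives $\Hom^2(E,E)=0$. If instead $\Im Z_{\delta_1,\delta_2}(v)=0$, then the semistable objects of class $v$ are obtained by taking iterated extensions among the three families listed in Lemma \ref{lemma:sstable_phase1}; a direct calculation (which I would not grind through) using the fact that $\A$ has global dimension $\leq 2$ (cf.\ the remark after Lemma \ref{lemma:ausalg_quiver} and the computation in Subsection \ref{subsec:tacaus}) and that $\DC^b(\tilde C)$ has global dimension $1$ shows that $\Hom^i$ vanishes for $i\geq 2$ between any pair of such objects, hence for $E$ by d\'evissage.

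Next, invoking the deformation theory of perfect, universally gluable complexes from \cite{Lie06}, the stack $\M_{\mathrm{pug}}(X)$ carries a perfect obstruction theory for which the obstructions at a point $[E]$ lie in $\Hom^2(E,E)$. Since $\M_{\delta_1,\delta_2}(v)$ is an open substack (semistability is an open condition, by boundedness and generic flatness as established in Section \ref{sec:moduli}), the vanishing $\Hom^2(E,E)=0$ proves that $\M_{\delta_1,\delta_2}(v)$ is smooth.

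Finally, for the good moduli space, if $E$ is $\sigma_{\delta_1,\delta_2}$-stable then $\Aut(E)=\C^\times$, so the good moduli space morphism $\M_{\delta_1,\delta_2}(v)\to M_{\delta_1,\delta_2}(v)$ is a $\mathbb{G}_m$-gerbe in an open neighborhood of $[E]$ (cf.\ \cite{AHLH23}). Smoothness of the stack therefore descends to smoothness of $M_{\delta_1,\delta_2}(v)$ at the point corresponding to $E$. The only mildly subtle point is the $\Hom^2$-vanishing in the boundary case of phase 1, but this reduces by the long exact sequence to the global dimension bounds for the two summands of the semi-orthogonal decomposition \eqref{eq:sod_main}, so no real obstacle arises.
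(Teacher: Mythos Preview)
Your core argument is correct and matches the paper's own proof, which simply says ``the proof of Corollary~\ref{cor:nodebasic_smooth} applies verbatim'': use Lemma~\ref{lemma:tacbasic_main}(2) to get $\Hom^2(E,E)=0$ for semistable $E$ of phase less than~$1$, then invoke the obstruction theory from \cite{Lie06}. Note that the subsection is explicitly about objects of phase smaller than~$1$, so the paper is implicitly assuming $r>0$; your treatment of that case is exactly right.

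However, your additional discussion of the phase~$1$ case is both unnecessary (given the context) and incorrect as stated. The claim that $\Hom^2$ vanishes between phase~$1$ objects fails here, unlike in the nodal case. Concretely, take $E=b^\ast S_1[1]\in\Cheart$, where $S_1=(\C,0;0,0)$ is the simple $\A$-module at vertex~$1$. Then $\Hom^2(E,E)=\Ext^2_\A(S_1,S_1)$, and for the quiver $1\rightleftarrows 2$ with relation $\beta\alpha=0$ this is one-dimensional (the relation is a path from vertex~$1$ to itself, contributing exactly one element to $\Ext^2$; you can also see it from the minimal projective resolution $0\to P^1\to P^2\to P^1\to S_1\to 0$). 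So the global dimension bound $\leq 2$ for $\A$ is sharp and does not give the vanishing you need. In the nodal case this issue was invisible because there $\A\cong\Coh(q)$ has global dimension~$0$. The upshot is that you should simply restrict to $r>0$, as the paper does implicitly, and drop the phase~$1$ paragraph.
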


\begin{proof}
The proof of Corollary \ref{cor:nodebasic_smooth} applies verbatim.
\end{proof}

\subsection{Walls} \label{subsec:tacwalls}

In this section we will give a basic description of the walls corresponding to a primitive vector $v=(r, d, \ell_1, \ell_2)$ with $r>0$, in the same spirit of Subsection \ref{subsec:nodewalls}. To start, note that we have finitely many walls thanks to Theorem \ref{teo:comparison_main}. 

It is useful to describe an explicit formula for the walls. Given $w = (r', d', \ell_1', \ell_2')$, the condition that $v$ and $w$ have the same $Z_{\delta_1, \delta_2}$-slope is
\begin{align} 
\frac{-\delta_1 \ell_1 -\delta_2\ell_2 + d}{r} &= \frac{-\delta_1 \ell_1' -\delta_2\ell_2' + d'}{r'} \nonumber \\ \iff \delta_1 \left( \frac{\ell_1}{r} - \frac{\ell_1'}{r'} \right) + \delta_2 \left( \frac{\ell_2}{r} - \frac{\ell_2'}{r'} \right) &= \frac{d}{r} - \frac{d'}{r'}. \label{eq:tacwalls_formula}
\end{align}

\begin{prop} \label{prop:tacwalls_bir}
Let $v=(r, d, \ell_1, \ell_2) \in K^\num(\Rcal)$ be a primitive numerical vector satisfying $r>0$. Let $\sigma_{\delta_1 \delta_2}$ and $\sigma_{\delta_1'\delta_2'}$ be two stability conditions not contained in any wall. Then $M_{\sigma_{\delta_1\delta_2}}(v)$ and $M_{\sigma_{\delta_1'\delta_2'}}(v)$ are birational.
\end{prop}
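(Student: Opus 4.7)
My plan is to follow the strategy of Proposition \ref{prop:nodewalls_bir} essentially verbatim, adapting the Ext-calculation to the four-dimensional lattice $K^\num(\Rcal) \cong \Z^4$. By Theorem \ref{teo:comparison_main}(1), the walls for $v$ in $\tilde{\Omega}$ form a finite collection of hyperplanes, so after a generic perturbation I can connect $\sigma_{\delta_1 \delta_2}$ and $\sigma_{\delta_1' \delta_2'}$ by a path in $\tilde{\Omega}$ that crosses walls one at a time. Thus it suffices to show that crossing a single $v$-wall $W$, corresponding to a decomposition $v = u + w$, yields birational moduli spaces $M_{\sigma_{\pm \epsilon}}(v)$. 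Both admit proper morphisms to the good moduli space $M_\sigma(v)$ at the wall, and these are isomorphisms away from the strictly semistable locus; so the problem reduces to showing that the destabilized locus $Z \subseteq M_{\sigma_{\pm\epsilon}}(v)$ associated with the fixed decomposition $v = u + w$ has positive codimension.

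For $[E] \in Z$, pick a short exact sequence $0 \to A \to E \to B \to 0$ in $\Cheart$ with $A, B$ of classes $u, w$ and the same $\sigma$-phase. The tangent-space calculation in the proof of Proposition \ref{prop:nodewalls_bir} goes through identically, now invoking Lemma \ref{lemma:tacbasic_main}(2) to get $\Ext^2(E, A) = \Ext^2(B, B) = 0$. The conclusion is that the composition map $\Theta\colon \Ext^1(E,E) \to \Ext^1(A, B)$ is surjective and that $T_{Z, [E]} \subseteq \ker \Theta$. Hence $\operatorname{codim}_{M_{\sigma_{\pm\epsilon}}(v)} Z \geq \dim \Ext^1(A, B)$, and using $\Hom(A, B) = 0$ (as in the node case: reduce first to stable Jordan--Hölder factors, then note that primitivity of $v$ prevents $A \cong B$) together with Lemma \ref{lemma:tacbasic_main}(2)(3), this dimension equals $-\chi_\Rcal(A, B)$.

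It remains to verify that $-\chi_\Rcal(A, B) > 0$. Writing $u = (r_1, d_1, \ell_{1,1}, \ell_{2,1})$ and $w = (r_2, d_2, \ell_{1,2}, \ell_{2,2})$, the wall equation \eqref{eq:tacwalls_formula} multiplied by $r_1 r_2$ expresses $d_1 r_2 - d_2 r_1$ as $\delta_1(r_2 \ell_{1,1} - r_1 \ell_{1,2}) + \delta_2(r_2 \ell_{2,1} - r_1 \ell_{2,2})$. Substituting this into the expression for $-\chi_\Rcal(A, B)$ coming from Lemma \ref{lemma:tacbasic_main}(3) yields a sum of the topological term $r_1 r_2 (g-1)$, a piece linear in $(\delta_1, \delta_2)$, and a quadratic form in the $\ell_{i, j}$. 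The main obstacle is verifying strict positivity of this expression across all admissible parameters. Using the rigid constraints $0 \leq \ell_{1, j} \leq 2 r_j$ and $0 \leq \ell_{2, j} - \ell_{1, j} \leq 2 r_j$ from Lemma \ref{lemma:tacbasic_main}(1), the positivity of the $\delta_i$, the constraint $\delta_1 + \delta_2 < 1$, and $g \geq 1$ (as in the nodes section), the residual boundary cases where the expression might fail to be positive should all correspond to $u$ and $w$ being proportional numerical vectors, contradicting the primitivity of $v$. Once the positivity is established, the single-wall birationality follows; iterating along the chosen path in $\tilde{\Omega}$ completes the proof.
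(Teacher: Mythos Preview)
Your proposal is correct and follows essentially the same route as the paper: reduce to a single wall-crossing, use the tangent-space argument from Proposition~\ref{prop:nodewalls_bir} (with Lemma~\ref{lemma:tacbasic_main}(2) supplying the $\Ext^2$-vanishing), and then reduce birationality to the strict inequality $-\chi_\Rcal(u,w)>0$ after substituting the wall equation. The only difference is that the paper makes the positivity check explicit via the change of variables $A_1=\ell_1'/r'-2$, $A_2=(\ell_2'-\ell_1')/r'-2$, $B_1=\ell_1''/r''$, $B_2=(\ell_2''-\ell_1'')/r''$, which puts each variable in $[-2,0]$ or $[0,2]$ and makes the boundary analysis concrete; you describe this step qualitatively but correctly identify the ingredients (the bounds from Lemma~\ref{lemma:tacbasic_main}(1), positivity of the $\delta_i$, $g\geq 1$, and that the degenerate case forces $u,w$ proportional, contradicting primitivity via \eqref{eq:tacwalls_formula}).
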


\begin{proof}
We follow the same argument of Proposition \ref{prop:nodewalls_bir}. We fix a decomposition $v=u+w$, with $u=(r', d', \ell_1', \ell_2'')$, $w=(r'', d'', \ell_1'', \ell_2'')$. Assume that there is a pair $(\delta_1, \delta_2)$ in the wall, satisfying $0 <\delta_1, \delta_2$, $\delta_1+\delta_2<1$, and $M_{\sigma_{\delta_1\delta_2}}(u), M_{\sigma_{\delta_1\delta_2}}(w)\neq \varnothing$. As in the proof of Proposition \ref{prop:nodewalls_bir}, it suffices to show that $\Ext^1(u, w)=-\chi(u, w)>0$.

To prove this, we use Lemma \ref{lemma:tacbasic_main} to compute $-\chi(u, w)$. Elementary manipulations show that
\begin{align*} \frac{\dim \Ext^1(u, w)}{r'r''} =& (g-1) + \delta_1(A_1-B_1+2) + \delta_2(A_1-B_1+A_2-B_2+4) \\
&\quad -A_1B_1-A_2B_2,
\end{align*}
where $A_1=\ell_1'/r'-2$, $A_2 =(\ell_2'-\ell_1')/2 -2$, $B_1=\ell_1''/r''$, and $B_2=(\ell_2''-\ell_1'')/r''$. Here $-2 \leq A_1, A_2 \leq 0$ and $0 \leq B_1, B_2 \leq 2$. As $\delta_1, \delta_2 >0$, we get that the right hand side is $\geq (g-1)$, and equality holds only if $A_1-B_1+2$, $A_1+A_2-B_1-B_2+4$, $A_1B_1$, $A_2B_2$ are all zero. This implies that $\ell_1', \ell_1', \ell_2', \ell_2''$ are all zero. But this cannot happen thanks to \eqref{eq:tacwalls_formula}.
\end{proof}

\subsection{Comparison at zero}

Fix $v=(r, d, \ell_1, \ell_2)$ a primitive numerical vector. By Theorem \ref{teo:comparison_main}, there exists a morphism $a_\ast\colon M_{\delta_1, \delta_2}(v) \to M_{\tilde{C}}(r, d)$ for $\delta_1, \delta_2 \ll 1$. The goal of this section is to describe this map in the case $\gcd(r, d)=1$.

\begin{lemma} \label{lemma:taccompzero_flag}
Let $v=(r, d, \ell_1, \ell_2)$ be a primitive numerical vector, and assume that $\gcd(r, d)=1$. Then, the map $a_\ast\colon M_\sigma(v) \to M_{\tilde{C}}(r, d)$ is a bundle whose fibers are isomorphic to the flag variety
\[ F = \{ (V_1, V_2): V_1 \subset \C^{\oplus 2r}, V_2 \subset \C^{\oplus 4r}, V_1 \times \{0\} \subset V_2 \subset \C^{2r} \times V_1, \dim(V_i) = \ell_i \}. \]
Moreover, this bundle is locally trivial in the Zariski topology. 
\end{lemma}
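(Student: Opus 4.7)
The overall strategy mimics Lemma \ref{lemma:nodecompzero_grass}. First I would fix $\delta_1, \delta_2$ sufficiently small so that $(\delta_1,\delta_2) \in \tilde{\Omega}$ lies in the chamber adjacent to the origin, before the first $v$-wall (by Theorem \ref{teo:comparison_main}(1) and \eqref{eq:tacwalls_formula}). Running the argument of Lemma \ref{lemma:pfcomparison_pushforward} word-for-word in the tacnode setup (using only that any proper submodule $\overline{F}' \subset \overline{F}$ contributes a length bounded by $\ell(\rho_\ast(\mu_S^\ast(F'|_S))) \le 4r'$), I would show that every $\sigma_{\delta_1,\delta_2}$-semistable $E=(F,\overline{F};\phi)$ with $v(E)=v$ has $F$ slope-semistable, and conversely that any surjection $\phi\colon \rho_\ast(\mu_S^\ast(F|_S)) \twoheadrightarrow \overline{F}$ with $\overline{F}$ of numerical type $(\ell_1,\ell_2)$ and $F$ slope-stable gives a $\sigma_{\delta_1,\delta_2}$-stable object. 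This identifies the fiber of $a_\ast$ over $[F] \in M_{\tilde{C}}(r,d)$ with the set of isomorphism classes of such surjections.

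The next step is to identify this fiber with the flag variety $F$. Using the normalization $\hat{\O}_{C,q} \hookrightarrow \C[[x]] \oplus \C[[x]]$ and the non-rational locus $T = V(x^2,y)$, one computes $\rho_\ast(\mu_S^\ast(\O_{\tilde{C}}|_S)) \cong P_2^{\oplus 2}$ as an $\A$-module (cf.\ Claim \ref{claim:tacaus_projmodules}), with the two summands corresponding to the two branches. For any locally free $F$ of rank $r$, tensoring yields $\rho_\ast(\mu_S^\ast(F|_S)) \cong P_2^{\oplus 2r} = (\C^{2r},\C^{4r};\alpha_0,\beta_0)$, where $\alpha_0$ is inclusion into the first summand of $\C^{4r} = \C^{2r}\oplus\C^{2r}$ and $\beta_0$ is projection onto the second. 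Taking kernels identifies surjections onto $\overline{F}$ of type $(\ell_1,\ell_2)$ with submodules $K \subset P_2^{\oplus 2r}$ of type $(2r-\ell_1, 4r-\ell_2)$, which by Lemma \ref{lemma:tacaus_subobj} and the explicit form of $\alpha_0,\beta_0$ is precisely the flag variety described (after a reparametrization of the linear data).

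To globalize, I would use the universal family $\F \in \Coh(M_{\tilde{C}}(r,d) \times \tilde{C})$ (which exists honestly since $\gcd(r,d)=1$, cf.\ \cite{HL10}*{Corollary 4.6.6}) and form the coherent $\A$-module $\mathcal{N} := \rho_\ast(\mu_S^\ast(\F|_S))$ on $M_{\tilde{C}}(r,d)$. Since every specialization of $\mathcal{N}$ is isomorphic to $P_2^{\oplus 2r}$ and $\Ext^1(P_2^{\oplus 2r}, P_2^{\oplus 2r}) = 0$ by Lemma \ref{lemma:tacaus_Ext}, the module $\mathcal{N}$ is Zariski-locally isomorphic to the constant family $P_2^{\oplus 2r}$. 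Over each such trivializing open $U$, form the flag scheme $G_U \to U$ of submodules of $P_2^{\oplus 2r}$ with the prescribed $(\ell_1,\ell_2)$, which by the previous paragraph is a Zariski-locally trivial bundle with fiber $F$; these glue to a global bundle $G \to M_{\tilde{C}}(r,d)$. Equipping $G$ with the tautological kernel--quotient exact sequence produces a family in $\Cheart_G$, giving a morphism $G \to M_{\delta_1,\delta_2}(v)$ compatible with $a_\ast$; as in Lemma \ref{lemma:nodecompzero_grass} this map is bijective on closed points and tangent spaces, and since $M_{\delta_1,\delta_2}(v)$ is smooth by Corollary \ref{cor:tacbasic_smooth} it is an isomorphism.

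The main technical obstacle I anticipate is verifying Zariski-local triviality of $\mathcal{N}$: one needs that the set-theoretic statement "every fiber is isomorphic to $P_2^{\oplus 2r}$" upgrades to a Zariski-local isomorphism of families. This follows formally from the vanishing of $\Ext^1(P_2^{\oplus 2r}, P_2^{\oplus 2r})$ (by Lemma \ref{lemma:tacaus_Ext}, since $P_2$ is both projective and injective in $\Coh(\A)$), which kills the obstruction to extending an isomorphism infinitesimally, combined with the openness of the isomorphism locus inside the Hom-bundle. Once this is established, the relative flag scheme construction is routine, and the rest of the argument parallels the node case.
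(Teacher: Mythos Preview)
Your proposal is correct and follows essentially the same approach as the paper: identify the fiber over $[F]$ with submodules $K \subset P_2^{\oplus 2r}$ of the complementary numerical type via the triangle of Lemma \ref{lemma:tacbasic_main}(1), then globalize exactly as in Lemma \ref{lemma:nodecompzero_grass}. The paper's own proof is considerably terser---it simply asserts the fiberwise identification and says ``the description globalizes in the same way as in Lemma \ref{lemma:nodecompzero_grass}''---whereas you spell out the local triviality of $\mathcal{N}$ via the vanishing of $\Ext^1(P_2^{\oplus 2r},P_2^{\oplus 2r})$, which is a genuine clarification of a step the paper leaves implicit.
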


\begin{proof}
Let us describe the fiber of a point $F \in M_{\tilde{C}}(1, d)$. This consists of objects $E$ fitting in a short exact sequence $0 \to b^\ast K \to La^\ast F \to E \to 0$, where $K$ has $\ell_1(K) = 2n-\ell_1$, $\ell-2(K) = 4n-\ell_2$. Here, the object $K$ corresponds to the choice of two subspaces $K_1 \subset \C^{\oplus 2n}$, $K_2 \oplus \subset \C^{\oplus 4n}$, satisfying $K_1 \times \{0\} \subset K_2 \subset \C^2 \times K_1$. This gives the description of the proposition on a fiber. The description globalizes in the same way as in Lemma \ref{lemma:nodecompzero_grass}.
\end{proof}

Let us point out that the variety constructed in Lemma \ref{lemma:taccompzero_flag} can be described as follows. Consider the Grassmannian $G=\Gr(r, r-\ell_1)$, and let $\U$ be the tautological subbundle, of rank $\ell_1$. Then there is a forgetful map $F \to G$, mapping $(V_1, V_2)$ to $V_1$. Over each $p=[V_1]$, the fiber is described by an $\ell_2-\ell_1$-dimensional subspace of $\O_G^{2r} \times \U/\U \times \{0\}$; and so $F \cong \Gr(\O_G^{2r} \times \U/\U \times \{0\}, 2r-(\ell_2-\ell_1))$.

\subsection{Rank one} \label{subsec:tacrkone}

In this subsection we consider the moduli spaces of $\sigma$-semistable objects for a numerical vector $v=(1, d, \ell_1, \ell_2)$. As this vector has $r=1$, there are no walls; thus, we simply write $M(v)$. We also have that this moduli space is smooth, thanks to Corollary \ref{cor:tacbasic_smooth}.

Assume that $E \in M(v)$ is a $\sigma$-stable object with vector $v=(1, d, \ell_1, \ell_2)$. We write $E$ fitting into the short exact sequence $0 \to b^\ast K \to La^\ast F \to E \to 0$ in $\Coh(\Rcal)$, where $K$ is an object of $\Coh(\A)$ with $\ell_1(K)=2-\ell_1$, $\ell_2(K)=4-\ell_2$. 

By construction, we have that $K$ is a subobject of $b_\ast La^\ast F \cong P_2^{\oplus 2}$. This way, Lemma \ref{lemma:tacbasic_main} gives us nine possible options for $\ell_1, \ell_2$. We have collected them in Table \ref{table:tacrkone_dim}, together with the value of $\Ext^1(E, E)$ computed by Lemma \ref{lemma:tacbasic_main}.

\begin{table}[htbp]
\centering
\caption{Possible values for $(\ell_1, \ell_2)$.}
\label{table:tacrkone_dim}
\begin{tabular}{|c|ccccccccc|} \hline
$\ell_1$ & 0 & 0 & 0 & 1 & 1 & 1 & 2 & 2 & 2 \\
$\ell_2$ & 0 & 1 & 2 & 1 & 2 & 3 & 2 & 3 & 4 \\  \hline
$\dim \Ext^1(E, E)$ & $g$ & $g+1$ & $g$ & $g+1$ & $g+2$ & $g+1$ & $g$ & $g+1$ & $g$ \\ \hline
\end{tabular}
\end{table}

Note that the description of Lemma \ref{lemma:taccompzero_flag} applies in this situation. We have three situations.
\begin{itemize}
\item If $\dim \Ext^1(E, E)=g$, then the map $M(v) \to M_{\tilde{C}}(1, d)$ is an isomorphism. 
\item If $\dim \Ext^1(E, E)=g+1$, then the map $M(v) \to M_{\tilde{C}}(1, d)$ is a $\P^1$-bundle, locally trivial in the Zariski topology.
\item If $\dim \Ext^1(E, E)=g+2$, then the map $M(v) \to M_{\tilde{C}}(1, d)$ is an $\mathbb{F}_2$-bundle, locally trivial in the Zariski topology, where $\mathbb{F}_2$ is the Hirzebruch surface.  
\end{itemize}

\begin{prop}
Let $v=(1, d, 1, 2)$. The map $R\pi_\ast\colon M(v) \to M_C(1, d)$ from Theorem \ref{teo:comparison_main} is birational.
\end{prop}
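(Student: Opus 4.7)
The plan is to apply part (4) of Theorem \ref{teo:comparison_main}. The key input I need is that $v = v(\pi^\ast L)$ for some (equivalently, any) line bundle $L$ of degree $d$ on $C$; birationality will then follow from a dimension count and irreducibility of both moduli spaces.

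To check $v(\pi^\ast L) = v$, I will note first that $\deg_{\tilde{C}}(\rho^\ast L) = d$ since $\rho$ is birational, which accounts for the first two coordinates of $v(\pi^\ast L)$. For the remaining coordinates $\ell_1, \ell_2$, I will work locally at the tacnode $q$. Trivializing $L \cong \mathcal{O}_C$ in a neighborhood of $q$, one has $\mu_T^\ast(\mathcal{O}_T) = \bigl( (\epsilon), \mathbb{C}[\epsilon]/(\epsilon^2) \bigr)$, of class $(\ell_1,\ell_2)=(1,2)$, while at each preimage $p_i \in S$ one has $\mu_S^\ast(\mathcal{O}_S) = \bigl( (t_i), \mathbb{C}[t_i]/(t_i^2) \bigr)$, so $\rho_\ast \mu_S^\ast(\mathcal{O}_S)$ has total class $(2,4)$. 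The canonical map $\mu_T^\ast(L|_T) \to \rho_\ast \mu_S^\ast(\rho^\ast L|_S)$, coming from the unit of $\rho^\ast \dashv \rho_\ast$, is injective (over the smooth locus it is an isomorphism, and at $q$ this reduces to the injectivity of $\mathcal{O}_T \hookrightarrow \rho_\ast \mathcal{O}_S$ from diagram \eqref{eq:nrgen_keydiagram}), so its cokernel has class $(1,2)$. Combined with $\overline{d} = d + (p_a(C)-g) - \ell_2 = d + 2 - 2 = d$ (since $\delta = 2$ for a tacnode), this yields $v(\pi^\ast L) = (1,d,1,2) = v$.

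With this in hand, Theorem \ref{teo:comparison_main}(4) produces an open embedding identifying the line bundle locus $U \subseteq M_C(1,d)$ with $(R\pi_\ast)^{-1}(U) \subseteq M(v)$. To upgrade this to a birational equivalence, I will verify that both $M(v)$ and $M_C(1,d)$ are irreducible of dimension $g+2$. The compactified Jacobian $M_C(1,d)$ of an irreducible planar curve is irreducible of dimension $p_a(C) = g+2$, with the line bundle locus dense, by the classical theory of Altman--Kleiman. For $M(v)$, irreducibility and dimension follow from the description at the end of Subsection \ref{subsec:tacrkone} of $M(v) \to \mathrm{Pic}^d(\tilde{C})$ as a Zariski-locally trivial $\mathbb{F}_2$-bundle; since $r=1$ admits no walls (as explained at the start of Subsection \ref{subsec:tacrkone}), this description is valid for all $(\delta_1,\delta_2) \in \tilde{\Omega}$, including the one used to set up the map $R\pi_\ast$ in Theorem \ref{teo:comparison_main}(3)--(4). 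With both moduli spaces irreducible of the same dimension, the isomorphism over the non-empty open set $U$ is automatically a birational equivalence.

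The main obstacle I anticipate is the local bookkeeping in computing $\pi^\ast L$ at the tacnode, namely tracking carefully the distinct ideals $I \subset \mathcal{O}_T$ and $J \subset \mathcal{O}_S$ that enter the two occurrences of $\mu_\bullet^\ast$, and confirming the injectivity of the unit map in that setting. Everything after that step is a formal consequence of Theorem \ref{teo:comparison_main}(4) and standard irreducibility statements for compactified Jacobians of irreducible planar curves.
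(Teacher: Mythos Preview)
Your proof is correct and follows the same approach as the paper: apply Theorem \ref{teo:comparison_main}(4) to obtain an isomorphism over the line bundle locus, then use the $\mathbb{F}_2$-bundle description of $a_\ast\colon M(v)\to M_{\tilde{C}}(1,d)$ to conclude that $M(v)$ is irreducible and hence that $R\pi_\ast$ is birational. You are more thorough than the paper in explicitly verifying the hypothesis $v=v(\pi^\ast L)$ and in checking irreducibility of $M_C(1,d)$ via Altman--Kleiman; the paper's two-line proof takes both of these for granted.
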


\begin{proof}
Note from Theorem \ref{teo:comparison_main} that $R\pi_\ast$ is an isomorphism over the locus of line bundles in $C$. Now, the fact that $a_\ast\colon M(v) \to M_C(1, d)$ is a $\mathbb{F}_2$-bundle implies in particular that $M(v)$ is irreducible. This implies that $R\pi_\ast$ is birational. 
\end{proof}

\subsection{Rank two} \label{subsec:tacrktwo}

In this subsection we will focus ourselves in describing the moduli spaces $M_{\delta_1, \delta_2}(v)$, where $v=(2, 1, 2, 4)$. Note that this is the numerical vector of $L\pi^\ast F$, where $F$ is a vector bundle of rank $2$ and degree $1$ in $\tilde{C}$. We will assume that $g\geq 1$, so that $M_{\tilde{C}}(2, 1) \neq \varnothing$.

We start by computing the walls using \eqref{eq:tacwalls_formula}. We get four walls $W_1, \dots, W_4$, corresponding to the decompositions $v=u+w$ shown in Table \ref{table:tacrktwo_walls}.

\begin{table}[htbp]
\centering
\caption{Walls for $v=(2,1,2,4)$.}
\label{table:tacrktwo_walls}
\begin{tabular}{|c|c|c|c|} \hline
Wall & $u$ & $w$ & Equation \\ \hline \hline
$W_1$ & $(1, -1, 0, 0)$ & $(1, 2,2,4)$ & $2\delta_1+4\delta_2 = 3$ \\ \hline
$W_2$ & $(1,0,0,0)$ & $(1,1,2,4)$ & $2\delta_1+4\delta_2=1$ \\ \hline
$W_3$ & $(1,0,0,1)$ & $(1,1,2,3)$ & $2\delta_1+2\delta_2=1$ \\ \hline
$W_4$ & $(1,0,0,2)$ & $(1,1,2,2)$ & $2\delta_1=1$ \\ \hline
\end{tabular}
\end{table}

These four walls determine five chambers in the region $\{\sigma_{\delta_1, \delta_2}: 0 < \delta_1, \delta_2, \delta_1+\delta_2 <1\}$. We have depicted all four walls in Figure \ref{fig:tacrktwo_wc}.

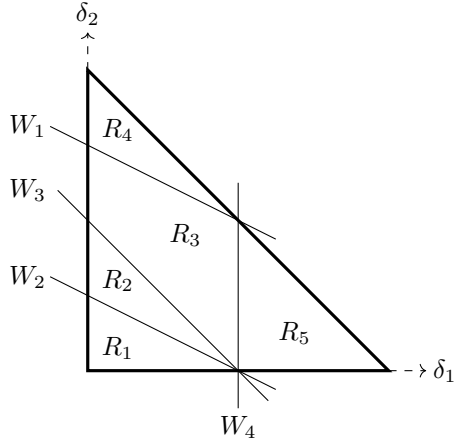
\begin{figure}[htbp]
\centering
\begin{tikzpicture}
\draw[very thick] (0,0) -- (4,0) -- (0,4) -- cycle;
\draw[dashed, ->] (4,0) -- (4.5,0);
\draw[dashed,->] (0,4) -- (0,4.5);
\draw (2,-0.5) -- (2,2.5);
\draw (-0.5,3.25) -- (2.5,1.75);
\draw (-0.4,2.4) -- (2.4,-0.4);
\draw (-0.5,1.25) -- (2.5,-0.25);

\node at (4.75,0) {$\delta_1$};
\node at (0,4.75) {$\delta_2$};
\node at (-0.8,3.25) {$W_1$};
\node at (-0.8,1.25) {$W_2$};
\node at (-0.8,2.4) {$W_3$};
\node at (2,-0.75) {$W_4$};
\node at (0.4,0.3) {$R_1$};
\node at (0.4,1.2) {$R_2$};
\node at (1.3,1.8) {$R_3$};
\node at (0.4,3.2) {$R_4$};
\node at (2.75,0.5) {$R_5$};
\end{tikzpicture}
\caption{Walls and chambers of $v=(2,1,2,4)$.}
\label{fig:tacrktwo_wc}
\end{figure}

For each wall $W_i$ associated to the decomposition $v=u+w$, we need to compute which chambers have $\phi(u)<\phi(w)$, and the dimensions of $\Ext^1(u, w)$, $\Ext^1(w, u)$. We have summarized this information in Table \ref{table:tacrktwo_dimwalls}.

\begin{table}[htbp]
\centering
\caption{Dimensions and chambers for the walls.}
\label{table:tacrktwo_dimwalls}
\begin{tabular}{|c|c|c|c|c|c|} \hline
Wall & $\dim M(u)$ & $\dim M(w)$ & $\dim \Ext^1(u, w)$ & $\dim \Ext^1(w, u)$ & $\phi(u)<\phi(w)$ \\ \hline \hline
$W_1$ & $g$ & $g$ & $g+4$ & $g+2$ & $R_1, R_2, R_3, R_5$ \\ \hline
$W_2$ & $g$ & $g$ & $g+6$ & $g$ & $R_1$ \\ \hline
$W_3$ & $g+1$ & $g+1$ & $g+3$ & $g+1$ & $R_1, R_2$ \\ \hline
$W_4$ & $g$ & $g$ & $g+2$ & $g+4$ & $R_1, R_2, R_3, R_4$ \\ \hline
\end{tabular}
\end{table}

\begin{teo}
For each $1 \leq i \leq 5$, set $M_i=M_{\sigma_i}(v)$, where $\sigma_i =\sigma_{\delta_1, \delta_2}$ for some $(\delta_1, \delta_2) \in R_i$. 
\begin{enumerate}
\item Each of the moduli spaces $M_i$ is smooth, projective, irreducible, of pure dimension $4g+5$. 
\item The map $a_\ast\colon M_1 \to M_{\tilde{C}}(2, 1)$ from Theorem \ref{teo:comparison_main} is an bundle whose fibers are isomorphic to the variety
\[ F=\{ (V_1, V_2): V_1 \subset \C^{\oplus 2}, V_2 \subset \C^{\oplus 4}, V_1 \times \{0\} \subset V_2 \subset \C^2 \times V_1, \dim(V_i) = \ell_i \}. \]
The bundle $a_\ast$ is locally trivial in the Zariski topology.
\item Given $R_i, R_j$ two adjacent chambers, we let $M_{ij} = M_{\sigma_{\delta_1, \delta_2}}(v)$ for a general point $(\delta_1, \delta_2) \in \overline{R_i} \cap \overline{R_j}$. Then, the natural maps $M_i \to M_{ij} \leftarrow M_j$ are a standard flip diagram. 
\item The moduli spaces $M_1, \dots, M_5$ are birational. 
\item The morphism $R\pi_\ast\colon M_4 \to M_C(2, 1)$ from Theorem \ref{teo:comparison_main} is birational.
\end{enumerate}
\end{teo}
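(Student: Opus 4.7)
The plan is to establish part (2) by direct appeal to Lemma~\ref{lemma:taccompzero_flag}, deduce part (1) from a dimension count together with irreducibility propagated through the flips in part (3), handle part (3) by a local analysis at strictly semistable points in the spirit of Proposition~\ref{prop:nodehigher_simpleflip}, and finally read off (4) and (5) formally.

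For part (2), I would note that $R_1$ lies near the origin of $\tilde{\Omega}$ (it is the chamber bounded by $W_2, W_3, W_4$), so $\delta_1, \delta_2 \ll 1$ and Lemma~\ref{lemma:taccompzero_flag} applies directly, giving the described bundle structure. For part (1), smoothness of each $\M_i$ comes from Corollary~\ref{cor:tacbasic_smooth}; because each chamber $R_i$ avoids every $v$-wall listed in Table~\ref{table:tacrktwo_walls}, all $\sigma_i$-semistable objects of class $v$ are in fact $\sigma_i$-stable, so $M_i$ is smooth as well, with $\M_i \to M_i$ a $\mathbb{G}_m$-gerbe. The dimension is computed from Lemma~\ref{lemma:tacbasic_main}(3): substituting $(r, d, \ell_1, \ell_2) = (2, 1, 2, 4)$ gives $\chi_{\Rcal}(v, v) = -4g-4$, hence $\dim \Ext^1(E, E) = 4g+5$ at each stable $E$. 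Irreducibility of $M_1$ comes from the bundle structure of part (2) over the irreducible base $M_{\tilde{C}}(2, 1)$, and propagates to the other $M_i$ once part (3) is established. Projectivity follows from smoothness via \cite{BM23}*{Theorem~2.4}, as in Theorem~\ref{teo:node_main}.

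Part (3) is the main technical content. For each wall $W_i$ my first task is to verify simplicity: that the decomposition $v = u + w$ recorded in Table~\ref{table:tacrktwo_walls} is the only one contributing to $W_i$. Since $\rk v = 2$, any strictly destabilizing subobject has rank $1$, and I would enumerate all primitive $u' = (1, d', \ell_1', \ell_2')$ satisfying the boundedness constraints of Lemma~\ref{lemma:tacaus_subobj} together with the wall equation \eqref{eq:tacwalls_formula}; a short case analysis rules out all alternatives. With simplicity in hand, the argument of Proposition~\ref{prop:nodehigher_simpleflip} transfers essentially verbatim to the tacnode setting: at a strictly semistable point $p = [U \oplus W] \in M_{ij}$ with $U$ of class $u$ and $W$ of class $w$, the completed local ring is
\[ \hat{\O}_{M_{ij}, p} \cong \C[[x_i, y_j z_k]]^{(\C^\ast)^2}, \]
where the $y_j$ span $\Ext^1(U, W)$ and the $z_k$ span $\Ext^1(W, U)$. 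The two small resolutions identify with $M_i \to M_{ij}$, a $\P^{b-1}$-bundle over the strictly semistable locus, and $M_j \to M_{ij}$, a $\P^{c-1}$-bundle, where $b = \dim \Ext^1(u, w)$ and $c = \dim \Ext^1(w, u)$ are tabulated in Table~\ref{table:tacrktwo_dimwalls}; which direction carries which fiber is determined by the sign of $\phi(u) - \phi(w)$ on each side of the wall. Globalization over the strictly semistable locus proceeds by projectivizing the sheaves $R\rho_\ast R\HOM$ of the universal families on the product of moduli of $u$- and $w$-classed objects, exactly as in Subsection~\ref{subsec:noderktwo}.

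Parts (4) and (5) then follow quickly. Birationality of the $M_i$ among themselves is immediate from (3), or alternatively from Proposition~\ref{prop:tacwalls_bir}. For (5), I would check that $R_4$ lies in the regime $\delta_2 > 1 - \epsilon'$ required by Theorem~\ref{teo:comparison_main}(3): the chamber $R_4$ is cut out by $W_1 : 2\delta_1 + 4\delta_2 = 3$, $\delta_1 > 0$, and $\delta_1 + \delta_2 < 1$, forcing $\delta_2 \in (3/4, 1)$. Since $v = v(L\pi^\ast E)$ for $E$ a vector bundle of rank $2$ and degree $1$, Theorem~\ref{teo:comparison_main}(4) identifies $R\pi_\ast \colon M_4 \to M_C(2, 1)$ with an isomorphism over the vector-bundle locus; combined with the irreducibility of $M_4$ from part (1), this yields birationality. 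The principal obstacle is the simplicity verification in part (3): a priori several distinct decompositions of $v$ could lie on the same wall line in the two-dimensional parameter space $\tilde{\Omega}$, and one must rule them all out by hand before the flip machinery of Section~\ref{sec:node} transfers verbatim.
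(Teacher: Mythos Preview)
Your proposal is correct and follows essentially the same approach as the paper: smoothness and dimension from Corollary~\ref{cor:tacbasic_smooth} and Lemma~\ref{lemma:tacbasic_main}, the bundle structure from Lemma~\ref{lemma:taccompzero_flag}, the flip description by transplanting Proposition~\ref{prop:nodehigher_simpleflip}, and birationality of $R\pi_\ast$ via Theorem~\ref{teo:comparison_main}(4) combined with irreducibility. Your explicit verification of wall simplicity in (3) and the regime check for $R_4$ in (5) are details the paper leaves implicit (it simply asserts that the argument of Proposition~\ref{prop:nodehigher_simpleflip} ``applies verbatim'' and reads off birationality from the dimension count), but otherwise the two arguments are the same.
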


\begin{proof}
The proof follows the same ideas of Subsection \ref{subsec:noderktwo}.
\begin{enumerate}
\item Smoothness follows immediately from Corollary \ref{cor:tacbasic_smooth}, while the dimension can be computed using Lemma \ref{lemma:tacbasic_main}. Irreducibility will follow from (2) and (3). 

\item Follows from Lemma \ref{lemma:taccompzero_flag}.

\item The proof of Proposition \ref{prop:nodehigher_simpleflip} applies verbatim.

\item We only need to check that the flipped loci have dimension smaller than $4g+5$. This follows immediately from the computations in Table \ref{table:tacrktwo_dimwalls}.

\item The proof of the previous point ensures that $M_5$ is irreducible of dimension $4g+5$. By Theorem \ref{teo:comparison_main}, we know that a $4g+5$-dimensional open subset of $M_5$ is isomorphic to an open subset of $M_C(2, d)$. \qedhere
\end{enumerate}
\end{proof}

\begin{cor}
The moduli space $M_C(2, 1)$ is birational to an $F$-bundle over $M_{\tilde{C}}(2, 1)$. More explicitly, we have a factorization
\[ M_{\tilde{C}}(2, 1) \leftarrow M_1 \dasharrow M_2 \dasharrow M_3 \dasharrow M_4 \to M_C(2, 1), \]
described as follows:
\begin{enumerate}
\item The map $M_1 \to M_{\tilde{C}}(2, 1)$ is an $F$-bundle, locally trivial in the Zariski topology. 
\item The map $M_1 \dashrightarrow M_2$ is a standard antiflip, replacing a $\P^{g-1}$-bundle over $M(u_2) \times M(w_2)$ with a $\P^{g+5}$-bundle. 
\item The map $M_2 \dashrightarrow M_3$ is a standard antiflip, replacing a $\P^g$-bundle over $M(u_3) \times M(v_3)$ with a $\P^{g+2}$-bundle.
\item The map $M_3 \dashrightarrow M_4$ is a standard antiflip, replacing a $\P^{g+1}$-bundle over $M(u_1) \times M(w_1)$ with a $\P^{g+3}$-bundle. 
\item The map $M_4 \to M_C(2, 1)$ is birational.
\end{enumerate}
\end{cor}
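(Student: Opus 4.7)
The plan is to deduce this corollary directly from the preceding theorem by tracking the walls crossed along a specific path in the space $\tilde\Omega$ of stability conditions. First, I would select a continuous path $\gamma\colon [0,1] \to \tilde\Omega$ starting at a point of $R_1$ with both $\delta_1, \delta_2$ small enough that Theorem~\ref{teo:comparison_main}(2) supplies the morphism $a_\ast$ to $M_{\tilde C}(2,1)$, passing successively through $R_2$ and $R_3$, and ending in $R_4$ at a point with $\delta_2$ close enough to $1$ for Theorem~\ref{teo:comparison_main}(3) to yield the morphism $R\pi_\ast$ to $M_C(2,1)$. Inspection of Figure~\ref{fig:tacrktwo_wc} shows that such a path crosses exactly the walls $W_2$, $W_3$, $W_1$, in that order, avoiding $R_5$. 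Setting $M_i := M_\sigma(v)$ for any $\sigma \in R_i$ with $i=1,2,3,4$, the two outer maps in the claimed factorization are then immediate: $M_1 \to M_{\tilde C}(2,1)$ is the $F$-bundle of Lemma~\ref{lemma:taccompzero_flag}, and $M_4 \to M_C(2,1)$ is birational by part (5) of the preceding theorem.

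For the three intermediate transitions I would invoke the tacnode analogue of Proposition~\ref{prop:nodehigher_simpleflip}, whose proof (as the preceding theorem notes) applies verbatim using the Ext-calculation of Lemma~\ref{lemma:tacbasic_main}. The principle to use is the following: at a wall with decomposition $v = u + w$, the side on which $\phi(u) < \phi(w)$ contains, over $M_\sigma(u) \times M_\sigma(w)$, a projective bundle of fiber dimension $\dim \Ext^1(w,u) - 1$ (parametrising extensions $0 \to u \to E \to w \to 0$), while the opposite side contains the $\P^{\dim \Ext^1(u,w)-1}$-bundle. Combining this principle with the Ext-dimensions and phase-comparison columns of Table~\ref{table:tacrktwo_dimwalls} then immediately yields each antiflip: crossing $W_2$ from $R_1$ to $R_2$ replaces a $\P^{g-1}$-bundle by a $\P^{g+5}$-bundle over $M(u_2) \times M(w_2)$; crossing $W_3$ from $R_2$ to $R_3$ replaces a $\P^g$-bundle by a $\P^{g+2}$-bundle over $M(u_3) \times M(w_3)$; crossing $W_1$ from $R_3$ to $R_4$ replaces a $\P^{g+1}$-bundle by a $\P^{g+3}$-bundle over $M(u_1) \times M(w_1)$. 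In each case the fiber dimension jumps upward along $\gamma$, so the transition is an antiflip in the direction stated.

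The only real obstacle is a bookkeeping check rather than a conceptual one: to verify that $\gamma$ can be chosen generic in the sense that it crosses each wall transversally at distinct times, and that no three of the walls $W_1, W_2, W_3$ meet simultaneously inside $\tilde\Omega$. Both facts follow from an elementary inspection of the three linear equations in Table~\ref{table:tacrktwo_walls}: pairs of these walls intersect in at most a single interior point, and a small perturbation of $\gamma$ avoids all such pairwise intersections. Once this is verified, the five assertions of the corollary are precisely the content of the preceding theorem applied to the chosen path.
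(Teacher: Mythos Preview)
Your proposal is correct and follows exactly the approach the paper intends: the corollary is stated without proof, as an immediate consequence of the preceding theorem together with the data in Tables~\ref{table:tacrktwo_walls} and~\ref{table:tacrktwo_dimwalls}, and your argument spells out precisely this deduction---choosing a path $R_1 \to R_2 \to R_3 \to R_4$ crossing $W_2$, $W_3$, $W_1$ in turn, and reading off the fiber dimensions from the $\Ext^1$ columns via the standard-flip description of Proposition~\ref{prop:nodehigher_simpleflip}.
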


\bibliography{Curves.bbl}
\end{document}